\documentclass[journal,twoside]{IEEEtran}


\usepackage{cite}

\usepackage{amsmath}
\interdisplaylinepenalty=2500

\usepackage{array}

\usepackage[pdftex]{graphicx}

\usepackage{amsfonts,amssymb,amsthm,mathtools,bbm,verbatim}
\allowdisplaybreaks 
\usepackage[usenames,dvipsnames]{color}
\usepackage[]{inputenc}
\usepackage{hyperref}
\hypersetup{colorlinks = true, linkcolor = blue, citecolor = blue, urlcolor  = blue}
\usepackage[all]{hypcap} 
\pdfstringdefDisableCommands{\def\eqref#1{(\ref{#1})}} 
\frenchspacing 
\usepackage{algorithm}
\usepackage{algpseudocode} 
\algnewcommand\True{\textbf{true}\space}
\algnewcommand\False{\textbf{false}\space}
\usepackage{balance}

\newcommand\Tstrut{\rule{0pt}{2.1ex}} 
\makeatletter 
\def\bbordermatrix#1{\begingroup \m@th
  \@tempdima 4.75\p@
  \setbox\z@\vbox{%
    \def\cr{\crcr\noalign{\kern2\p@\global\let\cr\endline}}%
    \ialign{$##$\hfil\kern2\p@\kern\@tempdima&\thinspace\hfil$##$\hfil
      &&\quad\hfil$##$\hfil\crcr
      \omit\strut\hfil\crcr\noalign{\kern-\baselineskip}%
      #1\crcr\omit\strut\cr}}%
  \setbox\tw@\vbox{\unvcopy\z@\global\setbox\@ne\lastbox}%
  \setbox\tw@\hbox{\unhbox\@ne\unskip\global\setbox\@ne\lastbox}%
  \setbox\tw@\hbox{$\kern\wd\@ne\kern-\@tempdima\left[\kern-\wd\@ne
    \global\setbox\@ne\vbox{\box\@ne\kern2\p@}%
    \vcenter{\kern-\ht\@ne\unvbox\z@\kern-\baselineskip}\,\right]$}%
  \null\;\vbox{\kern\ht\@ne\box\tw@}\endgroup}
\makeatother

\def\G{{\mathcal{G}}}
\def\C{{\mathcal{C}}}

\def\Y{{\mathcal{Y}}}

\def\F{{\mathcal{F}}}
\def\V{{\mathcal{V}}}
\def\W{{\mathcal{W}}}
\def\S{{\mathcal{S}}}
\def\CE{{\mathcal{E}}}
\def\Nbhd{{\mathcal{N}}}
\def\P{{\mathbb{P}}}
\def\E{{\mathbb{E}}}

\def\R{{\mathbb{R}}}
\def\N{{\mathbb{N}}}
\def\Z{{\mathbb{Z}}}
\def\Q{{\mathbb{Q}}}
\def\1{{\mathbf{1}}}
\def\0{{\mathbf{0}}}
\def\I{{\mathbbm{1}}}

\def\w{{w_{\delta}}}

\DeclareMathOperator*{\br}{br}

\DeclareMathOperator*{\rank}{rank}
\DeclareMathOperator*{\kernel}{ker}
\DeclareMathOperator*{\linspan}{span}
\DeclareMathOperator*{\divergence}{div}

\def\maj{{\mathsf{majority}}}
\def\Ber{{\mathsf{Bernoulli}}}
\def\cyceq{{\, \simeq_{\mathsf{c}} \,}}
\def\cycgeq{{\, \succeq_{\mathsf{c}} \,}}

\newcommand{\Mod}[1]{\ (\mathrm{mod}\ #1)}
\newcommand{\T}{\mathrm{T}}

\newtheorem{theorem}{Theorem}
\newtheorem{lemma}{Lemma}
\newtheorem{proposition}{Proposition}

\theoremstyle{definition}
\newtheorem{definition}{Definition}

\newtheorem{conjecture}{Conjecture}

\begin{document}

\bstctlcite{IEEEexample:BSTcontrol} 

\title{Broadcasting on Two-Dimensional Regular Grids}

\author{Anuran~Makur,~Elchanan~Mossel,~and~Yury~Polyanskiy%
\thanks{This work was supported in part by the Center for Science of Information, a National Science Foundation (NSF) Science and Technology Center, under Grant CCF-09-39370, in part by the NSF under Grants CCF-17-17842, CCF-19-18421, and DMS-17-37944, in part by the MIT-IBM Watson AI Lab, in part by the Vannevar Bush Faculty Fellowship under Grant N00014-20-1-2826, in part by the Simons Investigator Award under Grant 622132, and in part by the Army Research Office (ARO) Multidisciplinary University Research Initiative (MURI) under Grant W911NF-19-1-0217. This work was presented at the 2021 IEEE International Symposium on Information Theory \cite{MakurMosselPolyanskiy2021}.}%
\thanks{A. Makur is with the Department of Computer Science and the Elmore Family School of Electrical and Computer Engineering, Purdue University, West Lafayette, IN 47907, USA (e-mail: amakur@purdue.edu).}%
\thanks{E. Mossel is with the Department of Mathematics, Massachusetts Institute of Technology, Cambridge, MA 02139, USA (e-mail: elmos@mit.edu).}%
\thanks{Y. Polyanskiy is with the Department of Electrical Engineering and Computer Science, Massachusetts Institute of Technology, Cambridge, MA 02139, USA (e-mail: yp@mit.edu).}%
\thanks{Copyright (c) 2022 IEEE. Personal use of this material is permitted. However, permission to use this material for any other purposes must be obtained from the IEEE by sending a request to pubs-permissions@ieee.org.}}%

\maketitle

\begin{abstract}
We study an important specialization of the general problem of broadcasting on directed acyclic graphs, namely, that of broadcasting on two-dimensional (2D) regular grids. Consider an infinite directed acyclic graph with the form of a 2D regular grid, which has a single source vertex $X$ at layer $0$, and $k + 1$ vertices at layer $k \geq 1$, which are at a distance of $k$ from $X$. Every vertex of the 2D regular grid has outdegree $2$, the vertices at the boundary have indegree $1$, and all other non-source vertices have indegree $2$. At time $0$, $X$ is given a uniform random bit. At time $k \geq 1$, each vertex in layer $k$ receives transmitted bits from its parents in layer $k-1$, where the bits pass through independent binary symmetric channels with common crossover probability $\delta \in \big(0,\frac{1}{2}\big)$ during the process of transmission. Then, each vertex at layer $k$ with indegree $2$ combines its two input bits using a common deterministic Boolean processing function to produce a single output bit at the vertex. The objective is to recover $X$ with probability of error better than $\frac{1}{2}$ from all vertices at layer $k$ as $k \rightarrow \infty$. Besides their natural interpretation in the context of communication networks, such broadcasting processes can be construed as one-dimensional (1D) probabilistic cellular automata, or discrete-time statistical mechanical spin-flip systems on 1D lattices, with boundary conditions that limit the number of sites at each time $k$ to $k+1$. Inspired by the literature surrounding the ``positive rates conjecture'' for 1D probabilistic cellular automata, we conjecture that it is impossible to propagate information in a 2D regular grid regardless of the noise level $\delta$ and the choice of common Boolean processing function. In this paper, we make considerable progress towards establishing this conjecture, and prove using ideas from percolation and coding theory that recovery of $X$ is impossible for any $\delta \in \big(0,\frac{1}{2}\big)$ provided that all vertices with indegree $2$ use either AND or XOR for their processing functions. Furthermore, we propose a detailed and general martingale-based approach that establishes the impossibility of recovering $X$ for any $\delta \in \big(0,\frac{1}{2}\big)$ when all NAND processing functions are used if certain structured supermartingales can be rigorously constructed. We also provide strong numerical evidence for the existence of these supermartingales by computing several explicit examples for different values of $\delta$ via linear programming.
\end{abstract}

\begin{IEEEkeywords}
Broadcasting, probabilistic cellular automata, oriented bond percolation, linear code, supermartingale, potential function, linear programming. 
\end{IEEEkeywords}

\tableofcontents
\hypersetup{linkcolor = red}

\section{Introduction}
\label{Introduction}

The problem of broadcasting on two-dimensional (2D) regular grids is an important specialization of the broader question of broadcasting on bounded indegree directed acyclic graphs (DAGs), which was introduced in \cite{MakurMosselPolyanskiy2019,MakurMosselPolyanskiy2020,Makur2019} to generalize the classical problem of broadcasting on trees and Ising models, cf. \cite{Evansetal2000}. In contrast to the canonical study of reliable communication through broadcast channels in network information theory \cite[Chapters 5 and 8]{ElGamalKim2011}, the broadcasting problem, as studied in this paper, analyzes whether or not the ``wavefront of information'' emitted (or broadcasted) by a single transmitter decays irrecoverably as it propagates through a large, and typically structured, Bayesian network. For example, in the broadcasting on trees setting, we are given a Bayesian network whose underlying DAG is a rooted tree $T$, vertices are Bernoulli random variables, and edges are independent binary symmetric channels (BSCs) with common crossover probability $\delta \in \big(0,\frac{1}{2}\big)$. The root contains a uniform random bit that it transmits through $T$, and our goal is to decode this bit from the received values of the vertices at an arbitrarily deep layer $k$ (i.e. at distance $k$ from the root) of $T$. It is proved in a sequence of papers \cite{KestenStigum1966,BleherRuizZagrebnov1995,Evansetal2000} that the root bit can be decoded with minimum probability of error bounded away from $\frac{1}{2}$ as $k \rightarrow \infty$ if and only if $(1-2\delta)^2 \br(T) > 1$, i.e. the noise level $\delta$ is strictly less than the critical \textit{Kesten-Stigum threshold} $\frac{1}{2}\big(1 - (1/\br(T))^{1/2}\big)$, which depends on the branching number $\br(T)$ (see \cite[Chapter 1.2]{LyonsPeres2017}). This key result and its generalizations, cf. \cite{Ioffe1996a,Ioffe1996b,Mossel1998,Mossel2001,PemantlePeres2010,Sly2009,Sly2011,JansonMossel2004,Bhatnagaretal2011}, precisely characterize when information about the root bit contained in the vertices at arbitrarily deep layers of a Bayesian network with a tree-structured topology vanishes completely.

Although broadcasting on trees is amenable to various kinds of tractable analysis, the general problem of broadcasting on bounded indegree DAGs arguably better models real-world communication or social networks, where each vertex or agent usually receives multiple noisy input signals and has to judiciously consolidate these signals using simple rules. In the broadcasting on DAGs setting, we are given a Bayesian network with a single unbiased Bernoulli source vertex such that all other Bernoulli vertices have bounded indegree, and all edges are independent BSCs with noise level $\delta \in \big(0,\frac{1}{2}\big)$. Moreover, the vertices with indegree larger than $1$ compute their values by applying Boolean processing functions to their noisy inputs. Determining the precise conditions on the graph topology (e.g. the bound on the indegrees), the noise level $\delta$, and the choices of Boolean processing functions that permit successful reconstruction of the source bit is quite challenging. As a result, we usually characterize when reconstruction is possible for specific classes of DAGs and choices of processing functions.

For instance, our earlier work \cite{MakurMosselPolyanskiy2019,MakurMosselPolyanskiy2020} studies randomly constructed DAGs with indegrees $d$ and layer sizes $L_k$, where $L_k$ denotes the number of vertices at layer $k$ (i.e. at distance $k$ from the source vertex). It is established in \cite[Theorem 1]{MakurMosselPolyanskiy2020} that if $d \geq 3$ and all majority processing functions are used, then reconstruction of the source bit is possible using the majority decoder when $\delta < \delta_{\mathsf{maj}}(d)$ and $L_k \geq C(\delta,d) \log(k)$ for all sufficiently large $k$, where $\delta_{\mathsf{maj}}(d)$ is a known critical threshold (cf. \cite[Equation (11)]{MakurMosselPolyanskiy2020}) and $C(\delta,d) > 0$ is some fixed constant. Furthermore, \cite[Theorem 2]{MakurMosselPolyanskiy2020} shows a similar result when $d = 2$ and all NAND processing functions are used. (Partial converse results to \cite[Theorems 1 and 2]{MakurMosselPolyanskiy2020} are also developed in \cite{MakurMosselPolyanskiy2020}.) The aforementioned results demonstrate, after employing the probabilistic method, the existence of deterministic DAGs with bounded indegree and $L_k = \Theta(\log(k))$ such that reconstruction is possible for sufficiently small noise levels $\delta$. In fact, \cite[Theorem 3, Proposition 2]{MakurMosselPolyanskiy2020} also presents an explicit quasi-polynomial time construction of such deterministic DAGs using regular bipartite lossless expander graphs. In a nutshell, the results in \cite{MakurMosselPolyanskiy2019,MakurMosselPolyanskiy2020} illustrate that while $L_k$ must be exponential in $k$ for reconstruction to be possible in trees, logarithmic $L_k$ suffices for reconstruction in bounded indegree DAGs, because DAGs enable information fusion (or local ``error correction'') at the vertices. 

As opposed to the randomly constructed and expander-based DAGs analyzed in \cite{MakurMosselPolyanskiy2019,MakurMosselPolyanskiy2020}, in this paper, we consider the problem of broadcasting on another simple and important class of deterministic DAGs, namely, 2D regular grids. 2D regular grids correspond to DAGs with $L_k = k+1$ such that every vertex has outdegree $2$, the vertices at the boundary have indegree $1$, and all other non-source vertices have indegree $2$. For simplicity, we study the setting where the Boolean processing functions at all vertices with indegree $2$ are the same. As we will explain shortly, the literature on one-dimensional (1D) probabilistic cellular automata suggests that reconstruction of the source bit is impossible for such 2D regular grids regardless of the noise level $\delta$ and the choice of Boolean processing function. In this vein, the main contributions of this paper include two impossibility results that partially justify this intuition. In particular, we prove that recovery of the source bit is impossible on a 2D regular grid if all intermediate vertices with indegree $2$ use logical AND as the processing function, or all use logical XOR as the processing function. These proofs leverage ideas from percolation and coding theory. This leaves only NAND as the remaining symmetric processing function where the impossibility of reconstruction is unknown. Although we do not provide a complete proof of the impossibility of broadcasting with NAND processing functions, another main contribution of this paper is a careful elaboration of a martingale-based technique that yields the desired impossibility result if a certain family of superharmonic potential functions can be rigorously constructed. While such a (theoretical) construction currently remains open, we present some strong numerical evidence for it via linear programming. Furthermore, this martingale-based approach can easily be modified to obtain potential proofs for the impossibility of broadcasting on 2D regular grids for other choices of Boolean processing functions. Thus, we believe that it is instructive for future work in this area. 

\subsection{Motivation}
\label{Motivation}

As discussed in our earlier work \cite{MakurMosselPolyanskiy2020}, the general problem of broadcasting on bounded indegree DAGs is closely related to a myriad of problems in the literature. Besides its canonical broadcast interpretation in the context of communication networks, broadcasting on DAGs is a natural model of reliable computation and storage, cf. \cite{vonNeumann1956,HajekWeller1991,EvansPippenger1998,EvansSchulman1999,EvansSchulman2003,Unger2008}. Indeed, the model can be construed as a noisy circuit that has been constructed to remember (or store) a bit, where the edges are wires that independently make errors, and the Boolean processing functions at the vertices are perfect logic gates. Special cases of the broadcasting model on certain families of DAGs also correspond to well-known models in statistical physics. For example, broadcasting on trees corresponds to the extremality of certain Gibbs measures of ferromagnetic Ising models \cite[Section 2.2]{Evansetal2000}, and broadcasting on regular grids is closely related to the ergodicity of discrete-time statistical mechanical spin-flip systems (such as probabilistic cellular automata) on lattices \cite{Gray1982,Gray1987,Gray2001}. Furthermore, other special cases of the broadcasting model, such as on trees, represent information flow in biological networks, cf. \cite{Mossel2003,Mossel2004,DaskalakisMosselRoch2006,Roch2010}, play a crucial role in random constraint satisfaction problems, cf. \cite{MezardMontanari2006,Krzakalaetal2007,GerschenfeldMontanari2007,MontanariRestrepoTetali2011}, and are useful in proving converse results for community detection in stochastic block models, cf. \cite[Section 5.1]{Abbe2018}.

The main motivation for this work, i.e. the problem of understanding whether it is possible to propagate information starting from the source specifically in regular grids (see Figure \ref{Figure: Grid} for a 2D example), stems from the theory of \emph{probabilistic cellular automata} (PCA). Our \emph{conjecture} is that such propagation is possible for sufficiently low noise levels $\delta$ in $3$ or more dimensions, and impossible for 2D regular grids regardless of the noise level and the choice of Boolean processing function (which is the same for every vertex). This conjecture resembles and is inspired by the literature on the \emph{positive rates conjecture} for 1D PCA, cf. \cite[Section 1]{Gray2001}, and the existence of non-ergodic 2D PCA such as that defined by \emph{Toom's North-East-Center (NEC) rule}, cf. \cite{Toom1980}. Notice that broadcasting processes on 2D regular grids can be perceived as 1D PCA with boundary conditions that limit the layer sizes to be $L_k = k + 1$, and the impossibility of reconstruction on 2D regular grids intuitively corresponds to the ergodicity of 1D PCA (along with sufficiently fast convergence rate to stationarity). Therefore, the existence of a 2D regular grid with a choice of Boolean processing function which remembers its initial state bit for infinite time would suggest the existence of non-ergodic infinite 1D PCA consisting of $2$-input binary-state cells. However, the positive rates conjecture states (in essence) that ``relatively simple'' 1D PCA with local interactions and strictly positive noise probabilities are ergodic, and known counter-example constructions to this conjecture either require a lot more states \cite{Gacs2001}, or are non-uniform in time and space \cite{Cirelson1978}. This intuition gives credence to our conjecture that broadcasting is impossible for 2D regular grids. Furthermore, much like 2D regular grids, broadcasting on three-dimensional (3D) regular grids can be perceived as 2D PCA with boundary conditions (as shown in subsection \ref{3D Regular Grid Model}). Hence, the existence of non-ergodic 2D PCA, such as that in \cite{Toom1980}, suggests the existence of 3D regular grids where broadcasting is possible, thereby lending further credence to our conjecture. In this paper, we take some first steps towards establishing the 2D part of our larger conjecture.

\begin{figure}[t]
\centering
\includegraphics[trim = 50mm 180mm 50mm 25mm, width=0.75\linewidth]{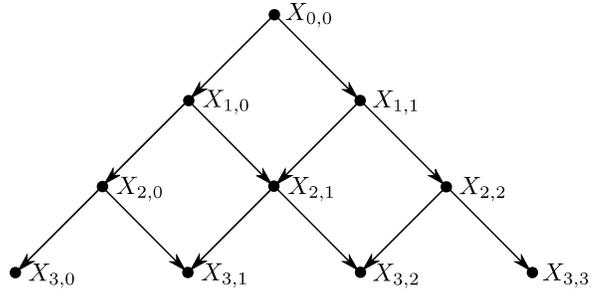} 
\caption{Illustration of a 2D regular grid where each vertex is a Bernoulli random variable and each edge is a BSC with parameter $\delta \in \big(0,\frac{1}{2}\big)$. Moreover, each vertex with indegree $2$ uses a common Boolean processing function to combine its noisy input bits.}
\label{Figure: Grid}
\end{figure}

Since the thrust of this paper partly hinges on the positive rates conjecture, we close this subsection by briefly expounding the underlying intuition behind it. Borrowing terminology from statistical physics, where a spin-flip system is said to experience a \emph{phase transition} if it has more than one invariant measure, the positive rates conjecture informally states that there are ``no phase transitions in one dimension'' \cite[Section 1]{Gray2001}. In fact, this statement is true for the special (and better understood) case of Ising models, where there is a phase transition in 2D when the temperature is sufficiently small, but not in 1D \cite[Section 2]{Gray2001}. Building intuition off of Ising models, it is explained in \cite[Section 2]{Gray2001} that when a binary-state 2D PCA is non-ergodic, its multiple invariant measures are ``close to'' the various \emph{stable} ground state configurations, e.g. ``all $0$'s'' and ``all $1$'s.'' Furthermore, for a ground state configuration such as ``all $0$'s'' to be deemed stable, we require that finite ``islands'' of $1$'s that are randomly formed by the noise process are killed by the transition (or Boolean processing) functions of the automaton, e.g. Toom's NEC rule kills such finite islands starting with their corners. However, for 1D PCA, a transition function with finite interaction neighborhood that is at the boundary of a large finite island cannot easily distinguish the island from the true ground state. Therefore, we cannot expect stable ground state configurations to exist in such simple 1D PCA, and hence, it is conjectured that such 1D PCA are ergodic.

\subsection{Outline}
\label{Outline}

We now delineate the organization of the remainder of this paper. The next subsection \ref{Deterministic 2D Grid Model} formally defines the 2D regular grid model in order to present our results in future sections. Section \ref{Main Results} presents our main impossibility results for the AND and XOR cases, describes our partial impossibility result for the NAND case and provides some accompanying numerical evidence, discusses some other related impossibility results, and elucidates the connection between 3D regular grid models and a variant of Toom's PCA. Sections \ref{Analysis of Deterministic And Grid} and \ref{Analysis of Deterministic Xor Grid} contain the proofs of our main impossibility results for AND processing functions and XOR processing functions, respectively. Then, we derive our partial impossibility result in section \ref{Martingale Approach for NAND Processing Functions} by carefully expounding a promising approach for proving impossibility results for 2D regular grids with NAND, and possibly other, processing functions via the construction of pertinent supermartingales. Finally, we conclude our discussion and propose future research directions in section \ref{Conclusion}.

\subsection{2D Regular Grid Model}
\label{Deterministic 2D Grid Model}

A \textit{2D regular grid model} consists of an infinite DAG with vertices that are Bernoulli random variables (with range $\{0,1\}$) and edges that are independent BSCs. The root or source random variable of the grid is $X_{0,0} \sim \Ber\big(\frac{1}{2}\big)$, and we let $X_k = (X_{k,0},\dots,X_{k,k})$ be the vector of vertex random variables at distance (i.e. length of shortest path) $k \in \N \triangleq \{0,1,2,\dots\}$ from the root. So, $X_0 = (X_{0,0})$ and there are $k+1$ vertices at distance $k$. Furthermore, the 2D regular grid contains the directed edges $(X_{k,j},X_{k+1,j})$ and $(X_{k,j},X_{k+1,j+1})$ for every $k \in \N$ and every $j \in [k+1] \triangleq \{0,\dots,k\}$, where the notation $(v,w)$ denotes an edge from source vertex $v$ to destination vertex $w$. The underlying DAG of such a 2D regular grid is shown in Figure \ref{Figure: Grid}.

To construct a \textit{Bayesian network} (or directed graphical model) on this 2D regular grid, we fix some crossover probability parameter $\delta \in \big(0,\frac{1}{2}\big)$,\footnote{The cases $\delta = 0$ and $\delta = \frac{1}{2}$ are uninteresting because the former corresponds to a deterministic grid and the latter corresponds to an independent grid.} and two Boolean processing functions $f_1:\{0,1\} \rightarrow \{0,1\}$ and $f_{2}:\{0,1\}^2 \rightarrow \{0,1\}$. Then, for any $k \in \N\backslash\!\{0,1\}$ and $j \in \{1,\dots,k-1\}$, we define:\footnote{We can similarly define a more general model where every vertex $X_{k,j}$ has its own Boolean processing function $f_{k,j}$, but we will only analyze instances of the simpler model presented here.}
\begin{equation}
\label{Eq:det propagation 1}
X_{k,j} = f_{2}(X_{k-1,j-1} \oplus Z_{k,j,1},X_{k-1,j}\oplus Z_{k,j,2})
\end{equation}
and for any $k \in \N\backslash\!\{0\}$, we define:
\begin{equation}
\label{Eq:det propagation 2}
\begin{aligned}
X_{k,0} & = f_{1}(X_{k-1,0} \oplus Z_{k,0,2}) \\
\text{and} \enspace X_{k,k} & = f_{1}(X_{k-1,k-1} \oplus Z_{k,k,1})
\end{aligned}
\end{equation}
where $\oplus$ denotes addition modulo $2$, and $\{Z_{k,j,i} : k \in \N\backslash\!\{0\}, j \in [k+1], i \in \{1,2\}\}$ are independent and identically distributed (i.i.d.) $\Ber(\delta)$ random variables that are independent of everything else. This implies that each edge is a $\mathsf{BSC}(\delta)$, i.e. a BSC with parameter $\delta$. Together, \eqref{Eq:det propagation 1} and \eqref{Eq:det propagation 2} characterize the conditional distribution of any $X_{k,j}$ given its parents.

Observe that the sequence $\{X_k : k \in \N\}$ forms a Markov chain, and our goal is to determine whether or not the value at the root $X_{0}$ can be decoded from the observations $X_k$ as $k \rightarrow \infty$. Given $X_k$ for any fixed $k \in \N\backslash\!\{0\}$, inferring the value of $X_0$ is a binary hypothesis testing problem with minimum achievable probability of error:
\begin{equation}
\label{Eq: ML Probability of Error}
\P\!\left(h_{\mathsf{ML}}^k(X_k) \neq X_0\right) = \frac{1}{2}\left(1-\left\|P_{X_k}^+ - P_{X_k}^-\right\|_{\mathsf{TV}}\right)
\end{equation}
where $h_{\mathsf{ML}}^k:\{0,1\}^{k+1} \rightarrow \{0,1\}$ is the maximum likelihood (ML) decision rule based on $X_k$ at level $k$ (with knowledge of the 2D regular grid), $P_{X_k}^+$ and $P_{X_k}^-$ are the conditional distributions of $X_k$ given $X_0 = 1$ and $X_0 = 0$, respectively, and for any two probability measures $P$ and $Q$ on the same measurable space $(\Omega,\F)$, their \textit{total variation (TV) distance} is defined as:
\begin{equation}
\left\|P - Q\right\|_{\mathsf{TV}} \triangleq \sup_{A \in \F}{\left|P(A) - Q(A)\right|} \, .
\end{equation}
It is straightforward to verify that the sequence $\{\P(h_{\mathsf{ML}}^k(X_k) \neq X_0) : k \in \N\backslash\!\{0\}\}$ is non-decreasing and bounded above by $\frac{1}{2}$. Indeed, the monotonicity of this sequence is a consequence of \eqref{Eq: ML Probability of Error} and the data processing inequality for TV distance, and the upper bound on the sequence trivially holds because a randomly generated bit cannot beat the ML decoder. Therefore, the limit of this sequence exists, and we say that reconstruction of the root bit $X_0$ is impossible, or ``broadcasting is impossible,'' when:
\begin{equation}
\label{Eq: Deterministic Impossibility of Reconstruction}
\begin{aligned}
\lim_{k \rightarrow \infty}&{\P\!\left(h_{\mathsf{ML}}^k(X_k) \neq X_0\right)} = \frac{1}{2} \\
& \Leftrightarrow \quad \lim_{k \rightarrow \infty}{\left\|P_{X_k}^+ - P_{X_k}^-\right\|_{\mathsf{TV}}} = 0 
\end{aligned}
\end{equation}
where the equivalence follows from \eqref{Eq: ML Probability of Error}.\footnote{Likewise, we say that reconstruction is possible, or ``broadcasting is possible,'' when $\lim_{k \rightarrow \infty}{\P(h_{\mathsf{ML}}^k(X_k) \neq X_0)} < \frac{1}{2}$, or equivalently, $\lim_{k \rightarrow \infty}{\|P_{X_k}^+ - P_{X_k}^-\|_{\mathsf{TV}}} > 0$.} In every impossibility result in this paper, we will prove that reconstruction of $X_0$ is impossible in the sense of \eqref{Eq: Deterministic Impossibility of Reconstruction}.

\section{Main Results and Discussion}
\label{Main Results}

In this section, we state our main results, briefly delineate the main techniques or intuition used in the proofs, and discuss related results and models in the literature.

\subsection{Impossibility Results for AND and XOR 2D Regular Grids}
\label{Impossibility Results on 2D Regular Grids}

In contrast to our work in \cite{MakurMosselPolyanskiy2020} where we analyze broadcasting on random DAGs, the deterministic 2D regular grids we now study are much harder to analyze due to the dependence between adjacent vertices in a given layer. So, as we will explain later, we have to employ seemingly ad hoc proof techniques from percolation theory, coding theory, and martingale theory in this paper instead of the simple and elegant fixed point iteration intuition exploited in \cite{MakurMosselPolyanskiy2020}. As mentioned earlier, we analyze the setting where all Boolean processing functions in the 2D regular grid with two inputs are the same, and all Boolean processing functions in the 2D regular grid with one input are the \textit{identity} rule.

Our first main result shows that reconstruction is impossible for all $\delta \in \big(0,\frac{1}{2}\big)$ when AND processing functions are used.

\begin{theorem}[AND 2D Regular Grid]
\label{Thm: Deterministic And Grid}
If $\delta \in \big(0,\frac{1}{2}\big)$, and all Boolean processing functions with two inputs in the 2D regular grid are the AND rule, then broadcasting is impossible in the sense of \eqref{Eq: Deterministic Impossibility of Reconstruction}:
$$ \lim_{k \rightarrow \infty}{\left\|P_{X_k}^+ - P_{X_k}^-\right\|_{\mathsf{TV}}} = 0 \, . $$
\end{theorem}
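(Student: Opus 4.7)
The plan is a coupling-plus-percolation argument. Build the graphical monotone coupling $(X^+, X^-)$ by driving every BSC edge with the maximal monotone joint distribution of two $\Ber(\delta)$ bits (shared noise when the two parent values agree; the ordered joint $\Pr[Z^+=Z^-=0]=1-2\delta$, $\Pr[Z^+=1,Z^-=0]=\Pr[Z^+=0,Z^-=1]=\delta$, $\Pr[Z^+=Z^-=1]=0$ otherwise). Since $f_1$ and $\mathsf{AND}$ are coordinate-wise monotone, this coupling satisfies $X^+_{k,j}\ge X^-_{k,j}$ pointwise almost surely, so the disagreement indicator $D_{k,j}:=X^+_{k,j}-X^-_{k,j}\in\{0,1\}$ is well-defined and
\[
\|P_{X_k}^+ - P_{X_k}^-\|_{\mathsf{TV}} \;\le\; \Pr\!\Big[\textstyle\sum_{j=0}^{k} D_{k,j}>0\Big],
\]
so it suffices to show the right-hand side tends to $0$ as $k\to\infty$.

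On the two boundary rays, $f_1$ turns $D$ into a two-state Markov chain whose stay probability is $1-2\delta$, so $\Pr[D_{k,0}=1]=\Pr[D_{k,k}=1]=(1-2\delta)^k$. At an internal $\mathsf{AND}$ vertex, a short case analysis shows that a disagreement can only propagate from a disagreeing parent to the child vertex when (i) the edge-level noise-mismatch event, of probability $1-2\delta$, occurs along that edge, and (ii) the other noisy input to the $\mathsf{AND}$ equals $1$. The elementary uniform bound $\Pr[X^-_{k,j}=1]\le\tfrac12$ holds on the whole grid, by the induction that the BSC map $p\mapsto\delta+(1-2\delta)p$ preserves $p\le\tfrac12$ combined with $\E[\mathsf{AND}(A,B)]\le\min(\E A,\E B)$. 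Hence the effective per-edge propagation probability is bounded by $q_\delta\le(1-2\delta)/2<\tfrac12$. A layer-by-layer peeling of the disagreement cluster, revealing fresh noise at each newly exposed edge, stochastically dominates $\{(k,j):D_{k,j}=1\}$ by an oriented Bernoulli$(q_\delta)$ percolation cluster rooted at $(0,0)$, supplemented by boundary injections at rate $(1-2\delta)^k$. Because the $2$-D oriented (site or bond) percolation critical threshold satisfies $p_c^{\mathsf{or}}>\tfrac12$, the parameter $q_\delta$ is strictly subcritical for every $\delta\in(0,\tfrac12)$; the dominating cluster is a.s.\ finite, and the summable boundary-injection rates produce finitely many sub-clusters by Borel--Cantelli. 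Combining these gives $\Pr[\sum_j D_{k,j}>0]\to 0$ and the TV claim.

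The most delicate point will be the peeling step, because the ``other input'' appearing in condition (ii) is correlated with the surrounding field $X^-$, so naive pointwise bounds give only $(1-2\delta)(1-\delta)$, which is not subcritical for small $\delta$. The resolution is a standard exploration of the disagreement cluster in an order in which each vertex's noise bit is sampled only the first time it is needed: integrating out the as-yet-unsampled ``other input'' against its marginal (bounded by $\tfrac12$) produces an unconditional per-edge propagation probability of at most $(1-2\delta)/2$, which the strict gap $(1-2\delta)/2<p_c^{\mathsf{or}}$ comfortably absorbs. A secondary nuisance is the boundary identity rule breaking the monotone coupling's perfect coalescence, but the geometric decay $(1-2\delta)^k$ handles that as an additional summable source term in the percolation domination.
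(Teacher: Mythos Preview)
There is a genuine gap in your propagation bound. In the coupled alphabet $\{0_{\mathsf c},1_{\mathsf u},1_{\mathsf c}\}$, the output of the AND is $1_{\mathsf u}$ exactly when at least one noisy input is $1_{\mathsf u}$ and the \emph{other} noisy input is in $\{1_{\mathsf u},1_{\mathsf c}\}$. Thus your condition (ii) is that the other noisy input has $X^{+}$-component equal to $1$, not $X^{-}$-component equal to $1$; note for instance that $1_{\mathsf u}\wedge 1_{\mathsf u}=1_{\mathsf u}$ even though the other input has $X^{-}=0$. Your inductive bound $\Pr[X^{-}_{k,j}=1]\le\tfrac12$ is correct but irrelevant; what you would need is $\Pr[X^{+}_{k,j}=1]\le\tfrac12$, and this fails. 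On the boundary ray $\Pr[X^{+}_{k,0}=1]=\tfrac12+\tfrac12(1-2\delta)^{k}>\tfrac12$ for every finite $k$, and in the interior the marginal of $X^{+}$ also exceeds $\tfrac12$ for a range of depths that diverges as $\delta\to0$. Hence the claimed per-edge bound $(1-2\delta)/2$ is not available; the honest bound is the one you flag yourself, $(1-2\delta)(1-\delta)$, which is supercritical for small $\delta$. The exploration device does not rescue this: the conditional law of the ``other input'' given the explored region is governed by the $X^{+}$ field (which starts at $1$), not by the $X^{-}$ field, so integrating it out does not produce a factor $\le\tfrac12$.

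The paper's proof does not attempt a single subcritical domination. It splits into two regimes. When $1-2\delta$ is below the oriented-percolation threshold $\delta_{\mathsf{perc}}$, the simple copy-percolation (edge open iff the BSC copies) is subcritical and the $1_{\mathsf u}$-cluster dies, which is essentially your argument without condition (ii). When $\delta$ is small, so that $1-\delta>\delta_{\mathsf{perc}}$, an entirely different mechanism is used: declare an edge open iff its BSC maps $0_{\mathsf c}$ to $0_{\mathsf c}$ (probability $1-\delta$), note that fresh $0_{\mathsf c}$'s are injected infinitely often at each boundary vertex by Borel--Cantelli, and use \emph{supercritical} oriented percolation to show that the $0_{\mathsf c}$-paths launched from the left and right boundaries spread linearly and almost surely meet. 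Since AND preserves coupled values and a $0_{\mathsf c}$ input forces a $0_{\mathsf c}$ output, these two meeting paths form a barrier beyond which no $1_{\mathsf u}$ can survive. The two regimes overlap because $\delta_{\mathsf{perc}}\in(\tfrac12,1)$, so together they cover all $\delta\in(0,\tfrac12)$.
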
 

Theorem \ref{Thm: Deterministic And Grid} is proved in section \ref{Analysis of Deterministic And Grid}. The proof couples the 2D regular grid starting at $X_{0,0} = 0$ with the 2D regular grid starting at $X_{0,0} = 1$, and ``runs'' them together. Using a phase transition result concerning \textit{bond percolation} on 2D lattices, we show that we eventually reach a layer where the values of all vertices in the first grid equal the values of the corresponding vertices in the second grid. So, the two 2D regular grids ``couple'' almost surely regardless of their starting state. This implies that we cannot decode the starting state by looking at vertices in layer $k$ as $k \rightarrow \infty$. We note that in order to prove that the two 2D regular grids ``couple,'' we have to consider two different regimes of $\delta$ and provide separate arguments for each. The details of these arguments are presented in section \ref{Analysis of Deterministic And Grid}. 

Our second main result shows that reconstruction is impossible for all $\delta \in \big(0,\frac{1}{2}\big)$ when XOR processing functions are used.

\begin{theorem}[XOR 2D Regular Grid]
\label{Thm: Deterministic Xor Grid}
If $\delta \in \big(0,\frac{1}{2}\big)$, and all Boolean processing functions with two inputs in the 2D regular grid are the XOR rule, then broadcasting is impossible in the sense of \eqref{Eq: Deterministic Impossibility of Reconstruction}:
$$ \lim_{k \rightarrow \infty}{\left\|P_{X_k}^+ - P_{X_k}^-\right\|_{\mathsf{TV}}} = 0 \, . $$
\end{theorem}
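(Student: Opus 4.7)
The XOR recursion is $\mathbb{F}_2$-linear, so unrolling it yields $X_k = a_k\, X_{0,0} \oplus N_k$, where $a_k \in \{0,1\}^{k+1}$ is the deterministic Pascal vector with $a_{k,j} = \binom{k}{j} \bmod 2$, and $N_k$ is an $\mathbb{F}_2$-linear function of the independent $\Ber(\delta)$ noise bits $\{Z_{i,l,m}\}$, hence independent of $X_{0,0}$. The problem therefore reduces to a translation-invariance question,
\[
\|P_{X_k}^+ - P_{X_k}^-\|_{\mathsf{TV}} \;=\; \|P_{N_k} - P_{N_k} \ast \delta_{a_k}\|_{\mathsf{TV}},
\]
and the plan is to show that the law of $N_k$ becomes asymptotically invariant under translation by $a_k$ in total variation.

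The main tool will be Walsh--Fourier analysis on the Boolean cube. Since $N_k$ is a linear combination of independent $\Ber(\delta)$'s, every character satisfies $\widehat{P_{N_k}}(s) = (1-2\delta)^{W(s)}$, where $W(s)$ counts the noise bits appearing with odd coefficient in the linear functional $\langle s, N_k\rangle$. Identifying $s$ with the polynomial $s(X) = \sum_j s_j X^j \in \mathbb{F}_2[X]$, the coefficient of a given $Z_{i,l,m}$ in $\langle s,N_k\rangle$ can be read off from an appropriate coefficient of $(1+X)^{k-i}\,s(X)$ over $\mathbb{F}_2$. Parseval combined with Cauchy--Schwarz then produces the Fourier bound
\[
\|P_{X_k}^+ - P_{X_k}^-\|_{\mathsf{TV}}^{2} \;\leq\; \sum_{s:\, \langle s,a_k\rangle = 1} (1-2\delta)^{2W(s)},
\]
and the remaining goal is to drive this sum to zero as $k\to\infty$.

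Next I would establish structural lower bounds on $W(s)$. For the extreme characters $s = e_0$ and $s = e_k$, tracking the surviving monomial of $(1+X)^{t}s(X)$ across $t=0,\ldots,k-1$ shows that each layer contributes one boundary noise bit, giving $W(s) = k$. For interior-supported $s$, the Lucas-theorem structure of Pascal's triangle modulo $2$ (row $t$ has $2^{s_2(t)}$ odd entries), combined with the identity $\sum_{t<k} 2^{s_2(t)} \asymp k^{\log_2 3}$ along $k = 2^n$, upgrades this to $W(s) \gtrsim k^{\log_2 3}$. In general one expects a whole hierarchy of lower bounds indexed by the ``interior profile'' of $s$.

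The main obstacle is that the Fourier sum has roughly $2^k$ terms, so the crude union bound $2^k(1-2\delta)^{2W_{\min}}$ only closes when $\delta$ is large. To cover the full range $\delta\in(0,\tfrac12)$, the coding-theoretic step is to produce a weight-enumerator-style estimate: bin the characters $s$ according to the joint weight pattern of the polynomial family $\{(1+X)^{k-i}s(X)\}_{i=1}^{k}$, interpret them as generators of a linear code whose Reed--Muller-like structure forces large minimum distance in every ``direction,'' and conclude that the number of $s$ with $W(s)\leq w$ is subexponential in $k$ at a rate that beats $(1-2\delta)^{-2w}$ for every fixed $\delta$. Establishing this quantitative distribution of weights is where I expect the bulk of the technical effort to lie.
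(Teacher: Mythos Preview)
Your setup is correct and your Fourier bound $\|P_{X_k}^+ - P_{X_k}^-\|_{\mathsf{TV}}^{2} \leq \sum_{s:\langle s,a_k\rangle=1}(1-2\delta)^{2W(s)}$ is valid, but the approach diverges sharply from the paper's, and the gap you flag at the end is more serious than you indicate.

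The step you defer --- controlling the weight enumerator of the code $\{B^{\T}s : s\in\mathbb{F}_2^{k+1}\}$ well enough that the sum dies for \emph{every} $\delta\in(0,\tfrac12)$ --- is not just ``the bulk of the technical effort''; it may be unachievable. Your bound comes from Cauchy--Schwarz and is really an $L^2$ (or $\chi^2$-type) estimate: $\mathrm{TV}^2 \leq 2^{k+1}\|P_{N_k}-P_{N_k}\ast\delta_{a_k}\|_2^2$. The paper's argument (described next) implicitly yields only polynomial decay of TV, roughly $(1-(2\delta)^2)^{\lfloor\log_2 k\rfloor}$, and there is no reason to expect the $L^2$ distance scaled by $2^{k+1}$ to track such slow decay. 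Concretely, you already observe that $W(e_0)=W(e_k)=k$, and there are $2^{k}$ characters $s$ with $s_0\neq s_k$; unless you can show that all but a sub-exponential number of them have $W(s)$ super-linear in $k$ \emph{and} that the few with $W(s)\asymp k$ contribute negligibly, the union-bound ceiling is exactly the Evans--Schulman threshold $2(1-2\delta)^2<1$. The ``Reed--Muller-like structure'' remark is not a plan: the code generated by the rows of $B^{\T}$ is not a standard Reed--Muller code, and nothing you have written suggests its weight distribution is concentrated enough to beat $2^k$ for arbitrary $\delta$.

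The paper avoids weight enumeration entirely. Restricting to $k=2^m$ (so that by Lucas' theorem $a_k=e_0+e_k$), it recasts the inference problem as decoding the first bit of a uniformly random codeword of the linear code $\mathcal{C}_k=\{w:H_k w=\0\}$ observed through BSCs, then relaxes each $\mathsf{BSC}(\delta)$ to a $\mathsf{BEC}(2\delta)$. The key observation is the existence of a \emph{single} weight-$3$ codeword $\omega^k\in\mathcal{C}_k$ supported on the first coordinate and the two ``last-layer boundary'' edges $e_1=(X_{k-1,0},X_{k,0})$ and $e_2=(X_{k-1,k-1},X_{k,k})$. Standard BEC decoding then says: whenever both $e_1,e_2$ are erased --- an event $B_k$ of probability $(2\delta)^2$, independent across $k=2^m$ --- the first bit is undecodable, and in fact $X_k$ becomes conditionally independent of $X_{0,0}$. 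The second Borel--Cantelli lemma finishes the proof. In your Fourier language, this is the statement that the single character $s=e_0$ already satisfies $\langle s,N_k\rangle = Z_{e_1} + (\text{terms independent of }Z_{e_1},Z_{e_2})$, so randomizing just those two noise bits decorrelates $X_k$ from $X_{0,0}$; no global control of $W(s)$ over all $s$ is needed.
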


Theorem \ref{Thm: Deterministic Xor Grid} is proved in section \ref{Analysis of Deterministic Xor Grid}. In the XOR 2D regular grid, every vertex at level $k$ can be written as a (binary) linear combination of the source bit and all the BSC noise random variables in the grid up to level $k$. This linear relationship can be captured by a binary matrix. The main idea of the proof is to perceive this matrix as a parity check matrix of a linear code. The problem of inferring $X_{0,0}$ from $X_k$ turns out to be equivalent to the problem of decoding the first bit of a codeword drawn uniformly from this code after observing a noisy version of the codeword. Basic facts from coding theory can then be used to complete the proof.

We remark that at first glance, Theorems \ref{Thm: Deterministic And Grid} and \ref{Thm: Deterministic Xor Grid} seem intuitively obvious from the random DAG model perspective of \cite[Section I-C]{MakurMosselPolyanskiy2020}. For example, consider a random DAG model where the number of vertices at level $k \in \N$ is $L_k = k+1$, two incoming edges for each vertex in level $k \in \N\backslash\!\{0\}$ are chosen randomly, uniformly, and independently (with replacement) from the vertices in level $k-1$, and all Boolean processing functions are the AND rule. Then, letting $\sigma_k \triangleq \sum_{j = 0}^{k}{X_{k,j}/(k+1)} \in [0,1]$ be the proportion of $1$'s in level $k \in \N$, the conditional expectation function $g(\sigma) = \E[\sigma_k|\sigma_{k-1} = \sigma]$ has only one fixed point regardless of the value of $\delta \in \big(0,\frac{1}{2}\big)$, and we intuitively expect $\sigma_k$ to tend to this fixed point (which roughly captures the equilibrium between AND gates killing $1$'s and $\mathsf{BSC}(\delta)$'s producing new $1$'s) as $k \rightarrow \infty$. So, reconstruction is impossible in this random DAG model, which suggests that reconstruction is also impossible in the AND 2D regular grid. However, although Theorems \ref{Thm: Deterministic And Grid} and \ref{Thm: Deterministic Xor Grid} seem intuitively easy to understand in this way, we emphasize that this random DAG intuition does not capture the subtleties engendered by the ``regularity'' of the 2D grid. In fact, the intuition derived from the random DAG model can even be somewhat misleading. Consider the random DAG model described above with all NAND processing functions (instead of AND processing functions). This model was analyzed in \cite[Theorem 2]{MakurMosselPolyanskiy2020}, because using alternating layers of AND and OR processing functions is equivalent to using all NAND processing functions (see \cite[Footnote 10]{MakurMosselPolyanskiy2020}). \cite[Theorem 2]{MakurMosselPolyanskiy2020} portrays that reconstruction of the source bit is possible for $\delta < \big(3-\sqrt{7}\big)/4$. Yet, evidence from \cite[Theorem 1]{HolroydMarcoviciMartin2019}, which establishes the ergodicity of 1D PCA with NAND gates, and the detailed discussion, results, and numerical simulations in subsection \ref{Partial Impossibility Result for NAND 2D Regular Grid} and section \ref{Martingale Approach for NAND Processing Functions}, suggest that reconstruction is actually impossible for the 2D regular grid with NAND processing functions. Therefore, the 2D regular grid setting of Theorems \ref{Thm: Deterministic And Grid} and \ref{Thm: Deterministic Xor Grid} should be intuitively understood using random DAG models with caution. Indeed, as sections \ref{Analysis of Deterministic And Grid} and \ref{Analysis of Deterministic Xor Grid} illustrate, the proofs of these theorems are nontrivial. 

The impossibility of broadcasting in Theorems \ref{Thm: Deterministic And Grid} and \ref{Thm: Deterministic Xor Grid} also seems intuitively plausible due to the ergodicity results for numerous 1D PCA\textemdash see e.g. \cite{Gray1982} and the references therein. (Indeed, as we delineated in subsection \ref{Motivation}, our conjecture was inspired by the positive rates conjecture for 1D PCA.) However, there are four key differences between broadcasting on 2D regular grids and 1D PCA. Firstly, the main question in the study of 1D PCA is whether a given automaton is ergodic, i.e. whether the Markov process defined by it converges to a unique invariant probability measure on the configuration space for all initial configurations. This question of ergodicity is typically addressed by considering the convergence of finite-dimensional distributions over the sites (i.e. weak convergence). Hence, for many 1D PCA that have special characteristics, such as translation invariance, finite interaction range, positivity, and attractiveness (or monotonicity), cf. \cite{Gray1982}, it suffices to consider the convergence of distributions on finite intervals, e.g. marginal distribution at a given site. In contrast to this setting, we are concerned with the stronger notion of convergence in TV distance. Indeed, Theorems \ref{Thm: Deterministic And Grid} and \ref{Thm: Deterministic Xor Grid} show that the TV distance between $P_{X_k}^+$ and $P_{X_k}^{-}$ vanishes as $k \rightarrow \infty$.

Secondly, since a 1D PCA has infinitely many sites, the problem of remembering or storing a bit in a 1D PCA (with binary state space) corresponds to distinguishing between the ``all $0$'s'' and ``all $1$'s'' initial configurations. On the other hand, as mentioned in subsection \ref{Motivation}, a 2D regular grid can be construed as a 1D PCA with boundary conditions; each level $k \in \N$ corresponds to an instance in discrete-time, and there are $L_k = k+1$ sites at time $k$. Moreover, its initial configuration has only one copy of the initial bit as opposed to infinitely many copies. As a result, compared a 2D regular grid, a 1D PCA (without boundary conditions) intuitively appears to have a stronger separation between the two initial states as time progresses. The aforementioned boundary conditions form another barrier to translating results from the 1D PCA literature to 2D regular grids.

Thirdly, in our broadcasting model in subsection \ref{Deterministic 2D Grid Model}, the independent BSCs are situated on the edges of the 2D regular grid. On the other hand, a corresponding 1D PCA (which removes the boundary conditions of the 2D regular grid) first uses the unadulterated bits from the previous layer while computing its processing functions in the current layer, and then applies independent BSC noise to the outputs of the functions. Equivalently, a 1D PCA with BSC noise behaves like a 2D regular grid where the edges between levels $0$ and $1$ are noise-free, and for any site at any level $k \geq 1$, the BSCs of its two outgoing edges are coupled so that they flip their input bit simultaneously (almost surely). It is due this difference between our broadcasting model and the canonical 1D PCA model that we cannot, for example, easily translate the ergodicity result for 1D PCA with NAND gates and BSC noise in \cite[Theorem 1]{HolroydMarcoviciMartin2019} to an impossibility result for broadcasting on 2D regular grids with NAND processing functions. We additionally remark, for completeness, that there are some known connections between vertex noise and edge noise, cf. \cite{DobrushinOrtyukov1977}, but these results do not help with our problem.

Fourthly, it is also worth mentioning that most results on 1D PCA pertain to the continuous-time setting\textemdash see e.g. \cite{Liggett1978, Gray1982} and the references therein. This is because sites are (almost surely) updated one by one in a continuous-time automaton, but they are updated in parallel in a discrete-time automaton. So, the discrete-time setting is often harder to analyze. Indeed, some of the only known discrete-time 1D PCA ergodicity results are in \cite[Theorem 1]{HolroydMarcoviciMartin2019} and \cite[
Section 3]{Gray1987}, where the latter outlines the proof of ergodicity of the $3$-input majority vote model (i.e. 1D PCA with $3$-input majority gates) for sufficiently small noise levels.\footnote{As Gray explains in \cite[Section 3]{Gray1987}, his proof of ergodicity is not complete; he is ``very detailed for certain parts of the argument and very sketchy in others'' \cite{Gray1987}. Although the references in \cite{Gray1987} indicate that Gray was preparing a paper with the complete proof, this paper was never published to our knowledge. So, the ergodicity of 1D PCA with $3$-input majority gates has not been rigorously established.} This is another reason why results from the 1D PCA literature cannot be easily transferred to our model.

\subsection{Partial Impossibility Result for NAND 2D Regular Grid}
\label{Partial Impossibility Result for NAND 2D Regular Grid}

We next present our final main result for the 2D regular grid with all NAND processing functions. Based on our broader conjecture in subsection \ref{Motivation}, we first state the following conjecture in analogy with Theorems \ref{Thm: Deterministic And Grid} and \ref{Thm: Deterministic Xor Grid}.

\begin{conjecture}[NAND 2D Regular Grid]
\label{Conj: NAND 2D Regular Grid}
If $\delta \in \big(0,\frac{1}{2}\big)$, and all Boolean processing functions with two inputs in the 2D regular grid are the NAND rule, then broadcasting is impossible in the sense of \eqref{Eq: Deterministic Impossibility of Reconstruction}:
$$ \lim_{k \rightarrow \infty}{\left\|P_{X_k}^+ - P_{X_k}^-\right\|_{\mathsf{TV}}} = 0 \, . $$ 
\end{conjecture}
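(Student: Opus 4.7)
The plan would be to execute the martingale-based program outlined in Section~\ref{Martingale Approach for NAND Processing Functions}. First, I would construct a grand coupling of the two Markov chains $\{X_k^+\}_{k \geq 0}$ (initialized at $X_{0,0}=1$) and $\{X_k^-\}_{k \geq 0}$ (initialized at $X_{0,0}=0$) by letting them share the same edge-noise realizations $\{Z_{k,j,i}\}$. This yields a joint Markov chain on layer configurations, each site carrying a pair $(X_{k,j}^+,X_{k,j}^-)$ taking one of four joint values, and the joint transition is a local kernel applied site-by-site. Because $\|P_{X_k}^+ - P_{X_k}^-\|_{\mathsf{TV}} \leq \P(X_k^+ \neq X_k^-)$, it suffices to prove that the disagreement set $D_k \triangleq \{j \in [k+1] : X_{k,j}^+ \neq X_{k,j}^-\}$ empties out in probability as $k \to \infty$, i.e.\ that the joint chain is absorbed into the diagonal of coupled states.

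Next I would search for a bounded, nonnegative ``potential'' $\Phi_k$ on joint layer configurations satisfying (i) $\Phi_k = 0$ precisely on the diagonal $\{X_k^+ = X_k^-\}$, (ii) the supermartingale inequality $\E[\Phi_{k+1} \mid \F_k] \leq \Phi_k$ under the joint dynamics, and (iii) a coercivity bound of the form $\Phi_k \geq c \, \I\{D_k \neq \emptyset\}$ for some $c > 0$. Since NAND acts locally and the edge-noise variables are independent, the natural ansatz is a local sum $\Phi_k = \sum_j \phi(\text{window of joint states around site } j)$ of fixed finite window size. The supermartingale inequality then decouples into a finite family of linear inequalities on the values of $\phi$, one per local configuration, which can be certified by linear programming\textemdash precisely the computations reported numerically in Section~\ref{Martingale Approach for NAND Processing Functions}. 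Given such a $\phi$, Doob's supermartingale convergence theorem forces $\Phi_k$ to a nonnegative limit almost surely; combined with a strict drift on non-diagonal configurations coming from (iii), this would rule out any positive limit and give $|D_k| \to 0$ almost surely, hence \eqref{Eq: Deterministic Impossibility of Reconstruction}.

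The main obstacle, and the reason the statement remains a conjecture, is the rigorous \emph{analytic} construction of a family of local potentials $\{\phi_\delta\}_{\delta \in (0,1/2)}$ that provably verifies the supermartingale inequality at every $\delta$. The LP computations already certify existence at selected values of $\delta$, but interpolating to a uniform family is delicate: as $\delta \downarrow 0$ the NAND dynamics concentrate near a nontrivial invariant measure with long-range correlations, so disagreements can spread for many layers before being absorbed, suggesting that the window size of $\phi_\delta$ must grow; conversely, as $\delta \uparrow \tfrac{1}{2}$ the local inequalities tighten and the drift margin degenerates. A satisfactory resolution likely requires either a closed-form potential with provably positive drift for all $\delta$, or a multiscale version of the argument that adapts the window size to the coupling scale at each $\delta$. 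One must additionally handle the boundary columns (the indegree-$1$ sites), whose marginal dynamics differ from the bulk and act as a persistent source of fresh disagreements. Closing these gaps\textemdash passing from finite numerical verification to a provable parametric construction\textemdash is, in my view, the decisive step.
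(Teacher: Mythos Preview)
Your proposal is essentially the paper's own program (Section~\ref{Martingale Approach for NAND Processing Functions}): couple the $+$ and $-$ chains on shared edge noise, track a ternary $\{0,1,u\}$ process, and search for a local $u$-only potential that is superharmonic under the coupled dynamics and dominates the disagreement count, with the LP certifying existence at fixed $\delta$. You also correctly identify the gap---passing from numerical LP feasibility at sampled $\delta$ to a rigorous construction for all $\delta \in (0,\tfrac12)$---as the reason the statement remains a conjecture.

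A few points where your sketch diverges from the paper's actual mechanics are worth flagging. First, the potential is not bounded: $\Phi_k$ grows with the layer width, and the paper's coercivity condition is $\Phi_k \geq C N_k$ (number of $u$'s), not merely $\Phi_k \geq c\,\I\{D_k \neq \emptyset\}$; the stronger linear lower bound is what lets one conclude $\limsup N_k < \infty$ from integrability of the martingale limit. Second, the passage from ``$\Phi_k$ converges a.s.'' to ``$N_k \to 0$'' is not via a strict-drift argument as you suggest; the paper (Proposition~\ref{Prop: Martingale Condition}) instead argues that once $N_k$ is bounded infinitely often, there is a uniformly positive probability at each such time that all outgoing BSCs from the $u$'s resample, killing every $u$ in one step, and then invokes a conditional Borel--Cantelli lemma. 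Third, your concern that the window size must diverge as $\delta \downarrow 0$ is not borne out by the paper's numerics: Table~\ref{Table: LP Solutions} exhibits feasible rank-$3$ potentials down to $\delta = 10^{-4}$, and the harmonic form $w_*$ of Proposition~\ref{Prop: Harmonic Counting Form} is an exact fixed point at $\delta = 0$, suggesting a fixed finite window suffices uniformly. Finally, the boundary issue is handled in the paper not by modifying the potential but by conditioning on the almost-sure event that the first and last $r$ sites of each layer eventually contain no $u$'s (the event $A$ in \eqref{Eq: The special event}), after which the $u$-only potential sees no boundary contribution.
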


As noted in section \ref{Introduction}, we will present a detailed ``program'' for proving this conjecture in section \ref{Martingale Approach for NAND Processing Functions}. This program is inspired by the potential function technique employed in the proof of ergodicity of 1D PCA with NAND gates in \cite[Theorem 1]{HolroydMarcoviciMartin2019}, and can be construed as a more \emph{general approach} to proving impossibility of broadcasting on 2D regular grids with other processing functions or ergodicity of the corresponding 1D PCA. In this subsection, we delineate a sufficient condition for proving Conjecture \ref{Conj: NAND 2D Regular Grid} that follows from the arguments in section \ref{Martingale Approach for NAND Processing Functions}, and provide accompanying numerical evidence that this sufficient condition is actually true. 

To this end, we begin with some necessary setup, notation, and definitions that are relevant to the NAND 2D regular grid. Let $\{X^+_k : k \in \N\}$ and $\{X^-_k : k \in \N\}$ denote versions of the Markov chain $\{X_k : k \in \N\}$ initialized at $X^+_0 = 1$ and $X^-_0 = 0$, respectively. Define the coupled 2D grid variables $\{Y_{k,j} = (X_{k,j}^-,X_{k,j}^+) : k \in \N, \, j \in [k+1]\}$, which yield the Markovian coupling $\{Y_k = (Y_{k,0},\dots,Y_{k,k}) : k \in \N\}$. The coupled Markov chain $\{Y_k : k \in \N\}$ ``runs'' its ``marginal'' Markov chains $\{X^+_k : k \in \N\}$ and $\{X^-_k : k \in \N\}$ on a common underlying 2D regular grid so that along any edge BSC, either both inputs are copied with probability $1 - 2\delta$, or a shared independent $\Ber\big(\frac{1}{2}\big)$ output bit is produced with probability $2\delta$. Moreover, we assume that each $Y_{k,j}$ for $k \in \N$ and $j \in [k+1]$ takes values in the extended alphabet set $\Y \triangleq \{0,1,u\}$, where $Y_{k,j} = 0$ only if $X_{k,j}^- = X_{k,j}^+ = 0$, $Y_{k,j} = 1$ only if $X_{k,j}^- = X_{k,j}^+ = 1$, and $Y_{k,j} = u$ means that either $X_{k,j}^- \neq X_{k,j}^+$ or it is ``unknown'' whether $X_{k,j}^- = X_{k,j}^+$. Our Markovian coupling, along with \eqref{Eq: Coupled BSC 2} and \eqref{Eq: Coupled Nand} from subsection \ref{Existence of Supermartingale}, completely determines the transition kernels of $\{Y_k : k \in \N\}$, i.e. the conditional distributions of $Y_{k+1} \in \Y^{k+2}$ given $Y_{k}$ for all $k \in \N$, and this coupled Markov chain starts at $Y_0 = u$ almost surely. As we will show in subsection \ref{Existence of Supermartingale}, it suffices to analyze the coupled Markov chain $\{Y_k : k \in \N\}$ to deduce the impossibility of broadcasting. (A more detailed explanation of our Markovian coupling can also be found in subsection \ref{Existence of Supermartingale}.) Lastly, in order to present our final main result, we introduce the class of cyclic potential functions (inspired by \cite{HolroydMarcoviciMartin2019}), a partial order over these potential functions, and a pertinent linear operator on the space of potential functions in the next definition.

\begin{definition}[Cyclic Potential Functions and Related Notions]
\label{Def: Cyclic Potential Functions and Related Notions}
Given any finite set of strings $v_1,\dots,v_m \in \Y^* = \cup_{k \in \N\backslash\!\{0\}}{\Y^k}$ and any associated coefficients $\alpha_1,\dots,\alpha_m \in \R$ (with $m \in \N\backslash\!\{0\}$), we may define a corresponding \emph{cyclic potential function} $w : \Y^* \rightarrow \R$ via the formal sum:
$$ w = \sum_{j = 1}^{m}{\alpha_j \{v_j\}} \, , $$
where curly braces are used to distinguish a string $v \in \Y^*$ from its associated potential function $\{v\} : \Y^* \rightarrow \R$. In particular, for every $k \in \N\backslash\!\{0\}$ and every string $y = (y_0 \cdots y_{k-1}) \in \Y^k$ of length $k$, the cyclic potential function $w$ is evaluated as follows:
$$ w[y] \triangleq \sum_{j = 1}^{m}{\alpha_j \I\!\left\{s_j \leq k\right\} \sum_{i = 0}^{k-1}{\I\!\left\{\left(y_{(i)_k} \cdots y_{(i + s_j - 1)_k}\right) = v_j\right\}}} $$ 
where $s_j$ denotes the length of $v_j$ for all $j \in \{1,\dots,m\}$, and $(i)_k \equiv i \Mod{k}$ for every $i \in \N$. Furthermore, we say that $w$ is \emph{$u$-only} if the strings $v_1,\dots,v_m$ all contain a $u$. For any fixed $r \in \N\backslash\!\{0\}$, we may also define a partial order $\cycgeq$ over the set of all cyclic potential functions for which the lengths of the underlying strings (with non-zero coefficients) defining their formal sums are bounded by $r$. Specifically, for any pair of such cyclic potential functions $w_1:\Y^* \rightarrow \R$ and $w_2:\Y^* \rightarrow \R$, we have:
$$ w_1 \cycgeq w_2 \quad \Leftrightarrow \quad \forall y \in \bigcup_{k \geq r}{\Y^k}, \enspace w_1[y] \geq w_2[y] \, . $$
Finally, we define the \emph{conditional expectation operator} $\CE$ on the space of cyclic potential functions based on the coupled NAND 2D regular grid as follows. For any input cyclic potential function $w$ (defined by the formal sum above), $\CE$ outputs the cyclic potential function with formal sum:
$$ \CE(w) \!\triangleq\! \sum_{j = 1}^{m}{\alpha_j \!\!\!\!\!\sum_{z \in \Y^{s_j+1}}{\!\!\!\!\P\!\left((Y_{s_j+1,1},\dots,Y_{s_j+1,s_j}) \!=\! v_j \middle| Y_{s_j} \!=\! z\right)\!\! \{z\}}} $$
where the probabilities are determined by the Markovian coupling $\{Y_k : k \in \N\}$.
\end{definition}

We note that Definition \ref{Def: Cyclic Potential Functions and Related Notions} collects several smaller definitions interspersed throughout section \ref{Martingale Approach for NAND Processing Functions}, where they are each carefully explained in greater detail, and many other closely related ideas and terminology are also introduced. Using the concepts in Definition \ref{Def: Cyclic Potential Functions and Related Notions}, the ensuing main theorem presents a sufficient condition for the impossibility of broadcasting on the NAND 2D regular grid. 

\begin{theorem}[Sufficient Condition for NAND 2D Regular Grid]
\label{Thm: Sufficient Condition for NAND 2D Regular Grid}
For any noise level $\delta \in \big(0,\frac{1}{2}\big)$, suppose that there exists $r \in \N\backslash\!\{0,1\}$, and a cyclic potential function $w_{\delta} : \Y^* \rightarrow \R$ whose formal sum is constructed with strings (with non-zero coefficients) of length at most $r-1$, such that:
\begin{enumerate}
\item $\w$ is $u$-only,
\item $\w \cycgeq \CE(\w)$,
\item There exists a constant $C = C(\delta) > 0$ (which may depend on $\delta$) such that $\w \cycgeq C \{u\}$,
\end{enumerate}
where $\cycgeq$ is defined using $r$ (as in Definition \ref{Def: Cyclic Potential Functions and Related Notions}), and $C \{u\} : \Y^* \rightarrow \R$ is the cyclic potential function consisting of a single string $(u) \in \Y$ with coefficient $C$. Then, Conjecture \ref{Conj: NAND 2D Regular Grid} is true, i.e. broadcasting on the 2D regular grid where all Boolean processing functions with two inputs are the NAND rule is impossible in the sense of \eqref{Eq: Deterministic Impossibility of Reconstruction}.
\end{theorem}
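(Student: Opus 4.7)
The plan is a coupling argument followed by a supermartingale analysis. First, let $\{X_k^+\}$ and $\{X_k^-\}$ denote the two grid processes initialized at $X_{0,0}^+ = 1$ and $X_{0,0}^- = 0$, realized jointly via the grand coupling (sharing every $\mathsf{BSC}(\delta)$ output) described prior to the theorem, and let $\{Y_k\}$ be the associated joint process in $\Y^{k+1}$ with the property that $Y_{k,j} \neq u$ certifies $X_{k,j}^+ = X_{k,j}^-$. The standard coupling inequality then gives
\begin{equation*}
\left\|P_{X_k}^+ - P_{X_k}^-\right\|_{\mathsf{TV}} \leq \P\!\left(X_k^+ \neq X_k^-\right) \leq \P\!\left(\exists\,j \in [k+1] : Y_{k,j} = u\right),
\end{equation*}
so it suffices to show the rightmost probability tends to $0$.

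Next, I would interpret $Y_k$ cyclically as a word of length $k+1$ and track the scalar process $M_k := \w[Y_k]$. The conditional expectation operator $\CE$ is engineered so that, by the local Markov property and the translation invariance of the one-step coupled NAND-plus-BSC transition, $\E[\w[Y_{k+1}] \mid Y_k] = \CE(\w)[Y_k]$. Hypothesis 2 then yields $\E[M_{k+1} \mid Y_k] \leq M_k$ whenever $k+1 \geq r$, while Hypotheses 1 and 3 force $M_k \geq 0$. Consequently $\{M_k\}_{k \geq r-1}$ is a non-negative supermartingale with $\E[M_k] \leq \E[M_{r-1}] < \infty$ uniformly in $k$, and Hypothesis 3 converts this into the uniform first-moment bound
\begin{equation*}
\E[N_u(Y_k)] \leq C^{-1} \E[M_{r-1}] =: M,
\end{equation*}
where $N_u(Y_k) := |\{j \in [k+1] : Y_{k,j} = u\}|$. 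Since the layer has $k+1$ sites, the expected density of $u$-markers thus tends to $0$.

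The last, and most delicate, step is to upgrade this density-vanishing into the vanishing of $\P(\exists j : Y_{k,j} = u)$. My plan is to embed the coupled grid chain into a translation-invariant cyclic one-dimensional PCA on a torus of growing width $N$, for which the same supermartingale argument together with translation invariance gives the single-site bound $\P(Y_{k,0}^{(N)} = u) \leq M/N$, and then to compare the torus chain with the boundary-truncated grid via a monotone coupling in which $u$ dominates $\{0,1\}$ under the natural partial order on $\Y$. Such a comparison propagates the single-site vanishing to the all-sites probability on the grid, which, combined with the opening coupling inequality, yields the conclusion. I expect the main obstacle to be precisely this final transfer: the supermartingale controls only the first moment of the disagreement count, and ruling out persistent localized clusters of $u$'s requires both the cyclic embedding and a careful verification that the attractive structure of the NAND-plus-BSC coupling survives the monotone comparison. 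These are the technical points that Section \ref{Martingale Approach for NAND Processing Functions} is organized to address.
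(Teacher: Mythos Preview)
Your proposal has two genuine gaps.

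First, the identity $\E[\w[Y_{k+1}] \mid Y_k] = \CE(\w)[Y_k]$ does not hold when you evaluate cyclically on the actual grid process. The grid layer $Y_k$ is a linear, not circular, string: the boundary sites $Y_{k+1,0}$ and $Y_{k+1,k+1}$ each have only one parent, and cyclic evaluation of $\w$ on $Y_{k+1}$ (or of $\CE(\w)$ on $Y_k$) manufactures wrap-around substrings joining the two ends of the layer which have no counterpart in the grid dynamics. The paper handles this differently. It uses the \emph{acyclic} evaluation $\w(\cdot)$ restricted to strings in $\Y_r^*$ (those whose first and last $r$ letters contain no $u$), and shows in Proposition~\ref{Prop: Sufficient Conditions for Equivalence and Partial Order} that the cyclic hypotheses $\w \cycgeq \CE(\w)$ and $\w \cycgeq C\{u\}$ imply the corresponding acyclic inequalities precisely because $\w$ is $u$-only. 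It then introduces a stopping time $T$ --- the first level at which the leftmost and rightmost $r$ sites are all in $\{0,1\}$ --- proves $T < \infty$ almost surely via Borel--Cantelli, and conditions on $\{T = m\}$ to obtain a bona fide supermartingale $\{\w(Y_k) : k \geq m\}$ via Proposition~\ref{Prop: Equivalence of CE}. This is where hypothesis~1 is actually doing work; your sketch uses it only to secure non-negativity, missing its role in neutralizing the boundary.

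Second, your final step --- embedding into a torus PCA and transferring back via a monotone comparison --- is not what the paper does, and the transfer you describe is not obviously available (nor is it needed). The paper instead argues directly on the grid, as in Proposition~\ref{Prop: Martingale Condition}: Doob's convergence theorem applied to the non-negative supermartingale gives $\E[\limsup_k N_k] < \infty$; then a stopping-time plus ``counterpart of Borel--Cantelli'' argument shows that if $N_\infty := \limsup_k N_k$ were positive with positive probability, the infinitely many levels where $N_k \leq 2\E[N_\infty]/\epsilon$ would each afford a uniformly positive chance that all outgoing BSCs from the boundedly many $u$-sites resample, killing every $u$ at once. This forces $N_\infty = 0$ almost surely, and Lemma~\ref{Lemma: Vanishing u's Condition} finishes. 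This route converts the first-moment bound into almost-sure extinction without any auxiliary periodic system or monotonicity.
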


Theorem \ref{Thm: Sufficient Condition for NAND 2D Regular Grid} is proved in subsection \ref{Proof of Theorem Sufficient Condition for NAND 2D Regular Grid}. Intuitively, the first two conditions in the theorem statement ensure that $\{\w(Y_k) : k \in \N\}$ is a supermartingale, and the third condition ensures that this supermartingale upper bounds the total number of uncoupled grid variables at successive levels. Then, using a martingale convergence argument along with some careful analysis of the stochastic dynamics of the coupled 2D regular grid, we can deduce that the number of uncoupled grid variables converges to zero almost surely. Akin to the proof of Theorem \ref{Thm: Deterministic And Grid}, this implies that broadcasting is impossible on the NAND 2D regular grid. As noted earlier, the details of the entire argument can be found in section \ref{Martingale Approach for NAND Processing Functions}.

While Theorem \ref{Thm: Sufficient Condition for NAND 2D Regular Grid} describes specific cyclic potential functions that can be used to prove Conjecture \ref{Conj: NAND 2D Regular Grid}, we do not rigorously prove their existence for all $\delta \in \big(0,\frac{1}{2}\big)$ in this paper. However, much of the development in section \ref{Martingale Approach for NAND Processing Functions} aims to carefully explain these desired potential functions and derive results that enable us to computationally construct them. In particular, we demonstrate in Proposition \ref{Prop: Linear Programming Criterion} that for fixed values of $\delta$, $r$, and $C$, the problem of finding an appropriate cyclic potential function $\w$ satisfying the three conditions of Theorem \ref{Thm: Sufficient Condition for NAND 2D Regular Grid} can be posed as a \emph{linear program} (LP). This connection stems from a graph theoretic characterization of $\cycgeq$ (the details of which are expounded in subsection \ref{Graph Theoretic Characterization}). 

\begin{table*}[t]
\caption{Feasible Solutions $\alpha^*(\delta)$ of the LP \eqref{Eq: Final LP Formulation} for Different $\delta \in \big(0,\frac{1}{2}\big)$ when $r = 4$}
\begin{center}
\begin{tabular}{|c|c@{\hskip -0.03in}ccccccc|}
\hline
$\delta$\Tstrut & & $0.0001$ & $0.001$ & $0.01$ & $0.02$ & $0.05$ & $0.1$ & $0.2$ \\
\hline
$\alpha^*(\delta)$  
& $\begin{array}{c} (000) \\ (001) \\ (00u) \\ (010) \\ (011) \\ (01u) \\ (0u0) \\ (0u1) \\ (0uu) \\ (100) \\ (101) \\ (10u) \\ (110) \\ (111) \\ (11u) \\ (1u0) \\ (1u1) \\ (1uu) \\ (u00) \\ (u01) \\ (u0u) \\ (u10) \\ (u11) \\ (u1u) \\ (uu0) \\ (uu1) \\ (uuu) \end{array}$ & $\left[\begin{array}{c} 
         0\\
         0\\
    0.0000\\
         0\\
         0\\
    1.9991\\
    0.0004\\
    1.9990\\
    1.9986\\
         0\\
         0\\
    0.0002\\
         0\\
         0\\
    0.9988\\
    1.0005\\
    1.9996\\
    1.9995\\
    1.0000\\
    1.0002\\
    1.0000\\
    2.0003\\
    1.0006\\
    1.9988\\
    1.0000\\
    1.9991\\
    1.9990 \end{array}\right]$ 
& $\left[\begin{array}{c} 
         0\\
         0\\
    0.0000\\
         0\\
         0\\
    1.9908\\
    0.0040\\
    1.9904\\
    1.9864\\
         0\\
         0\\
    0.0020\\
         0\\
         0\\
    0.9884\\
    1.0047\\
    1.9958\\
    1.9948\\
    1.0000\\
    1.0020\\
    1.0000\\
    2.0027\\
    1.0058\\
    1.9881\\
    1.0000\\
    1.9910\\
    1.9897 \end{array}\right]$ 
& $\left[\begin{array}{c}        			
         0\\
         0\\
    0.0171\\
         0\\
         0\\
    1.9223\\
    0.0421\\
    1.9164\\
    1.9528\\
         0\\
         0\\
    0.0358\\
         0\\
         0\\
    1.0010\\
    1.3343\\
    1.9526\\
    1.9610\\
    1.0000\\
    1.0057\\
    1.0000\\
    2.2894\\
    1.3689\\
    1.9396\\
    1.3358\\
    1.9535\\
    1.9499 \end{array} \right]$ 
& $\left[\begin{array}{c}     
         0\\
         0\\
    0.0624\\
         0\\
         0\\
    1.7777\\
    0.0783\\
    1.7509\\
    1.8429\\
         0\\
         0\\
    0.0948\\
         0\\
         0\\
    0.9906\\
    1.6963\\
    1.8252\\
    1.8482\\
    1.0000\\
    1.0000\\
    1.0000\\
    2.5746\\
    1.7854\\
    1.7970\\
    1.6983\\
    1.8211\\
    1.8119 \end{array}\right]$ 
& $\left[\begin{array}{c}
		     0\\
         0\\
    0.1151\\
         0\\
         0\\
    1.3064\\
    0.1477\\
    1.2696\\
    1.3672\\
         0\\
         0\\
    0.1516\\
         0\\
         0\\
    0.7999\\
    1.3745\\
    1.4142\\
    1.4430\\
    1.0000\\
    1.0000\\
    1.0000\\
    2.0237\\
    1.5929\\
    1.4229\\
    1.4511\\
    1.4682\\
    1.4577 \end{array}\right]$ 
& $\left[\begin{array}{c}
	       0\\
         0\\
    0.1368\\
         0\\
         0\\
    0.7631\\
    0.1599\\
    0.7283\\
    0.8458\\
         0\\
         0\\
    0.1589\\
         0\\
         0\\
    0.5322\\
    0.7787\\
    0.9025\\
    0.9489\\
    1.0000\\
    1.0000\\
    1.0000\\
    1.1221\\
    1.0445\\
    1.0000\\
    1.0000\\
    1.0000\\
    1.0000 \end{array}\right]$ 
& $\left[\begin{array}{c}
		     0\\
         0\\
    0.1402\\
         0\\
         0\\
    0.3943\\
    0.1852\\
    0.4138\\
    0.5363\\
         0\\
         0\\
    0.1428\\
         0\\
         0\\
    0.3189\\
    0.5149\\
    0.6328\\
    0.7120\\
    1.0000\\
    1.0000\\
    1.0000\\
    1.0000\\
    1.0000\\
    1.0000\\
    1.0000\\
    1.0000\\
    1.0000 \end{array}\right]$ \\
\hline
\end{tabular}
\end{center}
\label{Table: LP Solutions}
\end{table*}

Armed with this connection, we present some illustrative LP simulation results (computed using MATLAB with CVX optimization packages) in Table \ref{Table: LP Solutions} which numerically construct cyclic potential functions $w_{\delta}^* : \Y^* \rightarrow \R$ that satisfy the three conditions of Theorem \ref{Thm: Sufficient Condition for NAND 2D Regular Grid} with the fixed constants $r = 4$ and $C = C(\delta) = 1$. Specifically, we consider different representative values of $\delta \in \big(0,\frac{1}{2}\big)$ in the first row of Table \ref{Table: LP Solutions}. For any such fixed $\delta$ value, the second row of Table \ref{Table: LP Solutions} displays a corresponding vector of coefficients $\alpha^*(\delta) \in \R^{27}$ (which has been rounded to $4$ decimal places) that defines a cyclic potential function $w_{\delta}^* : \Y^* \rightarrow \R$ satisfying the three conditions of Theorem \ref{Thm: Sufficient Condition for NAND 2D Regular Grid} and consisting of a formal sum over all strings of length $3$. Indeed, for the readers' convenience, we index these vectors of coefficients $\alpha^*(\delta)$ with $\Y^3$ in Table \ref{Table: LP Solutions}, so that each $\alpha^*(\delta)$ defines $w_{\delta}^*$ via the formal sum constructed by scaling each index in $\Y^3$ with the associated value in $\alpha^*(\delta)$ and adding them up (see Definition \ref{Def: Cyclic Potential Functions and Related Notions}). For example, the first column of Table \ref{Table: LP Solutions} states that when $\delta = 0.0001$, the cyclic potential function, $w_{\delta}^* = 1.9991 \{01u\} + 0.0004 \{0u0\} + \cdots + 1.9991\{uu1\} + 1.9990\{uuu\}$, satisfies the conditions of Theorem \ref{Thm: Sufficient Condition for NAND 2D Regular Grid} with $r = 4$ and $C = 1$. Furthermore, it is straightforward to see that the coefficients corresponding to strings in $\Y^3$ with no $u$'s are always zero in Table \ref{Table: LP Solutions}, and the coefficients corresponding to the last $9$ strings in $\Y^3$ that begin with a $u$ are all lower bounded by $1$. The former observation immediately confirms that the cyclic potential functions represented in Table \ref{Table: LP Solutions} satisfy the first condition of Theorem \ref{Thm: Sufficient Condition for NAND 2D Regular Grid}, and the latter observation shows that they satisfy the third condition of Theorem \ref{Thm: Sufficient Condition for NAND 2D Regular Grid} with $C = 1$ (using the idea of ``purification'' from section \ref{Martingale Approach for NAND Processing Functions}). We refer readers to subsection \ref{Linear Programming Criteria} for further details and discussion regarding how to compute the vectors $\alpha^*(\delta)$ by solving LPs.

We close this subsection with three further remarks. Firstly, Table \ref{Table: LP Solutions} only presents a small subset of our simulation results for brevity. We have solved LPs for various other values of $\delta \in \big(0,\frac{1}{2}\big)$ and always obtained associated feasible vectors of coefficients $\alpha^*(\delta)$. We do not present simulation results for $\delta > 0.146446\dots$ (other than $\delta = 0.2$), because part 2 of Proposition \ref{Prop: Evans Schulman} implies that broadcasting is impossible in the sense of \eqref{Eq: Deterministic Impossibility of Reconstruction} in this case\textemdash see \eqref{Eq: ES Threshold} and the discussion below in subsection \ref{Related Results in the Literature}. Moreover, the impossibility of broadcasting is intuitively more surprising for smaller values of noise $\delta$, so we emphasized the LP solutions for smaller values of $\delta$ in Table \ref{Table: LP Solutions} since they are more compelling. (Note, however, that we do \emph{not} know any rigorous monotonicity result which shows that broadcasting is impossible on the NAND 2D regular grid for $\delta^{\prime} \in \big(0,\frac{1}{2}\big)$ if it is impossible for a smaller $\delta \in \big(0,\frac{1}{2}\big)$ value, i.e. $\delta < \delta^{\prime}$.)

Secondly, it is worth mentioning that Theorem \ref{Thm: Sufficient Condition for NAND 2D Regular Grid} and associated LP simulation results (as in Table \ref{Table: LP Solutions}) yield non-rigorous computer-assisted proofs of the impossibility of broadcasting on NAND 2D regular grids for individual values of $\delta \in \big(0,\frac{1}{2}\big)$. Hence, Table \ref{Table: LP Solutions} provides strong evidence that broadcasting is indeed impossible on the NAND 2D regular grid for all $\delta \in \big(0,\frac{1}{2}\big)$ in the sense of \eqref{Eq: Deterministic Impossibility of Reconstruction}. We elaborate on possible approaches to rigorize our LP-based argument at the end of subsection \ref{Linear Programming Criteria}.

Finally, we emphasize that by fully establishing the AND and XOR cases in Theorems \ref{Thm: Deterministic And Grid} and \ref{Thm: Deterministic Xor Grid}, and partially establishing the NAND case in Theorem \ref{Thm: Sufficient Condition for NAND 2D Regular Grid} and our simulations, we have made substantial progress towards proving the 2D aspect our conjecture in subsection \ref{Motivation} that broadcasting is impossible for 2D regular grids for all choices of common Boolean processing functions. Indeed, there are $16$ possible $2$-input logic gates that can serve as our processing function. It turns out that to prove our conjecture, we only need to establish the impossibility of broadcasting for four nontrivial cases out of the $16$ gates. To elaborate further, notice that the two constant Boolean processing functions that always output $0$ or $1$ engender 2D regular grids where only the vertices at the boundary carry any useful information about the source bit. However, since the boundaries of the 2D regular grid are ergodic binary-state Markov chains (since $\delta \in \big(0,\frac{1}{2}\big)$), broadcasting is clearly impossible for such constant processing functions. The four $2$-input Boolean processing functions that are actually $1$-input functions, namely, the identity maps for the first or second input and the inverters (or NOT gates) for the first or second input, beget 2D regular grids that are actually trees. Moreover, these trees have branching number $1$, and hence, the results of \cite{Evansetal2000} (outlined in section \ref{Introduction}) imply that broadcasting is impossible for such $1$-input Boolean processing functions. The six remaining symmetric Boolean processing functions are AND, NAND, OR, NOR, XOR, and XNOR. Due to the symmetry of $0$'s and $1$'s in our model (see subsection \ref{Deterministic 2D Grid Model}), we need only prove the impossibility of broadcasting for the three logic gates AND, XOR, and NAND (which are equivalent to OR, XNOR, and NOR, respectively). This leaves four asymmetric $2$-input Boolean processing functions out of the original $16$. Once again, due to the symmetry of $0$'s and $1$'s in our model, we need only consider two of these logic gates: the first gate outputs $1$ when its two inputs are equal, and outputs its first input when its two inputs are different, and the second gate outputs $1$ when its two inputs are equal, and outputs its second input when its two inputs are different. However, due to the symmetry of the edge configuration in our 2D regular grid construction (see subsection \ref{Deterministic 2D Grid Model}), it suffices to analyze only the second gate. Since this second function is not commonly viewed as a logic gate, we write down its truth table for the readers' convenience:
\begin{equation}
\label{Eq: The IMP gate}
\begin{array}{|c|c|c|}
\hline
x_1 & x_2 & x_1 \, \Rightarrow \, x_2 \\
\hline
0 & 0 & 1 \\
0 & 1 & 1 \\
1 & 0 & 0 \\
1 & 1 & 1 \\
\hline
\end{array}
\end{equation}
and recognize it as the \emph{implication relation}, denoted as IMP. Therefore, to prove the 2D aspect of our conjecture that broadcasting on 2D regular grids is impossible, we only have to analyze four nontrivial Boolean processing functions:
\begin{equation}
1) \text{ AND} \, , \quad 2) \text{ XOR} \, , \quad 3) \text{ NAND} \, , \quad 4) \text{ IMP} \, .
\end{equation}
Clearly, Theorems \ref{Thm: Deterministic And Grid} and \ref{Thm: Deterministic Xor Grid} completely tackle the first two cases, and Theorem \ref{Thm: Sufficient Condition for NAND 2D Regular Grid}, the convincing computer simulations for its sufficient condition in Table \ref{Table: LP Solutions}, and the discussion in section \ref{Martingale Approach for NAND Processing Functions} partially address the third case. The fourth case could also be partially addressed using the approach used for the third case, but we do not delve into it in this work for brevity. 

\subsection{Related Results in the Literature}
\label{Related Results in the Literature}

In this subsection, we present and discuss two complementary impossibility results from the literature regarding broadcasting on general DAGs (not just 2D regular grids). To this end, consider any infinite DAG $\G$ with a single source vertex at level $k = 0$ and $L_k \in \N\backslash\!\{0\}$ vertices at level $k \in \N$ (where $L_0 = 1$), and assume that $\G$ is in topological ordering so that all its edges are directed from lower levels to higher levels. Suppose further that for any level $k \in \N\backslash\!\{0\}$, each vertex of $\G$ at level $k$ has indegree $d \in \N\backslash\!\{0\}$ and the $d$ incoming edges originate from vertices at level $k-1$.\footnote{This implies that $\G$ is actually a multigraph since we permit multiple edges to exist between two vertices in successive levels. As explained in \cite[Section I-C]{MakurMosselPolyanskiy2020}, we can perceive the Bayesian network defined on this multigraph as a true DAG by constructing auxiliary vertices.} Much like subsection \ref{Deterministic 2D Grid Model}, we define a Bayesian network on this infinite DAG by letting each vertex be a Bernoulli random variable: For $k \in \N$ and $j \in [L_k]$, let $X_{k,j} \in \{0,1\}$ be the random variable corresponding to the $j$th vertex at level $k$, and let $X_k = (X_{k,0},\dots,X_{k,L_k - 1})$. As before, the edges of $\G$ are independent $\mathsf{BSC}(\delta)$'s with common parameter $\delta \in \big(0,\frac{1}{2}\big)$, and the vertices of $\G$ combine their inputs using Boolean processing functions. We say that broadcasting is impossible on this DAG if and only if \eqref{Eq: Deterministic Impossibility of Reconstruction} holds.  

The first impossibility result, which we proved in \cite[Proposition 3]{MakurMosselPolyanskiy2020}, illustrates that if $L_k$ is sub-logarithmic for every sufficiently large $k \in \N$, then broadcasting is impossible.

\begin{proposition}[Slow Growth of Layers {\cite[Proposition 3]{MakurMosselPolyanskiy2020}}]
\label{Prop: Slow Growth of Layers}
For any $\delta \in \big(0,\frac{1}{2}\big)$ and any $d \in \N\backslash\!\{0\}$, if $L_k \leq \log(k)/\allowbreak (d \log(1/(2\delta)))$ for all sufficiently large $k \in \N$, then for all choices of Boolean processing functions (which may vary between vertices), broadcasting is impossible on $\G$ in the sense of \eqref{Eq: Deterministic Impossibility of Reconstruction}:
$$ \lim_{k \rightarrow \infty}{\left\|P_{X_k}^+ - P_{X_k}^-\right\|_{\mathsf{TV}}} = 0 $$
where $\log(\cdot)$ denotes the natural logarithm, and $P_{X_k}^+$ and $P_{X_k}^-$ denote the conditional distributions of $X_k$ given $X_{0,0} = 1$ and $X_{0,0} = 0$, respectively.
\end{proposition}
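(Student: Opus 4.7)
The plan is to exploit an \emph{erasure coupling} for each edge BSC and show that with probability tending to one some intermediate level has all of its incoming edges erased; past that level, the downstream values become a measurable function of noise alone and carry no information about $X_{0,0}$.

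First, I would rewrite each edge $\mathsf{BSC}(\delta)$ using two independent Bernoullis per edge $e$: an erasure indicator $E_e \sim \Ber(2\delta)$ and a fresh bit $U_e \sim \Ber\!\big(\tfrac{1}{2}\big)$. The channel outputs its input when $E_e = 0$ and outputs $U_e$ when $E_e = 1$; one checks that $\P(\text{output} \neq \text{input}) = 2\delta \cdot \tfrac12 = \delta$, so the marginal agrees with the original $\mathsf{BSC}(\delta)$. For each $i \in \{1,\ldots,k\}$, let $A_i$ be the event that every one of the $d L_i$ edges entering level $i$ is erased. Because the $\{A_i\}$ involve disjoint noise variables, they are mutually independent with $\P(A_i) = (2\delta)^{d L_i}$; set $B_k = \bigcup_{i=1}^k A_i$.

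Second, I would verify that on $A_i$ every input to every processing function at level $i$ is a fresh $\Ber\!\big(\tfrac12\big)$ bit, so $X_i, X_{i+1}, \ldots, X_k$ are determined by the noise variables alone, regardless of the Boolean processing functions used (crucially, this remains true even when the processing functions vary across vertices). Hence on $B_k$ the vector $X_k$ is a measurable function of the noise, and so is $\I_{B_k}\I\{X_k = x\}$. Since the noise is independent of $X_{0,0}$, this term cancels in $P^+_{X_k}(x) - P^-_{X_k}(x)$, leaving only the contribution from $B_k^c$, which yields
\begin{equation*}
\bigl\|P^+_{X_k} - P^-_{X_k}\bigr\|_{\mathsf{TV}} \leq \P(B_k^c) = \prod_{i=1}^k \!\left(1 - (2\delta)^{d L_i}\right)
\end{equation*}
by independence of the $A_i$.

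Finally, I would invoke the growth hypothesis. For all $i \geq i_0$ sufficiently large, $L_i \leq \log(i)/(d \log(1/(2\delta)))$ yields $(2\delta)^{d L_i} \geq 1/i$, so
\begin{equation*}
\prod_{i=i_0}^{k} \!\left(1 - (2\delta)^{d L_i}\right) \leq \prod_{i = i_0}^{k} \!\left(1 - \tfrac{1}{i}\right) = \frac{i_0 - 1}{k} \xrightarrow[k \to \infty]{} 0,
\end{equation*}
and the finitely many earlier factors contribute only a constant bounded by $1$. I expect the argument to be fairly short and the main obstacle minor: the only delicate point is the ``cancellation on $B_k$'' step, which hinges on carefully justifying that once an entire layer of edges is erased no choice of downstream Boolean processing functions can reintroduce any dependence on $X_{0,0}$, combined with the fact that the erasure representation is an honest coupling of the original BSCs.
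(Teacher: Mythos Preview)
The paper does not actually reproduce a proof of this proposition; it is quoted verbatim from \cite[Proposition 3]{MakurMosselPolyanskiy2020} and only discussed. That said, your argument is correct and is precisely the natural one: the erasure representation of $\mathsf{BSC}(\delta)$ you use is the same decomposition the authors invoke in \eqref{Eq: BSC Decomposition}, the events $A_i$ are independent because they sit on disjoint edge sets, the cancellation on $B_k$ is justified exactly as you say (on $B_k$ the indicator $\I_{B_k}\I\{X_k\in A\}$ is measurable with respect to the noise alone, which is independent of $X_{0,0}$), and the telescoping product $\prod_{i=i_0}^k(1-1/i)=(i_0-1)/k$ finishes it. There is nothing to fix; this is almost certainly the argument the cited paper gives as well.
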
 

For the 2D regular grid models that we consider, the underlying DAGs have $L_k = k+1$. Hence, the impossibility of broadcasting in our models is not trivial, because this result does not apply to them. We further remark that an analogous impossibility result to Proposition \ref{Prop: Slow Growth of Layers} for DAGs without the bounded indegree assumption is also proved in \cite[Proposition 4]{MakurMosselPolyanskiy2020}: When each vertex at level $k \in \N\backslash\!\{0\}$ of our DAG is connected to all $L_{k-1}$ vertices at level $k-1$, for any $\delta \in \big(0,\frac{1}{2}\big)$, if $L_k \leq \sqrt{\log(k)/\!\log(1/(2\delta))}$ for all sufficiently large $k \in \N$, then broadcasting is impossible in the sense of \eqref{Eq: Deterministic Impossibility of Reconstruction} for all choices of Boolean processing functions (which may vary between vertices).

The second impossibility result, which specializes a more general result proved by Evans and Schulman in the context of von Neumann's model of fault-tolerant computation using noisy circuits \cite[Lemma 2 and p.2373]{EvansSchulman1999}, portrays an upper bound on the mutual information between $X_0$ and $X_k$ for every $k \in \N$ which decays exponentially under appropriate conditions (also see \cite[Proposition 5]{MakurMosselPolyanskiy2020}). 

\begin{proposition}[Decay of Mutual Information {\cite[Lemma 2]{EvansSchulman1999}}]
\label{Prop: Evans Schulman}
The following are true:
\begin{enumerate}
\item For the Bayesian network defined on $\G$, given any choices of Boolean processing functions (which may vary between vertices), the mutual information (in natural units) between $X_0$ and $X_k$, denoted $I(X_0;X_k)$, satisfies:
$$ I(X_0;X_k) \leq \log(2) L_k \left((1-2\delta)^2 d\right)^k $$
where $L_k d^k$ bounds the number of paths from the source $X_0$ to layer $X_k$, and $(1-2\delta)^{2k}$ represents the ``aggregate contraction'' of mutual information along each path.
\item For the Bayesian network defined on the 2D regular grid in subsection \ref{Deterministic 2D Grid Model}, given any choices of Boolean processing functions (which may vary between vertices), the mutual information between $X_0$ and $X_k$ satisfies:
$$ I(X_0;X_k) \leq \log(2) \left(2 (1-2\delta)^2 \right)^k $$
where $2^k$ denotes the number of paths from the source $X_0$ to layer $X_k$, and $(1-2\delta)^{2k}$ represents the ``aggregate contraction'' of mutual information along each path.
\end{enumerate}
\end{proposition}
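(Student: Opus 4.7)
My plan is to deduce both parts of the proposition from an Evans--Schulman-style path bound on mutual information together with an elementary count of directed source-to-level-$k$ paths in each model.

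The core ingredient is the strong data processing inequality for $\mathsf{BSC}(\delta)$: for any Markov chain $U \to X \to Y$ whose final edge is $\mathsf{BSC}(\delta)$, one has $I(U;Y) \leq (1-2\delta)^2\, I(U;X)$, i.e.\ $\etaKL(\mathsf{BSC}(\delta)) = (1-2\delta)^2$. Applying this contraction recursively along the layered structure of $\G$, as Evans and Schulman do in \cite[Lemma 2]{EvansSchulman1999}, yields the path-wise estimate
\begin{equation*}
I(X_0;X_k) \;\leq\; \log(2) \sum_{P} (1-2\delta)^{2|P|},
\end{equation*}
where the sum ranges over directed paths $P$ in the underlying DAG from the source vertex $X_0$ to any vertex at level $k$, and $|P|$ denotes the number of edges in $P$. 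The prefactor $\log(2)$ arises from $H(X_0) = \log(2)$ since $X_0 \sim \Ber(1/2)$.

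Given this path bound, part 1 follows from a direct count. Because $\G$ is in topological order with edges advancing exactly one layer at a time, every path from $X_0$ to level $k$ has length exactly $k$ and thus contributes $(1-2\delta)^{2k}$. Walking backward through the indegree-$d$ parents from any fixed vertex at level $k$ produces at most $d^k$ paths terminating at that vertex, and there are $L_k$ such terminating vertices, so the total number of source-to-level-$k$ paths is at most $L_k d^k$. Substituting yields the claimed $\log(2) L_k ((1-2\delta)^2 d)^k$. Part 2 is the specialization to the 2D regular grid of subsection \ref{Deterministic 2D Grid Model}: each forward step from $X_{i,j}$ is a binary choice between the two outgoing edges $(X_{i,j},X_{i+1,j})$ and $(X_{i,j},X_{i+1,j+1})$, so a directed path from $X_{0,0}$ to level $k$ is encoded by a binary string of length $k$, giving exactly $2^k$ such paths, each of length $k$. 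The path bound then produces the claimed $\log(2)(2(1-2\delta)^2)^k$. I note that applying part 1 naively with $d = 2$ and $L_k = k+1$ would only yield the weaker estimate $\log(2)(k+1)(2(1-2\delta)^2)^k$; the grid-specific count eliminates the $L_k$ factor because the binary choice sequence simultaneously determines both the path and its endpoint.

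The main obstacle is establishing the Evans--Schulman path bound itself. A naive vertex-by-vertex induction attempting $I(X_0;X_v) \leq (1-2\delta)^2 \sum_{u \in \pa(v)} I(X_0;X_u)$ breaks down, because the parents of $v$ are generally not conditionally independent given $X_0$ (they share ancestors through the DAG), so the subadditivity $I(X_0;X_{\pa(v)}) \leq \sum_u I(X_0;X_u)$ need not hold. Evans and Schulman circumvent this by aggregating contributions at the level of individual paths rather than vertices, and I would invoke their Lemma 2 directly rather than reproduce that argument here.
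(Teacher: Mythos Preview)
Your proposal is correct and matches the intended derivation: the paper itself does not give a proof of this proposition but simply cites \cite[Lemma~2]{EvansSchulman1999} and refers to \cite[Section~II-C]{MakurMosselPolyanskiy2020} for further discussion, so your sketch---invoking the Evans--Schulman path bound with the $\mathsf{BSC}(\delta)$ contraction coefficient $(1-2\delta)^2$ and then counting source-to-level-$k$ paths in each model---is exactly the argument behind the cited result. Your observation that the grid-specific count of $2^k$ paths improves on the generic $L_k d^k = (k+1)2^k$ bound from part~1 is also correct and explains why part~2 is stated separately rather than as a corollary of part~1.
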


Proposition \ref{Prop: Evans Schulman} is discussed in far greater detail in \cite[Section II-C]{MakurMosselPolyanskiy2020}, where several related references are also provided.\footnote{We also refer interested readers to \cite{Makur2019}, \cite{MakurZheng2020}, \cite{PolyanskiyWu2017}, and the references therein for more on the related theory of strong data processing inequalities.} Note that for the Bayesian network on $\G$, part 1 of Proposition \ref{Prop: Evans Schulman} shows that if $(1 - 2\delta)^2 d < 1$ (or equivalently, $\delta > \frac{1}{2} - \frac{1}{2\sqrt{d}}$) and $L_k = o\big(1/((1-2\delta)^2 d)^k\big)$, then for all choices of Boolean processing functions, we have:
\begin{equation}
\label{Eq: MI Vanishes}
\lim_{k \rightarrow \infty}{I(X_0;X_k)} = 0 
\end{equation}
which implies that broadcasting is impossible in the sense of \eqref{Eq: Deterministic Impossibility of Reconstruction} (due to Pinsker's inequality). Likewise, for the Bayesian network on the 2D regular grid, part 2 of Proposition \ref{Prop: Evans Schulman} shows that if the ``Evans-Schulman condition'' holds:
\begin{equation}
\label{Eq: ES Threshold}
2 (1-2\delta)^2 < 1 \quad \Leftrightarrow \quad \delta > \frac{1}{2} - \frac{1}{2\sqrt{2}} = 0.146446\dots \, ,
\end{equation}
then for all choices of Boolean processing functions, we get \eqref{Eq: MI Vanishes} and broadcasting is impossible in the sense of \eqref{Eq: Deterministic Impossibility of Reconstruction}.

\subsection{3D Regular Grid Model and Toom's NEC Rule}
\label{3D Regular Grid Model}

In view of our broader conjecture in subsection \ref{Motivation} that broadcasting should be possible in 3D regular grids, in this subsection, we elucidate a connection between a 3D regular grid with all majority processing functions and a 2D PCA with noise on the edges and boundary conditions that uses Toom's NEC rule \cite{Toom1980}. We first define the 3D regular grid model (akin to subsection \ref{Deterministic 2D Grid Model}). A 3D regular grid is an infinite DAG whose vertex set $\N^3$ is the intersection of the 3D integer lattice and the 3D non-negative orthant, and corresponding to each vertex $v \in \N^3$, we associate a Bernoulli random variable $X_v \in \{0,1\}$. Furthermore, a 3D regular grid contains the directed edges $(X_{v},X_{v + e_1})$, $(X_{v},X_{v + e_2})$, and $(X_{v},X_{v + e_3})$ for every $v \in \N^3$, which are independent BSCs with crossover probability $\delta \in \big(0,\frac{1}{2}\big)$, where $e_i$ denotes the $i$th standard basis vector of appropriate dimension (which has $1$ in the $i$th position and $0$ elsewhere). In this subsection, we set all the Boolean processing functions at the vertices to be the \emph{majority} rule. This implies that:
\begin{enumerate}
\item For any vertex $v = (v_1,v_2,v_3) \in \N^3\backslash\!\{\0\}$ on an axis of $\R^3$, i.e. there exists a unique $i \in \{1,2,3\}$ such that $v_i > 0$, we have: 
\begin{equation}
\label{Eq: Maj Grid 1}
X_v = X_{v-e_i} \oplus Z_{v,i} 
\end{equation}
where $\0$ denotes the zero vector of appropriate dimension,
\item For any vertex $v = (v_1,v_2,v_3) \in \N^3\backslash\! \{\0\}$ in a plane spanned by two axes of $\R^3$, i.e. there exist distinct $i,j \in \{1,2,3\}$ such that $v_i,v_j > 0$ and $v_k = 0$ for $k \in \{1,2,3\}\backslash\{i,j\}$, we have: 
\begin{equation}
\label{Eq: Maj Grid 2}
X_v =
\begin{cases}
X_{v-e_i} \oplus Z_{v,i} \, , & \text{with probability } \frac{1}{2} \\
X_{v-e_j} \oplus Z_{v,j} \, , & \text{with probability } \frac{1}{2} \\
\end{cases} 
\end{equation}
where one of the two noisy inputs of $X_v$ is chosen randomly and independently of everything else, 
\item For any vertex $v = (v_1,v_2,v_3) \in \N^3$ in the interior of the 3D non-negative orthant, i.e. $v_1,v_2,v_3 > 0$, we have: 
\begin{equation}
\label{Eq: Maj Grid 3}
\begin{aligned}
X_v & = \maj(X_{v-e_1} \oplus Z_{v,1}, X_{v-e_2} \oplus Z_{v,2},\\
& \qquad \qquad \quad \, X_{v-e_3} \oplus Z_{v,3}) ,
\end{aligned}
\end{equation}
\end{enumerate}
where $\{Z_{v,i} : v \in \N^3\backslash\!\{\0\}, \, i \in \{1,2,3\}\}$ are i.i.d. $\Ber(\delta)$ noise random variables that are independent of everything else, and $\maj:\{0,1\}^3 \rightarrow \{0,1\}$ is the $3$-input majority Boolean function. This defines the Bayesian network corresponding to the \textit{majority 3D regular grid}. In particular, for any $k \in \N$, let the discretized $2$-simplex $\S_k \triangleq \{v = (v_1,v_2,v_3) \in \N^3 : v_1 + v_2 + v_3 = k \}$ denote the $k$th layer of vertices in the 3D regular grid at distance $k$ from the source (so that $\N^3 = \cup_{k \in \N}{\, \S_k}$), and $X_{\S_k} \triangleq (X_v : v \in \S_k)$ denote the corresponding collection of random variables. Then, the majority 3D regular grid is completely characterized by the set of Markov transition kernels $\big\{P_{X_{\S_k}|X_{\0}} : k \in \N\big\}$ (which are defined by \eqref{Eq: Maj Grid 1}, \eqref{Eq: Maj Grid 2}, and \eqref{Eq: Maj Grid 3}). (While the definitions of these Markov kernels suffice for our purposes here, if we were to fully analyze the broadcasting question for majority 3D regular grids, then we would also assume that the source bit is $X_{\0} \sim \Ber\big(\frac{1}{2}\big)$.)

We next recall a version of Toom's 2D PCA which has noise on the edges rather than the vertices, cf. \cite{Toom1980}. Consider the 2D integer lattice $\Z^2$ of sites, the binary state space $\{0,1\}$, and the \textit{configuration space} $\{0,1\}^{\Z^2}$ of functions $\xi : \Z^2 \rightarrow \{0,1\}$ (which map sites to their bit values). Moreover, fix the PCA's transition function to be $\maj$, and let its \textit{interaction neighborhood} be $\Nbhd = \{-e_1,\0,-e_2\} \subseteq \Z^2$, which together define Toom's NEC rule \cite{Toom1980}.\footnote{In this case, the transition function and interaction neighborhood really yield a South-West-Center (SWC) rule rather than Toom's NEC rule, but we will still use the historically inspired nomenclature.} With these building blocks in place, we now describe the dynamics of Toom's 2D PCA with edge noise. Suppose at time $k \in \N$, the automaton has configuration $\xi_k \in \{0,1\}^{\Z^2}$. Then, its configuration $\xi_{k+1} \in \{0,1\}^{\Z^2}$ at time $k + 1$ is determined as follows: Each site $x \in \Z^2$ simultaneously receives three noisy input bits $(\xi_k(x + v) \oplus Z_{k,x,v} : v \in \Nbhd)$ from its interaction neighborhood, and then computes its value at time $k+1$ using Toom's NEC rule, i.e. $\xi_{k+1}(x) = \maj(\xi_k(x + v) \oplus Z_{k,x,v} : v \in \Nbhd)$, where $\{Z_{k,x,v} : k \in \N, \, x \in \Z^2, \, v \in \Nbhd\}$ are i.i.d. $\Ber(\delta)$ noise random variables that are independent of everything else. This defines the transition kernel of the Markov process corresponding to Toom's 2D PCA with edge noise. (Note that if we seek to analyze the ergodicity of this PCA, then we would have to additionally fix different initial configurations $\xi_0 \in \{0,1\}^{\Z^2}$ at time $k = 0$.) 

By modifying Toom's 2D PCA with edge noise outlined above, we now introduce \textit{Toom's 2D PCA with boundary conditions}. For any $k \in \N$, consider the projected discretized $2$-simplices $\hat{\S}_k \triangleq \{x = (x_1,x_2) \in \N^2 : x_1 + x_2 \leq k\}$. Using the notation introduced above, define the jointly distributed binary random variables $\{\xi_{k}(x) : x \in \hat{\S}_k, \, k \in \N\}$, and let $\xi_k \triangleq (\xi_k(x) : x \in \hat{\S}_k)$ be the values of the bits of this automaton at time $k \in \N$. The dynamics of Toom's 2D PCA with boundary conditions follows the dynamics of Toom's 2D PCA explained above (mutatis mutandis). Specifically, conditioned on $\xi_k$ at time $k \in \N$, the probability distribution of $\xi_{k+1}$ at time $k+1$ is characterized by:
\begin{enumerate}
\item For the site $\0 \in \hat{\S}_{k+1}$, we have: 
\begin{equation}
\label{Eq: Toom PCA 1}
\xi_{k+1}(\0) = \xi_{k}(\0) \oplus Z_{k,\0,\0} \, ,
\end{equation}
and for the sites $(k+1) e_i \in \hat{\S}_{k+1}$ with $i \in \{1,2\}$, we have:
\begin{equation}
\label{Eq: Toom PCA 1.5}
\xi_{k+1}((k+1) e_i) = \xi_{k}(k e_i) \oplus Z_{k, (k+1) e_i,-e_i} \, ,
\end{equation}
\item For any site $x = (x_1,x_2) \in \hat{\S}_{k+1}\backslash\{\0,(k+1)e_1,(k+1)e_2\}$ such that $x_i > 0$ and $x_j = 0$ for $i,j \in \{1,2\}$, we have: 
\begin{equation}
\label{Eq: Toom PCA 2}
\xi_{k+1}(x) =
\begin{cases}
\xi_{k}(x) \oplus Z_{k,x,\0} , & \!\!\! \text{with probability } \frac{1}{2} \\
\xi_{k}(x - e_i) \oplus Z_{k,x,-e_i} , & \!\!\! \text{with probability } \frac{1}{2} \\
\end{cases} 
\end{equation}
where one of the two noisy inputs of $\xi_{k+1}(x)$ is chosen randomly and independently of everything else, and for any site $x = (x_1,x_2) \in \hat{\S}_{k+1}\backslash\{(k+1)e_1,(k+1)e_2\}$ such that $x_1 + x_2 = k+1$, we have:
\begin{equation}
\label{Eq: Toom PCA 2.5}
\xi_{k+1}(x) =
\begin{cases}
\xi_{k}(x - e_1) \oplus Z_{k,x,-e_1} , & \!\!\!\! \text{with probability } \frac{1}{2} \\
\xi_{k}(x - e_2) \oplus Z_{k,x,-e_2} , & \!\!\!\! \text{with probability } \frac{1}{2} \\
\end{cases} 
\end{equation}
where, once again, one of the two noisy inputs of $\xi_{k+1}(x)$ is chosen randomly and independently of everything else,
\item For any other site $x = (x_1,x_2) \in \hat{\S}_{k+1}$ such that $x_1,x_2 > 0$, we have: 
\begin{equation}
\label{Eq: Toom PCA 3}
\begin{aligned}
\xi_{k+1}(x) & = \maj(\xi_{k}(x - e_1) \oplus Z_{k,x,-e_1}, \\
& \enspace \xi_{k}(x) \oplus Z_{k,x,\0}, \xi_{k}(x - e_2) \oplus Z_{k,x,-e_2}) ,
\end{aligned}
\end{equation}
\end{enumerate}
where, as before, $\{Z_{k,x,v} : k \in \N, \, x \in \hat{\S}_{k+1}, \, v \in \Nbhd\}$ are i.i.d. $\Ber(\delta)$ noise random variables that are independent of everything else. This completely specifies the transition kernels $\big\{P_{\xi_k|\xi_0(\0)} : k \in \N\big\}$ of the Markov process corresponding to Toom's 2D PCA with boundary conditions (which are defined by \eqref{Eq: Toom PCA 1}, \eqref{Eq: Toom PCA 1.5}, \eqref{Eq: Toom PCA 2}, \eqref{Eq: Toom PCA 2.5}, and \eqref{Eq: Toom PCA 3}). (To define a valid joint probability distribution of $\{\xi_{k}(x) : x \in \hat{\S}_k, \, k \in \N\}$, we can additionally impose the initial condition $\xi_0(\0) \sim \Ber\big(\frac{1}{2}\big)$ akin to the uniform source distribution for the majority 3D regular grid.)

It turns out that the majority 3D regular grid and Toom's 2D PCA with boundary conditions are statistically equivalent, i.e. their BSC noise random variables can be coupled so that the Markov processes $\{X_{\S_k} : k \in \N\}$ and $\{\xi_k : k \in \N\}$ are equal almost surely. This equivalence is presented in the ensuing proposition. 

\begin{proposition}[Majority 3D Regular Grid Equivalence]
\label{Prop: Majority 3D Regular Grid}
If $X_{\0} = \xi_0(\0)$ almost surely with any initial source distribution, then we can couple the majority 3D regular grid and Toom's 2D PCA with boundary conditions so that almost surely:
$$ \forall v = (v_1,v_2,v_3) \in \N^3, \enspace X_v = \xi_{v_1 + v_2 + v_3}((v_1,v_2)) \, , $$
or equivalently:
$$ \forall k \in \N, \, \forall x = (x_1,x_2) \in \hat{\S}_k, \enspace \xi_{k}(x) = X_{(x_1,x_2,k - x_1 - x_2)} \, . $$
\end{proposition}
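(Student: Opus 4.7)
The plan is to construct an explicit coupling of the edge-noise random variables used by the two processes and then verify, by induction on the layer index $k=v_1+v_2+v_3$, that the recursions for the majority 3D grid translate term-by-term into the recursions of Toom's 2D PCA with boundary conditions under the bijection
\[ \Phi : \N^3 \to \bigcup_{k\in\N}{\{k\}\times\hat{\S}_k} \, , \quad \Phi(v_1,v_2,v_3) = \bigl(v_1+v_2+v_3, (v_1,v_2)\bigr) \, . \]
Under $\Phi$, moving from $v$ to $v-e_1$ or $v-e_2$ decrements time and shifts the site by $-e_1$ or $-e_2$, while moving from $v$ to $v-e_3$ decrements time but keeps the site fixed. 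Thus the three incoming edges of an interior 3D vertex correspond exactly to the three neighbors in Toom's SWC-style interaction neighborhood $\Nbhd=\{-e_1,\0,-e_2\}$. The distinguished axis vertices of $\N^3$ map to the ``corner'' sites $k e_1$, $k e_2$, and to the origin $\0$ of the PCA (tracked in time); the three coordinate planes (with one coordinate zero) map to the hypotenuse and to the two axis-edges of $\hat{\S}_k$, which are precisely the three $2$-parent boundary regimes of the PCA.

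Given $\Phi$, I would couple the noise families as follows. For each $v\in\N^3\setminus\{\0\}$ and each $i\in\{1,2,3\}$ that indexes an incoming edge of $v$ in the grid, set
\[ Z_{v,1} = Z_{k-1,\,x,\,-e_1}\, , \quad Z_{v,2} = Z_{k-1,\,x,\,-e_2}\, , \quad Z_{v,3} = Z_{k-1,\,x,\,\0}\, , \]
where $(k,x)=\Phi(v)$. Both sides are i.i.d.\ $\Ber(\delta)$ families indexed by the same countable set, so this is a legitimate coupling. At face vertices (where exactly one coordinate is zero) the 3D grid also uses an independent fair coin to choose one of two noisy inputs; I would couple this coin to the corresponding fair coin used by the PCA at the boundary site $x=\Phi(v)$, which selects between the two available neighbors in $\Nbhd$ in exactly the same way.

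With the coupling in place, I would prove $X_v = \xi_{v_1+v_2+v_3}((v_1,v_2))$ almost surely by induction on $k=v_1+v_2+v_3$. The base case $k=0$ is immediate from the hypothesis $X_{\0}=\xi_0(\0)$. For the inductive step, fix $v$ with $v_1+v_2+v_3=k+1$ and split into cases by counting how many of the $v_i$ are zero: (a) $v$ is on an axis, so exactly one $v_i>0$; (b) $v$ lies in a coordinate plane but not on an axis; (c) $v$ is strictly interior. In each case the recursion \eqref{Eq: Maj Grid 1}, \eqref{Eq: Maj Grid 2}, or \eqref{Eq: Maj Grid 3} for $X_v$ matches, after substituting the induction hypothesis for its parents and the coupled noises for its $Z_{v,i}$, exactly one of the PCA rules \eqref{Eq: Toom PCA 1}--\eqref{Eq: Toom PCA 3} applied at $\Phi(v)$. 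The sub-case of the third-axis vertices $v=(0,0,v_3)$ is the key reason for using the neighbor $\0\in\Nbhd$ in the PCA: it matches the grid edge along $e_3$, which does not shift $(v_1,v_2)$.

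The main obstacle is not technical depth but rather bookkeeping: one must enumerate all seven boundary strata of $\hat{\S}_k$ (origin site, two corner sites $k e_1$ and $k e_2$, three open edges, and interior) and check that $\Phi$ induces a bijection between each stratum and the analogous stratum of $\N^3$ (third axis, two other axes, three coordinate planes, and open octant) such that the associated recursions agree under the coupling. Once this case analysis is discharged, the second equivalent formulation $\xi_k(x)=X_{(x_1,x_2,k-x_1-x_2)}$ is just a rewriting of the first via $\Phi^{-1}$.
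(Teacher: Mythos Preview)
Your proposal is correct and follows essentially the same approach as the paper: an explicit coupling of the edge-noise variables via the projection $\Phi(v)=(v_1+v_2+v_3,(v_1,v_2))$, followed by induction on $k$ with a case analysis over the boundary strata of $\hat{\S}_{k+1}$. Your treatment is in fact slightly more careful in that you explicitly note the need to couple the auxiliary fair coins at the two-parent faces, a point the paper leaves implicit when matching the ``with probability $\tfrac12$'' branches.
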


Proposition \ref{Prop: Majority 3D Regular Grid} is proved in appendix \ref{Proof of Proposition Majority 3D Regular Grid} via a \emph{projection} argument. The result implies that broadcasting is impossible in the majority 3D regular grid if and only if reconstruction of $\xi_0(\0)$ is impossible in Toom's 2D PCA with boundary conditions. Since the standard 2D PCA (with noise on the vertices) that uses Toom's NEC rule is non-ergodic \cite{Toom1980}, we believe that reconstruction should be possible in our variant of Toom's 2D PCA with boundary conditions. Toom's proof of non-ergodicity in \cite{Toom1980} is quite sophisticated, and a simpler combinatorial version of it has been proposed in \cite{Gacs1995}. We feel that it is an interesting open problem to establish the feasibility of broadcasting in the 3D regular grid with majority processing functions by possibly modifying the simple version of Toom's proof in \cite{Gacs1995}.

\section{Percolation Analysis of 2D Regular Grid with AND Processing Functions}
\label{Analysis of Deterministic And Grid}

In this section, we prove Theorem \ref{Thm: Deterministic And Grid}. Recall that we are given a 2D regular grid where all Boolean processing functions with two inputs are the AND rule, and all Boolean processing functions with one input are the identity rule, i.e. $f_{2}(x_1,x_2) = x_1 \wedge x_2$ and $f_{1}(x) = x$, where $\wedge$ denotes the logical AND operation. 

As in our proof of \cite[Theorem 1]{MakurMosselPolyanskiy2020}, we begin by constructing a useful ``monotone Markovian coupling'' (see \cite[Chapter 5]{LevinPeresWilmer2009} for basic definitions of Markovian couplings). Let $\{X^+_k : k \in \N\}$ and $\{X^-_k : k \in \N\}$ denote versions of the Markov chain $\{X_k : k \in \N\}$ (i.e. with the same transition kernels) initialized at $X^+_0 = 1$ and $X^-_0 = 0$, respectively. Note that the marginal distributions of $X_k^+$ and $X_k^-$ are $P^+_{X_k}$ and $P^-_{X_k}$, respectively. Furthermore, define the coupled 2D grid variables $\{Y_{k,j} = (X_{k,j}^-,X_{k,j}^+) : k \in \N, \, j \in [k+1]\}$, so that our Markovian coupling of the Markov chains $\{X^+_k : k \in \N\}$ and $\{X^-_k : k \in \N\}$ is the Markov chain $\{Y_k = (Y_{k,0},\dots,Y_{k,k}) : k \in \N\}$. We will couple $\{X^+_k : k \in \N\}$ and $\{X^-_k : k \in \N\}$ to ``run'' on a common underlying 2D regular grid with shared edge BSCs. 

Recall that each edge $\mathsf{BSC}(\delta)$ either copies its input bit with probability $1 - 2\delta$, or generates an independent $\Ber\big(\frac{1}{2}\big)$ output bit with probability $2\delta$.\footnote{This idea stems from the study of Fortuin-Kasteleyn random cluster representations of Ising models \cite{Grimmett1997}.} This follows from appropriately interpreting the following decomposition of the BSC transition matrix:
\begin{equation}
\label{Eq: BSC Decomposition}
\def\arraystretch{1.1} 
\underbrace{\left[\begin{array}{cc} 1-\delta & \delta \\ \delta & 1-\delta \end{array}\right]}_{\text{BSC matrix}} = (1-2\delta)\underbrace{\left[\begin{array}{cc} 1 & 0 \\ 0 & 1 \end{array}\right]}_{\text{copy matrix}} + (2\delta) \underbrace{\left[\begin{array}{cc} \frac{1}{2} & \frac{1}{2} \\ \frac{1}{2} & \frac{1}{2} \end{array}\right]}_{\text{random bit}} .
\def\arraystretch{1} 
\end{equation}
Since the underlying 2D regular grid is fixed, we couple $\{X^+_k : k \in \N\}$ and $\{X^-_k : k \in \N\}$ so that along any edge BSC of the grid, say $(X_{k,j},X_{k+1,j})$, $X_{k,j}^+$ and $X_{k,j}^-$ are either both copied with probability $1 - 2\delta$, or a shared independent $\Ber\big(\frac{1}{2}\big)$ bit is produced with probability $2\delta$ that is used by both $X_{k+1,j}^+$ and $X_{k+1,j}^-$. The Markovian coupling $\{Y_k : k \in \N\}$ exhibits the following properties:
\begin{enumerate}
\item The ``marginal'' Markov chains are $\{X^+_k : k \in \N\}$ and $\{X^-_k : k \in \N\}$.
\item For every $k \in \N$, $X_{k+1}^+$ is conditionally independent of $X_{k}^-$ given $X_k^{+}$, and $X_{k+1}^-$ is conditionally independent of $X_{k}^+$ given $X_k^{-}$. 
\item For every $k \in \N$ and every $j \in [k+1]$, $X_{k,j}^+ \geq X_{k,j}^-$ almost surely.
\end{enumerate} 
Here, the third (monotonicity) property of our coupling holds because $1 = X_{0,0}^+ \geq X_{0,0}^- = 0$ is true by assumption, each edge BSC preserves monotonicity, and AND processing functions are symmetric and monotone non-decreasing. In this section, probabilities of events that depend on the coupled 2D grid variables $\{Y_{k,j} : k \in \N, \, j \in [k+1]\}$ are defined with respect to this Markovian coupling. 

Since the marginal Markov chains $\{X^+_k : k \in \N\}$ and $\{X^-_k : k \in \N\}$ run on the same 2D regular grid with common BSCs, we keep track of the Markov chain $\{Y_k : k \in \N\}$ in a single coupled 2D regular grid. This 2D regular grid has the same underlying graph as the 2D regular grid described in subsection \ref{Deterministic 2D Grid Model}. Its vertices are the coupled 2D grid variables $\{Y_{k,j} = (X_{k,j}^-,X_{k,j}^+) : k \in \N, \, j \in [k+1]\}$, and we relabel the alphabet of these variables for simplicity. So, each $Y_{k,j}  = (X_{k,j}^-,X_{k,j}^+) \in \Y$ with:
\begin{equation}
\Y \triangleq \left\{0_{\mathsf{c}},1_{\mathsf{u}},1_{\mathsf{c}}\right\}
\end{equation}
where $0_{\mathsf{c}} = (0,0)$, $1_{\mathsf{u}} = (0,1)$, and $1_{\mathsf{c}} = (1,1)$. (Note that we do not require a letter $0_{\mathsf{u}} = (1,0)$ in this alphabet due to the monotonicity in the coupling.) Furthermore, each edge of the coupled 2D regular grid is a channel (conditional distribution) $W$ between the alphabets $\Y$ and $\Y$ that captures the action of a shared $\mathsf{BSC}(\delta)$\textemdash we describe $W$ using the following row stochastic matrix:
\begin{equation}
\label{Eq: Coupled BSC} 
W = \bbordermatrix{
		& 0_{\mathsf{c}} & 1_{\mathsf{u}} & 1_{\mathsf{c}} \cr
      0_{\mathsf{c}} & 1-\delta & 0 & \delta \cr
      1_{\mathsf{u}} & \delta & 1-2\delta & \delta \cr
      1_{\mathsf{c}} & \delta & 0 & 1-\delta \cr }
\end{equation}
where the $(i,j)$th entry gives the probability of output $j$ given input $i$. It is straightforward to verify that $W$ describes the aforementioned Markovian coupling. Finally, the AND rule can be equivalently described on the alphabet $\Y$ as:
\begin{equation}
\label{Eq: Coupled And} 
\begin{array}{|c|c|c|}
\hline
y_1 & y_2 & y_1 \wedge y_2 \\
\hline
0_{\mathsf{c}} & \star & 0_{\mathsf{c}} \\
1_{\mathsf{u}} & 1_{\mathsf{u}} & 1_{\mathsf{u}} \\
1_{\mathsf{u}} & 1_{\mathsf{c}} & 1_{\mathsf{u}} \\
1_{\mathsf{c}} & 1_{\mathsf{c}} & 1_{\mathsf{c}} \\
\hline
\end{array}
\end{equation}
where $\star$ denotes any letter in $\Y$, and the symmetry of the AND rule covers all other possible input combinations. This coupled 2D regular grid model completely characterizes the Markov chain $\{Y_k : k \in \N\}$, which starts at $Y_0 = 1_{\mathsf{u}}$ almost surely. We next prove Theorem \ref{Thm: Deterministic And Grid} by further analyzing this model.  

\renewcommand{\proofname}{Proof of Theorem \ref{Thm: Deterministic And Grid}}

\begin{proof}
We first bound the TV distance between $P_{X_k}^+$ and $P_{X_k}^-$ using Dobrushin's maximal coupling characterization of TV distance, cf. \cite[Chapter 4.2]{LevinPeresWilmer2009}:
$$ \left\|P_{X_k}^+ - P_{X_k}^-\right\|_{\mathsf{TV}} \leq \P\!\left(X_k^+ \neq X_k^- \right) = 1 - \P\!\left(X_k^+ = X_k^- \right) . $$
The events $\{X_k^+ = X_k^-\}$ are non-decreasing in $k$, i.e. $\{X_k^+ = X_k^-\} \subseteq \{X_{k+1}^+ = X_{k+1}^-\}$ for all $k \in \N$. Indeed, suppose for any $k \in \N$, the event $\{X_k^+ = X_k^-\}$ occurs. Since we have:
\begin{align*}
\left\{X_k^+ = X_k^-\right\} & = \left\{Y_k \in \{0_{\mathsf{c}},1_{\mathsf{c}}\}^{k+1}\right\} \\
& = \left\{\parbox[]{14em}{there are no $1_{\mathsf{u}}$'s in level $k$ of the coupled 2D regular grid}\right\} ,
\end{align*}
the channel \eqref{Eq: Coupled BSC} and the rule \eqref{Eq: Coupled And} imply that there are no $1_{\mathsf{u}}$'s in level $k+1$. Hence, the event $\{X_{k+1}^+ = X_{k+1}^-\}$ occurs as well. Letting $k \rightarrow \infty$, we can use the continuity of $\P$ with the events $\{X_k^+ = X_k^-\}$ to get:
\begin{align*}
\lim_{k \rightarrow\infty}{\left\|P_{X_k}^+ - P_{X_k}^-\right\|_{\mathsf{TV}}} & \leq 1 - \lim_{k \rightarrow \infty}{\P\!\left(X_k^+ = X_k^- \right)} \\
& = 1 - \P\!\left(A\right) 
\end{align*}
where we define:
$$ A \triangleq \left\{\parbox[]{20em}{$\exists k \in \N$, there are no $1_{\mathsf{u}}$'s in level $k$ of the coupled 2D regular grid}\right\} . $$
Therefore, it suffices to prove that $\P(A) = 1$.

To prove this, we recall a well-known result from \cite[Section 3]{Durrett1984} on \textit{oriented bond percolation} in 2D lattices. Given the underlying DAG of our 2D regular grid from subsection \ref{Deterministic 2D Grid Model}, suppose we independently keep each edge ``open'' with some probability $p \in [0,1]$, and delete it (``closed'') with probability $1-p$. Define the event:
$$ \Omega_{\infty} \triangleq \left\{\text{there is an infinite open path starting at the root}\right\} $$
and the quantities:
\begin{align*}
R_k & \triangleq \sup\!\left\{j \in [k+1] : \parbox[]{11em}{there is an open path from the root to the vertex $(k,j)$}\right\} \\
L_k & \triangleq \inf\!\left\{j \in [k+1] : \parbox[]{11em}{there is an open path from the root to the vertex $(k,j)$}\right\}
\end{align*}
which are the rightmost and leftmost vertices at level $k \in \N$, respectively, that are connected to the root. (Here, we refer to the vertex $X_{k,j}$ using $(k,j)$ as we do not associate a random variable to it.) It is proved in \cite[Section 3]{Durrett1984} that the occurrence of $\Omega_{\infty}$ experiences a phase transition phenomenon as the open probability parameter $p$ varies from $0$ to $1$.

\begin{lemma}[Oriented Bond Percolation {\cite[Section 3]{Durrett1984}}]
\label{Lemma: Oriented Bond Percolation}
For the aforementioned bond percolation process on the 2D regular grid, there exists a critical threshold $\delta_{\mathsf{perc}} \in \big(\frac{1}{2},1\big)$ around which we observe the following phase transition phenomenon:
\begin{enumerate}
\item If $p > \delta_{\mathsf{perc}}$, then $\P_p(\Omega_{\infty}) > 0$ and:
\begin{equation}
\label{Eq: Rightmost and Leftmost Open Paths}
\begin{aligned}
\P_p&\Bigg(\lim_{k \rightarrow \infty}{\frac{R_k}{k}} = \frac{1 + \alpha(p)}{2} \\
& \quad \enspace \text{and } \lim_{k \rightarrow \infty}{\frac{L_k}{k}} = \frac{1 - \alpha(p)}{2} \, \Bigg| \, \Omega_{\infty}\Bigg) = 1 
\end{aligned}
\end{equation}
for some constant $\alpha(p) > 0$, where $\alpha(p)$ is defined in \cite[Section 3, Equation (6)]{Durrett1984}, and $\P_p$ is the probability measure defined by the bond percolation process. 
\item If $p < \delta_{\mathsf{perc}}$, then $\P_p(\Omega_{\infty}) = 0$.
\end{enumerate}
\end{lemma}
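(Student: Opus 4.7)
The plan is to establish the three parts in sequence using the standard toolbox for oriented bond percolation on $\Z^2_{+}$. First, I would define $\delta_{\mathsf{perc}} \triangleq \inf\{p \in [0,1] : \P_p(\Omega_{\infty}) > 0\}$. The standard coupling — assign an independent $U_e \sim \mathrm{Unif}[0,1]$ to each edge $e$ of the grid, and declare $e$ to be open at parameter $p$ iff $U_e \leq p$ — makes $\Omega_{\infty}$ an increasing event in $p$, so $p \mapsto \P_p(\Omega_{\infty})$ is non-decreasing and the threshold is well-defined. Part (2) is then immediate from the definition of the infimum.

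Next, the two-sided bounds $\tfrac{1}{2} < \delta_{\mathsf{perc}} < 1$. For the lower bound, I would compare the open cluster of the root to a Galton--Watson branching process: since every vertex in our grid has outdegree $2$, the expected number of vertices at level $k$ connected to the root by an open oriented path is at most $(2p)^k$. For $p < \tfrac{1}{2}$ this decays geometrically, and Markov's inequality plus Borel--Cantelli forces $\P_p(\Omega_{\infty}) = 0$, with the boundary case $p=\tfrac{1}{2}$ handled by a standard second-moment refinement using the independence of disjoint paths. For the upper bound, I would run a Peierls-type contour argument on the dual oriented lattice: when $p$ is sufficiently close to $1$, the expected number of closed ``cuts'' separating the root from infinity at depth $k$ is summable, so with positive probability no such cut exists and the root percolates.

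For the linear-growth assertion \eqref{Eq: Rightmost and Leftmost Open Paths}, I would invoke Kingman's subadditive ergodic theorem. Define, for $m \leq n$, the random variable $R_{m,n}$ to be the rightmost offset at level $n$ reachable by an open path from level $m$ starting at the then-current rightmost reachable vertex (with suitable conventions when no such path exists). By the shift-invariance of the edge environment in the ``time'' direction, $R_{m,n}$ enjoys a superadditivity/stationarity structure of the form $R_{0,n} \geq R_{0,m} + R_{m,n}'$ for an independent copy $R_{m,n}' \stackrel{d}{=} R_{0,n-m}$. Conditional on $\Omega_{\infty}$, Kingman's theorem yields a deterministic constant $\alpha(p)$ such that $R_k/k \to (1+\alpha(p))/2$ almost surely, and the mirror symmetry $j \mapsto k-j$ of the grid delivers the analogous statement for $L_k/k \to (1-\alpha(p))/2$.

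The main obstacle is the \emph{strict positivity} $\alpha(p) > 0$ throughout $p > \delta_{\mathsf{perc}}$; the subadditive theorem alone only yields existence of a non-negative limit. Ruling out $\alpha(p) = 0$ requires a block/renormalization argument: one partitions space-time into large blocks, declares a block ``good'' when its local percolation behavior dominates a highly supercritical Bernoulli field, and compares the coarse-grained process to a supercritical oriented percolation with parameter arbitrarily close to $1$. The exponential decay of finite-cluster probabilities in the subcritical regime, applied within each block, makes the good blocks sufficiently dense, and the block process then propagates at a strictly positive linear speed that transfers back to $\{R_k\}$. This is precisely the content of Durrett's analysis in \cite[Section~3]{Durrett1984}, which I would cite at this step rather than redo in detail.
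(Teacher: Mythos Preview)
The paper does not prove this lemma at all: it is stated as a direct citation of \cite[Section 3]{Durrett1984} and used as a black box inside the proof of Theorem~\ref{Thm: Deterministic And Grid}. There is no argument in the paper to compare your proposal against.

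Your sketch is a reasonable outline of the standard toolkit behind Durrett's result (monotone coupling to define the threshold, first-moment/branching bound for $\delta_{\mathsf{perc}} \geq \tfrac{1}{2}$, contour argument for $\delta_{\mathsf{perc}} < 1$, subadditive ergodic theorem for the edge speed, and block renormalization for strict positivity of $\alpha(p)$). Two places deserve more care if you were to flesh this out. First, the \emph{strict} inequality $\delta_{\mathsf{perc}} > \tfrac{1}{2}$ is not obtained by a ``second-moment refinement''; the critical branching case requires a genuinely different argument (e.g., an improved path-counting bound exploiting that distinct open paths share edges with positive probability, or a direct estimate as in Durrett's survey). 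Second, the subadditivity structure you describe for $R_{m,n}$ is delicate because the rightmost process is defined on a grid with a boundary, not on the full lattice; the standard treatment works with the edge process on $\Z$ (no boundary) and then transfers the result to the wedge via a comparison, rather than setting up subadditivity directly in the wedge. Neither of these is a fatal gap given that you ultimately defer to \cite{Durrett1984}, but they are the steps where a self-contained write-up would need real work.
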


We will use Lemma \ref{Lemma: Oriented Bond Percolation} to prove $\P(A) = 1$ by considering two cases.

\textbf{Case 1:} Suppose $1 - 2\delta < \delta_{\mathsf{perc}}$ (i.e. $\delta > (1-\delta_{\mathsf{perc}})/2$) in our coupled 2D grid. The root of the coupled 2D regular grid is $Y_{0,0} = 1_{\mathsf{u}}$ almost surely, and we consider an oriented bond percolation process (as described above) with $p = 1 - 2\delta$. In particular, we say that each edge of the grid is open if and only if the corresponding BSC copies its input (with probability $1 - 2\delta$). In this context, $\Omega_{\infty}^c$ is the event that there exists $k \in \N$ such that none of the vertices at level $k$ are connected to the root via a sequence of BSCs that are copies. Suppose the event $\Omega_{\infty}^c$ occurs. Since \eqref{Eq: Coupled BSC} and \eqref{Eq: Coupled And} portray that a $1_{\mathsf{u}}$ moves from level $k$ to level $k+1$ only if one of its outgoing edges is open (and the corresponding BSC is a copy), there exists $k \in \N$ such that none of the vertices at level $k$ are $1_{\mathsf{u}}$'s. This proves that $\Omega_{\infty}^c \subseteq A$. Therefore, using part 2 of Lemma \ref{Lemma: Oriented Bond Percolation}, we get $\P(A) = 1$.

\textbf{Case 2:} Suppose $1 - \delta > \delta_{\mathsf{perc}}$ (i.e. $\delta < 1-\delta_{\mathsf{perc}}$) in our coupled 2D grid. Consider an oriented bond percolation process (as described earlier) with $p = 1 - \delta$ that runs on the 2D regular grid, where an edge is open if and only if the corresponding BSC is either copying or generating a $0$ as the new bit (i.e. this BSC takes a $0_{\mathsf{c}}$ to a $0_{\mathsf{c}}$, which happens with probability $1-\delta$ as shown in \eqref{Eq: Coupled BSC}). Let $B_k$ for $k \in \N\backslash\!\{0\}$ be the event that the BSC from $Y_{k-1,0}$ to $Y_{k,0}$ generates a new bit which equals $0$. Then, $\P(B_k) = \delta$ and $\{B_k:k \in \N\backslash\!\{0\}\}$ are mutually independent. So, the second Borel-Cantelli lemma tells us that infinitely many of the events $\{B_k:k \in \N\backslash\!\{0\}\}$ occur almost surely. Furthermore, $B_k \subseteq \{Y_{k,0} = 0_{\mathsf{c}}\}$ for every $k \in \N\backslash\!\{0\}$. 

We next define the following sequence of random variables for all $i \in \N\backslash\!\{0\}$: 
\begin{align*}
L_i & \triangleq \min\!\left\{k \geq T_{i-1} + 1 : B_k \text{ occurs}\right\} \\
T_i & \triangleq 1 + \max\!\left\{k \geq L_i : \parbox[]{13em}{$\exists j \in [k+1]$, $Y_{k,j}$ is connected to $Y_{L_i,0}$ by an open path}\right\}
\end{align*}
where we set $T_0 \triangleq 0$. Note that when $T_{i-1} = \infty$, we let $L_i = \infty$ almost surely. Furthermore, when $T_{i-1} < \infty$, $L_i < \infty$ almost surely, because infinitely many of the events $\{B_k:k \in \N\backslash\!\{0\}\}$ occur almost surely. We also note that when $L_{i} < \infty$, the set:
$$ \left\{k \geq L_i : \parbox[]{16em}{$\exists j \in [k+1]$, $Y_{k,j}$ is connected to $Y_{L_i,0}$ by an open path}\right\} $$
is non-empty since $Y_{L_i,0}$ is always connected to itself, and $T_i - L_i - 1$ denotes the length of the longest open path connected to $Y_{L_i,0}$ (which could be infinity). Lastly, when $L_{i} = \infty$, we let $T_i = \infty$ almost surely. 

Let $\F_k$ for every $k \in \N$ be the $\sigma$-algebra generated by the random variables $(Y_0,\dots,Y_k)$ and all the BSCs before level $k$ (where we include all events determining whether these BSCs are copies, and all events determining the independent bits they produce). Then, $\{\F_k : k \in \N\}$ is a \emph{filtration}. It is straightforward to verify that $L_i$ and $T_i$ are \textit{stopping times} with respect to $\{\F_k : k \in \N\}$ for all $i \in \N\backslash\!\{0\}$. We can show this inductively. $T_0 = 0$ is trivially a stopping time, and if $T_{i-1}$ is a stopping time, then $L_i$ is clearly a stopping time. So, it suffices to prove that $T_i$ is a stopping time given $L_i$ is a stopping time. For any finite $m \in \N\backslash\!\{0\}$, $\{T_i = m\}$ is the event that $L_i \leq m-1$ and the length of the longest open path connected to $Y_{L_i,0}$ is $m - 1 - L_i$. This event is contained in $\F_m$ because the event $\{L_i \leq m-1\}$ is contained in $\F_{m-1} \subseteq \F_m$ (since $L_i$ is a stopping time), and the length of the longest open path can be determined from $\F_m$ (rather than $\F_{m-1}$). Hence, $T_i$ is indeed a stopping time when $L_i$ is a stopping time.  

Now observe that:
\begin{align}
& \P(\exists k \in \N\backslash\!\{0\}, \, T_k = \infty) \nonumber \\
& = \P(T_1 = \infty) \nonumber \\
& \quad \, + \sum_{m = 2}^{\infty}{\P(\exists k \in \N\backslash\!\{0,1\}, \, T_k = \infty | T_1 = m) \P(T_1 = m)} \nonumber \\
& = \P(T_1 = \infty)  \nonumber \\
& \quad \, + \sum_{m = 2}^{\infty}{\P(\exists k \in \N\backslash\!\{0\}, \, T_k + m = \infty) \P(T_1 = m)} \nonumber \\
& = \P(T_1 = \infty) + (1 - \P(T_1 = \infty)) \P(\exists k \in \N\backslash\!\{0\}, \, T_k= \infty)
\label{Eq: Preliminary Stopping Time Step} 
\end{align}
where the first equality uses the law of total probability, the third equality follows from straightforward calculations, and the second equality follows from the fact that for all $m \in \N\backslash\!\{0,1\}$:
\begin{align*}
& \P(\exists k \in \N\backslash\!\{0,1\}, \, T_k = \infty | T_1 = m) \\
& \qquad \qquad \qquad = \P(\exists k \in \N\backslash\!\{0\}, \, T_k + m = \infty) \, . 
\end{align*}
This relation holds because the random variables $\{(L_i,T_i) : i \in \N\backslash\!\{0,1\}\}$ given $T_1 = m$ have the same distribution as the random variables $\{(L_{i-1} + m,T_{i-1} + m) : i \in \N\backslash\!\{0,1\}\}$. In particular, the conditional distribution of $L_i$ given $T_1 = m$ corresponds to the distribution of $L_{i-1} + m$, and the conditional distribution of $T_i$ given $T_1 = m$ corresponds to the distribution of $T_{i-1} + m$. These distributional equivalences implicitly use the fact that $\{T_i : i \in \N\}$ are stopping times. Indeed, the conditioning on $\{T_1 = m\}$ in these equivalences can be removed because the event $\{T_1 = m\}$ is in $\F_m$ since $T_1$ is a stopping time, and $\{T_1 = m\}$ is therefore independent of the events $\{B_k : k > m\}$ and the events that determine when the BSCs below level $m$ are open. 

Next, rearranging \eqref{Eq: Preliminary Stopping Time Step}, we get:
$$ \P(\exists k \in \N\backslash\!\{0\}, \, T_k = \infty) \P(T_1 = \infty) = \P(T_1 = \infty) \, . $$
Since $\P(T_1 = \infty) = \P(\Omega_{\infty}) > 0$ by part 1 of Lemma \ref{Lemma: Oriented Bond Percolation}, we have:
\begin{equation}
\label{Eq: Infinite Path Existence}
\P(\exists k \in \N\backslash\!\{0\}, \, T_k = \infty) = 1 \, . 
\end{equation}
For every $k \in \N\backslash\!\{0\}$, define the events:
\begin{align*}
\Omega_k^{\mathsf{left}} & \triangleq \left\{\parbox[]{18em}{there exists an infinite open path starting at the vertex $Y_{k,0}$}\right\} , \\
\Omega_k^{\mathsf{right}} & \triangleq \left\{\parbox[]{18em}{there exists an infinite open path starting at the vertex $Y_{k,k}$}\right\} .
\end{align*}
If the event $\{\exists k \in \N\backslash\!\{0\}, \, T_k = \infty\}$ occurs, we can choose the smallest $m \in \N\backslash\!\{0\}$ such that $T_m = \infty$, and for this $m$, there is an infinite open path starting at $Y_{L_m,0} = 0_{\mathsf{c}}$ (where $Y_{L_m,0} = 0_{\mathsf{c}}$ because $B_{L_m}$ occurs). Hence, using \eqref{Eq: Infinite Path Existence}, we have:
$$ \P\!\left(\exists k \in \N, \, \{Y_{k,0} = 0_{\mathsf{c}}\} \cap \Omega_k^{\mathsf{left}}\right) = 1 \, . $$
Likewise, we can also prove that:
$$ \P\!\left(\exists k \in \N, \, \{Y_{k,k} = 0_{\mathsf{c}}\} \cap \Omega_k^{\mathsf{right}}\right) = 1 $$
which implies that:
\begin{equation}
\label{Eq: Paths Meet}
\P\!\left(\exists k,m \in \N, \, \{Y_{k,0} = Y_{m,m} = 0_{\mathsf{c}}\} \cap \Omega_k^{\mathsf{left}} \cap \Omega_m^{\mathsf{right}} \right) = 1 \, .
\end{equation}
 
To finish the proof, consider $k,m \in \N$ such that $Y_{k,0} = Y_{m,m} = 0_{\mathsf{c}}$, and suppose $\Omega_k^{\mathsf{left}}$ and $\Omega_m^{\mathsf{right}}$ both happen. For every $n > \max\{k,m\}$, define the quantities:
\begin{align*}
R_n^{\mathsf{left}} & \triangleq \sup\!\left\{j \in [n+1] : \parbox[]{11em}{there is an open path from $Y_{k,0}$ to $Y_{n,j}$}\right\} \\
L_n^{\mathsf{right}} & \triangleq \inf\!\left\{j \in [n+1] : \parbox[]{11em}{there is an open path from $Y_{m,m}$ to $Y_{n,j}$}\right\}
\end{align*}
which are the rightmost and leftmost vertices at level $n$ that are connected to $Y_{k,0}$ and $Y_{m,m}$, respectively, by open paths. Using \eqref{Eq: Rightmost and Leftmost Open Paths} from part 1 of Lemma \ref{Lemma: Oriented Bond Percolation}, we know that almost surely:
\begin{align*}
\lim_{n \rightarrow \infty}{\frac{R_n^{\mathsf{left}}}{n}} & = \lim_{n \rightarrow \infty}{\frac{R_n^{\mathsf{left}}}{n-k}} = \frac{1 + \alpha(1-\delta)}{2} \, , \\
\lim_{n \rightarrow \infty}{\frac{L_n^{\mathsf{right}}}{n}} & = \lim_{n \rightarrow \infty}{\frac{L_n^{\mathsf{right}} - m}{n-m}} = \frac{1 - \alpha(1-\delta)}{2} \, . 
\end{align*}
This implies that almost surely:
$$ \lim_{n \rightarrow \infty}{\frac{R_n^{\mathsf{left}} - L_n^{\mathsf{right}}}{n}} = \alpha(1-\delta) > 0 $$
which means that for some sufficiently large level $n > \max\{k,m\}$, the rightmost open path from $Y_{k,0}$ meets the leftmost open path from $Y_{m,m}$:
$$ \left|R_n^{\mathsf{left}} - L_n^{\mathsf{right}}\right| \leq 1 \, . $$
By construction, all the vertices in these two open paths are equal to $0_{\mathsf{c}}$. Furthermore, since \eqref{Eq: Coupled BSC} and \eqref{Eq: Coupled And} demonstrate that AND gates and BSCs output $0_{\mathsf{c}}$'s or $1_{\mathsf{c}}$'s when their inputs are $0_{\mathsf{c}}$'s or $1_{\mathsf{c}}$'s, it is straightforward to inductively establish that all vertices at level $n$ that are either to left of $R_n^{\mathsf{left}}$ or to the right of $L_n^{\mathsf{right}}$ take values in $\{0_{\mathsf{c}},1_{\mathsf{c}}\}$. This shows that every vertex at level $n$ must be equal to $0_{\mathsf{c}}$ or $1_{\mathsf{c}}$ because the two aforementioned open paths meet. Hence, there exists a level $n \in \N$ with no $1_{\mathsf{u}}$'s, i.e. the event $A$ occurs. Therefore, we get $\P(A) = 1$ using \eqref{Eq: Paths Meet}.

Combining the two cases completes the proof as $\P(A) = 1$ for any $\delta \in \big(0,\frac{1}{2}\big)$.
\end{proof}

\renewcommand{\proofname}{Proof}

We remark that this proof can be perceived as using the technique presented in \cite[Theorem 5.2]{LevinPeresWilmer2009}. Indeed, let $T \triangleq \inf\{k \in \N : X_k^+ = X_k^-\}$ be a stopping time (with respect to the filtration $\{\F_k : k \in \N\}$ defined earlier) denoting the first time that the marginal Markov chains $\{X_k^+ : k \in \N\}$ and $\{X_k^- : k \in \N\}$ meet. (Note that $\{T = \infty\}$ corresponds to the event that these chains never meet.) Since the events $\{X_k^+ = X_k^-\}$ for $k \in \N$ form a non-decreasing sequence of sets, $\{T > k\} = \{X_k^+ \neq X_k^-\}$. We can use this relation to obtain the following bound on the TV distance between $P_{X_k}^+$ and $P_{X_k}^-$:
\begin{align}
\left\|P_{X_k}^+ - P_{X_k}^-\right\|_{\mathsf{TV}} & \leq \P\!\left(X_k^+ \neq X_k^- \right) \nonumber \\
& = \P\!\left(T > k\right) \nonumber \\
& = 1 - \P\!\left(T \leq k\right) 
\label{Eq: Mixing Time Technique}
\end{align}
where letting $k \rightarrow \infty$ and using the continuity of $\P$ produces:
\begin{align}
\lim_{k \rightarrow\infty}{\left\|P_{X_k}^+ - P_{X_k}^-\right\|_{\mathsf{TV}}} & \leq 1 - \P\!\left(\exists k \in \N, \, T \leq k\right) \nonumber \\
& = 1 - \P\!\left(T < \infty\right) . 
\end{align}
These bounds correspond to the ones shown in \cite[Theorem 5.2]{LevinPeresWilmer2009}. Since the event $A = \{\exists k \in \N, \, T \leq k\} = \{T < \infty\}$, our proof that $A$ happens almost surely also demonstrates that the two marginal Markov chains meet after a finite amount of time almost surely.

\section{Coding Theoretic Analysis of 2D Regular Grid with XOR Processing Functions}
\label{Analysis of Deterministic Xor Grid}

We now turn to proving Theorem \ref{Thm: Deterministic Xor Grid}. We will use some rudimentary coding theory ideas in this section, and refer readers to \cite{RichardsonUrbanke2008} for an introduction to the subject. We let $\mathbb{F}_2 = \{0,1\}$ denote the Galois field of order $2$ (i.e. integers with addition and multiplication modulo $2$), $\mathbb{F}_2^n$ with $n \in \N\backslash\!\{0,1\}$ denote the vector space over $\mathbb{F}_2$ of column vectors with $n$ entries from $\mathbb{F}_2$, and $\mathbb{F}_2^{m \times n}$ with $m,n \in \N\backslash\!\{0,1\}$ denote the space of $m \times n$ matrices with entries in $\mathbb{F}_2$. (All matrix and vector operations in this section will be performed modulo $2$.) Now fix some matrix $H \in \mathbb{F}_2^{m \times n}$ that has the following block structure:
\begin{equation}
\label{Eq: Block Structure of PCM}
H = \left[ 
\begin{array}{cc}
1 & B_1 \\
\0 & B_2 
\end{array} \right]
\end{equation}
where $\0 \triangleq [0 \cdots 0]^{\T} \in \mathbb{F}_2^{m-1}$ denotes the zero vector (whose dimension will be understood from context in the sequel), $B_1 \in \mathbb{F}_2^{1 \times (n-1)}$, and $B_2 \in \mathbb{F}_2^{(m-1)\times(n-1)}$. Consider the following two problems:
\begin{enumerate}
\item \textbf{Coding Problem:} Let $\C \triangleq \{x \in \mathbb{F}_2^{n} : H x = \0\}$ be the \textit{linear code} defined by the \textit{parity check matrix} $H$. Let $X = [X_1 \enspace X_2^{\T}]^{\T}$ with $X_1 \in \mathbb{F}_2$ and $X_2 \in \mathbb{F}_2^{n-1}$ be a codeword drawn uniformly from $\C$. Assume that there exists a codeword $x = [1 \enspace x_2^{\T}]^{\T} \in \C$ (i.e. $B_1 x_2 = 1$ and $B_2 x_2 = \0$). Then, since $\C \ni x^{\prime} \mapsto x^{\prime} + x \in \C$ is a bijective map that flips the first bit of its input, $X_1$ is a $\Ber\big(\frac{1}{2}\big)$ random variable. We observe the codeword $X$ through an additive noise channel model and see $Y_1 \in \mathbb{F}_2$ and $Y_2 \in \mathbb{F}_2^{n-1}$:
\begin{equation}
\label{Eq: Coding Problem}
\left[ \begin{array}{c} Y_1 \\ Y_2 \end{array} \right] = X + \left[ \begin{array}{c} Z_1 \\ Z_2 \end{array} \right] = \left[ \begin{array}{c} X_1 + Z_1 \\ X_2 + Z_2 \end{array} \right]
\end{equation} 
where $Z_1 \in \mathbb{F}_2$ is a $\Ber\big(\frac{1}{2}\big)$ random variable, $Z_2 \in \mathbb{F}_2^{n-1}$ is a vector of i.i.d. $\Ber(\delta)$ random variables that are independent of $Z_1$, and both $Z_1,Z_2$ are independent of $X$. Our problem is to decode $X_1$ with minimum probability of error after observing $Y_1,Y_2$. This can be achieved by using the ML decoder for $X_1$ based on $Y_1,Y_2$.
\item \textbf{Inference Problem:} Let $X^{\prime} \in \mathbb{F}_2$ be a $\Ber\big(\frac{1}{2}\big)$ random variable, and $Z \in \mathbb{F}_2^{n-1}$ be a vector of i.i.d. $\Ber(\delta)$ random variables that are independent of $X^{\prime}$. Suppose we see the observations $S_1^{\prime} \in \mathbb{F}_2$ and $S_2^{\prime} \in \mathbb{F}_2^{m-1}$ through the model:
\begin{equation}
\label{Eq: Inference Problem}
\left[ \begin{array}{c} S_1^{\prime} \\ S_2^{\prime} \end{array} \right] = H \left[ \begin{array}{c} X^{\prime} \\ Z \end{array} \right] =  \left[ \begin{array}{c} X^{\prime} + B_1 Z \\ B_2 Z \end{array} \right] . 
\end{equation}
Our problem is to decode $X^{\prime}$ with minimum probability of error after observing $S_1^{\prime},S_2^{\prime}$. This can be achieved by using the ML decoder for $X^{\prime}$ based on $S_1^{\prime},S_2^{\prime}$.
\end{enumerate}
As we will soon see, the inference problem above corresponds to our setting of reconstruction in the 2D regular grid with XOR processing functions. The next lemma illustrates that this inference problem is in fact ``equivalent'' to the aforementioned coding problem, and this connection will turn out to be useful since the coding problem admits simpler analysis.

\begin{lemma}[Equivalence of Problems]
\label{Lemma: Equivalence of Problems}
For the coding problem in \eqref{Eq: Coding Problem} and the inference problem in \eqref{Eq: Inference Problem}, the following statements hold:
\begin{enumerate}
\item The minimum probabilities of error for the coding and inference problems are equal. 
\item Suppose the random variables in the coding and inference problems are coupled so that $X_1 = X^{\prime}$ and $Z_2 = Z$ almost surely (i.e. these variables are shared by the two problems), $X_2$ is generated from a conditional distribution $P_{X_2|X_1}$ such that $X$ is uniform on $\C$, $Z_1$ is generated independently, $(Y_1,Y_2)$ is defined by \eqref{Eq: Coding Problem}, and $(S_1^{\prime},S_2^{\prime})$ is defined by \eqref{Eq: Inference Problem}. Then, $S_1^{\prime} = B_1 Y_2$ and $S_2^{\prime} = B_2 Y_2$ almost surely.
\item Under the aforementioned coupling, $(S_1^{\prime},S_2^{\prime})$ is a sufficient statistic of $(Y_1,Y_2)$ for performing inference about $X_1$ (in the coding problem).
\end{enumerate}
\end{lemma}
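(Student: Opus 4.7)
My plan is to prove the three parts in the order 2, 3, 1, since part 2 is a direct algebraic identity, part 3 is a short sufficiency calculation that uses it, and part 1 is then immediate upon combining sufficiency with the matching of joint laws.

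For part 2 I would simply substitute the parity-check relations into the coupling. Under the coupling, $X = [X_1 \;\, X_2^{\T}]^{\T}$ is a codeword of $\C$, so $H X = \0$ gives $X_1 + B_1 X_2 = 0$ and $B_2 X_2 = \0$ in $\mathbb{F}_2$, i.e.\ $B_1 X_2 = X_1$ and $B_2 X_2 = \0$. Combined with $Y_2 = X_2 + Z_2 = X_2 + Z$ this yields $B_1 Y_2 = X_1 + B_1 Z = S_1'$ and $B_2 Y_2 = \0 + B_2 Z = S_2'$, as claimed.

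For part 3, the key observation is that $Y_1 = X_1 + Z_1$ with $Z_1 \sim \Ber\bigl(\tfrac{1}{2}\bigr)$ independent of $(X, Z_2)$, so $Y_1$ is a uniform bit independent of $(X_1, Y_2)$ and thus provides no information about $X_1$. It therefore suffices to show that $(S_1', S_2') = (B_1 Y_2, B_2 Y_2)$ is a sufficient statistic for $X_1$ from $Y_2$. Define $L_{x_1} \triangleq \{x_2 \in \mathbb{F}_2^{n-1} : B_1 x_2 = x_1, \, B_2 x_2 = \0\}$; the hypothesis that some codeword has first bit $1$ gives $L_1 \neq \emptyset$, and translation by any fixed $c \in L_1$ is a bijection $L_0 \to L_1$, so $|L_0| = |L_1| =: N$ (this is also what makes $X_1$ uniform). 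Since $X_2 \mid X_1$ is uniform on $L_{X_1}$, the change of variable $z = y_2 + x_2$ gives
$$ P(Y_2 = y_2 \mid X_1 = x_1) = \frac{1}{N} \sum_{\substack{z \, : \, B_1 z = B_1 y_2 + x_1 \\ B_2 z = B_2 y_2}} \delta^{|z|} (1-\delta)^{n-1-|z|}, $$
which depends on $y_2$ only through $(B_1 y_2, B_2 y_2)$. Consequently, conditional on $(S_1', S_2') = (s_1, s_2)$ and $X_1 = x_1$, the distribution of $Y_2$ is uniform on the affine subspace $\{y_2 : B_1 y_2 = s_1, \, B_2 y_2 = s_2\}$, and this uniform law does not depend on $x_1$. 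By the factorization criterion, $(S_1', S_2')$ is sufficient for $X_1$ from $(Y_1, Y_2)$.

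Part 1 then follows in one line. By part 3, the ML decoder for $X_1$ based on $(Y_1, Y_2)$ has the same error probability as the ML decoder for $X_1$ based on $(S_1', S_2')$, and under the coupling $(S_1', S_2')$ is defined as the very same linear function $H [X_1 \;\, Z^{\T}]^{\T} = H [X' \;\, Z^{\T}]^{\T}$ of $(X' = X_1, Z)$ in both problems; consequently the joint laws of $(X_1, S_1', S_2')$ and $(X', S_1', S_2')$ coincide, giving equal minimum error probabilities. The only step with any real content is the computation in part 3; the main obstacle there is writing down the conditional law of $X_2 \mid X_1$ correctly, which in turn relies on the stated hypothesis that some codeword with first bit $1$ exists (without it, $X_1$ need not even be uniform, and the symmetry between $x_1 = 0$ and $x_1 = 1$ used in the change of variable breaks down).
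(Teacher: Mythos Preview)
Your proposal is correct and follows essentially the same approach as the paper: the core computation is the likelihood $P(Y_2=y_2\mid X_1=x_1)$ depending on $y_2$ only through $(B_1 y_2, B_2 y_2)$, which both you and the paper obtain by the same change of variable and invoke via Fisher--Neyman factorization. The only cosmetic differences are that the paper proves the parts in order 1, 2, 3 (establishing sufficiency as a byproduct of the Part~1 likelihood comparison) whereas you go 2, 3, 1, and that you dispose of $Y_1$ up front by noting it is independent of $(X_1,Y_2)$ rather than carrying the factor $P_{Y_1|X_1}=\tfrac{1}{2}$ through the likelihood formulas as the paper does.
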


\begin{proof} ~\newline
\indent
\textbf{Part 1:}
We first show that the minimum probabilities of error for the two problems are equal. The inference problem has prior $X^{\prime} \sim \Ber\big(\frac{1}{2}\big)$, and the following likelihoods for every $s_1^{\prime} \in \mathbb{F}_2$ and every $s_2^{\prime} \in \mathbb{F}_2^{m-1}$:
\begin{align}
& P_{S_1^{\prime},S_2^{\prime}|X^{\prime}}\!\left(s_1^{\prime},s_2^{\prime}\middle|0\right) \nonumber \\
& \qquad \quad = \sum_{z \in \mathbb{F}_2^{n-1}}{P_Z(z) \I\!\left\{B_1 z = s_1^{\prime}, B_2 z = s_2^{\prime}\right\}} \, , \label{Eq: Inference Likelihood 1} \\
& P_{S_1^{\prime},S_2^{\prime}|X^{\prime}}\!\left(s_1^{\prime},s_2^{\prime}\middle|1\right) \nonumber \\
& \qquad \quad = \sum_{z \in \mathbb{F}_2^{n-1}}{P_Z(z) \I\!\left\{B_1 z = s_1^{\prime} + 1, B_2 z = s_2^{\prime}\right\}} \, .
\label{Eq: Inference Likelihood 2}
\end{align}
On the other hand, the coding problem has prior $X_1 \sim \Ber\big(\frac{1}{2}\big)$, and the following likelihoods for every $y_1 \in \mathbb{F}_2$ and every $y_2 \in \mathbb{F}_2^{n-1}$:
\begin{align}
& P_{Y_1,Y_2|X_1}(y_1,y_2|0) \nonumber \\
& = P_{Y_1|X_1}(y_1|0) P_{Y_2|X_1}(y_2|0) \nonumber \\
& = \frac{1}{2} \sum_{x_2 \in \mathbb{F}_2^{n-1}}{P_{Y_2|X_2}(y_2|x_2) P_{X_2|X_1}(x_2|0)} \nonumber \\
& = \frac{1}{2} \sum_{x_2 \in \mathbb{F}_2^{n-1}}{P_{Z_2}(y_2-x_2) \I\!\left\{B_1 x_2 = 0, B_2 x_2 = \0\right\} \frac{2}{|\C|}} \nonumber \\
& = \frac{1}{|\C|} \sum_{z_2 \in \mathbb{F}_2^{n-1}}{P_{Z_2}(z_2) \I\!\left\{B_1 z_2 = B_1 y_2, B_2 z_2 = B_2 y_2\right\}} \, , \label{Eq: Coding Likelihood 1} \\
& P_{Y_1,Y_2|X_1}(y_1,y_2|1) \nonumber \\
& = \frac{1}{|\C|} \sum_{z_2 \in \mathbb{F}_2^{n-1}}{P_{Z_2}(z_2) \I\!\left\{B_1 z_2 = B_1 y_2 + 1, B_2 z_2 = B_2 y_2\right\}} \, , 
\label{Eq: Coding Likelihood 2}
\end{align}
where the third equality uses the fact that $X_2$ is uniform over a set of cardinality $|\C|/2$ given any value of $X_1$, because $X_1 \sim \Ber\big(\frac{1}{2}\big)$ and $X$ is uniform on $\C$. For the coding problem, define $S_1 \triangleq B_1 Y_2$ and $S_2 \triangleq B_2 Y_2$. Due to the Fisher-Neyman factorization theorem \cite[Theorem 3.6]{Keener2010}, \eqref{Eq: Coding Likelihood 1} and \eqref{Eq: Coding Likelihood 2} demonstrate that $(S_1,S_2)$ is a sufficient statistic of $(Y_1,Y_2)$ for performing inference about $X_1$. 

Continuing in the context of the coding problem, define the set:
$$ \C^{\prime} \triangleq \left\{x \in \mathbb{F}_2^{n-1} : B_1 x = 0 , B_2 x = \0\right\} , $$
which is also a linear code, and for any fixed $s_1 \in \mathbb{F}_2$ and $s_2 \in \mathbb{F}_2^{m-1}$, define the set:
$$ \mathcal{S}(s_1,s_2) \triangleq \left\{(y_1,y_2) \in \mathbb{F}_2 \times \mathbb{F}_2^{n-1} \! : B_1 y_2 = s_1, B_2 y_2 = s_2\right\} \! . $$
If there exists $y_2^{\prime} \in \mathbb{F}_2^{n-1}$ such that $B_1 y_2^{\prime} = s_1$ and $B_2 y_2^{\prime} = s_2$, then $\mathcal{S}(s_1,s_2) = \{(y_1,y_2 + y_2^{\prime}) \in \mathbb{F}_2 \times \mathbb{F}_2^{n-1} : y_2 \in \C^{\prime}\}$, which means that $|\mathcal{S}(s_1,s_2)| = 2|\C^{\prime}| = |\C|$ (where the final equality holds because each vector in $\C^{\prime}$ corresponds to a codeword in $\C$ whose first letter is $0$, and we have assumed that there are an equal number of codewords in $\C$ with first letter $1$). Hence, for every $s_1 \in \mathbb{F}_2$ and every $s_2 \in \mathbb{F}_2^{m-1}$, the likelihoods of $(S_1,S_2)$ given $X_1$ can be computed from \eqref{Eq: Coding Likelihood 1} and \eqref{Eq: Coding Likelihood 2}:
\begin{align}
& P_{S_1,S_2|X_1}(s_1,s_2|0) \nonumber \\
& = \!\!\!\! \sum_{y_1 \in \mathbb{F}_2, \, y_2 \in \mathbb{F}_2^{n-1}}{\!\!\! P_{Y_1,Y_2|X_1}(y_1,y_2|0) \I\!\left\{B_1 y_2 = s_1, B_2 y_2 = s_2\right\}} \nonumber \\
& = \frac{|\mathcal{S}(s_1,s_2)|}{|\C|} \sum_{z_2 \in \mathbb{F}_2^{n-1}}{P_{Z_2}(z_2) \I\!\left\{B_1 z_2 = s_1, B_2 z_2 = s_2\right\}} \nonumber \\
& = \sum_{z_2 \in \mathbb{F}_2^{n-1}}{P_{Z_2}(z_2) \I\!\left\{B_1 z_2 = s_1, B_2 z_2 = s_2\right\}} \, , \label{Eq: Sufficient Coding Likelihood 1} \\
& P_{S_1,S_2|X_1}(s_1,s_2|1) \nonumber \\
& = \sum_{z_2 \in \mathbb{F}_2^{n-1}}{P_{Z_2}(z_2) \I\!\left\{B_1 z_2 = s_1 + 1, B_2 z_2 = s_2\right\}} \, ,
\label{Eq: Sufficient Coding Likelihood 2}
\end{align}
where the second equality follows from \eqref{Eq: Coding Likelihood 1} and the third equality clearly holds in the $|\mathcal{S}(s_1,s_2)| = 0$ case as well. The likelihoods \eqref{Eq: Sufficient Coding Likelihood 1} and \eqref{Eq: Sufficient Coding Likelihood 2} are exactly the same as the likelihoods \eqref{Eq: Inference Likelihood 1} and \eqref{Eq: Inference Likelihood 2}, respectively, that we computed earlier for the inference problem. Thus, the sufficient statistic $(S_1,S_2)$ of $(Y_1,Y_2)$ for $X_1$ in the coding problem is equivalent to the observation $(S_1^{\prime},S_2^{\prime})$ in the inference problem in the sense that they are defined by the same probability model. As a result, the minimum probabilities of error in these formulations must be equal.

\textbf{Part 2:} We now assume that the random variables in the two problems are coupled as in the lemma statement. To prove that $S_1^{\prime} = S_1$ and $S_2^{\prime} = S_2$ almost surely, observe that:
\begin{align*}
\left[ \begin{array}{c} S_1 \\ S_2 \end{array} \right] & = \left[ \begin{array}{c} B_1 Y_2 \\ B_2 Y_2 \end{array} \right] \\
& = \left[ \begin{array}{c} B_1 X_2 + B_1 Z_2 \\ B_2 X_2 + B_2 Z_2 \end{array} \right] \\
& = \left[ \begin{array}{c} X_1 + B_1 Z_2 \\ B_2 Z_2 \end{array} \right] \\
& = H \left[ \begin{array}{c} X_1 \\ Z_2 \end{array} \right] \\
& = \left[ \begin{array}{c} S_1^{\prime} \\ S_2^{\prime} \end{array} \right] 
\end{align*}
where the second equality uses \eqref{Eq: Coding Problem}, the third equality holds because $B_1 X_2 = X_1$ and $B_2 X_2 = \0$ since $X \in \C$ is a codeword, and the last equality uses \eqref{Eq: Inference Problem} and the fact that $X_1 = X^{\prime}$ and $Z_2 = Z$ almost surely. This proves part 2.

\textbf{Part 3:} Since $(S_1,S_2)$ is a sufficient statistic of $(Y_1,Y_2)$ for performing inference about $X_1$ in the coding problem, and $S_1^{\prime} = S_1$ and $S_2^{\prime} = S_2$ almost surely under the coupling in the lemma statement, $(S_1^{\prime},S_2^{\prime})$ is also a sufficient statistic of $(Y_1,Y_2)$ for performing inference about $X_1$ under this coupling. This completes the proof.
\end{proof}

Recall that we are given a 2D regular grid where all Boolean processing functions with two inputs are the XOR rule, and all Boolean processing functions with one input are the identity rule, i.e. $f_{2}(x_1,x_2) = x_1 \oplus x_2$ and $f_{1}(x) = x$. We next prove Theorem \ref{Thm: Deterministic Xor Grid} using Lemma \ref{Lemma: Equivalence of Problems}.

\renewcommand{\proofname}{Proof of Theorem \ref{Thm: Deterministic Xor Grid}}

\begin{proof}
We first prove that the problem of decoding the root bit in the XOR 2D regular grid is captured by the inference problem defined in \eqref{Eq: Inference Problem}. Let $E_k$ denote the set of all directed edges in the 2D regular grid above level $k \in \N$. Furthermore, let us associate each edge $e \in E_k$ with an independent $\Ber(\delta)$ random variable $Z_e \in \mathbb{F}_2$. Since a $\mathsf{BSC}(\delta)$ can be modeled as addition of an independent $\Ber(\delta)$ bit (in $\mathbb{F}_2$), the random variables $\{Z_e : e \in E_k\}$ define the BSCs of the 2D regular grid up to level $k$. Moreover, each vertex at level $k \in \N\backslash\!\{0\}$ of the XOR 2D regular grid is simply a sum (in $\mathbb{F}_2$) of its parent vertices and the random variables on the edges between it and its parents:
\begin{align*}
\forall j \, \in \{1,\dots,& \, k-1\}, \\
X_{k,j} & = X_{k-1,j-1} \oplus X_{k-1,j} \\
& \quad \, \oplus Z_{(X_{k-1,j-1},X_{k,j})} \oplus Z_{(X_{k-1,j},X_{k,j})} \, , \\
X_{k,0} & = X_{k-1,0} \oplus Z_{(X_{k-1,0},X_{k,0})} \, , \\
X_{k,k} & = X_{k-1,k-1} \oplus Z_{(X_{k-1,k-1},X_{k,k})} \, .
\end{align*}
These recursive formulae for each vertex in terms of its parent vertices can be unwound so that each vertex is represented as a linear combination (in $\mathbb{F}_2$) of the root bit and all the edge random variables: 
\begin{equation}
\label{Eq: Sum Representation of Nodes}
\begin{aligned}
\forall k \in \N&\backslash\!\{0\}, \, \forall j \in [k+1], \\
X_{k,j} & =  \left(\binom{k}{j} \Mod{2}\right) X_{0,0} \, + \sum_{e \in E_k}{b_{k,j,e} Z_e} 
\end{aligned}
\end{equation}
where the coefficient of $X_{0,0}$ can be computed by realizing that the coefficients of the vertices in the ``2D regular grid above $X_{k,j}$'' (with $X_{k,j}$ as the root) are defined by the recursion of Pascal's triangle, and $\{b_{k,j,e} \in \mathbb{F}_2: k \in \N\backslash\!\{0\}, \, j \in [k+1], \, e \in E_k\}$ are some fixed coefficients. We do not require detailed knowledge of the values of $\{b_{k,j,e} \in \mathbb{F}_2: k \in \N\backslash\!\{0\}, \, j \in [k+1], \, e \in E_k\}$, but they can also be evaluated via straightforward counting if desired. 

In the remainder of this proof, we will fix $k$ to be a power of $2$: $k = 2^m$ for $m \in  \N\backslash\!\{0\}$. Then, we have:
\begin{equation}
\label{Eq: Lucas}
\binom{k}{j} \equiv \binom{2^m}{j} \equiv \left\{
\begin{array}{ll}
1 \, , & j \in \{0,k\} \\
0 \, ,  & j \in \{1,\dots,k-1\}
\end{array}
\right. \Mod{2}
\end{equation}
since by \emph{Lucas' theorem} (see \cite{Fine1947}), the parity of $\binom{k}{j}$ is $0$ if and only if at least one of the digits of $j$ in base $2$ is strictly greater than the corresponding digit of $k$ in base $2$, and the base $2$ representation of $k = 2^m$ is $10\cdots0$ (with $m$ $0$'s). So, for each $k$, we can define a binary matrix $H_k \in \mathbb{F}_2^{(k+1)\times (|E_k|+1)}$ whose rows are indexed by the vertices at level $k$ and columns are indexed by $1$ (first index corresponding to $X_{0,0}$) followed by the edges in $E_k$, and whose rows are made up of the coefficients in \eqref{Eq: Sum Representation of Nodes} (where the first entry of each row is given by \eqref{Eq: Lucas}). Clearly, we can write \eqref{Eq: Sum Representation of Nodes} in matrix-vector form using $H_k$ for every $k$:
\begin{equation}
\label{Eq: Xor Grid Matrix Version}
\left[ \begin{array}{c} X_{k,0} \\ X_{k,1} \\ \vdots \\ X_{k,k-1} \\ X_{k,k} \end{array} \right] = \underbrace{\left[ \begin{array}{cccc} 1 & \text{---} & b_{k,0,e} & \text{---} \\ 0 & \text{---} & b_{k,1,e} & \text{---} \\ \vdots & & \vdots & \\ 0 & \text{---} & b_{k,k-1,e} & \text{---} \\ 1 & \text{---} & b_{k,k,e} & \text{---} \end{array} \right]}_{\displaystyle{\triangleq H_k}} \left[ \begin{array}{c} X_{0,0} \\ \mid \\ Z_e \\ \mid \end{array} \right] 
\end{equation}
where the vector on the right hand side of \eqref{Eq: Xor Grid Matrix Version} has first element $X_{0,0}$ followed by the random variables $\{Z_e : e \in E_k\}$ (indexed consistently with $H_k$). Our XOR 2D regular grid reconstruction problem is to decode $X_{0,0}$ from the observations $(X_{k,0},\dots,X_{k,k})$ with minimum probability of error. Note that we can apply a row operation to $H_k$ that replaces the last row of $H_k$ with the sum of the first and last rows of $H_k$ to get the binary matrix $H_k^{\prime} \in \mathbb{F}_2^{(k+1)\times (|E_k|+1)}$, and correspondingly, we can replace $X_{k,k}$ with $X_{k,0} + X_{k,k}$ in \eqref{Eq: Xor Grid Matrix Version} to get the ``equivalent'' formulation:
\begin{equation}
\label{Eq: Row Operated Problem}
\left[ \begin{array}{c} X_{k,0} \\ X_{k,1} \\ \vdots \\ X_{k,k-1} \\ X_{k,0} + X_{k,k} \end{array} \right] = H_k^{\prime} \left[ \begin{array}{c} X_{0,0} \\ \mid \\ Z_e \\ \mid \end{array} \right] 
\end{equation}
for every $k$. Indeed, since we only perform invertible operations to obtain \eqref{Eq: Row Operated Problem} from \eqref{Eq: Xor Grid Matrix Version}, the minimum probability of error for ML decoding $X_{0,0}$ from the observations $(X_{k,0},\dots,X_{k,k})$ under the model \eqref{Eq: Xor Grid Matrix Version} is equal to the minimum probability of error for ML decoding $X_{0,0}$ from the observations $(X_{k,0},\dots,X_{k,k-1},X_{k,0} + X_{k,k})$ under the model \eqref{Eq: Row Operated Problem}. Furthermore, since $H_k^{\prime}$ is of the form \eqref{Eq: Block Structure of PCM}, the equivalent XOR 2D regular grid reconstruction problem in \eqref{Eq: Row Operated Problem} is exactly of the form of the inference problem in \eqref{Eq: Inference Problem}.

We next transform the XOR 2D regular grid reconstruction problem in \eqref{Eq: Xor Grid Matrix Version}, or equivalently, \eqref{Eq: Row Operated Problem}, into a coding problem. By Lemma \ref{Lemma: Equivalence of Problems}, the inference problem in \eqref{Eq: Row Operated Problem} is ``equivalent'' to a coupled coding problem analogous to \eqref{Eq: Coding Problem}. To describe this coupled coding problem, consider the linear code defined by the parity check matrix $H_k^{\prime}$:
\begin{align*}
\C_k & \triangleq \left\{w \in \mathbb{F}_2^{|E_k| + 1} : H_k^{\prime} w = \0 \right\} \\
& = \left\{w \in \mathbb{F}_2^{|E_k| + 1} : H_k w = \0 \right\} 
\end{align*}
where the second equality shows that the parity check matrix $H_k$ also generates $\C_k$ (because row operations do not change the nullspace of a matrix). As required by the coding problem, this linear code contains a codeword of the form $[1 \enspace w_2^{\T}]^{\T} \in \C_k$ for some $w_2 \in \mathbb{F}_2^{|E_k|}$. To prove this, notice that such a codeword exists if and only if the first column $[1 \, 0 \cdots 0]^{\T}$ of $H_k^{\prime}$ is in the span of the remaining columns of $H_k^{\prime}$. Assume for the sake of contradiction that such a codeword does not exist. Then, we can decode $X_{0,0}$ in the setting of \eqref{Eq: Row Operated Problem} with zero probability of error, because the observation vector on the left hand side of \eqref{Eq: Row Operated Problem} is in the span of the second to last columns of $H_k^{\prime}$ if and only if $X_{0,0} = 0$.\footnote{It is worth mentioning that in the ensuing coding problem in \eqref{Eq: Xor Coding Problem}, if such a codeword does not exist, we can also decode the first codeword bit with zero probability of error because all codewords must have the first bit equal to $0$.} This leads to a contradiction since it is clear that we cannot decode the root bit with zero probability of error in the XOR 2D regular grid. Hence, a codeword of the form $\big[1 \enspace w_2^{\T}\big]^{\T} \in \C_k$ for some $w_2 \in \mathbb{F}_2^{|E_k|}$ always exists. Next, we let $W_k = [X_{0,0} \enspace \text{---} \, W_{k,e} \, \text{---} \, ]^{\T} \in \C_k$ be a codeword that is drawn uniformly from $\C_k$, where the first element of $W_k$ is $X_{0,0}$ and the remaining elements of $W_k$ are $\{W_{k,e} : e \in E_k\}$. In the coupled coding problem, we observe $W_k$ through the additive noise channel model:
\begin{equation}
\label{Eq: Xor Coding Problem}
Y_k \triangleq \left[ \begin{array}{c} Y_{0,0}^k \\ \mid \\ Y_{k,e} \\ \mid \end{array} \right] = W_k + \left[ \begin{array}{c} Z_{0,0}^k \\ \mid \\ Z_{e} \\ \mid \end{array} \right]
\end{equation}
where $\{Z_e : e \in E_k\}$ are the BSC random variables that are independent of $W_k$, $Z_{0,0}^k$ is a completely independent $\Ber\big(\frac{1}{2}\big)$ random variable, $Y_{0,0}^k = X_{0,0} \oplus Z_{0,0}^k$, and $Y_{k,e} = W_{k,e} \oplus Z_{e}$ for $e \in E_k$. Our goal is to decode the first bit of the codeword, $X_{0,0}$, with minimum probability of error from the observation $Y_k$. Since we have coupled the coding problem \eqref{Eq: Xor Coding Problem} and the inference problem \eqref{Eq: Row Operated Problem} according to the coupling in part 2 of Lemma \ref{Lemma: Equivalence of Problems}, part 3 of Lemma \ref{Lemma: Equivalence of Problems} shows that $(X_{k,0},\dots,X_{k,k-1},X_{k,0} + X_{k,k})$, or equivalently:
\begin{equation}
\label{Eq: X_k as deterministic function of Y_k}
\left[ \begin{array}{c} X_{k,0} \\ \vdots \\ X_{k,k} \end{array} \right] = \left[\begin{array}{c} \displaystyle{\sum_{e \in E_{k}}{b_{k,0,e} Y_{k,e}}} \\ \vdots \\ \displaystyle{\sum_{e \in E_{k}}{b_{k,k,e} Y_{k,e}}} \end{array} \right] , 
\end{equation}
is a sufficient statistic of $Y_k$ for performing inference about $X_{0,0}$ in the coding problem \eqref{Eq: Xor Coding Problem}. Hence, the ML decoder for $X_{0,0}$ based on the sufficient statistic $(X_{k,0},\dots,X_{k,k-1},X_{k,0} + X_{k,k})$ (without loss of generality), which achieves the minimum probability of error in the coding problem \eqref{Eq: Xor Coding Problem}, makes an error if and only if the ML decision rule for $X_{0,0}$ based on $(X_{k,0},\dots,X_{k,k-1},X_{k,0} + X_{k,k})$, which achieves the minimum probability of error in the inference problem \eqref{Eq: Row Operated Problem}, makes an error. Therefore, as shown in part 1 of Lemma \ref{Lemma: Equivalence of Problems}, the minimum probabilities of error in the XOR 2D regular grid reconstruction problem \eqref{Eq: Xor Grid Matrix Version} and the coding problem \eqref{Eq: Xor Coding Problem} are equal, and it suffices to analyze the coding problem \eqref{Eq: Xor Coding Problem}.

In the coding problem \eqref{Eq: Xor Coding Problem}, we observe the codeword $W_k$ after passing it through memoryless BSCs. We now establish a ``cleaner'' model where $W_k$ is passed through memoryless binary erasure channels. Recall that each $\mathsf{BSC}(\delta)$ copies its input bit with probability $1-2\delta$ and generates an independent $\Ber\big(\frac{1}{2}\big)$ output bit with probability $2\delta$ (as shown in \eqref{Eq: BSC Decomposition} in section \ref{Analysis of Deterministic And Grid}), i.e. for any $e \in E_k$, instead of setting $Z_e \sim \Ber(\delta)$, we can generate $Z_e$ as follows:
$$ Z_e = \left\{
\begin{array}{ll}
0 \, , & \text{with probability } 1-2\delta \\
\Ber\!\left(\frac{1}{2}\right) , & \text{with probability } 2\delta 
\end{array}
\right. $$ 
where $\Ber\big(\frac{1}{2}\big)$ denotes an independent uniform bit. Suppose we know which BSCs among $\{Z_e : e \in E_k\}$ generate independent bits in \eqref{Eq: Xor Coding Problem}. Then, we can perceive each BSC in $\{Z_e : e \in E_k\}$ as an independent \textit{binary erasure channel} (BEC) with erasure probability $2\delta$, denoted $\mathsf{BEC}(2\delta)$, which erases its input with probability $2\delta$ and produces the erasure symbol $\mathsf{e}$ if and only if the corresponding $\mathsf{BSC}(\delta)$ generates an independent bit, and copies its input with probability $1-2\delta$ otherwise. (Note that the BSC defined by $Z_{0,0}^k$ corresponds to a $\mathsf{BEC}(1)$ which always erases its input.) Consider observing the codeword $W_k$ under this \textit{BEC model}, where $X_{0,0}$ is erased almost surely, and the remaining bits of $W_k$ are erased independently with probability $2\delta$, i.e. we observe $Y_k^{\prime} = [\mathsf{e} \enspace \text{---} \, Y_{k,e}^{\prime} \, \text{---} \,]^{\T} \in \{0,1,\mathsf{e}\}^{|E_k| + 1}$, where the first entry corresponds to the erased value of $X_{0,0}$, and for every $e \in E_k$, $Y_{k,e}^{\prime} = W_{k,e}$ with probability $1-2\delta$ and $Y_{k,e}^{\prime} = \mathsf{e}$ with probability $2\delta$. Clearly, we can obtain $Y_k$ from $Y_k^{\prime}$ by replacing every instance of $\mathsf{e}$ in $Y_k^{\prime}$ with an independent $\Ber\big(\frac{1}{2}\big)$ bit. Since the BECs reveal additional information about which BSCs generate independent bits, the minimum probability of error in ML decoding $X_{0,0}$ based on $Y_k^{\prime}$ under the BEC model lower bounds the minimum probability of error in ML decoding $X_{0,0}$ based on $Y_k$ under the BSC model \eqref{Eq: Xor Coding Problem}.\footnote{Indeed, the ML decoder for $X_{0,0}$ based on $Y_k^{\prime}$ has a smaller (or equal) probability of error than the decoder which first translates $Y_k^{\prime}$ into $Y_k$ by replacing every $\mathsf{e}$ with an independent $\Ber\big(\frac{1}{2}\big)$ bit, and then applies the ML decoder for $X_{0,0}$ based on $Y_k$ as in the coding problem \eqref{Eq: Xor Coding Problem}. We also remark that the fact that a $\mathsf{BEC}(2\delta)$ is ``less noisy'' than a $\mathsf{BSC}(\delta)$ is well-known in information theory, cf. \cite[Section 6, Equation (16)]{PolyanskiyWu2017}.} In the rest of the proof, we establish conditions under which the minimum probability of error for the BEC model is $\frac{1}{2}$, and then show as a consequence that the minimum probability of error in the XOR 2D regular grid reconstruction problem in \eqref{Eq: Xor Grid Matrix Version} tends to $\frac{1}{2}$ as $k \rightarrow \infty$.

Let $I_k \subseteq E_k$ denote the set of indices where the corresponding elements of $W_k$ are \textit{not} erased in the BEC model:
$$ I_k \triangleq \left\{e \in E_k : Y_{k,e}^{\prime} = W_{k,e} \right\} . $$
The ensuing lemma is a standard exercise in coding theory which shows that the ML decoder for $X_{0,0}$ only fails under the BEC model when a special codeword exists in $\C_k$; see the discussion in \cite[Section 3.2]{RichardsonUrbanke2008}.

\begin{lemma}[Bit-wise ML Decoding {\cite[Section 3.2]{RichardsonUrbanke2008}}]
\label{Lemma: Bit-wise ML Decoding}
Suppose we condition on some realization of $Y_k^{\prime}$ (in the BEC model), which determines a corresponding realization of the set of indices $I_k$. Then, the ML decoder for $X_{0,0}$ based on $Y_k^{\prime}$ (with codomain $\mathbb{F}_2$) makes an error with probability $\frac{1}{2}$ if and only if there exists a codeword $w \in \C_k$ with first element $w_1 = 1$ and $w_e = 0$ for all $e \in I_k$.
\end{lemma}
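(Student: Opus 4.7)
The plan is to compute the conditional distribution of $X_{0,0}$ given $Y_k^{\prime}$ explicitly and read off when it is uniform on $\mathbb{F}_2$. First I would note that conditioning on the realization $Y_k^{\prime} = y^{\prime}$ reveals both the index set $I_k \subseteq E_k$ and the values $\{W_{k,e} = y_e^{\prime} : e \in I_k\}$. Because the erasure mask is independent of $W_k$ and $W_k$ is uniform on $\C_k$, the posterior distribution of $W_k$ given $Y_k^{\prime} = y^{\prime}$ is uniform on the set of codewords compatible with the observed unerased bits,
\[
\C_k(y^{\prime}) \triangleq \left\{w \in \C_k : w_e = y_e^{\prime} \text{ for all } e \in I_k\right\} .
\]
Writing $S(b) \triangleq \{w \in \C_k(y^{\prime}) : w_1 = b\}$ for $b \in \mathbb{F}_2$, the bit-wise posterior is therefore $P(X_{0,0} = b \mid Y_k^{\prime} = y^{\prime}) = |S(b)|/(|S(0)| + |S(1)|)$, and in particular the ML decoder has conditional error probability $\tfrac{1}{2}$ iff $|S(0)| = |S(1)| > 0$.

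Next I would exploit the $\mathbb{F}_2$-linearity of $\C_k$ to reduce the equality $|S(0)| = |S(1)| > 0$ to the existence of the codeword described in the statement. In one direction, if both $S(0)$ and $S(1)$ are non-empty and we pick $w^{(0)} \in S(0)$, $w^{(1)} \in S(1)$, then $v \triangleq w^{(0)} + w^{(1)} \in \C_k$ satisfies $v_1 = 1$ and $v_e = 0$ for every $e \in I_k$. Conversely, given such a $v$, the map $w \mapsto w + v$ is an involutive bijection between $S(0)$ and $S(1)$, forcing $|S(0)| = |S(1)|$ and ensuring both are non-empty as soon as one of them is.

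Finally I would put these pieces together. Since the true $W_k \in \C_k(y^{\prime})$, at least one of $S(0), S(1)$ is always non-empty. Thus, if no codeword $v \in \C_k$ with $v_1 = 1$ and $v_e = 0$ on $I_k$ exists, then exactly one of $S(0), S(1)$ is empty, the posterior is concentrated on the true value of $X_{0,0}$, and the ML decoder succeeds almost surely given $Y_k^{\prime}$. If such a $v$ does exist, then $|S(0)| = |S(1)| > 0$, the posterior is $\bigl(\tfrac{1}{2}, \tfrac{1}{2}\bigr)$, and every decision rule (in particular any ML rule, regardless of tie-breaking) errs with conditional probability $\tfrac{1}{2}$. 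This establishes the claimed equivalence.

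There is no real obstacle here; the only subtlety worth flagging is the independence of the erasure pattern from $W_k$, which is what justifies the uniformity of $W_k$ on $\C_k(y^{\prime})$ after conditioning and reduces the problem to a purely combinatorial count inside the linear code $\C_k$.
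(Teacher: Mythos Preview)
Your proposal is correct and is exactly the standard coding-theoretic argument for bit-wise ML decoding over the BEC. The paper does not give its own proof of this lemma; it is stated as a well-known fact with a citation to \cite[Section 3.2]{RichardsonUrbanke2008}, and your write-up is precisely the argument one finds there (uniform posterior over compatible codewords, then the coset/bijection trick via linearity).
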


We next illustrate that such a special codeword exists whenever two particular erasures occur. Let $e_1 \in E_k$ and $e_2 \in E_k$ denote the edges $(X_{k-1,0},X_{k,0})$ and $(X_{k-1,k-1},X_{k,k})$ in the 2D regular grid, respectively. Consider the vector $\omega^k \in \mathbb{F}_2^{|E_k| + 1}$ such that $\omega^k_1 = 1$ (i.e. the first bit is $1$), $\omega^k_{e_1} = \omega^k_{e_2} = 1$, and all other elements of $\omega^k$ are $0$. Then, $\omega^k \in \C_k$ because:
\begin{align*}
H_k \, \omega^k & = \left[ \begin{array}{cccc} 1 & \text{---} & b_{k,0,e} & \text{---} \\ 0 & \text{---} & b_{k,1,e} & \text{---} \\ \vdots & & \vdots & \\ 0 & \text{---} & b_{k,k-1,e} & \text{---} \\ 1 & \text{---} & b_{k,k,e} & \text{---} \end{array} \right] \omega^k \\
& = \left[ \begin{array}{c} 1 \oplus b_{k,0,e_1} \oplus b_{k,0,e_2} \\ b_{k,1,e_1} \oplus b_{k,1,e_2} \\ \vdots \\ b_{k,k-1,e_1} \oplus b_{k,k-1,e_2} \\ 1 \oplus b_{k,k,e_1} \oplus b_{k,k,e_2} \end{array} \right] \\
& = \0 
\end{align*} 
where we use the facts that $b_{k,0,e_1} = 1$, $b_{k,0,e_2} = 0$, $b_{k,k,e_1} = 0$, $b_{k,k,e_2} = 1$, and for any $j \in \{1,\dots,k-1\}$, $b_{k,j,e_1} = 0$ and $b_{k,j,e_2} = 0$. (Note that the value of $b_{k,j,e_i}$ for $i \in \{0,1\}$ and $j \in [k+1]$ is determined by checking the dependence of vertex $X_{k,j}$ on the variable $Z_{e_i}$ in \eqref{Eq: Sum Representation of Nodes}, which is straightforward because $e_i$ is an edge between the last two layers at the side of the 2D regular grid up to level $k$). Since $\omega^k$ has two $1$'s at the indices $e_1$ and $e_2$ (besides the first bit), if the BECs corresponding to the indices $e_1$ and $e_2$ erase their inputs, i.e. $e_1,e_2 \notin I_k$, then $\omega^k \in \C_k$ satisfies the conditions of Lemma \ref{Lemma: Bit-wise ML Decoding} and the ML decoder for $X_{0,0}$ based on $Y_k^{\prime}$ under the BEC model makes an error with probability $\frac{1}{2}$. Hence, we define the event:
\begin{align*}
B_k & \triangleq \left\{Y_{k,e_1}^{\prime} = Y_{k,e_2}^{\prime} = \mathsf{e}\right\} \\
& = \left\{\parbox[]{18em}{BECs corresponding to edges $e_1 \in E_k$ and $e_2 \in E_k$ erase their inputs}\right\} \\
& = \left\{\parbox[]{18em}{BSCs corresponding to edges $e_1 \in E_k$ and $e_2 \in E_k$ generate independent bits}\right\} .
\end{align*}
As the ML decoder for $X_{0,0}$ based on $Y_k^{\prime}$ under the BEC model makes an error with probability $\frac{1}{2}$ conditioned on $B_k$, we must have:
$$ P_{Y_k^{\prime}|X_{0,0}}(y^{\prime}|0) = P_{Y_k^{\prime}|X_{0,0}}(y^{\prime}|1) $$
for all realizations $y^{\prime} \in \{0,1,\mathsf{e}\}^{|E_k| + 1}$ of $Y_k^{\prime}$ such that $B_k$ occurs, i.e. $y^{\prime}_1 = y^{\prime}_{e_1} = y^{\prime}_{e_2} = \mathsf{e}$. This implies that $Y_k^{\prime}$ is conditionally independent of $X_{0,0}$ given $B_k$ (where we also use the fact that $X_{0,0}$ is independent of $B_k$). Furthermore, it is straightforward to verify that $Y_k$ is also conditionally independent of $X_{0,0}$ given $B_k$, because $Y_k$ can be obtained from $Y_k^{\prime}$ by replacing $\mathsf{e}$'s with completely independent $\Ber\big(\frac{1}{2}\big)$ bits. Thus, since \eqref{Eq: X_k as deterministic function of Y_k} shows that $X_k$ is a deterministic function of $Y_k$, $X_k$ is conditionally independent of $X_{0,0}$ given $B_k$.

To finish the proof, notice that $\P(B_k) = (2\delta)^2$ for every $k$, and the events $\{B_k : k = 2^m, \, m \in \N\backslash\!\{0\}\}$ are mutually independent because the BSCs in the 2D regular grid are all independent. So, infinitely many of the events $\{B_k : k = 2^m, \, m \in \N\backslash\!\{0\}\}$ occur almost surely by the second Borel-Cantelli lemma. Let us define:
$$ \forall n \in \N\backslash\!\{0\}, \enspace A_n \triangleq \bigcup_{m = 1}^{n}{B_{2^m}} $$ 
where the continuity of the underlying probability measure $\P$ yields $\lim_{n \rightarrow \infty}{\P(A_n)} = 1$. Then, since $X_k$ is conditionally independent of $X_{0,0}$ given $B_k$, and $X_{r}$ is conditionally independent of $X_{0,0}$ and $B_k$ given $X_k$ for any $r > k$, we have that $X_{2^m}$ is conditionally independent of $X_{0,0}$ given $A_m$ for every $m \in \N\backslash\!\{0\}$. Hence, we obtain:
$$ \forall m\in\N\backslash\!\{0\}, \enspace \P\!\left(h_{\mathsf{ML}}^{2^m}(X_{2^m}) \neq X_{0,0} \,\middle|\,A_m \right) = \frac{1}{2} $$
where $h_{\mathsf{ML}}^{k} : \mathbb{F}_2^{k+1} \rightarrow \mathbb{F}_2$ denotes the ML decoder for $X_{0,0}$ based on $X_{k}$ for the XOR 2D regular grid reconstruction problem in \eqref{Eq: Xor Grid Matrix Version}. Finally, observe that:
\begin{align*}
& \lim_{m \rightarrow \infty}{\P\!\left(h_{\mathsf{ML}}^{2^m}(X_{2^m}) \neq X_{0,0} \right)} \\
& \qquad = \lim_{m \rightarrow \infty} \P\!\left(h_{\mathsf{ML}}^{2^m}(X_{2^m}) \neq X_{0,0} \,\middle|\,A_m \right) \P(A_m) \\
& \qquad \qquad \quad \enspace + \P\!\left(h_{\mathsf{ML}}^{2^m}(X_{2^m}) \neq X_{0,0} \,\middle|\,A_m^{c} \right) \P(A_m^{c}) \\
& \qquad = \lim_{m \rightarrow \infty}{\P\!\left(h_{\mathsf{ML}}^{2^m}(X_{2^m}) \neq X_{0,0} \,\middle|\,A_m \right)} \\
& \qquad = \frac{1}{2} \, .
\end{align*}
This completes the proof since the above condition establishes \eqref{Eq: Deterministic Impossibility of Reconstruction}.   
\end{proof}

\renewcommand{\proofname}{Proof}

\section{Martingale Approach for 2D Regular Grid with NAND Processing Functions}
\label{Martingale Approach for NAND Processing Functions}

\begin{figure*}[t]
\centering
\includegraphics[trim = 21mm 60mm 30mm 72mm, clip, width=0.85\linewidth]{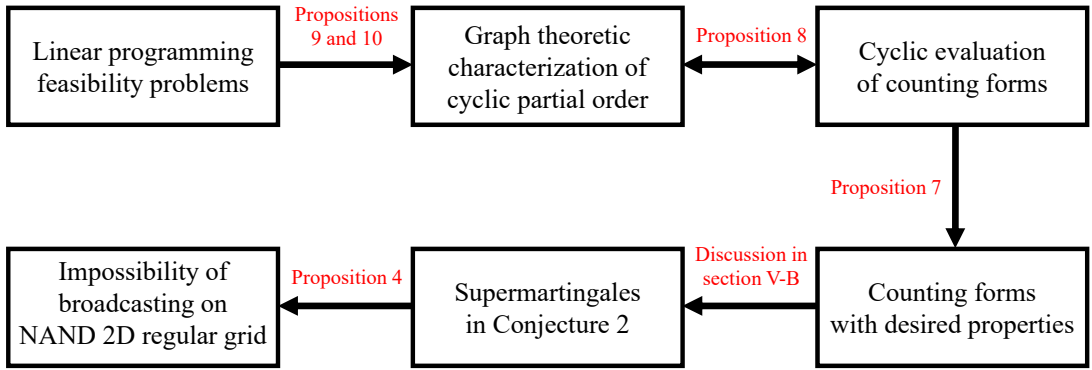} 
\caption{A high-level schematic of the proposed martingale-based approach for establishing the impossibility of broadcasting on 2D regular grids with NAND processing functions.}
\label{Figure: Martingale method}
\end{figure*}

Finally, we consider the 2D regular grid where all Boolean processing functions with two inputs are the NAND rule, and all Boolean processing functions with one input are the identity rule, i.e. $f_{2}(x_1,x_2) = \neg(x_1 \wedge x_2)$ and $f_{1}(x) = x$, where $\neg$ denotes the logical NOT operation. In this section, we will illustrate a promising program for proving Conjecture \ref{Conj: NAND 2D Regular Grid}; this program is easily \emph{applicable to other Boolean processing functions}, e.g., IMP, but we do not consider other functions here for brevity. In particular, we will establish Theorem \ref{Thm: Sufficient Condition for NAND 2D Regular Grid}, derive results required to generate the accompanying numerical evidence in Table \ref{Table: LP Solutions}, and in the process, present several peripheral results for completeness.

To guide the readers, we briefly outline the ensuing subsections. In subsection \ref{Existence of Supermartingale}, we will describe the Markovian coupling setup of subsection \ref{Partial Impossibility Result for NAND 2D Regular Grid}, and then show that the existence of certain structured supermartingales implies Conjecture \ref{Conj: NAND 2D Regular Grid}. In subsection \ref{Potential Functions}, inspired by \cite{HolroydMarcoviciMartin2019}, we will introduce counting forms and elucidate how they define the structured superharmonic potential functions that produce the aforementioned supermartingales. Then, to efficiently test the desired structural properties of counting forms, we will develop cyclic evaluation of counting forms and derive corresponding graph theoretic characterizations in subsection \ref{Graph Theoretic Characterization}. With these required pieces in place, we will prove Theorem \ref{Thm: Sufficient Condition for NAND 2D Regular Grid} in subsection \ref{Proof of Theorem Sufficient Condition for NAND 2D Regular Grid}. Lastly, we will transform the graph theoretic tests for the desired superharmonic potential functions in subsection \ref{Graph Theoretic Characterization} into simple LPs in subsection \ref{Linear Programming Criteria}. These LPs are then solved to generate Table \ref{Table: LP Solutions}. This chain of ideas is illustrated in Figure \ref{Figure: Martingale method} so that readers may refer back to it as they proceed through this section.

\subsection{Existence of Supermartingale}
\label{Existence of Supermartingale}

As mentioned in subsection \ref{Partial Impossibility Result for NAND 2D Regular Grid}, similar to our proof of Theorem \ref{Thm: Deterministic And Grid} in section \ref{Analysis of Deterministic And Grid}, we begin by constructing a Markovian coupling. Let $\{X^+_k : k \in \N\}$ and $\{X^-_k : k \in \N\}$ denote versions of the Markov chain $\{X_k : k \in \N\}$ initialized at $X^+_0 = 1$ and $X^-_0 = 0$, respectively. Moreover, define the coupled 2D grid variables $\{Y_{k,j} = (X_{k,j}^-,X_{k,j}^+) : k \in \N, \, j \in [k+1]\}$, which yield the Markovian coupling $\{Y_k = (Y_{k,0},\dots,Y_{k,k}) : k \in \N\}$. Recall from \eqref{Eq: BSC Decomposition} that each edge $\mathsf{BSC}(\delta)$ either copies its input bit with probability $1 - 2\delta$, or generates an independent $\Ber\big(\frac{1}{2}\big)$ output bit with probability $2\delta$. As before, the Markovian coupling $\{Y_k : k \in \N\}$ ``runs'' its ``marginal'' Markov chains $\{X^+_k : k \in \N\}$ and $\{X^-_k : k \in \N\}$ on a common underlying 2D regular grid so that along any edge BSC, either both inputs are copied with probability $1 - 2\delta$, or a shared independent $\Ber\big(\frac{1}{2}\big)$ output bit is produced with probability $2\delta$. We will analyze the Markov chain $\{Y_k : k \in \N\}$ in the remainder of this section.

To simplify our analysis, we will again keep track of $\{Y_k : k \in \N\}$ in a single coupled 2D regular grid. The edge configuration of the underlying graph of the coupled 2D regular grid is exactly as described in subsection \ref{Deterministic 2D Grid Model}. The vertices of the coupled 2D regular grid are indexed by the coupled 2D regular grid variables $\{Y_{k,j} \in \Y : k \in \N, \, j \in [k+1] \}$, where we simplify the alphabet set of these variables from $\{(0,0),(0,1),(1,0),(1,1)\}$ to:
\begin{equation}
\label{Eq: The coupled NAND alphabet}
\Y \triangleq \left\{0,1,u\right\}
\end{equation}
such that for any $k \in \N$ and any $j \in [k+1]$, $Y_{k,j} = 0$ only if $X_{k,j}^- = X_{k,j}^+ = 0$, $Y_{k,j} = 1$ only if $X_{k,j}^- = X_{k,j}^+ = 1$, and $Y_{k,j} = u$ means that either $X_{k,j}^- \neq X_{k,j}^+$ or it is ``unknown'' whether $X_{k,j}^- = X_{k,j}^+$.\footnote{Equivalently, we let $0 = (0,0)$ and $1 = (1,1)$ with abuse of notation, and we let $u$ represent the unknown (or uncoupled) state.} So, $Y_{k,j} = u$ represents that the variables $X_{k,j}^-$ and $X_{k,j}^+$ are uncoupled; in particular, we do not distinguish between the possible values the pair $(X_{k,j}^-,X_{k,j}^+)$ might have. Furthermore, much like \eqref{Eq: Coupled BSC}, each shared edge $\mathsf{BSC}(\delta)$ of the coupled 2D regular grid is described by a row stochastic matrix, or channel, on the alphabet $\Y$:
\begin{equation}
\label{Eq: Coupled BSC 2} 
W = \bbordermatrix{
		& 0 & u & 1 \cr
      0 & 1-\delta & 0 & \delta \cr
      u & \delta & 1-2\delta & \delta \cr
      1 & \delta & 0 & 1-\delta \cr } ,
\end{equation}
and $W$ appropriately captures the aforementioned Markovian coupling. Lastly, the NAND rule can be equivalently described on the alphabet $\Y$ as:\footnote{Note that the NAND of two $u$'s could be $1$, e.g. $(1,0)$ and $(0,1)$ pass through a NAND gate to produce $(1,1)$. However, we conservatively treat the NAND of two $u$'s as unknown, i.e. $u$.}
\begin{equation}
\label{Eq: Coupled Nand} 
\begin{array}{|c|c|c|}
\hline
y_1 & y_2 & \neg(y_1 \wedge y_2) \\
\hline
0 & \star & 1 \\
u & u & u \\
u & 1 & u \\
1 & 1 & 0 \\
\hline
\end{array}
\end{equation}
where $\star$ denotes any letter in $\Y$ as before, and the symmetry of the NAND rule covers all other possible input combinations. This coupled 2D regular grid model completely characterizes the Markov chain $\{Y_k : k \in \N\}$, which starts at $Y_0 = u$ almost surely.

For every $k \in \N$, define the random variable:
\begin{equation}
\label{Eq: N_k rv}
N_k \triangleq \sum_{j = 0}^{k}{\I\!\left\{Y_{k,j} = u\right\}}
\end{equation}
which gives the number of $u$'s at level $k$ in the coupled 2D regular grid, where $\I\{\cdot\}$ denotes the indicator function which equals $1$ if its input proposition is true, and $0$ otherwise. The next lemma illustrates that the almost sure convergence of $\{N_k : k \in\N\}$ to zero implies Conjecture \ref{Conj: NAND 2D Regular Grid}.

\begin{lemma}[Vanishing $u$'s Condition]
\label{Lemma: Vanishing u's Condition}
For any noise level $\delta \in \big(0,\frac{1}{2}\big)$, if we have $\lim_{k \rightarrow \infty}{N_k} = 0$ almost surely, then Conjecture \ref{Conj: NAND 2D Regular Grid} is true, i.e. $\lim_{k \rightarrow \infty}{\|P_{X_k}^+ - P_{X_k}^-\|_{\mathsf{TV}}} = 0$. 
\end{lemma}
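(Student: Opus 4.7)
The plan is to reduce the vanishing of the TV distance to the vanishing of $\{N_k\}$ via a standard coupling argument, in the same spirit as the opening of the proof of Theorem~\ref{Thm: Deterministic And Grid}. The key observation is that the alphabet convention forces $\{N_k = 0\} \subseteq \{X_k^+ = X_k^-\}$: whenever every $Y_{k,j}$ lies in $\{0,1\}$, the definition of the coupled state immediately gives $X_{k,j}^- = X_{k,j}^+$ for every $j \in [k+1]$. Combining this with Dobrushin's maximal-coupling bound we obtain
\begin{equation*}
\left\|P_{X_k}^+ - P_{X_k}^-\right\|_{\mathsf{TV}} \leq \P\!\left(X_k^+ \neq X_k^-\right) \leq \P(N_k \geq 1),
\end{equation*}
so it suffices to show that $\P(N_k \geq 1) \to 0$ as $k \to \infty$.

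Next, I would observe, directly from the coupled edge channel \eqref{Eq: Coupled BSC 2} and the coupled NAND table \eqref{Eq: Coupled Nand}, that a $u$ cannot be spontaneously created from non-$u$ inputs: the channel $W$ sends the alphabet $\{0,1\}$ into $\{0,1\}$ almost surely, and NAND applied to any pair of inputs in $\{0,1\}$ lies in $\{0,1\}$. Therefore, on the event $\{N_k = 0\}$ every vertex at level $k+1$ (including the two boundary vertices, which merely apply the identity after the BSC) also lies in $\{0,1\}$, so the events $\{N_k = 0\}$ are non-decreasing in $k$. By continuity of probability,
\begin{equation*}
\lim_{k \to \infty} \P(N_k = 0) = \P\!\left(\exists \, k \in \N, \, N_k = 0\right).
\end{equation*}
The hypothesis $N_k \to 0$ almost surely, together with the fact that each $N_k$ is non-negative and integer-valued, implies that $N_k = 0$ for all sufficiently large $k$ on an almost sure event, and hence $\P(\exists k, \, N_k = 0) = 1$. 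This yields $\P(N_k \geq 1) \to 0$ and closes the argument.

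I expect the main obstacle to lie not in this lemma itself, since the reduction above is elementary, but in verifying its hypothesis $N_k \to 0$ almost surely, which is exactly the task taken up by the supermartingale construction in the subsequent subsections. The role of the present lemma is simply to certify that, once such a supermartingale-based control of the number of uncoupled sites is available, it will translate into the desired TV-distance statement of Conjecture~\ref{Conj: NAND 2D Regular Grid}.
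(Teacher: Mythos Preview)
Your argument is correct and follows essentially the same route as the paper: both use the coupling inequality $\|P_{X_k}^+ - P_{X_k}^-\|_{\mathsf{TV}} \leq \P(X_k^+ \neq X_k^-)$, the inclusion $\{N_k = 0\} \subseteq \{X_k^+ = X_k^-\}$, monotonicity of $\{N_k = 0\}$ via \eqref{Eq: Coupled BSC 2} and \eqref{Eq: Coupled Nand}, continuity of measure, and integer-valuedness of $N_k$ to pass from $N_k \to 0$ a.s.\ to $N_k = 0$ eventually. The paper packages the last steps as showing the set equality $\{\lim_{k\to\infty} N_k = 0\} = \{\exists k,\, N_k = 0\}$, whereas you go directly through $\P(N_k \geq 1) \to 0$, but these are the same argument.
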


\begin{proof}
Define the event:
\begin{align*}
A & \triangleq \left\{\exists k \in \N, \, \forall j \in [k+1], \enspace Y_{k,j} \in \{0,1\}\right\} \\
& = \left\{\parbox[]{18em}{$\exists k \in \N$, there are no $u$'s in level $k$ of the coupled 2D regular grid}\right\} ,
\end{align*}
and observe that, as shown at the outset of the proof of Theorem \ref{Thm: Deterministic And Grid} in section \ref{Analysis of Deterministic And Grid}, we have:
\begin{align*}
\lim_{k \rightarrow\infty}{\left\|P_{X_k}^+ - P_{X_k}^-\right\|_{\mathsf{TV}}} & \leq 1 - \lim_{k \rightarrow \infty}{\P\!\left(X_k^+ = X_k^- \right)} \\
& = 1 - \P\!\left(A\right) 
\end{align*}
where we use \eqref{Eq: Coupled BSC 2}, \eqref{Eq: Coupled Nand}, and the continuity of probability measures. Therefore, similar to section \ref{Analysis of Deterministic And Grid}, it suffices to prove that if $\P(\lim_{k \rightarrow \infty}{N_k} = 0) = 1$, then $\P(A) = 1$. To establish this implication, we next show that $\big\{\!\lim_{k \rightarrow \infty}{N_k} = 0\big\} = A$.

Indeed, suppose $\lim_{k \rightarrow \infty}{N_k} = 0$. Then, there exists $K \in \N$ such that for all $k \in \N\backslash[K+1]$, $N_k \leq \frac{1}{2}$. However, since $N_m \in \N$ for all $m \in \N$, we must have $N_k = 0$ for all $k \in \N\backslash[K+1]$. This implies that $A$ occurs. Conversely, suppose $A$ occurs, i.e. there exists a level $k \in \N$ with no $u$'s. Then, $N_k = 0$, and due to \eqref{Eq: Coupled BSC 2} and \eqref{Eq: Coupled Nand}, $N_m = 0$ for all $m \geq k$. Hence, $\lim_{k \rightarrow \infty}{N_k} = 0$. This completes the proof.	
\end{proof}

Due to Lemma \ref{Lemma: Vanishing u's Condition}, we now focus our attention on establishing the almost sure convergence of $\{N_k : k \in\N\}$ to zero. A general approach to establishing such almost sure convergence of random variables is via \textit{Doob's martingale convergence theorem}, cf. \cite[Chapter V, Theorem 4.1]{Cinlar2011}. However, since we do not have a martingale in our problem, we need to construct one from our Markov chain $\{Y_k : k \in \N\}$. It is well-known in probability theory that ergodicity and related properties of discrete-time time-homogeneous Markov chains with countable state spaces can be analyzed using tools from the intimately related fields of martingale theory, Lyapunov theory, and potential theory (see e.g. \cite[Chapter 5]{Bremaud1999}). For example, \textit{Foster's theorem}, cf. \cite[Theorem 1.1, Chapter 5]{Bremaud1999}, characterizes the positive recurrence of Markov chains via the existence of \textit{Lyapunov functions} with certain properties. It is also closely related to martingale convergence based criteria for recurrence of Markov chains. Indeed, such martingale arguments typically construct a martingale from the Markov chain under consideration. The canonical approach of doing this is to apply a \textit{harmonic function}, which is an eigen-function of the Markov chain's conditional expectation operator with eigenvalue $1$, to the random variables defining the Markov chain\textemdash this is a specialization of the so called \textit{Dynkin martingale} (or \textit{L\'{e}vy's martingale}). Likewise, a supermartingale can be constructed by applying a superharmonic function to the Markov chain. The study of (super)harmonic functions in classical analysis is known as potential theory, and variants of harmonic functions turn out to be precisely the desired Lyapunov functions of Foster's theorem. 

The ensuing conjecture presents a particularly useful family of superharmonic functions with special properties that can be used to construct a supermartingale from $\{Y_k : k \in \N\}$.

\begin{conjecture}[Existence of Supermartingale]
\label{Conj: Martingale Construction}
Let $\Y^* = \cup_{k \in \N\backslash\!\{0\}}{\Y^k}$ denote the set of all non-empty finite-length strings with letters in $\Y$, and as in the proof of Theorem \ref{Thm: Deterministic And Grid} in section \ref{Analysis of Deterministic And Grid}, let $\{\F_k : k \in \N\}$ denote a filtration such that each $\F_k$ is the $\sigma$-algebra generated by the random variables $(Y_0,\dots,Y_k)$ and all the BSCs before level $k$ (where we include all events determining whether these BSCs are copies, and all events determining the independent bits they produce) for $k \in \N$. Then, there exists a parametrized family of Borel measurable \emph{superharmonic} functions $\big\{f_{\delta} : \Y^* \rightarrow \R \, \big| \, \delta \in \big(0,\frac{1}{2}\big) \big\}$ with parameter $\delta$ such that the discrete-time stochastic process $\{f_{\delta}(Y_{k}) : k \in \N\}$ satisfies the following properties for every $\delta \in \big(0,\frac{1}{2}\big)$:
\begin{enumerate}
\item $\{f_{\delta}(Y_{k}) : k \in \N\}$ is a \emph{supermartingale} adapted to $\{\F_k : k \in \N\}$, which implies that:
\begin{equation}
\label{Eq: Property 1}
\forall k \in \N, \enspace \E\!\left[f_{\delta}(Y_{k+1}) \middle| \F_k\right] = \E\!\left[f_{\delta}(Y_{k+1}) \middle| Y_k\right] \leq f_{\delta}(Y_{k})
\end{equation}
since $\{Y_k : k \in \N\}$ forms a Markov chain.
\item There exists a constant $C = C(\delta) > 0$ (which may depend on $\delta$) such that:\footnote{In fact, it would suffice to take $C = 1$ in \eqref{Eq: Property 2} (as used in our simulations).}
\begin{equation}
\label{Eq: Property 2}
\forall k \in \N, \enspace f_{\delta}(Y_{k}) \geq C N_k \enspace \text{almost surely} \, .
\end{equation}
\end{enumerate}
\end{conjecture}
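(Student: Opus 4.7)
The plan is to build $f_\delta$ from cyclic potential functions of the form introduced in Definition \ref{Def: Cyclic Potential Functions and Related Notions}, i.e., to set $f_\delta(y) = w_\delta[y]$ where $w_\delta = \sum_{j=1}^{m} \alpha_j \{v_j\}$ is a formal sum of short strings $v_1, \ldots, v_m \in \Y^*$, each of length at most $r-1$ for some fixed $r \in \N\backslash\!\{0,1\}$. The supermartingale property \eqref{Eq: Property 1} and the pointwise lower bound \eqref{Eq: Property 2} correspond precisely to conditions 2 and 3 of Theorem \ref{Thm: Sufficient Condition for NAND 2D Regular Grid}, while the $u$-only restriction (condition 1) guarantees that the local evaluation of the potential only ``sees'' windows that actually contain uncoupled sites, which is needed to control the boundary of the 2D grid.

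First, I would use the translation-invariant local structure of the coupled dynamics encoded in \eqref{Eq: Coupled BSC 2} and \eqref{Eq: Coupled Nand}: for any interior substring of length $s_j$ at level $k+1$, the conditional distribution given $Y_k$ depends only on the corresponding parent window of length $s_j + 1$ at level $k$. Thus, $\E[\{v_j\}(Y_{k+1}) \mid Y_k]$ expands as a sum over parent windows $z \in \Y^{s_j + 1}$ of $\P((Y_{s_j + 1, 1}, \ldots, Y_{s_j + 1, s_j}) = v_j \mid Y_{s_j} = z)\,\{z\}[Y_k]$, which is exactly the operator $\CE$. The cyclic (rather than linear) evaluation convention in Definition \ref{Def: Cyclic Potential Functions and Related Notions} is what makes boundary bookkeeping telescope cleanly; the two endpoints of each level in the true 2D grid, which are identity-processed rather than NAND-processed, contribute only a bounded additive discrepancy that is absorbed because $w_\delta$ is $u$-only (a boundary $u$ can only propagate from an interior $u$ at the previous level). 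Condition $w_\delta \cycgeq C\{u\}$ then yields $w_\delta[Y_k] \geq C N_k$, so that combining the supermartingale property with Doob's convergence theorem and a bit of analysis forces $N_k \to 0$ almost surely; Lemma \ref{Lemma: Vanishing u's Condition} closes the loop back to Conjecture \ref{Conj: NAND 2D Regular Grid}.

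The genuine content of the conjecture, however, is the existence of such a $w_\delta$ for every $\delta \in (0, \tfrac{1}{2})$. My plan is to reduce this to a linear feasibility problem: fix $r$ and parametrize the coefficients $\alpha_v$ over all $v \in \Y^{r-1}$ containing at least one $u$; using the graph theoretic characterization of $\cycgeq$ (to be developed in subsection \ref{Graph Theoretic Characterization}) and the resulting Proposition \ref{Prop: Linear Programming Criterion}, both $w_\delta - \CE(w_\delta) \cycgeq 0$ and $w_\delta \cycgeq C\{u\}$ become finitely many linear inequalities with coefficients that depend continuously (in fact polynomially) on $\delta$. The LP simulations tabulated in Table \ref{Table: LP Solutions}, combined with part 2 of Proposition \ref{Prop: Evans Schulman} which covers $\delta > (1 - 1/\sqrt{2})/2$, provide strong numerical evidence across the entire interval.

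The main obstacle, and the reason this remains a conjecture, is upgrading pointwise numerical feasibility to a rigorous existence statement uniformly in $\delta$. Two plausible routes suggest themselves. The first is to exhibit a closed-form family $\alpha(\delta)$ (likely piecewise rational in $\delta$) interpolating the numerically computed vectors $\alpha^*(\delta)$ in Table \ref{Table: LP Solutions} and verify all LP inequalities symbolically; the smoothness of the LP data in $\delta$ suggests that on each region where the basis of a basic feasible solution is constant, the $\alpha^*(\delta)$ depend rationally on $\delta$. The second, more robust route is to argue by continuity and compactness: show that the feasible set is nonempty on a dense subset of $(0, \tfrac{1}{2})$, is upper-hemicontinuous in $\delta$, and remains nonempty at limit points; the delicate point here is that as $\delta \to 0^+$ the noise contribution shrinks and one may need to allow $r = r(\delta)$ to grow, so uniform feasibility at fixed $r$ is unlikely and a truly parametric construction is needed. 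I expect this regime $\delta \to 0^+$ to be the hardest, since it is intuitively the farthest from the Evans--Schulman threshold and the potential function must capture increasingly long-range statistical restoration of the ground state by the NAND rule.
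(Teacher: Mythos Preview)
This statement is a \emph{conjecture}, and the paper does not prove it; rather, the paper develops exactly the program you outline (counting forms, the operator $\CE$, cyclic evaluation, the graph-theoretic and LP criteria) as a route toward a proof, provides numerical evidence via Table \ref{Table: LP Solutions}, and leaves the uniform-in-$\delta$ existence open. Your proposal is therefore not a proof but a faithful summary of the paper's own approach, and you correctly identify the remaining gap.

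Two small points of divergence are worth flagging. First, your account of the boundary handling is slightly off: the paper does not rely on cyclic evaluation to ``telescope'' the endpoints. Instead, it works with \emph{acyclic} evaluation on the restricted domain $\Y_r^*$ (strings whose first and last $r$ letters contain no $u$'s), introduces the stopping time $T = \inf\{k \geq r-1 : Y_k \in \Y_r^*\}$, shows $T < \infty$ almost surely by Borel--Cantelli, and runs the supermartingale argument in the conditional probability space given $\{T = m\}$. Cyclic non-negativity enters only as a \emph{sufficient} condition for acyclic non-negativity (Proposition \ref{Prop: Sufficient Conditions for Equivalence and Partial Order}) that happens to be computationally tractable. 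Second, for rigorizing feasibility across all $\delta$, the paper proposes a third route you do not mention: exploit that $\mathbf{C}_{r-1}(\delta)$ is a polynomial matrix in $\delta$, restrict $\phi(\delta)$ to be polynomial, and reformulate the resulting univariate polynomial non-negativity constraints on $[0,\tfrac{1}{2}]$ via the Markov--Luk\'{a}cs SOS characterization as a single SDP. Also, your expectation that $r = r(\delta)$ must grow as $\delta \to 0^+$ is not supported by the paper's numerics, which find feasible solutions at the fixed value $r = 4$ down to $\delta = 10^{-4}$.
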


The aforementioned properties should be reminiscent of the conditions in Theorem \ref{Thm: Sufficient Condition for NAND 2D Regular Grid}. If \textit{existence} of the supermartingale in Conjecture \ref{Conj: Martingale Construction} can be rigorously established, then the impossibility of broadcasting on 2D regular grids with NAND processing functions follows as a consequence. The next proposition justifies this implication.  

\begin{proposition}[Supermartingale Sufficient Condition]
\label{Prop: Martingale Condition}
If Conjecture \ref{Conj: Martingale Construction} is true, then Conjecture \ref{Conj: NAND 2D Regular Grid} is true. Equivalently, if $\delta \in \big(0,\frac{1}{2}\big)$, and there exists a Borel measurable superharmonic function $f_{\delta} : \Y^* \rightarrow \R$ such that $\{f_{\delta}(Y_{k}) : k \in \N\}$ is a supermartingale adapted to $\{\F_k : k \in \N\}$ and \eqref{Eq: Property 2} holds for some constant $C = C(\delta) > 0$, then broadcasting is impossible on the 2D regular grid with NAND processing functions in the sense of \eqref{Eq: Deterministic Impossibility of Reconstruction}. 
\end{proposition}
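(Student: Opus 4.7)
The plan is to combine Doob's supermartingale convergence theorem with a dynamical analysis of the coupled grid to derive $\lim_{k\to\infty} N_k = 0$ almost surely, which by Lemma \ref{Lemma: Vanishing u's Condition} yields Conjecture \ref{Conj: NAND 2D Regular Grid}. First, since $f_\delta(Y_k) \geq C N_k \geq 0$ almost surely by \eqref{Eq: Property 2} and $\{f_\delta(Y_k)\}$ is a supermartingale by \eqref{Eq: Property 1}, Doob's martingale convergence theorem guarantees $f_\delta(Y_k) \to f_\infty$ almost surely, with $\E[f_\infty] \leq \E[f_\delta(Y_0)] < \infty$, so $f_\infty$ is almost surely finite. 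Setting $D_k \triangleq f_\delta(Y_k) - \E[f_\delta(Y_{k+1}) \mid \F_k] \geq 0$, a telescoping computation gives $\sum_{k \in \N} \E[D_k] \leq \E[f_\delta(Y_0)] < \infty$, hence $\sum_k D_k < \infty$ almost surely.

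Next, I would observe that $\{N_k = 0\}$ is absorbing for the Markov chain $\{Y_k\}$: from the channel \eqref{Eq: Coupled BSC 2}, neither a $0$ nor a $1$ can produce a $u$, and \eqref{Eq: Coupled Nand} cannot generate a $u$ when all its inputs lie in $\{0,1\}$. Therefore $\{\lim_{k\to\infty} N_k = 0\} = \{\exists k : N_k = 0\}$. The key step is then to establish a uniform drift inequality: there exists $\epsilon = \epsilon(\delta) > 0$ such that $D_k \geq \epsilon$ almost surely on the event $\{N_k \geq 1\}$. Granted this, finiteness of $\sum_k \E[D_k]$ forces $\sum_k \P(N_k \geq 1) < \infty$, the first Borel--Cantelli lemma gives $\P(N_k \geq 1 \text{ i.o.}) = 0$, and the absorbing property upgrades this to $N_k \to 0$ almost surely, completing the proof.

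The main obstacle is the uniform drift lower bound, which is not a direct consequence of the bare superharmonic hypothesis on $f_\delta$. To secure it, one must exploit the coupled grid's local dynamics: whenever a vertex at layer $k$ holds value $u$, each of its outgoing edge BSCs independently injects a uniform random bit with probability $2\delta$, producing a $0$ there with probability $\delta$; combined with the NAND rule's property that any input pair containing a $0$ deterministically outputs $1 \in \{0,1\}$, this gives a positive local probability, bounded away from zero in $\delta$, of converting a $u$ into a definite bit. Parlaying this local event into a uniform decrease $\epsilon$ of the global potential $f_\delta$ will likely require refining the hypothesis to match Theorem \ref{Thm: Sufficient Condition for NAND 2D Regular Grid} (in particular, taking $f_\delta$ to be $u$-only so that it vanishes precisely on the absorbing set and its positivity tracks the presence of $u$'s), together with the cyclic potential function and graph-theoretic machinery developed in the subsequent subsections. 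It is in this refinement and the associated case analysis that the bulk of the technical work resides.
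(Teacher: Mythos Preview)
Your opening moves are sound and match the paper: non-negativity of $f_\delta(Y_k)$ from \eqref{Eq: Property 2} feeds Doob's convergence theorem, the limit $f_\infty$ is integrable, and hence $N_\infty \triangleq \limsup_k N_k$ satisfies $\E[N_\infty] < \infty$. The absorbing property of $\{N_k = 0\}$ is also correct and used in the paper.

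However, your ``key step'' --- the uniform drift bound $D_k \geq \epsilon$ on $\{N_k \geq 1\}$ --- is a genuine gap, and you correctly identify it as such. The point is that the proposition is stated for an \emph{arbitrary} $f_\delta$ satisfying \eqref{Eq: Property 1} and \eqref{Eq: Property 2}, and for such a function there is no reason the one-step decrement should be uniformly bounded below. Your proposed fix (strengthening hypotheses to the $u$-only cyclic structure of Theorem \ref{Thm: Sufficient Condition for NAND 2D Regular Grid}) would prove a different, weaker statement than the proposition as written.

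The paper avoids this issue entirely by \emph{not} seeking strict decrease of the potential. Instead, it argues by contradiction that $\E[N_\infty] > 0$ is impossible. From $\E[N_\infty] < \infty$ and Markov's inequality, with probability at least $1-\epsilon$ there is a level beyond which $N_k \leq 2\E[N_\infty]/\epsilon$ for all $k$. One then defines stopping times $T_k$ marking the $k$th visit to $\{N_m \leq 2\E[N_\infty]/\epsilon\}$, and events $A_{k,0}$ that every outgoing BSC from a $u$ at level $T_k$ generates an independent bit (which kills all $u$'s at level $T_k + 1$). Since there are at most $2\E[N_\infty]/\epsilon$ such $u$'s, $\P(A_{k+1,0} \mid A_{k,0}^c)$ is bounded below by a fixed power of $2\delta$. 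A ``counterpart'' Borel--Cantelli argument (on the nested events $A_{k,0}$) then yields $\P(\cup_k A_{k,0}) = 1$, which combined with $\P(\forall k,\, T_k < \infty) \geq 1-\epsilon$ gives $\P(N_\infty = 0) \geq 1-\epsilon$ for every $\epsilon$, contradicting $\E[N_\infty] > 0$. In short: the supermartingale is used \emph{only} to show $N_k$ is eventually bounded, and the actual extinction of $u$'s is driven directly by the BSC dynamics, not by a drift in $f_\delta$.
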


\begin{proof}
Fix any noise level $\delta \in \big(0,\frac{1}{2}\big)$, and consider the supermartingale $\{f_{\delta}(Y_{k}) : k \in \N\}$ (see property \eqref{Eq: Property 1}). Letting $X^- \triangleq -\min\{X,0\}$ denote the ``negative part'' of a random variable $X$, observe that:
$$ 0 = \sup_{k \in \N}{\E\!\left[f_{\delta}(Y_k)^- \right]} < +\infty $$
because \eqref{Eq: Property 2} implies that for all $k \in \N$, $f_{\delta}(Y_{k}) \geq 0$ almost surely since $N_k \geq 0$ almost surely. Hence, applying Doob's martingale convergence theorem, cf. \cite[Chapter V, Theorem 4.1]{Cinlar2011}, we have that the limiting random variable:
$$ F_{\delta} \triangleq \lim_{k \rightarrow \infty}{f_{\delta}(Y_k)} \geq 0 $$ 
exists (almost surely) and is integrable, i.e. $\E[F_{\delta}] < +\infty$. Define the limiting random variable:
$$ N_{\infty} \triangleq \limsup_{k \rightarrow \infty}{N_k} \geq 0 $$ 
(almost surely). Then, taking limits in \eqref{Eq: Property 2}, we have that:
$$ F_{\delta} \geq C N_{\infty} \enspace \text{almost surely} $$
which in turn produces:
$$ \E\!\left[F_{\delta}\right] \geq C \, \E\!\left[N_{\infty}\right] . $$
Thus, we get that $\E[N_{\infty}] < +\infty$. Now notice that it suffices to prove that $N_{\infty} = 0$ almost surely, since we can then employ Lemma \ref{Lemma: Vanishing u's Condition} to establish Conjecture \ref{Conj: NAND 2D Regular Grid}, and hence, this proposition.

We next prove that $N_{\infty} = 0$ almost surely. Observe that if $\E[N_{\infty}] = 0$, then $N_{\infty} = 0$ almost surely, because the expectation of a non-negative random variable is zero when the random variable itself is zero. Thus, we assume, for the sake of contradiction, that $\E[N_{\infty}] > 0$. Using Markov's inequality and the definition of superior limits, we obtain that for all $t > 0$:
\begin{align*}
& \P\!\left(\exists K = K(t) \in \N, \forall k \in \N\backslash[K+1], \, N_k \leq 2t\right) \\
& \qquad \qquad \qquad \qquad \qquad \qquad \qquad \qquad \geq \P\!\left(N_{\infty} \leq t\right) \\
& \qquad \qquad \qquad \qquad \qquad \qquad \qquad \qquad \geq 1 - \frac{\E\!\left[N_{\infty}\right]}{t} \, . 
\end{align*}
Fix any (small) $\epsilon \in (0,1)$, and let $t = \E[N_{\infty}]/\epsilon$. Then, the previous inequality can be written as:
\begin{equation}
\label{Eq: High Prob Event}
\begin{aligned}
& \P\!\left(\exists K = K(\epsilon) \in \N, \forall k \in \N\backslash[K+1], \, N_k \leq \frac{2 \, \E[N_{\infty}]}{\epsilon}\right) \\
& \qquad \qquad \qquad \qquad \qquad \qquad \qquad \qquad \qquad \qquad \geq 1 - \epsilon \, . 
\end{aligned}
\end{equation}
Now, for every $k \in \N\backslash\!\{0\}$, define the stopping time (with respect to the filtration $\{\F_k : k \in \N\}$):
\begin{align*}
T_k \triangleq \inf\bigg\{m \in \N : & \, \exists \, m_1 <\dots < m_{k} = m, \, \forall i \in \{1,\dots,k\}, \\
& \, N_{m_i} \leq \frac{2 \, \E[N_{\infty}]}{\epsilon}\bigg\} 
\end{align*}
which denotes the level index of the $k$th occasion when the number of $u$'s is bounded by $2 \E[N_{\infty}]/\epsilon$. Note that $T_k = +\infty$ for outcomes in the sample space where there do not exist $k$ levels $m_1 <\dots < m_{k}$ such that $N_{m_i} \leq 2 \E[N_{\infty}]/\epsilon$ for all $i \in \{1,\dots,k\}$. Moreover, for $i \in \{0,1\}$, define the corresponding sequences of events $\{A_{k,i} : k \in \N\backslash\!\{0\}\}$ as follows: 
$$ A_{k,i} \triangleq \left\{T_k = +\infty\right\} \cup \left\{\parbox[]{13em}{every $\mathsf{BSC}(\delta)$ on an outgoing edge starting at a vertex at level $T_k + i < +\infty$ that equals $u$ outputs an independent bit}\right\} $$
which contains all outcomes of the sample space where there are no $u$'s at level $T_k + i$. Observe that for every $k \in \N\backslash\!\{0\}$, $A_{k,0} \subseteq A_{k+1,0}$. Indeed, if $A_{k,0}$ occurs, then either $T_k = +\infty$ or there are no $u$'s at level $T_k + 1$ due to \eqref{Eq: Coupled BSC 2} and \eqref{Eq: Coupled Nand}. In the former case, we have $T_{k+1} = +\infty$, and in the latter case, there are no $u$'s at level $T_{k+1}$ due to \eqref{Eq: Coupled BSC 2} and \eqref{Eq: Coupled Nand}. So, $A_{k+1,0}$ must occur. 

It is straightforward to verify that for every $k \in \N\backslash\!\{0\}$:
\begin{align}
\P\!\left(A_{k+1,0} \middle| A_{k,0}^{c}\right) & \geq \P\!\left(A_{k+1,0} \cap \{T_{k+1} < +\infty\} \middle| A_{k,0}^{c}\right) \nonumber \\
& = \P\!\left(A_{k+1,0} \middle| \{T_{k+1} < +\infty\} \cap A_{k,0}^{c}\right) \nonumber \\
& \quad \, \cdot \P\!\left(T_{k+1} < +\infty \middle| A_{k,0}^{c}\right) \nonumber \\
& \geq (2\delta)^{4 \, \E[N_{\infty}]/\epsilon} \, \P\!\left(T_{k+1} < +\infty \middle| A_{k,0}^{c}\right) 
\label{Eq: Pre-Lower Bound on Probability for Counterpart BC}
\end{align}
where to obtain \eqref{Eq: Pre-Lower Bound on Probability for Counterpart BC}, we argue that if $A_{k,0}^{c}$ occurs (which means that $T_k$ is finite and the collection of outgoing BSCs from level $T_k$ output at least one $u$) and $T_{k+1}$ is finite, then $\P(A_{k+1,0} \, | \, \{T_{k+1} < +\infty\} \cap A_{k,0}^{c})$ is the probability that the BSCs on the pairs of outgoing edges of $u$'s at level $T_{k + 1}$ produce independent bits. Since there are at most $2 \, \E[N_{\infty}]/\epsilon$ $u$'s at level $T_{k + 1}$, and the outgoing BSCs from these $u$'s are conditionally independent given $A_{k,0}^c$ and $\{T_{k+1} < +\infty\}$, we obtain the bound in \eqref{Eq: Pre-Lower Bound on Probability for Counterpart BC}. Next, notice that for every $k \in \N\backslash\!\{0\}$, conditioned on $A_{k,0}^c$ happening, we have the inclusion relation $A_{k,1} \subseteq \{T_{k+1} < +\infty\}$. Indeed, if $A_{k,0}^c$ occurs, then $T_k$ must be finite. Suppose $A_{k,1}$ also occurs. Then, either there are no $u$'s at level $T_k + 1$, or there are $u$'s at level $T_k + 1$ whose outgoing BSCs generate independent bits. In both cases, there are no $u$'s at level $T_k + 2$ (due to \eqref{Eq: Coupled BSC 2} and \eqref{Eq: Coupled Nand}). Hence, $T_{k+1}$ must be finite. Using this conditional inclusion relation, continuing from \eqref{Eq: Pre-Lower Bound on Probability for Counterpart BC}, we have that for every $k \in \N\backslash\!\{0\}$:\footnote{As shown later in \eqref{Eq: Finiteness of Stopping Times}, it is easy to use \eqref{Eq: High Prob Event} to lower bound $\P(T_{k+1} < +\infty)$, but this does not immediately lower bound the term $\P(T_{k+1} < +\infty | A_{k,0}^c)$ in \eqref{Eq: Pre-Lower Bound on Probability for Counterpart BC}. So, we must apply a different approach here.}
\begin{align}
\P\!\left(A_{k+1,0} \middle| A_{k,0}^{c}\right) & \geq (2\delta)^{4 \, \E[N_{\infty}]/\epsilon} \, \P\!\left(A_{k,1} \middle| A_{k,0}^{c}\right) \nonumber \\
& \geq (2\delta)^{4 \, \E[N_{\infty}]/\epsilon} \, (2\delta)^{8 \, \E[N_{\infty}]/\epsilon} \nonumber \\
& = (2\delta)^{12 \, \E[N_{\infty}]/\epsilon} 
\label{Eq: Lower Bound on Probability for Counterpart BC}
\end{align}
where to obtain \eqref{Eq: Lower Bound on Probability for Counterpart BC}, we argue akin to \eqref{Eq: Pre-Lower Bound on Probability for Counterpart BC} that if $A_{k,0}^{c}$ occurs, then $\P(A_{k,1} | A_{k,0}^{c})$ is the probability that the BSCs on the pairs of outgoing edges of $u$'s at level $T_{k} + 1$ produce independent bits. Since there are at most $4 \, \E[N_{\infty}] / \epsilon$ $u$'s at level $T_k + 1$ due to \eqref{Eq: Coupled BSC 2} and \eqref{Eq: Coupled Nand} (because there are at most $2 \, \E[N_{\infty}] / \epsilon$ $u$'s at level $T_k$ that can propagate to the next level via two outgoing edges each), and the outgoing BSCs from these $u$'s are conditionally independent given $A_{k,0}^c$, we obtain the bound in \eqref{Eq: Lower Bound on Probability for Counterpart BC}.

To conclude the proof, we will employ the ``counterpart of the Borel-Cantelli lemma,'' cf. \cite[Lemma 1]{Bruss1980}. To this end, notice using \eqref{Eq: Lower Bound on Probability for Counterpart BC} that:
$$ \sum_{k = 1}^{\infty}{\P\!\left(A_{k+1,0}\middle|A_{k,0}^c\right)} \geq  \sum_{k = 1}^{\infty}{(2\delta)^{12 \, \E[N_{\infty}]/\epsilon}} = +\infty \, . $$
Then, since $\{ A_{k,0} : k \in \N\backslash\!\{0\}\}$ is a non-decreasing sequence of sets, the counterpart of the Borel-Cantelli lemma yields:
\begin{equation}
\label{Eq: A_k i.o.}
\P\!\left(\bigcup_{k = 1}^{\infty}{A_{k,0}}\right) = 1 \, . 
\end{equation}
(Note that if there exists some $k \in \N\backslash\!\{0\}$ such that $\P(A_{k,0}^c) = 0$ and $\P(A_{k+1,0}|A_{k,0}^c)$ is not well-defined, then $\P(A_{k,0}) = 1$, and the above equality trivially holds.) On the other hand, \eqref{Eq: High Prob Event} yields the following inequality:
\begin{align}
& \P\!\left(\forall k \in \N\backslash\!\{0\}, \, T_k < +\infty \right) \nonumber \\
& \geq \P\!\left(\exists K = K(\epsilon) \in \N, \forall k \in \N\backslash[K+1], \, N_k \leq \frac{2 \, \E[N_{\infty}]}{\epsilon}\right) \nonumber \\
& \geq 1 - \epsilon \, . 
\label{Eq: Finiteness of Stopping Times}
\end{align}
Finally, using both \eqref{Eq: A_k i.o.} and \eqref{Eq: Finiteness of Stopping Times}, we get that:
\begin{align*}
\P\!\left(N_{\infty} = 0\right) & \geq \P\!\left(\left(\bigcup_{k = 1}^{\infty}{A_{k,0}}\right)\!\cap\!\left(\bigcap_{k = 1}^{\infty}{\{T_k < +\infty\}}\right)\right) \\
& \geq 1 - \epsilon 
\end{align*}
where the first inequality holds because if there exists a finite level $T_k$ such that all outgoing BSCs from this level do not output $u$'s, then we must have $N_{T_k + 1} = 0$, and therefore, $N_{\infty} = 0$ (due to \eqref{Eq: Coupled BSC 2} and \eqref{Eq: Coupled Nand}). Letting $\epsilon \rightarrow 0$ produces:
$$ \P\!\left(N_{\infty} = 0\right) = 1 \, , $$
which implies that $\E[N_{\infty}] = 0$. This contradicts the assumption that $\E[N_{\infty}] > 0$. Hence, $N_{\infty} = 0$ almost surely, which completes the proof. 
\end{proof}

Although Proposition \ref{Prop: Martingale Condition} shows that it suffices to prove Conjecture \ref{Conj: Martingale Construction} in order to establish the impossibility of broadcasting on 2D regular grids with NAND processing functions, constructing the supermartingales in Conjecture \ref{Conj: Martingale Construction} turns out to be nontrivial. Therefore, in the ensuing subsections, we propose an approach for constructing the structured supermartingales outlined in Conjecture \ref{Conj: Martingale Construction}, and bolster this proposal by demonstrating how to compute them efficiently.

\subsection{Counting Forms and Potential Functions}
\label{Potential Functions}

In this subsection, we will explain an extended version of the potential function technique in \cite{HolroydMarcoviciMartin2019} that offers an avenue to construct the superharmonic functions delineated in Conjecture \ref{Conj: Martingale Construction}. In particular, we will consider superharmonic potential functions composed of so called ``counting forms.'' To this end, we begin with a simple motivating example. Consider the potential function $w:\Y^* \rightarrow \R$ given by:
\begin{equation}
\begin{aligned}
\forall y \in \Y^*, \enspace w(y) & = \big[\text{number of } (u) \text{'s in } y\big] \\
& \quad \,\, - \big[\text{number of } (01u) \text{'s in } y\big] \, .
\end{aligned}
\end{equation}
This function counts the number of $u$'s that appear in an input string $y$ and subtracts the number of times the sub-string $(01u)$ appears in $y$. So, for example, if the input string is $y = (0010uu01101u0)$, then $w(y) = 3 - 1 = 2$. Since computing $w$ requires a count of sub-strings of length (at most) $3$, $w$ can only be meaningfully evaluated for strings of length at least $3$, and we say that $w$ has ``rank'' $3$. Potential functions obtained by such linear combinations of counts will be the subject of much of our discussion from hereon. The ensuing definition formally generalizes the aforementioned example.

\begin{definition}[Basis Clauses and Counting Forms]
\label{Def: Counting Form}
For every finite string $v \in \Y^*$ with length $r \in \N\backslash\!\{0\}$, we define an associated \emph{basis clause} $\{v\} : \Y^* \rightarrow \R$ via:
\begin{align*}
\forall k \in & \, \N\backslash\!\{0\},\, \forall y = (y_1 \cdots y_k) \in \Y^k , \\
& \{v\}(y) \triangleq 
\begin{cases}
\displaystyle{\sum_{i = 1}^{k-r+1}{\I\!\left\{(y_i \cdots y_{i + r - 1}) = v\right\}}} \, , & r \leq k \\
0 \, , & r > k 
\end{cases} 
\end{align*}
which is a map that counts the number of sub-strings $v$ that can be found in the input string $y$. (Note that when explicitly writing out the letters of a string $v$, we will often use parentheses for clarity, e.g. $v = (01u) \in \Y^3$. On the other hand, curly braces will be used to distinguish the string $v$ from its associated basis clause $\{v\}$, e.g. $\{01u\}$ denotes the basis clause associated with the string $(01u)$. Moreover, throughout this paper, we will identify a basis clause $\{v\}$ with the string $v$.) The length of such a basis clause $\{v\}$ is called its \emph{rank}, which we denote as $\rank(\{v\}) = r$. For any finite set of basis clauses $\{v_1\},\dots,\{v_m\} : \Y^* \rightarrow \R$ and coefficients $\alpha_1,\dots,\alpha_m \in \R$ (with $m \in \N\backslash\!\{0\}$), the formal sum $w = \alpha_1 \{v_1\} + \cdots + \alpha_m \{v_m\}$ is said to be a \emph{counting form}, and $w : \Y^* \rightarrow \R$ is a \emph{potential function} defined by:
$$ \forall y \in \Y^* , \enspace w(y) \triangleq \sum_{j = 1}^{m}{\alpha_j \{v_j\}(y)} \, . $$
(Note that we omit the curly braces notation for general counting forms since there is no cause for confusion between strings and counting forms.) The rank of a counting form $w$ is defined as the maximal rank of its basis clauses:
$$ \rank(w) \triangleq \max_{\substack{i \in \{1,\dots,m\}: \\ \alpha_i \neq 0}}{\rank(\{v_i\})} \, , $$
and we say that $w$ has \emph{pure rank} if $\rank(w) = \rank(\{v_i\})$ for every $i \in \{1,\dots,m\}$ such that $\alpha_i \neq 0$. Furthermore, we define a counting form $w$ to be \emph{$u$-only} if all the strings $v_1,\dots,v_m$ defining its basis clauses contain a $u$, i.e. $w$ is not $u$-only if:
$$ \exists \, i \in \{1,\dots,m\}, \enspace v_i \in \{0,1\}^{\rank(\{v_i\})} \, . $$ 
\end{definition} 

As another concrete illustration of the concepts in Definition \ref{Def: Counting Form}, consider the $u$-only counting form:
\begin{equation}
\label{Eq: Holroyd potential}
w_* \triangleq 2 \{u\} + \{u 1\} + \{1 u\} + \{u 1 0\} + \{0 1 u\} - 2\{0 u 0\} \, .
\end{equation}
This counting form has $\rank(w_*) = 3$, and basis clauses $\{u\}$, $\{u 1\}$, $\{1 u\}$, $\{u 1 0\}$, $\{0 1 u\}$, $\{0 u 0\}$ with coefficients $2$, $1$, $1$, $1$, $1$, $-2$, respectively. Moreover, the potential of the string $y = (0 1 u u 1 u 0 0 1 u 1)$ is $w_*(y) = 2 (4) + (2) + (3) + (0) + (2) - 2(0) = 15$.

As mentioned earlier, we will restrict our search for superharmonic functions satisfying conditions \eqref{Eq: Property 1} and \eqref{Eq: Property 2} in Conjecture \ref{Conj: Martingale Construction} to the class of $u$-only potential functions defined above. This restriction to $u$-only counting forms is intuitively sound, because our goal is to show that \eqref{Eq: Property 2} is satisfied and the number of $u$'s per layer of the coupled NAND 2D regular grid vanishes almost surely as the depth tends to infinity\textemdash see Lemma \ref{Lemma: Vanishing u's Condition}. Let us fix a rank $r \in \N\backslash\!\{0\}$, and only consider $u$-only counting forms with at most this rank. Moreover, define the set of finite strings $\Y_r^* \subset \cup_{k \geq r}{\Y^k}$ such that for any $y \in \Y_r^*$, the first and last $r$ letters of $y$ do not contain $u$'s, or equivalently, $y = (y_1 \cdots y_k) \in \Y_r^*$ with $k \geq r$ if and only if $y_1,\dots,y_r,y_{k-r+1},\dots,y_k \in \{0,1\}$. Since the BSCs in the coupled NAND 2D regular grid are independent, a simple second Borel-Cantelli lemma argument can be used to show that with probability $1$, there exists a level $t \in \N$ such that $Y_{t,0},\dots,Y_{t,r-1},Y_{t,t-r+1},\dots,Y_{t,t} \in \{0,1\}$ in the coupled 2D regular grid. (This is because $Y_{t,0},\dots,Y_{t,r-1},Y_{t,t-r+1},\dots,Y_{t,t} \in \{0,1\}$ if the BSCs above these vertices generate independent bits.) Define the event:
\begin{equation}
\label{Eq: The special event}
\begin{aligned}
A \triangleq \big\{ \exists \, t \in \N, \, \forall \, l \geq t, & \enspace Y_{l,0},\dots,Y_{l,r-1}, \\
& \enspace Y_{l,l-r+1},\dots,Y_{l,l} \in \{0,1\}\big\} \, .
\end{aligned}
\end{equation}
Then, due to \eqref{Eq: Coupled BSC 2} and \eqref{Eq: Coupled Nand}, we have that $\P(A) = 1$, i.e. $A$ occurs almost surely. For convenience in our ensuing exposition, we will ``condition the event $A$'' and make the simplifying assumption that our $u$-only counting forms are potential functions with domain $\Y_r^*$ (instead of $\Y^*$). We will rigorize what we mean by ``conditioning on $A$'' during the proof of Theorem \ref{Thm: Sufficient Condition for NAND 2D Regular Grid} in subsection \ref{Proof of Theorem Sufficient Condition for NAND 2D Regular Grid}.

Therefore, we seek a $u$-only counting form $w_{\delta} = \alpha_1(\delta) \{v_1\} + \cdots + \alpha_m(\delta) \{v_m\}$ with $m \in \N\backslash\!\{0\}$ basis clauses $\{v_1\},\dots,\{v_m\}$ which contain $u$'s, and corresponding $\delta$-dependent coefficients $\alpha_1(\delta),\dots,\alpha_m(\delta) \in \R\backslash\!\{0\}$, respectively, such that for all $\delta \in \big(0,\frac{1}{2}\big)$, the potential function $w_{\delta} : \Y_r^* \rightarrow \R$ satisfies:
\begin{enumerate}
\item The supermartingale conditions:
\begin{equation}
\label{Eq: Prop 1}
\forall k \in \N, \enspace \E\!\left[\left|w_{\delta}(Y_{k})\right|\right] < +\infty \, , 
\end{equation}
\begin{equation}
\label{Eq: Prop 2}
\begin{aligned}
\forall k \geq r-1, & \, \forall y \in \Y^{k+1} \cap \Y_r^*, \\
& \E\!\left[w_{\delta}(Y_{k+1}) \middle| Y_k = y\right] \leq w_{\delta}(y) \, .
\end{aligned}
\end{equation}
\item There exists a constant $C = C(\delta) > 0$ such that:
\begin{equation}
\label{Eq: Prop 3}
\forall y \in \Y_r^*, \enspace w_{\delta}(y) \geq C \, \{u\}(y) 
\end{equation}
where $\{u\}$ is a basis clause with rank $1$.
\end{enumerate}
The condition \eqref{Eq: Prop 1} holds trivially, because for all $k \in \N$:
\begin{align}
\E\!\left[|w_{\delta}(Y_{k})|\right] & \leq \sum_{i = 1}^{m}{|\alpha_i(\delta)| \, \E\!\left[|\{v_i\}(Y_{k})|\right]} \nonumber \\
& \leq (k + 1)\sum_{i = 1}^{m}{|\alpha_i(\delta)|} < +\infty
\end{align}
using the triangle inequality and the fact that $0 \leq \{v_i\}(y) \leq k+1$ for all $i \in \{1,\dots,m\}$ and all $y \in \Y^{k+1}$. Moreover, it is intuitively clear that akin to the supermartingale depicted in \eqref{Eq: Property 1}, conditions \eqref{Eq: Prop 1} and \eqref{Eq: Prop 2} essentially define a supermartingale $\{w_{\delta}(Y_{k}) : k \geq r-1\}$ ``conditioned on $A$.'' Likewise, \eqref{Eq: Prop 3} is essentially equivalently to \eqref{Eq: Property 2} ``conditioned on $A$.'' Hence, by extending the argument in Proposition \ref{Prop: Martingale Condition}, the impossibility of broadcasting on the NAND 2D regular grid can be established from conditions \eqref{Eq: Prop 1}, \eqref{Eq: Prop 2}, and \eqref{Eq: Prop 3} (as we will explain in subsection \ref{Proof of Theorem Sufficient Condition for NAND 2D Regular Grid}).  

The discussion heretofore reveals that to prove the impossibility of broadcasting on the NAND 2D regular grid, it suffices to prove the \emph{existence} of a $u$-only counting form $w_{\delta}$ (with rank at most $r$) such that \eqref{Eq: Prop 2} and \eqref{Eq: Prop 3} are satisfied. Naturally, one approach towards proving the existence of such a $w_{\delta}$ is to construct an explicit example. In the remainder of this section, we will make some partial progress on this important problem of constructing an example. 

Both the constraints \eqref{Eq: Prop 2} and \eqref{Eq: Prop 3} require us to verify inequalities between certain potentials of strings in $\Y_r^*$. To state these constraints directly in terms of counting forms, we will transform them using the next two definitions. The first of these is an equivalent linear operator on counting forms that captures the action of the conditional expectation operators in \eqref{Eq: Prop 2}.

\begin{definition}[Conditional Expectation Operator]
\label{Def: Conditional Expectation Operator}
For any input basis clause $\{v\}$ with rank $s \in \N\backslash\!\{0\}$, the \emph{conditional expectation operator} $\CE$ outputs the following counting form which has pure rank $s+1$:
$$ \CE(\{v\}) \triangleq \sum_{z \in \Y^{s+1}}{\P\!\left((Y_{s+1,1},\dots,Y_{s+1,s}) = v \, \middle| \, Y_{s} = z\right) \{z\}} $$
where the coefficients $\big\{\P((Y_{s+1,1},\dots,Y_{s+1,s}) = v \, | \, Y_{s} = z) : z \in \Y^{s+1}\big\}$ in the above formal sum of basis clauses are given by the Markov transition probabilities of the coupled NAND 2D regular grid (see \eqref{Eq: Coupled BSC 2}  and \eqref{Eq: Coupled Nand}). Furthermore, for any input counting form $w = \alpha_1 \{v_1\} + \cdots + \alpha_m \{v_m\}$ with basis clauses $\{v_1\},\dots,\{v_m\}$ and coefficients $\alpha_1,\dots,\alpha_m \in \R\backslash\!\{0\}$ (with $m \in \N\backslash\!\{0\}$), $\CE$ outputs the following counting form:
$$ \CE(w) \triangleq \alpha_1 \CE(\{v_1\}) + \cdots + \alpha_m \CE(\{v_m\}) $$
which has $\rank(\CE(w)) = \rank(w) + 1$.
\end{definition}

We note that the set of all counting forms is a vector space over $\R$, and the sets of all $u$-only counting forms and all counting forms with rank at most $r$ are linear subspaces of this larger vector space. Specifically, the intersection of these two linear subspaces, namely, the set of all $u$-only counting forms with rank at most $r$, is also a linear subspace. In particular, for any real scalar $\gamma \in \R$ and any pair of counting forms $w_1 = \alpha_1 \{v_1\} + \cdots + \alpha_m \{v_m\}$ and $w_2 = \beta_1 \{v_1\} + \cdots + \beta_m \{v_m\}$, which have a common set of basis clauses $\{v_1\},\dots,\{v_m\}$ without loss of generality, and coefficients $\alpha_1,\dots,\alpha_m$ and $\beta_1,\dots,\beta_m$, respectively, we define vector addition by $w_1 + w_2 = (\alpha_1 + \beta_1) \{v_1\} + \cdots + (\alpha_m + \beta_m) \{v_m\}$ and scalar multiplication by $\gamma w_1 = \gamma \alpha_1 \{v_1\} + \cdots + \gamma \alpha_m \{v_m\}$. It is straightforward to verify that $\rank(w_1 + w_2) \leq \max\{\rank(w_1),\rank(w_2)\}$ and $\rank(\gamma w_1) = \rank(w_1)$ (for $\gamma \neq 0$). Moreover, we will use the notation $w_1 - w_2$ to mean $w_1 - w_2 = w_1 + (-1)w_2$ in the sequel.

In Definition \ref{Def: Conditional Expectation Operator}, if the basis clause $\{v\}$ contains $u$'s, then the coefficient $\P((Y_{s+1,1},\dots,Y_{s+1,s}) = v \, | \, Y_{s} = z) = 0$ when $z \in \{0,1\}^{s+1}$ (due to \eqref{Eq: Coupled BSC 2}  and \eqref{Eq: Coupled Nand}). Thus, the conditional expectation operator $\CE$ maps the vector space of $u$-only counting forms with rank at most $r$ to the vector space of $u$-only counting forms with rank at most $r+1$. The ensuing proposition illustrates that the action of the conditional expectations in \eqref{Eq: Prop 2} is equivalent to the action of $\CE$ in Definition \ref{Def: Conditional Expectation Operator}.

\begin{proposition}[Equivalence of $\CE$]
\label{Prop: Equivalence of CE}
Consider any $u$-only counting form $w_{\delta}$ with rank at most $r$. Then, for all $k \geq r-1$ and for every $y \in \Y^{k+1} \cap \Y_r^*$, we have: 
$$ \big(\CE(w_{\delta})\big)(y) = \E\!\left[w_{\delta}(Y_{k+1}) \middle| Y_k = y\right] . $$
\end{proposition}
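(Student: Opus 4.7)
The plan is to verify the identity by $\R$-linearity, expanding both sides and matching them term by term. Writing $w_\delta = \alpha_1 \{v_1\} + \cdots + \alpha_m \{v_m\}$ and using that both $\CE$ and the conditional expectation are $\R$-linear, it suffices to prove
$$(\CE(\{v\}))(y) = \E\!\left[\{v\}(Y_{k+1}) \,\middle|\, Y_k = y\right]$$
for a single $u$-only basis clause $\{v\}$ of rank $s \leq r$ and for any $y = (y_0,\dots,y_k) \in \Y_r^* \cap \Y^{k+1}$ with $k \geq r-1$.

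First I would expand the right-hand side using Definition \ref{Def: Counting Form}: since $Y_{k+1}$ has length $k+2$,
$$\E\!\left[\{v\}(Y_{k+1}) \,\middle|\, Y_k = y\right] = \sum_{j=0}^{k-s+2} \P\!\left((Y_{k+1,j},\dots,Y_{k+1,j+s-1}) = v \,\middle|\, Y_k = y\right).$$
The two extremal starting positions $j = 0$ and $j = k-s+2$ are the only ones whose length-$s$ window contains a single-parent corner vertex ($Y_{k+1,0}$ or $Y_{k+1,k+1}$). Here the hypothesis $y \in \Y_r^*$ is crucial: its first and last $r \geq s$ letters lie in $\{0,1\}$, the coupled $\mathsf{BSC}(\delta)$ matrix $W$ in \eqref{Eq: Coupled BSC 2} fixes the sub-alphabet $\{0,1\}$, and the coupled NAND table in \eqref{Eq: Coupled Nand} maps $\{0,1\}^2$ into $\{0,1\}$. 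Iterating these two facts forces both $(Y_{k+1,0},\dots,Y_{k+1,s-1})$ and $(Y_{k+1,k-s+2},\dots,Y_{k+1,k+1})$ to lie in $\{0,1\}^s$ almost surely. Since $v$ contains a $u$, these two boundary probabilities vanish, and only the interior terms $j \in \{1,\dots,k-s+1\}$ survive.

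For any interior $j$, the block $(Y_{k+1,j},\dots,Y_{k+1,j+s-1})$ depends on $Y_k$ only through its $s+1$ parents $(Y_{k,j-1},Y_{k,j},\dots,Y_{k,j+s-1})$, because each of these $s$ children has two interior parents. Since the edge BSCs are i.i.d.\ and every gate realizes the same NAND rule, the conditional law of these $s$ children given their $s+1$ parents is translation-invariant and coincides with the law of $(Y_{s+1,1},\dots,Y_{s+1,s})$ given $Y_s = (y_{j-1},y_j,\dots,y_{j+s-1})$. Reindexing $j' = j - 1$ therefore gives
$$\E\!\left[\{v\}(Y_{k+1}) \,\middle|\, Y_k = y\right] = \sum_{j'=0}^{k-s} \P\!\left((Y_{s+1,1},\dots,Y_{s+1,s}) = v \,\middle|\, Y_s = (y_{j'},y_{j'+1},\dots,y_{j'+s})\right).$$

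Finally, I would evaluate $(\CE(\{v\}))(y)$ directly from Definition \ref{Def: Conditional Expectation Operator}: unfolding $\{z\}(y) = \sum_{j'=0}^{k-s} \I\{(y_{j'},\dots,y_{j'+s}) = z\}$ for strings $z$ of length $s+1$ and swapping the order of summation, the inner sum over $z \in \Y^{s+1}$ collapses to a single indicator and reproduces exactly the expression above. The only step requiring genuine input beyond bookkeeping is the boundary vanishing, which is the single place where the $\Y_r^*$ hypothesis and the $u$-only property of the basis clauses are used; the rest follows from the Markov property and the translation invariance of the coupled grid dynamics.
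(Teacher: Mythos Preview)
Your proof is correct and follows essentially the same approach as the paper: reduce by linearity to a single $u$-only basis clause, expand $\E[\{v\}(Y_{k+1})\mid Y_k=y]$ as a sum over window positions, kill the two boundary windows using $y\in\Y_r^*$ together with the fact that the coupled BSC and NAND preserve $\{0,1\}$, use locality and translation invariance on the interior windows, and match the result to the expansion of $(\CE(\{v\}))(y)$ via a swap of summations. The only cosmetic difference is that you compute $(\CE(\{v\}))(y)$ at the end and compare, whereas the paper inserts the sum over $z\in\Y^{s+1}$ midstream and then recognizes $\{z\}(y)$; the arguments are otherwise identical.
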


\begin{proof}
First, consider any basis clause $\{v\}$ that contains $u$'s and has $\rank(\{v\}) = s \leq r$, and fix any $k \geq r-1$ and any $y = (y_0 \cdots y_k) \in \Y^{k+1} \cap \Y_r^*$. Then, observe that:
\begin{align*}
& \E\!\left[\{v\}(Y_{k+1})\middle| Y_k = y\right] \\
& = \sum_{i = 0}^{k-s+2}{\P\!\left((Y_{k+1,i},\dots,Y_{k+1,i + s-1}) = v \, \middle| \, Y_k = y\right)} \\
& = \sum_{i = 1}^{k-s+1}{\begin{aligned} 
\P((Y_{k+1,i},\dots,Y_{k+1,i + s-1}) = v \, | \, Y_{k,i-1} & = y_{i-1},\\
\dots,Y_{k,i + s-1} & = y_{i + s-1}) 
\end{aligned}} \\
& = \sum_{i = 1}^{k-s+1}{\P\!\left((Y_{s+1,1},\dots,Y_{s+1,s}) = v \middle| Y_s = (y_{i-1} \cdots y_{i + s-1})\right)} \\
& = \sum_{i = 1}^{k-s+1}{\sum_{z \in \Y^{s+1}}{\begin{aligned}
& \P\!\left((Y_{s+1,1},\dots,Y_{s+1,s}) = v \, \middle| \, Y_s = z\right) \\
& \cdot \I\!\left\{(y_{i-1} \cdots y_{i + s-1}) = z\right\} 
\end{aligned}}} \\
& = \sum_{z \in \Y^{s+1}}{\P\!\left((Y_{s+1,1},\dots,Y_{s+1,s}) = v \, \middle| \, Y_s = z\right) \{z\}(y)} \\
& = \big(\CE(\{v\})\big)(y) 
\end{align*}
where the first equality follows from Definition \ref{Def: Counting Form}, the second equality holds because the coupled 2D grid variables $Y_{k+1,i},\dots,Y_{k+1,i + s-1}$ only depend on the variables $Y_{k,i-1},\dots,Y_{k,i + s-1}$ in the previous layer, and because $\P((Y_{k+1,0},\dots,Y_{k+1,s-1}) = v \, | \, Y_{k,0} = y_{0},\dots,\allowbreak Y_{k,s-1} = y_{s-1}) = 0$ and $\P((Y_{k+1,k-s+2},\dots,Y_{k+1,k+1}) = v \, | \, Y_{k,k-s+1} = y_{k-s+1},\dots,Y_{k,k} = y_{k}) = 0$ since $v$ contains $u$'s and $y \in \Y_r^*$, the third equality holds because \eqref{Eq: Coupled BSC 2} and \eqref{Eq: Coupled Nand} determine the Markov transition kernels between all pairs of consecutive layers in the coupled NAND 2D regular grid, the fifth equality follows from Definition \ref{Def: Counting Form} (where we treat the string $z$ as a basis clause $\{z\}$), and the final equality follows from Definitions \ref{Def: Counting Form} and \ref{Def: Conditional Expectation Operator}.

This implies that for all $k \geq r-1$ and for every $y = (y_0 \cdots y_k) \in \Y^{k+1} \cap \Y_r^*$, we also have: 
\begin{align*}
\E\!\left[w_{\delta}(Y_{k+1})\middle| Y_k = y\right] & = \sum_{i = 1}^{m}{\alpha_i(\delta) \, \E\!\left[\{v_i\}(Y_{k+1})\middle| Y_k = y\right]} \\
& = \sum_{i = 1}^{m}{\alpha_i(\delta) \big(\CE(\{v_i\})\big)(y)} \\
& = \big(\CE(w_{\delta})\big)(y)
\end{align*}
where we use Definitions \ref{Def: Counting Form} and \ref{Def: Conditional Expectation Operator}, and we let $w_{\delta} = \alpha_1(\delta) \{v_1\} + \cdots + \alpha_m(\delta) \{v_m\}$ (without loss of generality) with $m \in \N\backslash\!\{0\}$ basis clauses $\{v_1\},\dots,\{v_m\}$ which contain $u$'s, and corresponding $\delta$-dependent coefficients $\alpha_1(\delta),\dots,\alpha_m(\delta) \in \R$, respectively. This completes the proof.
\end{proof}

The second definition we require to transform the constraints \eqref{Eq: Prop 2} and \eqref{Eq: Prop 3} into inequalities in terms of counting forms is that of a partial order over counting forms.

\begin{definition}[Equivalence Classes and Partial Order]
\label{Def: Partial Order}
Let $\0$ denote the zero counting form with no basis clauses and $\rank(\0) = 0$ (with abuse of notation), which outputs $0$ on every input string. Define the \emph{equivalence relation} $\simeq$ over $u$-only counting forms as follows. For any pair of $u$-only counting forms $w_1$ and $w_2$ with $\max\{\rank(w_1),\rank(w_2)\} \leq r$, $w_1 \simeq w_2$ if we have:
$$ \forall y \in \Y_r^*, \enspace w_1(y) = w_2(y) \, . $$
Due to this equivalence relation, the aforementioned vector space of $u$-only counting forms with rank at most $r$ is actually a vector space of equivalence classes of such counting forms, where any specific $u$-only counting form $w$ with rank at most $r$ corresponds to the equivalence class $\{w + z : z \text{ is } u \text{-only}, \, \rank(z) \leq r, \, z \simeq \0\}$. Furthermore, we define the \emph{partial order} $\succeq$ over $u$-only counting forms as follows. For any pair of $u$-only counting forms $w_1$ and $w_2$ with $\max\{\rank(w_1),\rank(w_2)\} \leq r$, $w_1 \succeq w_2$ if we have:
$$ \forall y \in \Y_r^*, \enspace w_1(y) \geq w_2(y) \, . $$
This makes the set of equivalence classes of $u$-only counting forms with rank at most $r$ a partially ordered vector space. Lastly, we say that a $u$-only counting form $w$ with rank at most $r$ is \emph{non-negative} if $w \succeq \0$, and we will often write $w_1 \succeq w_2$ equivalently as $w_1 - w_2 \succeq \0$.
\end{definition}

We remark that Definition \ref{Def: Partial Order} illustrates that the rank of a $u$-only counting form is a property of the counting form itself, and not the corresponding equivalence class. For example, when $r \geq 2$, the basis clause $\{u\}$ has unit rank, and is equivalent to the following $u$-only counting forms with pure rank $2$:
\begin{align}
\{u\} & \simeq \{0u\} + \{1u\} + \{uu\} \, , \\
\{u\} & \simeq \{u0\} + \{u1\} + \{uu\} \, .
\end{align}
In fact, given a $u$-only counting form $w_1$ with rank $s \leq r$, these equations exemplify a more general and rather useful trick for constructing an equivalent $u$-only counting form $w_2$ with pure rank $t$ such that $w_1 \simeq w_2$ and $s \leq t \leq r$. The next lemma formally presents this trick.

\begin{lemma}[Purification of Counting Forms]
\label{Lemma: Purification of Counting Forms}
Consider any $u$-only counting form $w_1 = \alpha_1 \{v_1\} + \cdots + \alpha_m \{v_m\}$ with basis clauses $\{v_1\},\dots,\{v_m\}$ and coefficients $\alpha_1,\dots,\alpha_m \in \R\backslash\!\{0\}$ for some $m \in \N\backslash\!\{0\}$, and rank $s \leq r$. For any integer $t \in [s,r]$, construct the $u$-only counting forms:
\begin{align}
\label{Eq: Pure 1}
w_2 & = \sum_{i = 1}^{m}{\alpha_i \sum_{z \in \Y^{t-\rank(\{v_i\})}}{\{v_i,z\}}} \, , \\
w_3 & = \sum_{i = 1}^{m}{\alpha_i \sum_{z \in \Y^{t-\rank(\{v_i\})}}{\{z,v_i\}}} \, , 
\label{Eq: Pure 2} 
\end{align}
where $\{v_i,z\} , \{z,v_i\} \in \Y^{t}$ denote the basis clauses obtained by concatenating the strings $v_i \in \Y^{\rank(\{v_i\})}$ and $z \in \Y^{t-\rank(\{v_i\})}$, and the inner summations equal $\{v_i\}$ when $\rank(\{v_i\}) = t$. Then, $w_1 \simeq w_2 \simeq w_3$, and $w_2,w_3$ have pure rank $t$.
\end{lemma}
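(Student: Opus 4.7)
The plan is to reduce the claim to a single-basis-clause identity and then conclude by linearity. Specifically, since $w_1$, $w_2$, $w_3$ are all formed as $\sum_i \alpha_i(\cdot)$ with the same coefficients $\alpha_i$, it suffices to show that for each individual $u$-only basis clause $\{v\}$ of rank $s' \leq t$, we have the two identities
\[
\{v\} \simeq \sum_{z \in \Y^{t-s'}}{\{(v,z)\}} \quad \text{and} \quad \{v\} \simeq \sum_{z \in \Y^{t-s'}}{\{(z,v)\}}
\]
in the sense of Definition \ref{Def: Partial Order}, where the case $s' = t$ is trivial (empty concatenation). Purity of rank then comes for free: every basis clause appearing in $w_2$ or $w_3$ has length exactly $s' + (t-s') = t$, so after collecting like terms, all surviving basis clauses have rank $t$.

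To prove the first identity, I would fix an arbitrary $y = (y_1 \cdots y_n) \in \Y_r^*$ (so $n \geq r$ and the first and last $r$ letters of $y$ are in $\{0,1\}$) and compute both sides using Definition \ref{Def: Counting Form}. Since summing over all suffixes $z \in \Y^{t-s'}$ imposes no constraint on the trailing $t-s'$ letters of the window, the right-hand side reduces to
\[
\sum_{z \in \Y^{t-s'}}{\{(v,z)\}(y)} = \sum_{i=1}^{n-t+1}{\I\{(y_i \cdots y_{i+s'-1}) = v\}}.
\]
Comparing with $\{v\}(y) = \sum_{i=1}^{n-s'+1} \I\{(y_i \cdots y_{i+s'-1}) = v\}$, the difference equals $\sum_{i=n-t+2}^{n-s'+1} \I\{(y_i \cdots y_{i+s'-1}) = v\}$. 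The key observation is that every such boundary index $i$ satisfies $i \geq n - t + 2 \geq n - r + 2$, so the window $y_i, \ldots, y_{i+s'-1}$ is entirely contained within the last $r$ letters of $y$, which lie in $\{0,1\}$ by definition of $\Y_r^*$. Since $v$ contains a $u$, the indicator is forced to be zero at every boundary index, and the two sides agree.

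The second identity, $\{v\} \simeq \sum_z \{(z,v)\}$, is handled by the symmetric argument: the leftover indices after rewriting in terms of length-$t$ windows fall within the first $r$ letters of $y$, which are again in $\{0,1\}$, killing the indicator. No step is genuinely difficult — the only thing to be careful with is the boundary accounting — so the main ``obstacle'' is really just the clean bookkeeping that shows each leftover window lies within one of the two all-$\{0,1\}$ buffer zones guaranteed by membership in $\Y_r^*$. Taking $\R$-linear combinations with the coefficients $\alpha_i$ then yields $w_1 \simeq w_2$ and $w_1 \simeq w_3$, and pure rank $t$ of $w_2$ and $w_3$ follows by inspection of their basis clauses.
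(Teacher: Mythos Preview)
Your proposal is correct and follows essentially the same approach as the paper: reduce to a single $u$-only basis clause by linearity, collapse the sum over $z$ since it imposes no constraint, and kill the boundary terms using the fact that the relevant windows fall within the $\{0,1\}$-buffer of $\Y_r^*$ while $v$ contains a $u$. The paper's proof is organized identically, invoking symmetry for the $w_3$ case just as you do.
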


\begin{proof}
It is obvious that $w_2,w_3$ have pure rank $t$. Moreover, the proof of $w_1 \simeq w_3$ is the same as the proof of $w_1 \simeq w_2$ by symmetry. So, we only need to establish that $w_1 \simeq w_2$. To prove this, it suffices to show that for any basis clause $\{v_i\}$ of $w_1$ with $\rank(\{v_i\}) < t$:
\begin{equation}
\label{Eq: Clause Purify}
\{v_i\} \simeq \sum_{z \in \Y^{t-\rank(\{v_i\})}}{\{v_i,z\}} \, .
\end{equation}
To this end, consider any basis clause $\{v_i\}$ with $s^{\prime} = \rank(\{v_i\}) < t$. Then, for any $k \geq r$ and any string $y = (y_1 \cdots y_k) \in \Y^{k} \cap \Y_r^*$, we have: 
\begin{align*}
& \sum_{z \in \Y^{t-s^{\prime}}}{\{v_i,z\}(y)} \\
& \qquad = \sum_{z \in \Y^{t-s^{\prime}}}{\sum_{j = 1}^{k - t + 1}{\I\!\left\{(y_j\cdots y_{j+t-1}) = (v_i,z)\right\}}} \\
& \qquad = \sum_{j = 1}^{k - t + 1} \I\!\left\{(y_j \cdots y_{j+s^{\prime}-1}) = v_i\right\} \cdot \\
& \qquad \qquad \qquad \underbrace{\sum_{z \in \Y^{t-s^{\prime}}}{\I\!\left\{(y_{j+s^{\prime}} \cdots y_{j+t-1}) = z\right\}}}_{= \, 1} \\
& \qquad = \sum_{j = 1}^{k - s^{\prime} + 1}{\I\!\left\{(y_j \cdots y_{j+s^{\prime}-1}) = v_i\right\}} \\
& \qquad \quad \, - \underbrace{\sum_{j = k - t + 2}^{k - s^{\prime} + 1}{\I\!\left\{(y_j \cdots y_{j+s^{\prime}-1}) = v_i\right\}}}_{= \, 0} \\
& \qquad = \{v_i\}(y) 
\end{align*}
where $(v_i,z)$ denotes the concatenation of the strings $v_i$ and $z$, the first and final equalities follow from Definition \ref{Def: Counting Form}, the second equality follows from swapping the order of summations, and the second summation in the third equality equals zero, because it only depends on the sub-string $(y_{k - t + 2} \cdots y_k)$ with length $k - (k - t + 2) + 1 = t - 1 \leq r$, and $(y_{k - t + 2} \cdots y_k)$ contains no $u$'s since $y \in \Y_r^*$ while $v_i$ contains $u$'s since $w_1$ is $u$-only. This establishes \eqref{Eq: Clause Purify}, and therefore, completes the proof.
\end{proof}

Returning to our main discussion, with Definitions \ref{Def: Conditional Expectation Operator} and \ref{Def: Partial Order} in place, we can recast the constraints \eqref{Eq: Prop 2} and \eqref{Eq: Prop 3} as follows. We seek to construct a $u$-only counting form $w_{\delta}$ with rank at most $r$ such that for all $\delta \in \big(0,\frac{1}{2}\big)$:
\begin{align}
\label{Eq: Form Cond 1}
w_{\delta} - \CE(w_{\delta}) & \succeq \0 \\
\exists \, C = C(\delta) > 0, \enspace w_{\delta} - C \{u\} & \succeq \0
\label{Eq: Form Cond 2}
\end{align}
where \eqref{Eq: Form Cond 1} is equivalent to \eqref{Eq: Prop 2} using Proposition \ref{Prop: Equivalence of CE}, and \eqref{Eq: Form Cond 2} is equivalent to \eqref{Eq: Prop 3}. In order to verify \eqref{Eq: Form Cond 1} and \eqref{Eq: Form Cond 2} using a computer program, we need to choose an appropriate value of $r$, and develop an efficient algorithm to test the partial order $\succeq$. We conclude this subsection by doing the former, and leave the development of the latter for ensuing subsections. 

Observe using Definition \ref{Def: Conditional Expectation Operator} that the conditional expectation operator $\CE$ evidently depends on $\delta$ (because the coefficients of the counting form $\CE(\{v\})$ for a basis clause $\{v\}$ depend on the transition probabilities of the coupled NAND 2D regular grid). Thus, it is instructive to consider the noiseless setting where $\delta = 0$. In this case, \eqref{Eq: Form Cond 1} is clearly satisfied if we find a $u$-only counting form $w$ that is a fixed point of $\CE$, i.e. $w \simeq \CE(w)$. Inspired by the ``weight function'' used in \cite[Equation (2.3)]{HolroydMarcoviciMartin2019}, we consider the $u$-only \emph{harmonic counting form} $w_*$ defined in \eqref{Eq: Holroyd potential} (which is an adjusted and symmetrized version of \cite[Equation (2.3)]{HolroydMarcoviciMartin2019}). The next proposition establishes that $w_*$ is a fixed point of $\CE$ when $\delta = 0$.

\begin{proposition}[Harmonic Counting Form]
\label{Prop: Harmonic Counting Form} 
If $\delta = 0$ and $r \geq 3$, then the harmonic counting form $w_*$ in \eqref{Eq: Holroyd potential} satisfies:
$$ w_* \simeq \CE(w_*) \, . $$
\end{proposition}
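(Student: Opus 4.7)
The plan is to verify $w_* \simeq \CE(w_*)$ by direct algebraic computation, exploiting the fact that setting $\delta = 0$ in the coupled-channel matrix \eqref{Eq: Coupled BSC 2} renders every shared edge the identity map. Under this specialization, the evolution is deterministic and governed entirely by the coupled NAND table \eqref{Eq: Coupled Nand}: an interior vertex equals the NAND of its two parents, and every transition probability appearing in Definition \ref{Def: Conditional Expectation Operator} is either $0$ or $1$. Consequently, for any basis clause $\{v\}$ of rank $s$, the image $\CE(\{v\})$ reduces to the unweighted formal sum of those $z = (z_0, \ldots, z_s) \in \Y^{s+1}$ whose coordinatewise NAND, $(\neg(z_{i-1} \wedge z_i))_{i = 1}^{s}$, equals $v$.

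First I would enumerate the NAND-preimages of each of the six basis clauses comprising $w_*$. Inspection of \eqref{Eq: Coupled Nand} yields $\CE(\{u\}) = \{u1\} + \{1u\} + \{uu\}$, and short case analyses give $\CE(\{u1\}) = \{u10\} + \{1u0\} + \{uu0\}$ together with its left--right mirror $\CE(\{1u\}) = \{01u\} + \{0u1\} + \{0uu\}$. The key observation is that each of the three rank-three basis clauses in $w_*$ has an \emph{infeasible} preimage system: for $\{u10\}$, the requirement $\neg(z_2 \wedge z_3) = 0$ forces $z_2 = z_3 = 1$, which in turn forces $z_1 = 0$ to make $\neg(z_1 \wedge z_2) = 1$, and then $\neg(z_0 \wedge 0) = 1 \neq u$ for every $z_0$; the mirror argument handles $\{01u\}$, while $\{0u0\}$ is blocked because the two outer NANDs equalling $0$ simultaneously force $z_1 = z_2 = 1$, giving $\neg(z_1 \wedge z_2) = 0 \neq u$. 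Hence $\CE(\{u10\}) = \CE(\{01u\}) = \CE(\{0u0\}) = \0$, and only the first three clauses of $w_*$ actually contribute to $\CE(w_*)$.

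The final step is a purely formal cancellation modulo $\simeq$. Subtracting $\CE(w_*)$ from $w_*$ leaves a combination of rank-one, rank-two, and rank-three basis clauses; I would then apply Lemma \ref{Lemma: Purification of Counting Forms} twice to the term $2\{u\}$, once concatenating a single letter on the right and once on the left, to obtain $2\{u\} \simeq \{u0\} + \{u1\} + \{0u\} + \{1u\} + 2\{uu\}$, and then re-purify the two rank-two residues $\{u0\}$ and $\{0u\}$ up to pure rank three. A direct term-by-term accounting then shows that every resulting rank-three basis clause pairs against exactly one term of opposite sign; in particular, the coefficient $-2$ on $\{0u0\}$ in the definition \eqref{Eq: Holroyd potential} is precisely what is needed to absorb the two copies of $\{0u0\}$ produced by left- and right-purification of $\{u0\}$ and $\{0u\}$. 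I expect the main obstacle to be clerical rather than conceptual: the verification involves about a dozen rank-two and rank-three basis clauses whose signs and multiplicities must all balance, but the left--right symmetry of $w_*$ essentially halves the work, and no new idea is required beyond the deterministic-dynamics observation and the purification lemma.
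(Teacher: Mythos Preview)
Your proposal is correct and follows essentially the same approach as the paper: both compute $\CE$ on each of the six basis clauses using the deterministic NAND dynamics at $\delta = 0$, observe that the three rank-three clauses have empty preimage (so $\CE(\{u10\}) = \CE(\{01u\}) = \CE(\{0u0\}) = \0$), and then cancel the remaining terms via the purification identities of Lemma~\ref{Lemma: Purification of Counting Forms}. The only cosmetic difference is that the paper manipulates $\CE(w_*)$ directly into $w_*$, whereas you plan to show $w_* - \CE(w_*) \simeq \0$; the underlying computation is identical.
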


\begin{proof}
Using \eqref{Eq: Holroyd potential} and Definition \ref{Def: Conditional Expectation Operator}, we have:
\begin{align*}
\CE(w_*) & = 2 \CE(\{u\}) + \CE(\{u 1\}) + \CE(\{1 u\}) + \CE(\{u 1 0\}) \\
& \quad \, + \CE(\{0 1 u\}) - 2\CE(\{0 u 0\}) \\
& = 2 (\{u u\} + \{1 u\} + \{u 1\}) \\
& \quad \, + (\{u 1 0\} + \{u u 0\} + \{1 u 0\}) \\
& \quad \, + (\{0 1 u\} + \{0 u u\} + \{0 u 1\}) + \0 + \0 - 2(\0) \\
& \simeq (\underbrace{\{u u\} + \{1 u\} + \{0 u\}}_{\simeq \, \{u\}}) - \{0 u\} \\
& \quad \, + (\underbrace{\{u u\} + \{u 1\} + \{u 0\}}_{\simeq \, \{u\}}) - \{u 0\} + \{1 u\} + \{u 1\} \\
& \quad \, + \{u 1 0\} + (\underbrace{\{u u 0\} + \{1 u 0\} + \{0 u 0\}}_{\simeq \, \{u 0\}}) + \{0 1 u\} \\
& \quad \, + (\underbrace{\{0 u u\} + \{0 u 1\} + \{0 u 0\}}_{\simeq \, \{0 u\}}) - 2\{0 u 0\} \\
& \simeq 2 \{u\} + \{u 1\} + \{1 u\} + \{u 1 0\} + \{0 1 u\} - 2\{0 u 0\} \\
& = w_*
\end{align*}
where to obtain the second equality, for each basis clause $\{v_1\}$ of $w_*$, we find all basis clauses $\{v_2\}$ with $\rank(\{v_2\}) = \rank(\{v_1\}) + 1$ such that consecutive noiseless NAND gates map $v_2$ to $v_1$ (e.g. $(u 1 0)$, $(u u 0)$, and $(1 u 0)$ form the set of strings that are mapped to $(u 1)$ via \eqref{Eq: Coupled Nand}), and when there are no such $\{v_2\}$'s, $\CE$ outputs the zero counting form $\0$, and to obtain the third equivalence, we utilize Lemma \ref{Lemma: Purification of Counting Forms}. This completes the proof.
\end{proof} 

Propelled by the elegant fixed point property of $w_*$ in Proposition \ref{Prop: Harmonic Counting Form}, since $\rank(w_*) = 3$, we will try to construct $u$-only counting forms $\w$ with rank $3$ such that \eqref{Eq: Form Cond 1} and \eqref{Eq: Form Cond 2} hold for all $\delta \in \big(0,\frac{1}{2}\big)$. Specifically, when performing computer simulations, e.g. for Table \ref{Table: LP Solutions}, we will choose $r = 4$ to ensure all our calculations are correct, because $\CE(\w)$ has rank at most $4$ when $\rank(\w) = 3$. However, we continue our development for general fixed $r$ for now.

\subsection{Cyclic Evaluation and Graph Theoretic Characterizations}
\label{Graph Theoretic Characterization}

In order to efficiently test $\simeq$ and $\succeq$ computationally, we introduce the notion of cyclic evaluation of counting forms in this subsection.

\begin{definition}[Cyclic Evaluation]
\label{Def: Cyclic Counting Forms}
In addition to the standard (acyclic) evaluation over strings presented in Definition \ref{Def: Counting Form}, a basis clause $\{v\}$ with rank $s \in \N\backslash\!\{0\}$ can be also evaluated cyclically over strings, i.e. it can operate on input strings in a periodic fashion:
\begin{align*}
\forall k \in & \, \N\backslash\!\{0\}, \, \forall y = (y_0 \cdots y_{k-1}) \in \Y^k , \\
& \{v\}[y] \triangleq 
\begin{cases}
\displaystyle{\sum_{i = 0}^{k-1}{\I\!\left\{\left(y_{(i)_k} \cdots y_{(i + s - 1)_k}\right) = v\right\}}} \, , & s \leq k \\
0 \, , & s > k 
\end{cases} 
\end{align*}
where $(i)_k \equiv i \Mod{k}$ for every $i \in \N$. In particular, we utilize the square bracket notation $[\cdot]$ to represent such cyclic evaluation, as opposed to the usual parentheses $(\cdot)$ used to represent acyclic evaluation. Furthermore, a counting form $w = \alpha_1 \{v_1\} + \cdots + \alpha_m \{v_m\}$ with rank $s \in \N\backslash\!\{0\}$, basis clauses $\{v_1\},\dots,\{v_m\}$, and coefficients $\alpha_1,\dots,\alpha_m \in \R$ (with $m \in \N\backslash\!\{0\}$) can also be evaluated cyclically over strings, and in this case, we say that $w$ is a \emph{cyclic potential function} $w : \Y^* \rightarrow \R$ which operates on input strings as follows:
$$ \forall y \in \Y^* , \enspace w[y] \triangleq \sum_{j = 1}^{m}{\alpha_j \{v_j\}[y]} \, . $$
\end{definition}

For any counting form $w$ with rank at most $r$, we will assume that the domain of its corresponding cyclic potential function is $\cup_{k \geq r}{\Y^r}$. This is in contrast to standard (acyclic) potential functions, where the domain is $\Y_r^*$. We will see that allowing strings which contain $u$'s in the first and last $r$ positions to remain in the domain will lead to lucid algebraic conditions in the sequel. The next definition presents analogs of $\simeq$ and $\succeq$ for cyclic evaluation of counting forms $w : \cup_{k \geq r}{\Y^r} \rightarrow \R$ with rank at most $r$. (Although we will primarily be concerned with the vector space of $u$-only counting forms with rank at most $r$, unlike Definition \ref{Def: Partial Order}, it is convenient to state the ensuing definition for general counting forms with rank at most $r$.)

\begin{definition}[Cyclic Equivalence Classes and Partial Order]
\label{Def: Cyclic Partial Order}
As before, let $\0$ denote the zero counting form, which outputs $0$ on every input string after cyclic evaluation. We define the \emph{cyclic equivalence relation} $\cyceq$ and the \emph{cyclic partial order} $\cycgeq$ over counting forms as follows. For any pair of counting forms $w_1$ and $w_2$ with $\max\{\rank(w_1),\rank(w_2)\} \leq r$, we write $w_1 \cyceq w_2$ if we have:
$$ \forall y \in \bigcup_{k \geq r}{\Y^k}, \enspace w_1[y] = w_2[y] \, , $$
and we write $w_1 \cycgeq w_2$ if we have:
$$ \forall y \in \bigcup_{k \geq r}{\Y^k}, \enspace w_1[y] \geq w_2[y] \, . $$
Hence, the aforementioned vector space of counting forms with rank at most $r$ is also a partially ordered vector space of cyclic equivalence classes $\big\{\{w + z : \rank(z) \leq r, \, z \cyceq \0\} : \rank(w) \leq r\big\}$. Finally, we say that a counting form $w$ with rank at most $r$ is \emph{cyclically non-negative} if $w \cycgeq \0$, and we will often write $w_1 \cycgeq w_2$ equivalently as $w_1 - w_2 \cycgeq \0$.
\end{definition}

The proposition below illustrates that for the special case of $u$-only counting forms, the relations $\simeq$ and $\succeq$ can be deduced from the (stronger) relations $\cyceq$ and $\cycgeq$, respectively.

\begin{proposition}[Sufficient Conditions for $\simeq$ and $\succeq$]
\label{Prop: Sufficient Conditions for Equivalence and Partial Order}
For any pair of $u$-only counting forms $w_1$ and $w_2$ with rank at most $r$, we have:
\begin{enumerate}
\item $w_1 \cyceq w_2$ implies $w_1 \simeq w_2$,
\item $w_1 \cycgeq w_2$ implies $w_1 \succeq w_2$.
\end{enumerate}
\end{proposition}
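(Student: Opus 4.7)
The plan is to reduce both parts to a single key observation: if $w$ is a $u$-only counting form of rank at most $r$ and $y \in \Y_r^*$, then the cyclic evaluation $w[y]$ coincides with the ordinary acyclic evaluation $w(y)$. Once this is established, the proposition follows immediately: if $w_1 \cycgeq w_2$, then for every $y \in \Y_r^*$ (which lies in $\cup_{k \geq r} \Y^k$) we have $w_1(y) = w_1[y] \geq w_2[y] = w_2(y)$, so $w_1 \succeq w_2$; the same argument with $=$ in place of $\geq$ handles the equivalence case.

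To prove the key observation, by linearity it suffices to treat a single $u$-only basis clause $\{v\}$ of rank $s \leq r$. Fix $y = (y_0 \cdots y_{k-1}) \in \Y_r^*$ with $k \geq r$. Split the cyclic sum in Definition \ref{Def: Cyclic Counting Forms} into the $k-s+1$ windows with starting index $i \in \{0,1,\dots,k-s\}$, which do not wrap around and therefore agree term by term with the acyclic sum $\{v\}(y)$, and the $s-1$ wrap-around windows with starting index $i \in \{k-s+1,\dots,k-1\}$. For any wrap-around window, the indices it reads from the end of $y$ lie in $\{i,\dots,k-1\}$ and hence within the last $k-i \leq s-1 \leq r-1$ positions of $y$, while the indices it reads from the beginning of $y$ lie in $\{0,\dots,(i+s-1)-k\}$ and hence within the first $i+s-k \leq s-1 \leq r-1$ positions. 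Because $y \in \Y_r^*$, the first $r$ and last $r$ positions contain no $u$'s, so each wrap-around window is a string in $\{0,1\}^s$. Since $v$ contains at least one $u$ by the $u$-only hypothesis, no wrap-around window equals $v$, and the corresponding indicators vanish. Therefore $\{v\}[y] = \{v\}(y)$.

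The core of the argument is just this wrap-around indexing check; no analytic difficulty is involved, so I do not anticipate any real obstacle. The only point that requires a little care is the bookkeeping of which indices a wrap-around window actually touches, and verifying that the bound $s \leq r$ is exactly what one needs to guarantee that each such window is entirely contained in the $u$-free boundary zone of $y$. It is worth noting that the converse implications do not hold: a $u$-only counting form can be acyclically equivalent to $\0$ (for example via the purification identities of Lemma \ref{Lemma: Purification of Counting Forms}) without being cyclically equivalent to $\0$, since cyclic evaluation probes additional windows that straddle a string with $u$'s at its boundary; this asymmetry is what makes $\cycgeq$ a strictly stronger, and computationally more tractable, sufficient condition to check.
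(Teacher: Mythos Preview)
Your proposal is correct and follows essentially the same approach as the paper: both reduce (by linearity) to a single $u$-only basis clause $\{v\}$ of rank $s \leq r$, split the cyclic sum into the non-wrap-around windows that match the acyclic sum and the $s-1$ wrap-around windows, and observe that each wrap-around window reads only from the first and last $r$ positions of $y \in \Y_r^*$ and hence lies in $\{0,1\}^s$, forcing its indicator to vanish since $v$ contains a $u$. The only cosmetic difference is that the paper first reduces to $w_2 = \0$ and works with the single form $w = w_1 - w_2$, whereas you compare $w_1$ and $w_2$ directly; your additional remark about the failure of the converse is extra commentary not present in the paper's proof.
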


\begin{proof}
Using Definitions \ref{Def: Partial Order} and \ref{Def: Cyclic Partial Order}, it suffices to prove this proposition for $w_2 = \0$. So, given a $u$-only counting form $w = \alpha_1 \{v_1\} + \cdots + \alpha_m \{v_m\}$ with rank at most $r$, $m \in \N\backslash\!\{0\}$, basis clauses $\{v_1\},\dots,\{v_m\}$ that contain $u$'s, and coefficients $\alpha_1,\dots,\alpha_m \in \R$, we will show that: 
\begin{enumerate}
\item $w \cyceq \0$ implies $w \simeq \0$,
\item $w \cycgeq \0$ implies $w \succeq \0$.
\end{enumerate}

\textbf{Part 1:} Observe that for any basis clause $\{v\}$ that contains $u$'s and has rank $s \leq r$, and any string $y = (y_0 \cdots y_{k-1}) \in \Y^k \cap \Y_r^*$ with $k \geq r$, we have:
\begin{align*}
\{v\}[y] & = \sum_{i = 0}^{k-1}{\I\!\left\{\left(y_{(i)_k} \cdots y_{(i + s - 1)_k}\right) = v\right\}} \\
& = \sum_{i = 0}^{k-s}{\I\!\left\{\left(y_{i} \cdots y_{i + s - 1}\right) = v\right\}} \\
& \quad \, + \sum_{i = k-s+1}^{k-1}{\underbrace{\I\!\left\{\left(y_{(i)_k} \cdots y_{(i + s - 1)_k}\right) = v\right\}}_{= \, 0}} \\
& = \{v\}(y)
\end{align*}
where the first equality follows from Definition \ref{Def: Cyclic Counting Forms}, and the third equality follows from Definition \ref{Def: Counting Form} and the fact that $v$ contains $u$'s and $y \in \Y_r^*$. Hence, using Definitions \ref{Def: Counting Form} and \ref{Def: Cyclic Counting Forms}, we obtain:
\begin{equation}
\label{Eq: Equivalence of Form Computation}
\forall y \in \Y_r^*, \enspace w(y) = w[y] \, .
\end{equation}
Since $w \cyceq \0$, we have $w[y] = 0$ for all $y \in \cup_{k \geq r}{\Y^k}$ using Definition \ref{Def: Cyclic Partial Order}. This implies that $w(y) = 0$ for all $y \in \Y_r^*$ via \eqref{Eq: Equivalence of Form Computation}. This yields $w \simeq \0$ using Definition \ref{Def: Partial Order}.

\textbf{Part 2:} Since $w \cycgeq \0$, we have $w[y] \geq 0$ for all $y \in \cup_{k \geq r}{\Y^k}$ using Definition \ref{Def: Cyclic Partial Order}. This implies that $w(y) \geq 0$ for all $y \in \Y_r^*$ via \eqref{Eq: Equivalence of Form Computation}. This yields $w \succeq \0$ using Definition \ref{Def: Partial Order}.
\end{proof}

With Proposition \ref{Prop: Sufficient Conditions for Equivalence and Partial Order} at our disposal, we can write down the ensuing \emph{sufficient conditions} for \eqref{Eq: Form Cond 1} and \eqref{Eq: Form Cond 2}; specifically, we seek to construct a $u$-only counting form $w_{\delta}$ such that for all $\delta \in \big(0,\frac{1}{2}\big)$, the following cyclic non-negativity constraints are satisfied:
\begin{align}
\label{Eq: Cyc Form Cond 1}
w_{\delta} - \CE(w_{\delta}) & \cycgeq \0 \\
\exists \, C = C(\delta) > 0, \enspace w_{\delta} - C \{u\} & \cycgeq \0
\label{Eq: Cyc Form Cond 2}
\end{align}
where \eqref{Eq: Cyc Form Cond 1} implies \eqref{Eq: Form Cond 1}, and \eqref{Eq: Cyc Form Cond 2} implies \eqref{Eq: Form Cond 2}. (At this stage, we already have all we need to prove Theorem \ref{Thm: Sufficient Condition for NAND 2D Regular Grid}, but we defer the proof to subsection \ref{Proof of Theorem Sufficient Condition for NAND 2D Regular Grid}.) As we mentioned earlier, these new constraints are easy to test computationally. Indeed, we will develop graph theoretic methods to exactly verify the cyclic equivalence relation and partial order in due course. 

However, before presenting these graph theoretic characterizations, we briefly digress and present an equivalent characterization of the cyclic equivalence relation. Our condition is inspired by the notion of ``purification'' introduced in Lemma \ref{Lemma: Purification of Counting Forms}. Indeed, notice that Lemma \ref{Lemma: Purification of Counting Forms} also holds for $\cyceq$: \emph{For any counting form $w_1$ with rank $s \leq r$, the counting forms $w_2$ and $w_3$, defined in \eqref{Eq: Pure 1} and \eqref{Eq: Pure 2}, respectively, satisfy $w_1 \cyceq w_2 \cyceq w_3$, and have pure rank $t \in [s,r]$}. (We omit an explicit proof of this fact since it follows the argument for Lemma \ref{Lemma: Purification of Counting Forms} mutatis mutandis.) Hence, for any basis clause $\{v\}$ with rank less than $r$, we clearly have $\0 \cyceq \{v\} - \{v\} \cyceq \{v,0\} + \{v,1\} + \{v,u\} - \{0,v\} - \{1,v\} - \{u,v\}$ via purification, where $\{v,z\},\{z,v\}$ denote basis clauses that are obtained by concatenating $v$ and $z \in \Y$. The next theorem shows that linear combinations of counting forms of this kind form a nontrivial linear subspace of counting forms that are equivalent to $\0$ in the cyclic sense.

\begin{theorem}[Equivalent Characterization of $\cyceq$]
\label{Thm: Equivalent Characterization of Cyclic Equivalence}
Consider any counting form $w$ with pure rank $s \in \{2,\dots,r\}$. Then, $w \cyceq \0$ if and only if $w \in \linspan\!\big(\big\{\rho_{v} : v \in \Y^{s-1} \big\}\big)$, where $\linspan(\cdot)$ denotes the linear span of its input counting forms, and for any basis clause $\{v\}$ that has rank $s-1$, $\rho_v$ denotes the following counting form with pure rank $s$:
\begin{equation}
\label{Eq: Def of rho_v}
\rho_v \triangleq \sum_{z \in \Y}{\{v,z\} - \{z,v\}} \, . 
\end{equation}
Furthermore, the linear subspace $\linspan\!\big(\big\{\rho_{v} : v \in \Y^{s-1} \big\}\big)$ has dimension $3^{s-1} - 1$.
\end{theorem}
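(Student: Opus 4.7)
The plan is to verify the ``if'' direction by direct computation, compute the dimension of $\linspan(\{\rho_{v}\})$ by examining the kernel of the map $(\beta_v) \mapsto \sum_v \beta_v \rho_v$, and then handle the ``only if'' direction via a de Bruijn graph argument; the last of these I expect to be the main obstacle. For the ``if'' direction, I would fix $v \in \Y^{s-1}$ and any cyclic string $y = (y_0 \cdots y_{k-1}) \in \Y^k$ with $k \geq r$, and compute
\[
\sum_{z \in \Y} \{v,z\}[y] = \sum_{i=0}^{k-1} \I\!\left\{(y_{(i)_k}\cdots y_{(i+s-2)_k}) = v\right\} = \sum_{z \in \Y} \{z,v\}[y],
\]
the last equality obtained by reindexing $j = (i+1)_k$. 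Subtracting the two sums yields $\rho_v[y] = 0$, so $\rho_v \cyceq \0$ and hence $\linspan(\{\rho_v\}) \subseteq K_s$, where I write $K_s$ for the subspace of pure-rank-$s$ counting forms cyclically equivalent to $\0$.

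For the dimension of $\linspan(\{\rho_v\})$, I would observe that the coefficient of $\{u_1 \cdots u_s\}$ in $\sum_{v \in \Y^{s-1}} \beta_v \rho_v$ equals $\beta_{u_1 \cdots u_{s-1}} - \beta_{u_2 \cdots u_s}$. Requiring all such coefficients to vanish forces $\beta_{v_1} = \beta_{v_2}$ whenever $v_1, v_2 \in \Y^{s-1}$ differ by a single cyclic shift in an extended length-$s$ window; iterating along arbitrary extensions shows $\beta$ must be constant on $\Y^{s-1}$. Conversely, $\sum_v \rho_v = 0$ by a simple reindexing (each basis clause $\{u_1 \cdots u_s\}$ appears with coefficients $+1$ and $-1$ exactly once). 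Hence the kernel of $(\beta_v) \mapsto \sum_v \beta_v \rho_v$ is one-dimensional and $\dim \linspan(\{\rho_v\}) = 3^{s-1} - 1$.

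The substantive step is $K_s \subseteq \linspan(\{\rho_v\})$. I would introduce the de Bruijn graph $G$ on vertex set $\Y^{s-1}$, in which each string $(u_1 \cdots u_s) \in \Y^s$ labels a directed edge from $(u_1 \cdots u_{s-1})$ to $(u_2 \cdots u_s)$. A cyclic string $y$ of length $k$ corresponds to a closed walk of length $k$ in $G$, and $\{v\}[y]$ is precisely the number of times that walk traverses the edge labeled $v$. The relations $\rho_v$ are then the flow-conservation equations ``in-flow equals out-flow'' at each vertex $v$, which cut out a subspace $W \subseteq \R^{\Y^s}$ of dimension $3^s - (3^{s-1} - 1) = 2 \cdot 3^{s-1} + 1$ (using the single dependency $\sum_v \rho_v = 0$ established above). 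A form $w$ with coefficient vector $(\alpha_v)$ satisfies $w \cyceq \0$ iff $(\alpha_v)$ annihilates the edge-multiplicity vector of every closed walk of length $\geq r$, so it suffices to show that these multiplicity vectors span $W$ over $\R$; this would give $\dim K_s \leq 3^s - \dim W = 3^{s-1} - 1$, and the theorem follows by matching dimensions with $\linspan(\{\rho_v\})$.

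The main obstacle is verifying this spanning claim. Its core is the classical fact, provable via Euler's theorem for directed multigraphs, that every nonnegative integer vector in $W$ is the multiplicity vector of some closed walk in $G$: indeed, $G$ is strongly connected (a standard property of de Bruijn graphs) and the uniform vector $(1,1,\dots,1)$ lies in $W$, corresponding to an Eulerian circuit and certifying that the nonnegative integer points fill out a cone of full dimension inside $W$; hence they span $W$ over $\R$. To enforce the length lower bound $k \geq r$, I would observe that traversing any closed walk $y$ a positive integer number of times $c$ yields a closed walk of length $c|y|$ with multiplicity vector $c \cdot N(y)$, so choosing $c \geq r/|y|$ provides walks of length $\geq r$ whose multiplicity vectors have the same real-linear span as the set of all closed-walk multiplicity vectors. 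This completes the chain of inequalities and identifies $K_s$ with $\linspan(\{\rho_v\})$.
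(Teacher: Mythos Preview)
Your argument is correct and follows essentially the same route as the paper: both identify pure-rank-$s$ counting forms with edge functions on the de Bruijn graph on vertex set $\Y^{s-1}$, recognize that cyclic evaluation corresponds to summing over closed walks, and reduce the ``only if'' direction to showing that the flow space $W=\{f:\divergence f=0\}$ is spanned by closed-walk (circuit) indicator vectors. The paper abstracts this to a general statement about closed edge functions on strongly connected graphs and cites a standard incidence-matrix rank result for the dimension; your version is more self-contained, computing $\dim\linspan(\{\rho_v\})$ directly via the kernel of $(\beta_v)\mapsto\sum_v\beta_v\rho_v$ and then matching dimensions.

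One wording issue worth fixing: the ``classical fact'' that \emph{every} nonnegative integer vector in $W$ is the multiplicity vector of a single closed walk is false as stated\textemdash take the sum of the indicators of the self-loops at $(0\cdots0)$ and $(1\cdots1)$, whose support is disconnected. What you actually need (and what your invocation of the all-ones vector already supplies) is that the strictly positive integer circulations are closed-walk multiplicity vectors and span $W$: for any integer $f\in W$, choose $c$ large enough that $f+c\mathbf{1}$ is entrywise positive; the resulting multigraph is connected and Eulerian, and $f=(f+c\mathbf{1})-c\mathbf{1}$ expresses $f$ as a difference of two closed-walk vectors. This is exactly the shift-by-an-Eulerian-circuit trick the paper uses as well.
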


Theorem \ref{Thm: Equivalent Characterization of Cyclic Equivalence} is established in appendix \ref{Proof of Equivalent Characterization of Cyclic Equivalence} using algebraic topological ideas that are more involved than the graph theoretic notions utilized in this section. (Note that we state this result as a theorem because it demonstrates the significance of the counting forms $\{\rho_{v} : v \in \Y^{s-1}\}$ as elucidated by the development in appendix \ref{Proof of Equivalent Characterization of Cyclic Equivalence}.) Using Theorem \ref{Thm: Equivalent Characterization of Cyclic Equivalence}, we see that to verify whether a counting form $w$ with rank $s \in \{2,\dots,r\}$ satisfies $w \cyceq \0$, we can first purify it to obtain an equivalent counting form $\tilde{w}$ with pure rank $s$, and then test whether $\tilde{w} \cyceq \0$ using the condition in Theorem \ref{Thm: Equivalent Characterization of Cyclic Equivalence}. This latter test simply involves solving a linear programming feasibility problem. Moreover, to verify whether $w_1 \cyceq w_2$ for two counting forms with rank at most $r$, we can simply test for $w_1 - w_2 \cyceq \0$ using the aforementioned procedure. Note that a specialization of the converse direction of Theorem \ref{Thm: Equivalent Characterization of Cyclic Equivalence} states that: \emph{If a $u$-only counting form $w$ with pure rank $s \in \{2,\dots,r\}$ satisfies $w \in \linspan(\{\rho_{v} : v \in \Y^{s-1} \backslash \{0,1\}^{s-1}\})$, then $w \cyceq \0$.} We present some numerical simulations based on this special case of Theorem \ref{Thm: Equivalent Characterization of Cyclic Equivalence} in appendix \ref{Necessary Conditions to Satisfy Cyc Form Cond 1 for Small Delta}, which provide evidence for the existence of $u$-only counting forms satisfying \eqref{Eq: Cyc Form Cond 1} for sufficiently small values of $\delta > 0$. On a related but separate note, we believe that any counting form $w$ with pure rank $s \in \{2,\dots,r\}$ satisfies $w \cycgeq \0$ if and only if $w = w^{\prime} + w_{0}$ for some counting form $w^{\prime}$ with pure rank $s$ and all non-negative coefficients, and some counting form $w_0 \in \linspan(\{\rho_{v} : v \in \Y^{s-1} \})$. The converse direction of this statement holds due to Theorem \ref{Thm: Equivalent Characterization of Cyclic Equivalence}, but the forward direction is an open problem.

Finally, we turn to presenting our graph theoretic characterizations of the cyclic equivalence relation and partial order. To this end, consider any counting form $w$ with pure rank $r$ defined by the formal sum:
\begin{equation}
\label{Eq: Pure Rank Formal Sum}
w = \sum_{v \in \Y^r}{\alpha_v \{v\}}
\end{equation}
with coefficients $\{\alpha_v \in \R : v \in \Y^r\}$. Corresponding to $w$, construct the weighted directed graph $\G_r(w)$ with vertex set $\Y^r$ (of strings with length $r$, or equivalently, basis clauses with rank $r$), directed edge set:\footnote{Here, the notation for an edge $(v,z) \in \Y^r \times \Y^r$ should not be confused with the notation for concatenation of the strings $v$ and $z$; indeed, the correct meaning of $(v,z)$ should be clear from context in the sequel.}
\begin{equation} 
\label{Eq: Constructed edge set}
E \triangleq \left\{(v,z) \in \Y^r \times \Y^r : (v_2 \cdots v_r) = (z_1 \cdots z_{r-1})\right\} ,
\end{equation}
where $v = (v_1 \cdots v_r) \in \Y^r$ and $z = (z_1 \cdots z_r) \in \Y^r$, and weight function $\W: E \rightarrow \R$ given by:
\begin{equation}
\label{Eq: Weight Construction for G}
\forall (v,z) \in E, \enspace \W((v,z)) \triangleq \alpha_{v} \, . 
\end{equation}
This graph encodes all the information required to compute the cyclic potential of any given string. For instance, each vertex of $\G_r(w)$ has outdegree $3$, and transitioning between these vertices via the directed edges can be equivalently construed as sliding an $r$-length window along a string. Indeed, any path $v_1 \rightarrow v_2 \rightarrow \cdots \rightarrow v_{k-1} \rightarrow v_k$ on this graph corresponds to successively visiting all possible $r$-length sub-strings of the associated string $((v_1)_1 \, (v_2)_1 \cdots (v_{k-1})_1 \, (v_k)_1 \cdots (v_k)_r) \in \Y^{r + k-1}$, where $(v_i)_j \in \Y$ denotes the $j$th letter in the $i$th sub-string $v_i \in \Y^r$ for $i \in \{1,\dots,k\}$ and $j \in \{1,\dots,r\}$. In fact, it is straightforward to verify that $\G_r(w)$ is \emph{strongly connected}, because we can reach any vertex (or basis clause) from any other vertex via some sequence of intermediate letters in $\Y$. Furthermore, the coefficients of $w$ inherently assign weights to each vertex of $\G_r(w)$, and we let each directed edge of $\G_r(w)$ inherit the weight associated with its source vertex. So, for example, for the aforementioned path $v_1 \rightarrow v_2 \rightarrow \cdots \rightarrow v_{k-1} \rightarrow v_k$, the sum of the weights along the edges of this path is equal to $\alpha_{v_1} + \cdots + \alpha_{v_{k-1}}$. This equivalence can be used to capture the cyclic potentials of general strings. The proposition below rigorizes these intuitions, and conveys equivalent characterizations of $w \cyceq \0$ and $w \cycgeq \0$.

\begin{proposition}[Graph Theoretic Characterizations of $\cyceq$ and $\cycgeq$]
\label{Prop: Graph Theoretic Characterization of Cyclic Partial Order}
For any counting form $w$ with pure rank $r$, the following are true:
\begin{enumerate}
\item $w \cyceq \0$ if and only if all cycles in the corresponding graph $\G_r(w)$ have zero total weight, where the total weight of any path in $\G_r(w)$ is given by the sum of the weights of the edges on the path.
\item $w \cycgeq \0$ (i.e. $w$ is cyclically non-negative) if and only if the corresponding graph $\G_r(w)$ does not contain any negative cycles (i.e. cycles with negative total weight).\footnote{In this paper, we refer to directed paths that begin and end at the same vertex as (directed) \emph{circuits}, and we refer to circuits with no repeated vertices or edges as (directed) \emph{cycles}.} 
\end{enumerate}
\end{proposition}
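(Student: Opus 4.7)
The plan is to set up a dictionary between cyclic evaluations of $w$ on strings of length $\geq r$ and closed walks in $\G_r(w)$, and then reduce both claims to elementary cycle-decomposition arguments.

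For any string $y = (y_0 \cdots y_{k-1}) \in \Y^k$ with $k \geq r$, set $v_i := (y_{(i)_k} \, y_{(i+1)_k} \cdots y_{(i+r-1)_k}) \in \Y^r$ for $i \in \{0,\dots,k-1\}$. By construction, the length-$(r-1)$ suffix of $v_i$ coincides with the length-$(r-1)$ prefix of $v_{(i+1)_k}$, so by \eqref{Eq: Constructed edge set} the sequence $W_y: v_0 \to v_1 \to \cdots \to v_{k-1} \to v_0$ is a closed walk of length $k$ in $\G_r(w)$. Its total edge weight is
$$\sum_{i = 0}^{k-1} \W\!\left((v_i, v_{(i+1)_k})\right) = \sum_{i = 0}^{k-1} \alpha_{v_i} = w[y],$$
by \eqref{Eq: Weight Construction for G} and Definition \ref{Def: Cyclic Counting Forms}. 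Conversely, any closed walk of length $k \geq r$ in $\G_r(w)$ arises in this way from the unique string whose $i$-th letter is the first letter of the $i$-th visited vertex, since the edge condition forces the remaining letters to be consistent.

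For the forward directions of both parts, given any cycle $C$ of length $k_0$ in $\G_r(w)$, choose $m \in \N\backslash\!\{0\}$ with $m k_0 \geq r$ and traverse $C$ exactly $m$ times to form a closed walk of length $m k_0$. The dictionary applied in reverse produces a string $y \in \Y^{m k_0}$ with $w[y] = m \cdot \mathrm{weight}(C)$. Hence $w \cyceq \0$ forces $\mathrm{weight}(C) = 0$, while $w \cycgeq \0$ forces $\mathrm{weight}(C) \geq 0$.

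For the converse directions, take any $y \in \Y^k$ with $k \geq r$ and consider its closed walk $W_y$. A standard excision argument --- whenever the walk first revisits a vertex, detach the simple cycle traced out in between and iterate on the remainder --- yields an edge-disjoint decomposition of $W_y$ into simple cycles $C_1, \ldots, C_t$ (possibly with repetition), so that $w[y] = \sum_{j=1}^{t} \mathrm{weight}(C_j)$. If every cycle has zero weight, then $w[y] = 0$ for all eligible $y$, giving $w \cyceq \0$; if no cycle has negative weight, then $w[y] \geq 0$ for all eligible $y$, giving $w \cycgeq \0$. The only delicate point is the length constraint $k \geq r$ in Definition \ref{Def: Cyclic Counting Forms}, which is the reason for the padding multiplier $m$ in the forward direction for short cycles; the remaining graph-theoretic facts are standard.
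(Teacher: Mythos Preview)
Your proof is correct and follows essentially the same approach as the paper's: both set up the correspondence between strings $y \in \Y^k$ (with $k \geq r$) and closed walks in $\G_r(w)$ via $v_i = (y_{(i)_k}\cdots y_{(i+r-1)_k})$, pad short cycles by repetition to meet the length constraint in the forward direction, and decompose closed walks into simple cycles for the converse. The only cosmetic differences are that the paper argues the forward direction by contradiction and separates out ``no negative cycles $\Rightarrow$ no negative circuits'' as an explicit intermediate step, whereas you fold this into the excision decomposition directly.
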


\begin{proof} 
We only prove part 2, because the proof of part 1 follows mutatis mutandis. To prove the forward direction of part 2, assume that $w \cycgeq \0$. Suppose further for the sake of contradiction that there exists a negative cycle $v_0 \rightarrow v_1 \rightarrow \cdots \rightarrow v_{m-1} \rightarrow v_0$ in $\G_r(w)$, which has $m \in \N\backslash\!\{0\}$ edges. (Note that the vertices $v_1,\dots,v_{m-1}$ are distinct and not equal to $v_0$, and the case $m = 1$ corresponds to a self-loop $v_0 \rightarrow v_0$). Let $\tilde{y} = \big((v_0)_0 \cdots (v_{m-1})_0\big) \in \Y^{m}$, where $(v_i)_0$ is the first letter of $v_i$ for all $i \in [k]$, and construct the associated string:
$$ y = \underbrace{\left(\tilde{y},\dots,\tilde{y}\right)}_{r \text{ times}} \in \Y^{r m} $$
which concatenates $\tilde{y}$ with itself $r$ times. (This concatenation process ensures that the length of $y$ is at least $r$, so that its cyclic potential can be non-zero.) The non-negative cyclic potential of $y$, $w[y]$, is given by:
\begin{align*}
0 & \leq w[y] = \sum_{v \in \Y^r}{\alpha_v \sum_{i = 0}^{m-1}{r \I\!\left\{v_i = v\right\}}} \\
& = r \sum_{i = 0}^{m-1}{\alpha_{v_i}} \\
& = r \underbrace{\sum_{i = 0}^{m-1}{\W\!\left(\left(v_i,v_{(i+1)_{m}}\right)\right)}}_{\substack{\text{total weight of cycle}\\v_0 \rightarrow v_1 \rightarrow \cdots \rightarrow v_{m-1} \rightarrow v_0}}
\end{align*}
where the first equality uses \eqref{Eq: Pure Rank Formal Sum}, Definition \ref{Def: Cyclic Counting Forms}, and the construction of $y$, the second equality follows from swapping the order of summations, and the third equality uses \eqref{Eq: Weight Construction for G}. This contradicts our assumption that $v_0 \rightarrow v_1 \rightarrow \cdots \rightarrow v_{m-1} \rightarrow v_0$ is a negative cycle. Hence, $\G_r(w)$ does not contain any negative cycles.

To prove the converse direction of part 2, assume that $\G_r(w)$ does not contain any negative cycles. This implies that $\G_r(w)$ does not contain any negative circuits. To prove this, suppose for the sake of contradiction that $\G_r(w)$ contains a circuit starting and ending at the vertex $v_1$ with negative total weight. Then, there exists a vertex $v_2 \neq v_1$ in this circuit that is visited at least twice. (If no such $v_2$ exists, then this circuit must be a cycle and its total weight cannot be negative.) So, we can decompose this circuit into a path from $v_1$ to $v_2$, followed by a circuit from $v_2$ to itself, followed by another path from $v_2$ back to $v_1$. This decomposition yields two shorter circuits, namely, one circuit from $v_1$ to itself via $v_2$, and another circuit from $v_2$ to itself. Moreover, at least one of these shorter circuits must have negative total weight (since the larger circuit has negative total weight). By recursively decomposing negative circuits and obtaining smaller negative circuits, we will eventually obtain a negative cycle. This yields a contradiction, and $\G_r(w)$ must not have had any negative circuits.

Now consider any string $y = (y_0 \cdots y_{k-1}) \in \Y^k$ for any fixed $k \geq r$, and construct the associated circuit $v_0 \rightarrow v_1 \rightarrow \cdots \rightarrow v_{k-1} \rightarrow v_0$, where $v_i = \big(y_{(i)_k} \cdots y_{(i + r-1)_k}\big)$ for all $i \in [k]$. It is straightforward to verify that the edges of this circuit exist in $\G_r(w)$. Then, using essentially the same argument as before, the cyclic potential of $y$, $w[y]$, is non-negative, because:
\begin{align*}
w[y] & = \sum_{v \in \Y^r}{\alpha_v \sum_{i = 0}^{k-1}{\I\!\left\{\left(y_{(i)_k} \cdots y_{(i + r-1)_k}\right) = v\right\}}} \\
& = \sum_{i = 0}^{k-1}{\alpha_{v_i}} \\
& = \underbrace{\sum_{i = 0}^{k-1}{\W\!\left(\left(v_i,v_{(i+1)_k}\right)\right)}}_{\substack{\text{total weight of circuit}\\v_0 \rightarrow v_1 \rightarrow \cdots \rightarrow v_{k-1} \rightarrow v_0}} \\
& \geq 0
\end{align*}
where, as before, the first equality uses \eqref{Eq: Pure Rank Formal Sum} and Definition \ref{Def: Cyclic Counting Forms}, the second equality follows from swapping the order of summations, the third equality uses \eqref{Eq: Weight Construction for G}, and the final inequality holds because $\G_r(w)$ does not contain any negative circuits. Therefore, we obtain that $w \cycgeq \0$ using Definition \ref{Def: Cyclic Partial Order}. This completes the proof.
\end{proof}

Several remarks are in order at this point. Firstly, as explained after Theorem \ref{Thm: Equivalent Characterization of Cyclic Equivalence}, to check whether a general counting form $w$ with rank $s \leq r$ satisfies $w \cycgeq \0$ or $w \cyceq \0$, we can first construct an equivalent purified counting form $\tilde{w}$ with pure rank $r$, and then apply Proposition \ref{Prop: Graph Theoretic Characterization of Cyclic Partial Order} to $\tilde{w}$. Furthermore, to test whether $w_1 \cycgeq w_2$ (respectively $w_1 \cyceq w_2$) for two counting forms with rank at most $r$, we can simply test for $w_1 - w_2 \cycgeq \0$ (respectively $w_1 - w_2 \cyceq \0$).

Secondly, it is worth mentioning that the graph $\G_r(w)$ has three vertices with self-loops: all zeros $(0 \cdots 0)$, all ones $(1 \cdots 1)$, and all $u$'s $(u \cdots u)$. Since we will only test cyclic non-negativity of $u$-only counting forms $w$, the self-loop edges on $(0 \cdots 0)$ and $(1 \cdots 1)$ always get weights of $0$, and can never contribute to negative cycles. On the other hand, part 2 of Proposition \ref{Prop: Graph Theoretic Characterization of Cyclic Partial Order} illustrates that a necessary condition for $w \cycgeq \0$ is that the weight of the self-loop edge on $(u \cdots u)$ is non-negative, i.e. if the weight of the self-loop edge on $(u \cdots u)$ is negative, then $w$ is not cyclically non-negative. Hence, when verifying whether a candidate $u$-only counting form $\w$ with rank $r - 1$ satisfies \eqref{Eq: Cyc Form Cond 1} and \eqref{Eq: Cyc Form Cond 2} for fixed values of $\delta \in \big(0,\frac{1}{2}\big)$, the coefficients associated with the basis clause $\{u \cdots u\}$ in purified versions of both $w_{\delta} - \CE(w_{\delta})$ and $w_{\delta} - C \{u\}$ must be non-negative (where $w_{\delta} - \CE(w_{\delta})$ is also $u$-only\textemdash see the discussion after Definition \ref{Def: Conditional Expectation Operator}).

Lastly, we remark that the condition in part 2 of Proposition \ref{Prop: Graph Theoretic Characterization of Cyclic Partial Order}, which is the more important part of the proposition since we seek to verify \eqref{Eq: Cyc Form Cond 1} and \eqref{Eq: Cyc Form Cond 2}, can be tested using several well-known graph shortest path algorithms. For example, we can utilize the \emph{Bellman-Ford algorithm} to detect negative cycles in $\G_r(w)$ that are reachable from a certain fixed source vertex, cf. \cite[Section 24.1]{Cormenetal2009}. In fact, since $\G_r(w)$ is a strongly connected graph, we can choose to compute shortest paths from any initial vertex of $\G_r(w)$ in order to detect negative cycles in the entire graph.\footnote{For directed graphs that are not strongly connected, one could either run the Bellman-Ford algorithm several times with different choices of source vertices, or run the \emph{Floyd-Warshall algorithm}, which can detect negative cycles reachable from any source vertex in the graph, cf. \cite[Section 25.2]{Cormenetal2009}.} For completeness, we refer readers to Algorithm \ref{Algorithm: Bellman-Ford} in appendix \ref{Algorithm to Detect Non-negative Cyclic Counting Forms}, which provides pseudocode for a Bellman-Ford-based algorithm to determine the cyclic non-negativity of $u$-only counting forms.

\subsection{Proof of Theorem \ref{Thm: Sufficient Condition for NAND 2D Regular Grid}}
\label{Proof of Theorem Sufficient Condition for NAND 2D Regular Grid}

At noted earlier, we are ready to establish Theorem \ref{Thm: Sufficient Condition for NAND 2D Regular Grid} by tracing back through some of the results and arguments in this section. Since most of the arguments have already been carried out rigorously, we will omit many details. 

\renewcommand{\proofname}{Proof of Theorem \ref{Thm: Sufficient Condition for NAND 2D Regular Grid}}

\begin{proof}
Fix any noise level $\delta \in \big(0,\frac{1}{2}\big)$, and assume that there exists $r \geq 2$ (with respect to which both $\succeq$ and $\cycgeq$ are defined) and a $u$-only counting form $\w$ with rank $r-1$ that satisfies \eqref{Eq: Cyc Form Cond 1} and \eqref{Eq: Cyc Form Cond 2}. Then, using Proposition \ref{Prop: Sufficient Conditions for Equivalence and Partial Order}, this $u$-only counting form $\w$ also satisfies \eqref{Eq: Form Cond 1} and \eqref{Eq: Form Cond 2} (where we also utilize the fact that $\CE(\w)$ is a $u$-only counting form with rank $r$). 

Now construct the stopping time:
\begin{align*}
T \triangleq \inf\big\{k \in \N\backslash[r-1] : & \, Y_{k,0},\dots,Y_{k,r-1}, \\
& \, Y_{k-r+1},\dots,Y_{k,k} \in \{0,1\} \big\} 
\end{align*}
with respect to the filtration $\{\F_k : k \in \N\}$. It is straightforward to verify that $\P(T < +\infty) = 1$ because, as discussed after Definition \ref{Def: Counting Form} in subsection \ref{Potential Functions}, the event $A$ in \eqref{Eq: The special event} occurs with probability $1$ due to the second Borel-Cantelli lemma, and $A \subseteq \{T < +\infty\}$. Hence, the events $\{T = m\}$ for $m \geq r-1$ partition the underlying sample space (without loss of generality). Let us condition on any one such event $\{T = m\} \in \F_m$, i.e., if the underlying probability space is $(\Omega,\F,\P)$, let us operate in the smaller conditional probability space $(\{T = m\},\F^{\prime},\P_m)$ with sample space $\{T = m\}$, $\sigma$-algebra $\F^{\prime} = \{A \cap \{T = m\} : A \in \F\}$, and conditional probability measure $\P_m(\cdot) = \P(\cdot \cap \{T = m\})/\P(T = m)$. This means that for all $k \geq m$, $Y_{k,0},\dots,Y_{k,r-1},Y_{k,k-r+1},\dots,Y_{k,k} \in \{0,1\}$ $\P_m$-almost surely (due to \eqref{Eq: Coupled BSC 2} and \eqref{Eq: Coupled Nand}). Furthermore, define the filtration $\{\F^{\prime}_k : k \geq m\}$ with $\sigma$-algebras $\F_k^{\prime} = \{A \cap \{T = m\} : A \in \F_k\} \subseteq \F^{\prime}$. 

Then, as explained after Definition \ref{Def: Counting Form} and Lemma \ref{Lemma: Purification of Counting Forms} in subsection \ref{Potential Functions}, we obtain that $w_{\delta} : \Y^* \rightarrow \R$ is a superharmonic potential function that satisfies the conditions in \eqref{Eq: Prop 1}, \eqref{Eq: Prop 2}, and \eqref{Eq: Prop 3} mutatis mutandis: 
\begin{align*}
\forall k \geq m, & \enspace \E_m\!\left[\left|\w(Y_{k})\right| \right] < +\infty \, , \\
\forall k \geq m, & \, \forall y \in \Y^{k+1} \cap \Y_r^*, \\
& \, \E_m\!\left[\w(Y_{k+1}) \middle| Y_k = y\right] \leq \w(y) \, , \\
\forall y \in \Y_r^*, & \enspace \w(y) \geq C \, \{u\}(y) \, , 
\end{align*}
where $\E_m[\cdot]$ denotes the expectation with respect to $\P_m$, and the second inequality uses Proposition \ref{Prop: Equivalence of CE}, whose proof holds for $\E_m$ in place of $\E$, because $Y_{k+1}$ is conditionally independent of the event $\{T = m\}$ given $Y_k$ for $k \geq m$ (in the original probability space). Moreover, since $\{Y_k : k \geq m\}$ remains a Markov chain adapted to $\{\F^{\prime}_k : k \geq m\}$ in the conditional probability space, we have:
$$ \forall k \geq m, \enspace \E_m\!\left[\w(Y_{k+1}) \middle| \F_k^{\prime}\right] = \E_m\!\left[\w(Y_{k+1}) \middle| Y_k\right] \leq \w(Y_k) $$
$\P_m$-almost surely, i.e. $\{\w(Y_{k}) : k \geq m\}$ is a supermartingale adapted to $\{\F^{\prime}_k : k \geq m\}$. Equivalently, within our conditional probability space, we have shown that $w_{\delta} : \Y^* \rightarrow \R$ is a superharmonic function that fulfills the two requirements delineated in Conjecture \ref{Conj: Martingale Construction} (mutatis mutandis) for the fixed value of $\delta$ under consideration.

Therefore, using the argument in the proof of Proposition \ref{Prop: Martingale Condition} mutatis mutandis, we get:
$$ \P_m\!\left(\lim_{k \rightarrow \infty}{N_{k}} = 0\right) = \P\!\left(\lim_{k \rightarrow \infty}{N_{k}} = 0 \, \middle| \, T = m\right) = 1 $$
where the random variables $N_k$ are defined in \eqref{Eq: N_k rv}. Since $m \geq r-1$ was arbitrary, taking expectations with respect to the law of $T$ yields that $\lim_{k \rightarrow \infty}{N_{k}} = 0$ $\P$-almost surely. Finally, employing Lemma \ref{Lemma: Vanishing u's Condition} shows that Conjecture \ref{Conj: NAND 2D Regular Grid} is true for the noise level $\delta$. This completes the proof. 
\end{proof}

\renewcommand{\proofname}{Proof}
		
\subsection{Linear Programming Criteria}
\label{Linear Programming Criteria}

Returning back to our discussion of counting forms, it is well-known that the graph theoretic task of computing shortest paths from a single source vertex, or detecting negative cycles reachable from this vertex, can also be solved using \emph{linear programming feasibility} problems, cf. \cite[Sections 24.4 and 29]{Cormenetal2009}. Therefore, in this subsection, we will present an efficient linear programming formulation that ensures cyclic non-negativity. This formulation can be employed to numerically compute $u$-only counting forms $w_{\delta}$ (e.g. with rank $r - 1 = 3$) that satisfy \eqref{Eq: Cyc Form Cond 1} and \eqref{Eq: Cyc Form Cond 2} for different values of $\delta \in \big(0,\frac{1}{2}\big)$ as in Table \ref{Table: LP Solutions}. 

Consider any counting form $w$ with pure rank $r$ defined by the formal sum \eqref{Eq: Pure Rank Formal Sum}, and let $\G_r(w)$ be its corresponding weighted directed graph with vertex set $\Y^r$, edge set $E$ given by \eqref{Eq: Constructed edge set}, and weight function $\W: E \rightarrow \R$ given by \eqref{Eq: Weight Construction for G}. Define the input and output incidence matrices, $\mathbf{B}_{\mathsf{in}} \in \{0,1\}^{3^r \times 3^{r+1}}$ and $\mathbf{B}_{\mathsf{out}} \in \{0,1\}^{3^r \times 3^{r+1}}$, respectively, of $\G_r(w)$ entry-wise as follows:
\begin{align}
\forall v \in \Y^r, \, \forall e \in E, \enspace \left[\mathbf{B}_{\mathsf{in}}\right]_{v,e} & \triangleq \I\!\left\{v \text{ has incoming edge } e\right\} \\
\forall v \in \Y^r, \, \forall e \in E, \enspace \left[\mathbf{B}_{\mathsf{out}}\right]_{v,e} & \triangleq \I\!\left\{v \text{ has outgoing edge } e\right\}
\end{align}
where the rows are indexed by vertices in $\Y^r$, and the columns are indexed by edges in $E$. (Note that a self-loop is considered both an incoming and an outgoing edge.) Using $\mathbf{B}_{\mathsf{in}}$ and $\mathbf{B}_{\mathsf{out}}$, we can define the \emph{oriented incidence matrix} $\mathbf{B} \in \{0,1\}^{3^r \times 3^{r+1}}$ of $\G_r(w)$ as:
\begin{equation}
\mathbf{B} \triangleq \mathbf{B}_{\mathsf{out}} - \mathbf{B}_{\mathsf{in}}
\end{equation}  
which is a fundamental matrix utilized in algebraic graph theory to study the homology of graphs.\footnote{Indeed, such matrices define \emph{boundary operators} in the study of abstract simplicial complices in algebraic topology. Moreover, the dual Gramian matrix corresponding to the oriented incidence matrix of a directed simple graph is equal to the unnormalized \emph{Laplacian matrix} of the associated undirected graph obtained by removing edge orientations (see e.g. \cite[Section 8.3]{GodsilRoyle2001}).} The kernel (or nullspace) of $\mathbf{B}$, denoted $\kernel(\mathbf{B}) \subseteq \R^{3^{r+1}}$, is the so called \emph{flow space} of the graph $\G_r(w)$, which is (informally) the linear span of all cycles of $\G_r(w)$, cf. \cite[Section 14.2]{GodsilRoyle2001}. Indeed, let us encode any path in $\G_r(w)$ as a column vector $x \in \N^{3^{r+1}}$ (indexed by edges in $E$) whose $e$th element $x_e \geq 0$ denotes the number of times the edge $e \in E$ was traversed by the path. Then, we have the following lemma from \cite[Section 14.2]{GodsilRoyle2001}.

\begin{lemma}[Circuits and Flow Space {\cite[Theorem 14.2.2]{GodsilRoyle2001}}]
\label{Lemma: Flow Space}
If a vector $x \in \N^{3^{r+1}}$ encodes a circuit in $\G_r(w)$, then $x \in \kernel(\mathbf{B})$, i.e. $x$ is a flow vector.
\end{lemma}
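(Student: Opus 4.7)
The plan is to prove this by directly computing $\mathbf{B} x$ coordinate by coordinate, and showing the entry at every vertex vanishes because a circuit enters and exits every vertex the same number of times. First I would unpack the definition $\mathbf{B} = \mathbf{B}_{\mathsf{out}} - \mathbf{B}_{\mathsf{in}}$: for any vertex $v \in \Y^r$,
$$ (\mathbf{B} x)_v \;=\; \sum_{e \text{ outgoing from } v}{x_e} \;-\; \sum_{e \text{ incoming to } v}{x_e}, $$
which is precisely the net out-flow at $v$ induced by the edge-multiplicity vector $x$. So the lemma reduces to showing this net out-flow is zero at every vertex of $\G_r(w)$.

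Next, I would invoke the combinatorial structure of a circuit. Writing the circuit as a cyclic sequence of directed edges $e_1 \to e_2 \to \cdots \to e_k \to e_1$ satisfying the concatenation condition that the head of $e_i$ equals the tail of $e_{i+1}$ (with indices taken modulo $k$), by assumption $x_e$ equals the number of indices $i \in \{1,\dots,k\}$ for which $e_i = e$. Fix any vertex $v \in \Y^r$. Each index $i$ with tail of $e_i$ equal to $v$ contributes exactly $+1$ to the outgoing sum above; by the cyclic head-equals-tail condition, the index $i - 1$ (mod $k$) has head of $e_{i-1}$ equal to $v$, contributing $+1$ to the incoming sum. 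The map $i \mapsto i - 1$ on $\{1,\dots,k\}$ (mod $k$) is a bijection, so this pairs the two sums term by term and establishes their equality. Hence $(\mathbf{B} x)_v = 0$ for every $v \in \Y^r$, i.e. $x \in \kernel(\mathbf{B})$.

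There is no real obstacle here; the result is a standard and elementary fact in algebraic graph theory, reflecting that $\mathbf{B}$ is the boundary map of the one-dimensional simplicial complex associated with $\G_r(w)$ and that every closed walk (a $1$-cycle in the chain-complex sense) lies in its kernel. The only care needed is to make the telescoping/pairing argument explicit so that it is clear why the starting/ending vertex of the circuit also balances, which is handled automatically by using the cyclic index convention above. An alternative phrasing I would consider, if it makes the accompanying LP development in the next paragraphs cleaner, is to express $x$ as a non-negative integer sum of indicator vectors of individual edges in the circuit and apply $\mathbf{B}$ to a single edge $e = (u,v)$, where $\mathbf{B}$ applied to its indicator produces $\mathbf{1}_u - \mathbf{1}_v$; summing along a closed walk then telescopes to zero.
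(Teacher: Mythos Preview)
Your proof is correct. The paper does not actually prove this lemma; it simply cites it as \cite[Theorem 14.2.2]{GodsilRoyle2001}, and your direct edge-counting argument (pairing each departure from $v$ with the immediately preceding arrival via the cyclic bijection $i\mapsto i-1$) is the standard proof one finds in that reference.
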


Using Lemma \ref{Lemma: Flow Space} and Proposition \ref{Prop: Graph Theoretic Characterization of Cyclic Partial Order}, we will present sufficient conditions for cyclic equivalence and non-negativity in terms of matrix relations and linear programming. To this end, as shown in appendix \ref{Necessary Conditions to Satisfy Cyc Form Cond 1 for Small Delta}, let us represent the counting form $w$ using its column vector of coefficients $\alpha = (\alpha_v : v \in \Y^r) = \zeta_r(w) \in \R^{3^r}$ akin to \eqref{Eq: The Coeff Map}. Note that $\alpha$ is indexed by the vertices of $\G_r(w)$ consistently with $\mathbf{B}$. The next proposition states our matrix and linear programming based criteria.

\begin{proposition}[Matrix and Linear Programming Criteria for $\cyceq$ and $\cycgeq$]
\label{Prop: Linear Programming Criterion for Cyclic Partial Order}
Given any counting form $w$ with pure rank $r$ and coefficients $\alpha = (\alpha_v : v \in \Y^r) \in \R^{3^r}$, the following are true:
\begin{enumerate}
\item If there exists a vector $z \in \R^{3^r}$ such that $\mathbf{B}^{\T} z = \mathbf{B}_{\mathsf{out}}^{\T} \alpha$, then $w \cyceq \0$. 
\item If there exists a vector $z \in \R^{3^r}$ such that $\mathbf{B}^{\T} z \leq \mathbf{B}_{\mathsf{out}}^{\T} \alpha$ entry-wise, then $w \cycgeq \0$.
\end{enumerate}
\end{proposition}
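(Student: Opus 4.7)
My plan is to view both criteria as instances of classical linear programming duality for the shortest-path / negative-cycle detection problem, with the auxiliary vector $z \in \R^{3^r}$ playing the role of a \emph{vertex potential} (precisely, the dual variable associated with the flow-conservation constraints $\mathbf{B} x = 0$). The reduced-cost inequality $\mathbf{B}^{\T} z \leq \mathbf{B}_{\mathsf{out}}^{\T} \alpha$ will then certify the absence of negative cycles, and combining with Proposition \ref{Prop: Graph Theoretic Characterization of Cyclic Partial Order} will immediately produce $w \cycgeq \0$; the equality variant will similarly produce $w \cyceq \0$.

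The preliminary observation I would make explicit is that the vector $\mathbf{B}_{\mathsf{out}}^{\T} \alpha \in \R^{3^{r+1}}$ is precisely the edge-weight vector of $\G_r(w)$. Indeed, the $e$th entry equals $\sum_{v \in \Y^r}{[\mathbf{B}_{\mathsf{out}}]_{v,e}\, \alpha_v} = \alpha_{\mathrm{tail}(e)}$, which by \eqref{Eq: Weight Construction for G} is exactly $\W(e)$. Therefore, for any path encoded by a non-negative integer vector $x \in \N^{3^{r+1}}$, the total weight of the path is the inner product $(\mathbf{B}_{\mathsf{out}}^{\T} \alpha)^{\T} x = \alpha^{\T} \mathbf{B}_{\mathsf{out}} x$.

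With this in hand, I would finish both parts with one short calculation each. For part 2, suppose a cycle of $\G_r(w)$ is encoded by some $x \in \N^{3^{r+1}}$; Lemma \ref{Lemma: Flow Space} then gives $\mathbf{B} x = 0$. Taking the inner product of the hypothesized inequality $\mathbf{B}^{\T} z \leq \mathbf{B}_{\mathsf{out}}^{\T} \alpha$ with the non-negative vector $x$ preserves the inequality and yields
\[
\alpha^{\T} \mathbf{B}_{\mathsf{out}} x \;=\; (\mathbf{B}_{\mathsf{out}}^{\T} \alpha)^{\T} x \;\geq\; (\mathbf{B}^{\T} z)^{\T} x \;=\; z^{\T} (\mathbf{B} x) \;=\; 0,
\]
so the total weight of the cycle is non-negative. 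Since the cycle was arbitrary, part 2 of Proposition \ref{Prop: Graph Theoretic Characterization of Cyclic Partial Order} gives $w \cycgeq \0$. For part 1, the very same chain of steps runs with equality throughout, so every cycle has zero total weight, and part 1 of Proposition \ref{Prop: Graph Theoretic Characterization of Cyclic Partial Order} yields $w \cyceq \0$.

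I do not anticipate any genuine obstacle in this proof; the argument is essentially a one-line Farkas-type duality observation resting on the identity $z^{\T} \mathbf{B} x = 0$ for flow vectors. The only bookkeeping I would double-check is the handling of self-loops (where $[\mathbf{B}_{\mathsf{in}}]_{v,e} = [\mathbf{B}_{\mathsf{out}}]_{v,e} = 1$, so the corresponding column of $\mathbf{B}$ is zero); this is automatically consistent because such an edge contributes $\alpha_v$ per traversal to the weight and contributes $0$ to $\mathbf{B} x$, which is compatible with Lemma \ref{Lemma: Flow Space} and with the calculation above. Once that bookkeeping is confirmed, the proof is complete.
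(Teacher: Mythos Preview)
Your proof is correct. Part 1 is essentially identical to the paper's argument: both observe that $\mathbf{B}_{\mathsf{out}}^{\T}\alpha$ is the edge-weight vector, take the inner product with a circuit's flow vector $x$, and use $\mathbf{B}x=0$ from Lemma~\ref{Lemma: Flow Space} to conclude zero total weight.

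For Part 2, however, you take a more direct route than the paper. The paper sets up an integer LP minimizing $\alpha^{\T}\mathbf{B}_{\mathsf{out}}x$ over non-negative integer flows, relaxes the integer constraint, writes down the dual (which is exactly the feasibility condition $\mathbf{B}^{\T}z\leq\mathbf{B}_{\mathsf{out}}^{\T}\alpha$), and then invokes the strong duality theorem to deduce that dual feasibility forces the primal optimum to be zero. Your argument bypasses this machinery entirely: you simply take the inner product of the hypothesized inequality with a non-negative cycle vector $x$, preserving the inequality because $x\geq 0$, and collapse the left side via $z^{\T}\mathbf{B}x=0$. This is precisely the weak-duality half of the LP argument done by hand, and it is all that is needed for the stated implication. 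What the paper's longer approach buys is an explicit dichotomy (either the dual is feasible and the primal value is zero, or the dual is infeasible and the primal is unbounded), which motivates why one would look for such a $z$ in the first place and connects to the linear programming formulation used downstream in Proposition~\ref{Prop: Linear Programming Criterion}; your approach buys brevity and avoids any appeal to strong duality.
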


\begin{proof} ~\newline
\indent
\textbf{Part 1:} First, observe that for any flow vector $x \in \N^{3^{r+1}}$ that encodes a circuit in $\G_r(w)$, the bilinear form $\alpha^{\T} \mathbf{B}_{\mathsf{out}} x$ is equal to the total weight of the circuit corresponding to $x$ due to \eqref{Eq: Weight Construction for G}. (Note that there may be multiple circuits with the same encoding $x$, but these circuits have the same total weight.) Now, suppose the edge weights $\alpha^{\T} \mathbf{B}_{\mathsf{out}}$ of $\G_r(w)$ live in the row space of $\mathbf{B}$, or equivalently, suppose there exists $z \in \R^{3^r}$ such that $\mathbf{B}^{\T} z = \mathbf{B}_{\mathsf{out}}^{\T} \alpha$. Then, for any circuit in $\G_r(w)$ encoded by the flow vector $x \in \N^{3^{r+1}}$, since $x \in \ker(\mathbf{B})$ by Lemma \ref{Lemma: Flow Space}, the total weight of the circuit is $\alpha^{\T} \mathbf{B}_{\mathsf{out}} x = z^{\T} \mathbf{B} x = 0$. So, using part 1 of Proposition \ref{Prop: Graph Theoretic Characterization of Cyclic Partial Order}, we have $w \cyceq \0$. This proves part 1.

\textbf{Part 2:} To prove part 2, consider the feasible integer LP:
$$ L_1 = \min_{\substack{x \in \Z^{3^{r+1}}:\\ \mathbf{B} x = \0, \, x \geq \0}}{\alpha^{\T} \mathbf{B}_{\mathsf{out}} x} $$
where inequalities among vectors hold entry-wise. Note that the constraints of this integer LP ensure that the minimization is over all non-negative integer-valued flow vectors $x$, which includes all flow vectors that encode circuits in $\G_r(w)$ by Lemma \ref{Lemma: Flow Space}. Moreover, when the flow vector $x$ encodes a circuit, the objective function $\alpha^{\T} \mathbf{B}_{\mathsf{out}} x$ is equal to the total weight of this encoded circuit. Thus, using part 2 of Proposition \ref{Prop: Graph Theoretic Characterization of Cyclic Partial Order}, if $L_1 = 0$ then $w \cycgeq \0$ (where we use the fact that $\G_r(w)$ has no negative cycles if and only if it has no negative circuits\textemdash see the proof of Proposition \ref{Prop: Graph Theoretic Characterization of Cyclic Partial Order}).\footnote{Notice further that, since the constraint set is invariant to scaling with non-negative integers, $L_1 < 0$ implies $L_1 = -\infty$, i.e. if there exists $x \in \Z^{3^{r+1}}$ satisfying $\mathbf{B} x = \0$ and $x \geq \0$ such that $\alpha^{\T} \mathbf{B}_{\mathsf{out}} x < 0$, then the integer LP is unbounded. Similarly, $L_1 \geq 0$ implies $L_1 = 0$, because the zero vector is a feasible solution of the integer LP. Thus, there are only two possible values of $L_1$, viz. $L_1 \in \{0,-\infty\}$.} 

We next relax the integer constraints of this integer LP and obtain the following feasible LP:
$$ L_2 = \min_{\substack{x \in \R^{3^{r+1}}:\\ \mathbf{B} x = \0, \, x \geq \0}}{\alpha^{\T} \mathbf{B}_{\mathsf{out}} x} $$
which obviously satisfies $L_2 \leq L_1$. It is straightforward to verify that the dual LP of this (primal) LP is the feasibility problem (cf. \cite[Section 29.4]{Cormenetal2009}, \cite[Chapter 1, Section 2]{Franklin2002}):
$$ L_3 = \max_{\substack{z \in \R^{3^r}: \\ \mathbf{B}^{\T} z \leq \mathbf{B}_{\mathsf{out}}^{\T} \alpha}}{0} $$
where the dual variables $z$ are indexed by the vertices in $\Y^r$. Using the \emph{duality theorem} (see e.g. \cite[Chapter 1, Section 8]{Franklin2002}), there are only two possibilities:
\begin{enumerate}
\item If there exists $z \in \R^{3^r}$ such that $\mathbf{B}^{\T} z \leq \mathbf{B}_{\mathsf{out}}^{\T} \alpha$, then we have $L_1 = L_2 = L_3 = 0$ (via strong duality).
\item Otherwise, we have $L_2 = -\infty$, i.e. the primal LP is unbounded.
\end{enumerate}
This proves that if there exists $z \in \R^{3^r}$ such that $\mathbf{B}^{\T} z \leq \mathbf{B}_{\mathsf{out}}^{\T} \alpha$, then $w \cycgeq \0$ as desired.
\end{proof}

It is worth briefly interpreting the sufficient condition in part 2 of Proposition \ref{Prop: Linear Programming Criterion for Cyclic Partial Order}. The result says that the counting form $w$ is cyclically non-negative, or equivalently, all circuits in $\G_r(w)$ have non-negative total weight, if the set of edge weights $\mathbf{B}_{\mathsf{out}}^{\T} \alpha$ of $\G_r(w)$ can be decomposed as follows:
\begin{equation}
\exists z \in \R^{3^r}, \, \exists \beta \in \R^{3^{r+1}} , \enspace \mathbf{B}_{\mathsf{out}}^{\T} \alpha = \mathbf{B}^{\T} z + \beta \enspace \text{and} \enspace \beta \geq \0
\end{equation}
where the set of edge weights $\mathbf{B}^{\T} z$ lead to all circuits in $\G_r(w)$ having zero total weight (see part 1 of Proposition \ref{Prop: Linear Programming Criterion for Cyclic Partial Order}), and the set of edge weights $\beta$ are entry-wise non-negative. On a separate note, it is also worth comparing part 1 of Proposition \ref{Prop: Linear Programming Criterion for Cyclic Partial Order} with the converse direction of Theorem \ref{Thm: Equivalent Characterization of Cyclic Equivalence}, because the two results are closely related. 

Using part 2 of Proposition \ref{Prop: Linear Programming Criterion for Cyclic Partial Order}, we now illustrate sufficient conditions for \eqref{Eq: Cyc Form Cond 1} and \eqref{Eq: Cyc Form Cond 2}. Let us represent the conditional expectation operator $\CE$ from Definition \ref{Def: Conditional Expectation Operator} as the $3^{r} \times 3^{r-1}$ matrix $\mathbf{C}_{r-1}(\delta)$ (for $\delta \in \big[0,\frac{1}{2}\big]$) defined in \eqref{Eq: Useful representation of CE matrix} in appendix \ref{Necessary Conditions to Satisfy Cyc Form Cond 1 for Small Delta}, and let $P_{r-1}$ be the $3^{r} \times 3^{r-1}$ ``purification matrix'' defined in \eqref{Eq: The Purify Matrix} in appendix \ref{Necessary Conditions to Satisfy Cyc Form Cond 1 for Small Delta}. Note that each $u$-only counting form with pure rank $s \leq r$ can be represented by a column vector of coefficients in $\R^{3^s}$ using the map $\zeta_s(\cdot)$ as shown in \eqref{Eq: The Coeff Map}. So, $\mathbf{C}_{r-1}(\delta)$ and $P_{r-1}$ map coefficient vectors of $u$-only counting forms with pure rank $r-1$ into coefficient vectors of $u$-only counting forms with pure rank $r$ (see e.g. \eqref{Eq: Matrix equation for forms} in appendix \ref{Necessary Conditions to Satisfy Cyc Form Cond 1 for Small Delta}). In fact, the output vectors of $P_{r-1}$ are cyclically equivalent to the corresponding input vectors (as discussed in appendix \ref{Necessary Conditions to Satisfy Cyc Form Cond 1 for Small Delta}). We obviously assume that the entries of different coefficient vectors, and the rows and columns of $\mathbf{C}_{r-1}(\delta)$ and $P_{r-1}$, are all indexed consistently by the sets $\Y^s$ for appropriate choices of $s$. (We remark that the indexing in \eqref{Eq: Index map} in appendix \ref{Necessary Conditions to Satisfy Cyc Form Cond 1 for Small Delta} endows the strings in $\Y^s$ for any $s \leq r$ with a \emph{lexicographic ordering},\footnote{Specifically, we index the entries of vectors in $\R^{3^s}$, or the rows and columns of appropriately sized matrices, using $[3^s]$ and assume the following bijective correspondence between $[3^s]$ and strings (or basis clauses) in $\Y^s$: For each $v^{\prime} \in [3^s]$, we first write $v^{\prime}$ in its $s$-length ternary representation, and then replace each $2$ in this representation with a $u$. The resulting string $v$ belongs to $\Y^s$, and defines the correspondence $[3^s] \ni v^{\prime} \leftrightarrow v \in \Y^s$.} which is useful for computing purposes.) Furthermore, define the vector of coefficients $\psi = (\psi_v : v \in \Y^r) \in \R^{3^r}$ of a purified version of the basis clause $\{u\}$:
\begin{equation}
\forall v = (v_1 \cdots v_r) \in \Y^r, \enspace \psi_v \triangleq \I\!\left\{v_1 = u\right\} . 
\end{equation}
It is straightforward to verify that $\{u\} \cyceq \sum_{v \in \Y^r}{\psi_v \{v\}}$ (see Lemma \ref{Lemma: Purification of Counting Forms} and the discussion preceding Theorem \ref{Thm: Equivalent Characterization of Cyclic Equivalence}). The ensuing proposition portrays sufficient conditions for \eqref{Eq: Cyc Form Cond 1} and \eqref{Eq: Cyc Form Cond 2} that can be tested using an LP.

\begin{proposition}[Linear Programming Criterion for Satisfying \eqref{Eq: Cyc Form Cond 1} and \eqref{Eq: Cyc Form Cond 2}]
\label{Prop: Linear Programming Criterion}
Fix any $\delta \in \big(0,\frac{1}{2}\big)$, and construct the partitioned matrices:
\begin{equation}
\label{Eq: Partitioned Vector}
\xi \triangleq \left[\0^{\T} \, \middle| \, \psi^{\T} \mathbf{B}_{\mathsf{out}} \right]^{\T} \in \R^{2(3^{r+1})} 
\end{equation}
\begin{equation}
\label{Eq: Partitioned Matrix}
\begin{aligned}
\mathbf{A}(\delta) & \triangleq \left[
\begin{array}{c|c|c}
\mathbf{B}_{\mathsf{out}}^{\T} \left(P_{r-1} - \mathbf{C}_{r-1}(\delta)\right) & \mathbf{B}^{\T} & \0 \\
\hline
\mathbf{B}_{\mathsf{out}}^{\T} P_{r-1} \Tstrut & \0 \Tstrut & \mathbf{B}^{\T} \Tstrut
\end{array}
\right] \\
& \quad \, \in \R^{2(3^{r+1}) \times (3^{r-1} + 2(3^r))}
\end{aligned}
\end{equation}
where $\0$ denotes the zero vector or matrix of appropriate dimension (with abuse of notation). Consider the following linear programming feasibility problem:
\begin{equation}
\label{Eq: Final LP Formulation}
\begin{aligned}
\exists \, \phi(\delta) & = \left[\alpha(\delta)^{\T} \, \middle| \, z(\delta)^{\T}\right]^{\T} \in \R^{3^{r-1} + 2(3^r)} \\
\text{with} & \enspace \alpha(\delta) = \left(\alpha_v(\delta) : v \in \Y^{r-1}\right) \in \R^{3^{r-1}} \\
\text{such that} & \enspace \mathbf{A}(\delta) \phi(\delta) \geq \xi \\
\text{and} & \enspace \alpha_v(\delta) = 0 \enspace \text{for all} \enspace v \in \{0,1\}^{r-1} \, . 
\end{aligned}
\end{equation}
If this LP in \eqref{Eq: Final LP Formulation} has a feasible solution $\phi(\delta) = \phi^*(\delta) = \big[\alpha^*(\delta)^{\T} \, \big| \, z^*(\delta)^{\T}\big]^{\T} \in \R^{3^{r-1} + 2(3^r)}$, then the corresponding $u$-only counting form $w_{\delta}^*$ with pure rank $r-1$:
\begin{equation}
\label{Eq: LP Potential}
w_{\delta}^* = \sum_{v \in \Y^{r-1}}{\alpha_v^*(\delta) \{v\}} 
\end{equation}
satisfies both \eqref{Eq: Cyc Form Cond 1} and \eqref{Eq: Cyc Form Cond 2} with $C = C(\delta) = 1$. Furthermore, the $u$-only counting form $w_{\delta}^*$ satisfies both \eqref{Eq: Form Cond 1} and \eqref{Eq: Form Cond 2} with $C = 1$. 
\end{proposition}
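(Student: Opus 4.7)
The plan is to unpack the block structure of the LP \eqref{Eq: Final LP Formulation}, identify each block inequality as a sufficient condition of the type supplied by part 2 of Proposition \ref{Prop: Linear Programming Criterion for Cyclic Partial Order}, and then translate these cyclic non-negativity statements (via purification) into the desired inequalities \eqref{Eq: Cyc Form Cond 1} and \eqref{Eq: Cyc Form Cond 2}. Finally, the passage from the cyclic inequalities to \eqref{Eq: Form Cond 1} and \eqref{Eq: Form Cond 2} will be immediate from Propositions \ref{Prop: Equivalence of CE} and \ref{Prop: Sufficient Conditions for Equivalence and Partial Order}.

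Concretely, first I would write $\phi(\delta) = [\alpha(\delta)^{\T} \mid z_1(\delta)^{\T} \mid z_2(\delta)^{\T}]^{\T}$ with $z_1(\delta), z_2(\delta) \in \R^{3^r}$, so that $\mathbf{A}(\delta)\phi(\delta) \geq \xi$ decouples into the two block inequalities
\begin{align*}
\mathbf{B}^{\T}(-z_1(\delta)) &\leq \mathbf{B}_{\mathsf{out}}^{\T}\big(P_{r-1} - \mathbf{C}_{r-1}(\delta)\big)\alpha(\delta), \\
\mathbf{B}^{\T}(-z_2(\delta)) &\leq \mathbf{B}_{\mathsf{out}}^{\T}\big(P_{r-1}\alpha(\delta) - \psi\big).
\end{align*}
Next, letting $w_\delta^*$ be the counting form \eqref{Eq: LP Potential} with coefficients $\alpha^*(\delta)$, I would identify the purified representations: $P_{r-1}\alpha^*(\delta)$ is the coefficient vector of the pure-rank-$r$ counting form cyclically equivalent to $w_\delta^*$ (by Lemma \ref{Lemma: Purification of Counting Forms} applied in the cyclic setting); $\mathbf{C}_{r-1}(\delta)\alpha^*(\delta)$ is the coefficient vector of $\CE(w_\delta^*)$ written with pure rank $r$ (by Definition \ref{Def: Conditional Expectation Operator}); and $\psi$ is the coefficient vector of a pure-rank-$r$ purification of the basis clause $\{u\}$.

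Thus the top block inequality is exactly the hypothesis of part 2 of Proposition \ref{Prop: Linear Programming Criterion for Cyclic Partial Order} applied to the pure-rank-$r$ counting form whose coefficient vector is $(P_{r-1} - \mathbf{C}_{r-1}(\delta))\alpha^*(\delta)$, yielding cyclic non-negativity of a counting form cyclically equivalent to $w_\delta^* - \CE(w_\delta^*)$. Hence $w_\delta^* - \CE(w_\delta^*) \cycgeq \0$, which is \eqref{Eq: Cyc Form Cond 1}. Likewise the bottom block inequality produces cyclic non-negativity of a pure-rank-$r$ counting form cyclically equivalent to $w_\delta^* - \{u\}$, giving \eqref{Eq: Cyc Form Cond 2} with $C=1$. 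The side constraint $\alpha^*_v(\delta) = 0$ for $v \in \{0,1\}^{r-1}$ is precisely the statement that $w_\delta^*$ is $u$-only (so that $\CE(w_\delta^*)$ is $u$-only too, keeping us inside the vector space where Proposition \ref{Prop: Sufficient Conditions for Equivalence and Partial Order} applies).

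For the final sentence of the proposition, I would invoke Proposition \ref{Prop: Sufficient Conditions for Equivalence and Partial Order} to upgrade $\cycgeq$ to $\succeq$ for both $u$-only counting forms $w_\delta^* - \CE(w_\delta^*)$ and $w_\delta^* - \{u\}$, and then Proposition \ref{Prop: Equivalence of CE} to rewrite $(\CE(w_\delta^*))(y)$ as $\E[w_\delta^*(Y_{k+1}) \mid Y_k = y]$, which converts $w_\delta^* \succeq \CE(w_\delta^*)$ into \eqref{Eq: Form Cond 1} and $w_\delta^* \succeq \{u\}$ into \eqref{Eq: Form Cond 2} with $C = 1$. There is no real obstacle here: once the block structure is parsed correctly, the proof is essentially a dictionary between matrix-vector notation and counting-form notation. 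The only place requiring care is verifying that $P_{r-1}\alpha^*(\delta)$ genuinely gives a pure-rank-$r$ counting form that is cyclically equivalent to $w_\delta^*$, which follows from the cyclic analog of Lemma \ref{Lemma: Purification of Counting Forms} mentioned immediately after its statement.
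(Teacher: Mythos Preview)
Your proposal is correct and follows essentially the same approach as the paper: unpack the block inequality $\mathbf{A}(\delta)\phi(\delta) \geq \xi$ into two inequalities of the form $\mathbf{B}^{\T}z \leq \mathbf{B}_{\mathsf{out}}^{\T}\alpha$, invoke part 2 of Proposition \ref{Prop: Linear Programming Criterion for Cyclic Partial Order} on each (with the purified coefficient vectors $(P_{r-1}-\mathbf{C}_{r-1}(\delta))\alpha^*(\delta)$ and $P_{r-1}\alpha^*(\delta)-\psi$) to obtain \eqref{Eq: Cyc Form Cond 1} and \eqref{Eq: Cyc Form Cond 2}, and then apply Proposition \ref{Prop: Sufficient Conditions for Equivalence and Partial Order} for the passage to \eqref{Eq: Form Cond 1} and \eqref{Eq: Form Cond 2}. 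The only minor wrinkle is that your final appeal to Proposition \ref{Prop: Equivalence of CE} is unnecessary, since \eqref{Eq: Form Cond 1} is already stated as $w_\delta - \CE(w_\delta) \succeq \0$ in counting-form language rather than as a conditional-expectation inequality.
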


\begin{proof}
We seek to construct a $u$-only counting form $w_{\delta}$ with pure rank $r - 1$ such that \eqref{Eq: Cyc Form Cond 1} and \eqref{Eq: Cyc Form Cond 2} hold, i.e. $w_{\delta} - \CE(w_{\delta}) \cycgeq \0$ and $w_{\delta} - C \{u\} \cycgeq \0$ for some constant $C = C(\delta) > 0$. Let $\alpha(\delta) = (\alpha_v(\delta) : v \in \Y^{r-1}) \in \R^{3^{r-1}}$ denote the vector of coefficients of $w_{\delta}$. Using part 2 of Proposition \ref{Prop: Linear Programming Criterion for Cyclic Partial Order}, a sufficient condition for \eqref{Eq: Cyc Form Cond 1} is:
\begin{equation}
\label{Eq: Suff Cond 1 Proxy}
\exists \, z_1(\delta) \in \R^{3^r}, \enspace \mathbf{B}^{\T} z_1(\delta) \leq \mathbf{B}_{\mathsf{out}}^{\T} \left(P_{r-1} - \mathbf{C}_{r-1}(\delta)\right) \alpha(\delta)
\end{equation}
because $\big(P_{r-1} - \mathbf{C}_{r-1}(\delta)\big) \alpha(\delta) \in \R^{3^r}$ is the vector of coefficients corresponding to a purified version of $w_{\delta} - \CE(w_{\delta})$. Similarly, using part 2 of Proposition \ref{Prop: Linear Programming Criterion for Cyclic Partial Order}, a sufficient condition for \eqref{Eq: Cyc Form Cond 2} is:
\begin{equation}
\label{Eq: Suff Cond 2 Proxy}
\begin{aligned}
\exists \, C = C(\delta) > 0, & \, \exists \, z_2(\delta) \in \R^{3^r}, \\
& \mathbf{B}^{\T} z_2(\delta) \leq \mathbf{B}_{\mathsf{out}}^{\T} \left(P_{r-1} \alpha(\delta) - C \psi \right) 
\end{aligned}
\end{equation}
because $P_{r-1} \alpha(\delta) - C \psi \in \R^{3^r}$ is the vector of coefficients corresponding to a purified version of $w_{\delta} - C \{u\}$. Since our goal is to find a $u$-only counting form $w_{\delta}$, or equivalently, a vector of coefficients $\alpha(\delta)$, that satisfies \eqref{Eq: Suff Cond 1 Proxy} and \eqref{Eq: Suff Cond 2 Proxy}, the vector $\alpha(\delta)$ is also unknown in \eqref{Eq: Suff Cond 1 Proxy} and \eqref{Eq: Suff Cond 2 Proxy}. Hence, we can set $C = 1$ without loss of generality in \eqref{Eq: Suff Cond 2 Proxy}, because we can multiply both sides of \eqref{Eq: Suff Cond 1 Proxy} and \eqref{Eq: Suff Cond 2 Proxy} by $C^{-1}$ and absorb this multiplicative constant into the unknown vectors $z_1(\delta)$, $z_2(\delta)$, and $\alpha(\delta)$.

Next, construct the block vector:
$$ \phi(\delta) \triangleq \left[\alpha(\delta)^{\T} \, \middle| \, -z_1(\delta)^{\T} \, \middle| \, -z_2(\delta)^{\T} \right]^{\T} \in \R^{3^{r-1} + 2(3^r)} \, , $$
the block vector $\xi \in \R^{2(3^{r+1})}$ in \eqref{Eq: Partitioned Vector}, and the block matrix $\mathbf{A}(\delta) \in \R^{2(3^{r+1}) \times (3^{r-1} + 2(3^r))}$ in \eqref{Eq: Partitioned Matrix}. Then, it is straightforward to verify that for any fixed vector $\alpha(\delta)$, the sufficient conditions \eqref{Eq: Suff Cond 1 Proxy} and \eqref{Eq: Suff Cond 2 Proxy} with $C = 1$ can be simultaneously expressed as:
$$ \exists \, z_1(\delta),z_2(\delta) \in \R^{3^r}, \enspace \mathbf{A}(\delta) \phi(\delta) \geq \xi \, . $$
Moreover, since we seek to construct an $\alpha(\delta)$ satisfying this condition, our true objective is to verify the following sufficient condition:
$$ \exists \, \alpha(\delta) \in \R^{3^{r-1}}, \, \exists \, z_1(\delta),z_2(\delta) \in \R^{3^r}, \enspace \mathbf{A}(\delta) \phi(\delta) \geq \xi $$
where $\alpha(\delta)$ additionally satisfies the constraints:
$$ \forall v \in \{0,1\}^{r-1}, \enspace \alpha_v(\delta) = 0 $$
because it must correspond to a $u$-only counting form $w_{\delta}$. If these two sets of constraints hold for some feasible $\alpha(\delta) = \alpha^*(\delta) \in \R^{3^{r-1}}$, then the $u$-only counting form $w_{\delta}^*$ defined by the coefficients $\alpha^*(\delta)$ satisfies both \eqref{Eq: Cyc Form Cond 1} and \eqref{Eq: Cyc Form Cond 2} with $C = 1$, which implies, via Proposition \ref{Prop: Sufficient Conditions for Equivalence and Partial Order}, that $w_{\delta}^*$ also satisfies both \eqref{Eq: Form Cond 1} and \eqref{Eq: Form Cond 2} with $C = 1$ (see the discussion following Proposition \ref{Prop: Sufficient Conditions for Equivalence and Partial Order}). This completes the proof.
\end{proof}

Proposition \ref{Prop: Linear Programming Criterion} can be exploited to computationally find counting forms that satisfy the conditions of Theorem \ref{Thm: Sufficient Condition for NAND 2D Regular Grid}. As noted in subsection \ref{Partial Impossibility Result for NAND 2D Regular Grid}, the MATLAB and CVX based simulation results in Table \ref{Table: LP Solutions} solve the LP in \eqref{Eq: Final LP Formulation} of Proposition \ref{Prop: Linear Programming Criterion} to numerically construct $u$-only counting forms $w_{\delta}^*$ with pure rank $3$ that satisfy \eqref{Eq: Cyc Form Cond 1} and \eqref{Eq: Cyc Form Cond 2} with $C = 1$. In particular, for any noise level $\delta \in \big(0,\frac{1}{2}\big)$, the constraint matrices \eqref{Eq: Partitioned Vector} and \eqref{Eq: Partitioned Matrix} can be explicitly written out (as expounded earlier), and the corresponding feasible vectors of coefficients $\alpha^*(\delta) \in \R^{27}$ that solve the LP in \eqref{Eq: Final LP Formulation} (with reasonably small $\ell^1$-norm), depicted in Table \ref{Table: LP Solutions}, define $u$-only counting forms $w_{\delta}^*$ with pure rank $3$ via \eqref{Eq: LP Potential}. Furthermore, inspired by the reasons outlined at the end of subsection \ref{Potential Functions}, we set $r = 4$ for these simulations; indeed, the proof of ergodicity of the 1D PCA with NAND gates also uses counting forms with rank $3$ \cite[Section 2.2]{HolroydMarcoviciMartin2019}. 

To conclude this section, we make a few other noteworthy remarks. As we indicated earlier in subsection \ref{Partial Impossibility Result for NAND 2D Regular Grid}, Theorem \ref{Thm: Sufficient Condition for NAND 2D Regular Grid} and simulations based on the other arguments in this section yield non-rigorous computer-assisted proofs of the impossibility of broadcasting on NAND 2D regular grids. Indeed, for every fixed noise level $\delta \in \big(0,\frac{1}{2}\big)$ where we can compute a feasible solution $\alpha^*(\delta)$ of the LP in \eqref{Eq: Final LP Formulation}, such as the values of $\delta$ that feature in Table \ref{Table: LP Solutions}, Proposition \ref{Prop: Linear Programming Criterion} portrays that we can obtain a $u$-only counting form $w_{\delta}^*$ with pure rank $3$ that satisfies both \eqref{Eq: Cyc Form Cond 1} and \eqref{Eq: Cyc Form Cond 2} with $C = 1$, and Theorem \ref{Thm: Sufficient Condition for NAND 2D Regular Grid} then implies that Conjecture \ref{Conj: NAND 2D Regular Grid} is true for this $\delta$. We emphasize that the computer-assisted part of this argument is not rigorous without further careful analysis, because we use floating-point arithmetic to solve the LP in \eqref{Eq: Final LP Formulation}. If we could instead solve an integer LP akin to \eqref{Eq: Final LP Formulation} (when $\delta$ is rational) or utilize \emph{interval arithmetic}, then our feasible solution $\alpha^*(\delta)$ would be deemed rigorous, and the discussion in section \ref{Martingale Approach for NAND Processing Functions} would truly yield a computer-assisted proof.

Nevertheless, even with a rigorous computational technique, the above approach only demonstrates the impossibility of broadcasting for specific values of $\delta$. So, while this approach lends credence to our development of counting forms as a framework for constructing superharmonic potential functions, it does not enable us to establish Conjecture \ref{Conj: NAND 2D Regular Grid} for all $\delta \in \big(0,\frac{1}{2}\big)$. A promising future direction to remedy this issue is to develop a more sophisticated \emph{semidefinite programming} formulation for the problem of constructing $u$-only counting forms satisfying \eqref{Eq: Cyc Form Cond 1} and \eqref{Eq: Cyc Form Cond 2} for all $\delta \in \big(0,\frac{1}{2}\big)$. For instance, consider the key constraint $\mathbf{A}(\delta) \phi(\delta) \geq \xi$ in the LP in \eqref{Eq: Final LP Formulation} without fixing $\delta$. Let us restrict the vector $\phi(\delta)$ in \eqref{Eq: Final LP Formulation} to be a polynomial vector in $\delta$ with some (sufficiently large) fixed degree and unknown coefficients. Then, since $\mathbf{C}_{r-1}(\delta)$ is a known polynomial matrix in $\delta$ with degree $2(r-1)$ (see \eqref{Eq: Polynomial Matrix} in appendix \ref{Necessary Conditions to Satisfy Cyc Form Cond 1 for Small Delta}), the constraint $\mathbf{A}(\delta) \phi(\delta) \geq \xi$ is equivalent to simultaneously ensuring the non-negativity of a collection of univariate polynomials in $\delta$, whose coefficients are affine functions of the unknown coefficients of $\phi(\delta)$, over the interval $\big[0,\frac{1}{2}\big]$. Since the \emph{Markov-Luk\'{a}cs theorem} provides well-known necessary and sufficient conditions for the non-negativity of univariate polynomials in terms of \emph{sum of squares} (SOS) characterizations \cite{Markov1906,Lukacs1918} (also see \cite[Theorems 1.21.1 and 1.21.2]{Szego1975}), we can represent the aforementioned simultaneous non-negativity of polynomials in $\delta$ over $\big[0,\frac{1}{2}\big]$ using an SOS program. Such SOS programs can be reformulated as semidefinite programs (SDPs) and computationally solved using standard optimization tools, cf. \cite{Parrilo2003,Lasserre2007}. Hence, one could computationally establish the existence of $u$-only counting forms satisfying \eqref{Eq: Cyc Form Cond 1} and \eqref{Eq: Cyc Form Cond 2} for all $\delta \in \big(0,\frac{1}{2}\big)$ using a single SDP. This, in conjunction with rigorous interval arithmetic analysis, could be an avenue for proving Conjecture \ref{Conj: NAND 2D Regular Grid} in its entirety. (Additionally, we remark that this connection between SDPs and the existence of pertinent counting forms is somewhat natural, because SDPs have been very useful in constructing Lyapunov functions in many control theoretic problems, and there are limpid parallels between superharmonic functions and Lyapunov functions as discussed earlier.)

Lastly, we reiterate that while subsection \ref{Partial Impossibility Result for NAND 2D Regular Grid} and section \ref{Martingale Approach for NAND Processing Functions} present a detailed martingale-based approach to prove the impossibility of broadcasting on the NAND 2D regular grid, they also provide a general program for establishing the impossibility of broadcasting on 2D regular grids with other Boolean processing functions. Indeed, the arguments in section \ref{Martingale Approach for NAND Processing Functions} can be easily carried out for other processing functions, such as AND, XOR, or IMP, with minor cosmetic changes, e.g. the elements of the matrix $\mathbf{C}_{r-1}(\delta)$ representing $\CE$ need to be modified accordingly.

\section{Conclusion}
\label{Conclusion}

In closing, we briefly summarize the main contributions of this paper. Propelled by the positive rates conjecture for 1D PCA, we conjectured in subsection \ref{Motivation} that broadcasting is impossible on 2D regular grids when all vertices with two inputs use a common Boolean processing function. We made considerable progress towards establishing this conjecture, and proved impossibility results for 2D regular grids with all AND and all XOR processing functions in Theorems \ref{Thm: Deterministic And Grid} and \ref{Thm: Deterministic Xor Grid}, respectively. Specifically, our proof for the AND 2D regular grid utilized phase transition results concerning oriented bond percolation in 2D lattices, and our proof for the XOR 2D regular grid relied on analyzing the equivalent problem of decoding the first bit of a codeword drawn uniformly from of a linear code. Then, we showed in Theorem \ref{Thm: Sufficient Condition for NAND 2D Regular Grid} that an impossibility result for 2D regular grids with all NAND processing functions follows from the existence of certain structured supermartingales. Moreover, in much of section \ref{Martingale Approach for NAND Processing Functions}, we developed the notion of counting forms and elucidated graph theoretic and linear programming based criteria to help construct the desired superharmonic potential functions of Theorem \ref{Thm: Sufficient Condition for NAND 2D Regular Grid}. These ideas were utilized to depict several numerical examples of such potential functions in Table \ref{Table: LP Solutions}. Finally, we list some fruitful directions of future research below:
\begin{enumerate}
\item The program in subsection \ref{Partial Impossibility Result for NAND 2D Regular Grid} and section \ref{Martingale Approach for NAND Processing Functions} can be rigorously executed to prove the impossibility of broadcasting on the NAND 2D regular grid for all $\delta \in \big(0,\frac{1}{2}\big)$. In particular, our suggestion in subsection \ref{Linear Programming Criteria} of using SDPs to obtain a computer-assisted proof could be pursued.
\item In order to completely resolve the 2D version of our conjecture in subsection \ref{Motivation}, the impossibility of broadcasting on 2D regular grids with IMP processing functions must also be established (see \eqref{Eq: The IMP gate} in subsection \ref{Partial Impossibility Result for NAND 2D Regular Grid}). 
\item In order to prove the 3D part of our broader conjecture in subsection \ref{Motivation} that broadcasting is feasible in $3$ or more dimensions for sufficiently low noise levels $\delta$, broadcasting on the majority 3D regular grid can be examined. As indicated by Proposition \ref{Prop: Majority 3D Regular Grid} and discussed in subsection \ref{3D Regular Grid Model}, it may be possible to draw on ideas from the analysis of Toom's NEC rule in \cite{Toom1980} and \cite{Gacs1995} to establish the possibility of broadcasting on the majority 3D regular grid.
\item Much like the correspondence between our 2D regular grid model with NAND processing functions and the 1D PCA with NAND gates in \cite{HolroydMarcoviciMartin2019}, we can define a 2D $45^{\circ}$ grid model with $3$-input majority processing functions corresponding to Gray's 1D PCA with $3$-input majority gates \cite[Example 5]{Gray1987}. In particular, a 2D $45^{\circ}$ grid model has $L_k = 2k + 1$ vertices at each level $k \in \N$, and every vertex has three outgoing edges that have $45^{\circ}$ separation between them.\footnote{In contrast, every vertex in the 2D regular grid model has two outgoing edges that have $90^{\circ}$ separation between them (see Figure \ref{Figure: Grid}).} Furthermore, all vertices in the 2D $45^{\circ}$ grid model which are at least two positions away from the boundary have three incoming edges. Although Gray's proof sketch of the ergodicity of the 1D PCA with $3$-input majority gates in \cite[Section 3]{Gray1987} shows exponentially fast convergence for sufficiently small noise levels, it includes noise on the vertices (rather than the edges) and obviously does not account for the boundary effects of 2D grid models. It is therefore an interesting future endeavor to study broadcasting in the 2D $45^{\circ}$ grid model with $3$-input majority processing functions by building upon the reasoning in \cite[Section 3]{Gray1987}.
\end{enumerate}

\appendices

\section{Proof of Proposition \ref{Prop: Majority 3D Regular Grid}}
\label{Proof of Proposition Majority 3D Regular Grid}

\begin{proof}
To prove this proposition, we will perform a simple projection argument. Specifically, for any $k \in \N$, we will project any vertex $v = (v_1,v_2,v_3) \in \S_k$ of the majority 3D regular grid onto its first two coordinates to obtain a site $v \mapsto (v_1,v_2) \in \hat{\S}_k$ at time $k$ of Toom's 2D PCA with boundary conditions. (Note that this projection map is bijective, because for any site $x = (x_1,x_2) \in \hat{\S}_k$ at time $k \in \N$ of Toom's 2D PCA with boundary conditions, we can recover the corresponding vertex of the majority 3D regular grid via the affine map $x \mapsto (x_1,x_2,k-x_1 - x_2) \in \S_k$.) 

To this end, let us first couple the i.i.d. $\Ber(\delta)$ random variables $\{Z_{v,i} : v \in \N^3\backslash\!\{\0\}, \, i \in \{1,2,3\}\}$ with the i.i.d. $\Ber(\delta)$ random variables $\{Z_{k,x,v} : k \in \N, \, x \in \hat{\S}_{k+1}, \, v \in \Nbhd\}$ such that:
\begin{equation}
\label{Eq: Coupling of Noise Vars}
\begin{aligned}
\forall v = (v_1,& \, v_2,v_3) \in \N^3\backslash\!\{\0\}, \\
Z_{v,i} & = Z_{v_1 + v_2 + v_3 - 1,(v_1,v_2),-e_i} \enspace \text{for } i \in \{1,2\} \, , \\
\text{and} \enspace Z_{v,3} & = Z_{v_1 + v_2 + v_3 - 1,(v_1,v_2),\0} \, ,
\end{aligned}
\end{equation}
or equivalently:
\begin{equation*}
\begin{aligned}
\forall k \in \N, \, \forall x & = (x_1,x_2) \in \hat{\S}_{k+1}, \\
Z_{k,x,-e_i} & = Z_{(x_1,x_2,k+1-x_1 - x_2),i} \enspace \text{for } i \in \{1,2\} \, , \\
\text{and} \enspace Z_{k,x,\0} & = Z_{(x_1,x_2,k+1-x_1 - x_2),3} \, ,
\end{aligned}
\end{equation*}
almost surely. The rest of the proof proceeds by strong induction. 

Notice that the base case, $X_{\0} = \xi_0(\0)$ almost surely, holds by assumption. Suppose further that the inductive hypothesis:
\begin{equation}
\label{Eq: Ind Hyp}
\forall v = (v_1,v_2,v_3) \in \bigcup_{m = 0}^{k}{\S_m}, \enspace X_v = \xi_{v_1 + v_2 + v_3}((v_1,v_2))  
\end{equation}
almost surely, holds for some $k \in \N$. Now consider any vertex $v = (v_1,v_2,v_3) \in \S_{k+1}$. There are several cases for us to verify:
\begin{enumerate}
\item If $v_1 = v_2 = 0$ and $v_3 = k+1$, then we have almost surely:
\begin{align*}
X_v & = X_{v- e_3} \oplus Z_{v,3} \\
& = \xi_{k}(\0) \oplus Z_{k, \0,\0} \\
& = \xi_{k+1}(\0) 
\end{align*}
using \eqref{Eq: Maj Grid 1}, \eqref{Eq: Ind Hyp}, \eqref{Eq: Coupling of Noise Vars}, and \eqref{Eq: Toom PCA 1}.
\item If there exists $i \in \{1,2\}$ such that $v_i = k+1$, then we have almost surely:
\begin{align*}
X_v & = X_{v- e_i} \oplus Z_{v,i} \\
& = \xi_{k}(k e_i) \oplus Z_{k, (k+1)e_i,-e_i} \\
& = \xi_{k+1}((v_1,v_2)) 
\end{align*}
using \eqref{Eq: Maj Grid 1}, \eqref{Eq: Ind Hyp}, \eqref{Eq: Coupling of Noise Vars}, and \eqref{Eq: Toom PCA 1.5}.
\item If there exists $i \in \{1,2\}$ such that $v_i + v_3 = k+1$, then we have almost surely:
\begin{align*}
X_v & = 
\begin{cases}
X_{v-e_3} \oplus Z_{v,3} \, , & \text{with probability (w.p.) } \frac{1}{2} \\
X_{v-e_i} \oplus Z_{v,i} \, , & \text{w.p. } \frac{1}{2} \\
\end{cases} \\
& = \begin{cases}
\xi_{k}((v_1,v_2)) \oplus Z_{k,(v_1,v_2),\0} \, , & \text{w.p. } \frac{1}{2} \\
\xi_{k}((v_1,v_2) - e_i) \oplus Z_{k,(v_1,v_2),-e_i} \, , & \text{w.p. } \frac{1}{2} \\
\end{cases} \\
& = \xi_{k+1}((v_1,v_2)) 
\end{align*}
using \eqref{Eq: Maj Grid 2}, \eqref{Eq: Ind Hyp}, \eqref{Eq: Coupling of Noise Vars}, and \eqref{Eq: Toom PCA 2}.
\item If $v_1 + v_2 = k+1$ and $v_3 = 0$, then we have almost surely:
\begin{align*}
X_v & = 
\begin{cases}
X_{v-e_1} \oplus Z_{v,1}  \, , & \text{w.p. } \frac{1}{2} \\
X_{v-e_2} \oplus Z_{v,2}  \, , & \text{w.p. } \frac{1}{2} \\
\end{cases} \\
& = 
\begin{cases}
\xi_{k}((v_1 - 1,v_2)) \oplus Z_{k,(v_1,v_2),-e_1} \, , & \text{w.p. } \frac{1}{2} \\
\xi_{k}((v_1,v_2 - 1)) \oplus Z_{k,(v_1,v_2),-e_2} \, , & \text{w.p. } \frac{1}{2} \\
\end{cases} \\
& = \xi_{k+1}((v_1,v_2)) 
\end{align*}
using \eqref{Eq: Maj Grid 2}, \eqref{Eq: Ind Hyp}, \eqref{Eq: Coupling of Noise Vars}, and \eqref{Eq: Toom PCA 2.5}.
\item If $v_1,v_2,v_3 > 0$, then we have almost surely:
\begin{align*}
X_v & = \maj\big(X_{v-e_1} \oplus Z_{v,1}, X_{v-e_2} \oplus Z_{v,2}, \\
& \qquad \qquad \quad \, X_{v-e_3} \oplus Z_{v,3}\big) \\
& = \maj\big(\xi_{k}((v_1-1,v_2)) \oplus Z_{k,(v_1,v_2),-e_1}, \\
& \qquad \qquad \quad \, \xi_{k}((v_1,v_2-1)) \oplus Z_{k,(v_1,v_2),-e_2}, \\
& \qquad \qquad \quad \, \xi_{k}((v_1,v_2)) \oplus Z_{k,(v_1,v_2),\0}\big) \\
& = \xi_{k+1}((v_1,v_2))
\end{align*}
using \eqref{Eq: Maj Grid 3}, \eqref{Eq: Ind Hyp}, \eqref{Eq: Coupling of Noise Vars}, and \eqref{Eq: Toom PCA 3}.
\end{enumerate} 
This shows that for every $v = (v_1,v_2,v_3) \in \S_{k+1}$, we have $X_v = \xi_{k+1}((v_1,v_2))$ almost surely. Therefore, our coupling of the i.i.d. $\Ber(\delta)$ random variables in \eqref{Eq: Coupling of Noise Vars} establishes the proposition statement by induction.
\end{proof}

\section{Proof of Theorem \ref{Thm: Equivalent Characterization of Cyclic Equivalence}}
\label{Proof of Equivalent Characterization of Cyclic Equivalence}

We will prove Theorem \ref{Thm: Equivalent Characterization of Cyclic Equivalence} in this appendix. Throughout section \ref{Martingale Approach for NAND Processing Functions}, we intentionally circumvented an exposition of the algebraic topological perspective of counting forms since it does not help us establish Theorem \ref{Thm: Sufficient Condition for NAND 2D Regular Grid} or perform the simulations illustrated in Table \ref{Table: LP Solutions}. However, in order to derive Theorem \ref{Thm: Equivalent Characterization of Cyclic Equivalence} and elucidate the elegant intuition behind the counting forms $\{\rho_v : v \in \Y^{s-1}\}$ in \eqref{Eq: Def of rho_v}, we have to introduce some basic ideas from algebraic graph theory. Although some of these ideas can be found in or derived from standard expositions, cf. \cite{GodsilRoyle2001,Lim2020}, we develop the details here in a manner that is directly relevant to us. 

Consider a strongly connected finite directed graph $\G = (\V,E)$ with vertex set $\V$ and edge set $E$. (Note that $\G$
has no isolated vertices, but $\G$ may contain self-loop edges.) Given $\G$, we begin with a list of useful definitions:
\begin{enumerate}
\item For any codomain $S \subseteq \R$, we refer to a map $f : E \rightarrow S$ as an \emph{$S$-edge function}.
Moreover, we refer to $\R$-edge functions as simply \emph{edge functions}. 
\item An edge function $f$ is said to be \emph{closed} if for every directed circuit $(e_1,\dots,e_n) \in E^n$ in $\G$,\footnote{Note that the source vertex of $e_{i+1}$ is equal to the destination vertex of $e_i$ for all $i \in \{1,\dots,n-1\}$, and the source vertex of $e_1$ is equal to the destination vertex of $e_n$.} we have:
\begin{equation}
\sum_{k = 1}^{n}{f(e_k)} = 0 \, ,
\end{equation}
where the length $n \in \N$ of the circuit is arbitrary.
\item For the fields $S = \R$ and $S = \Q$, the set of all $S$-edge functions forms an inner product space over $S$ with inner product given by:
\begin{equation}
\left<f,g\right> \triangleq \sum_{e \in E}{f(e) g(e)}
\end{equation}
for all pairs of $S$-edge functions $f,g$.
\item The \emph{divergence} of an edge function $f$ at a vertex $v \in \V$ is defined as:
\begin{equation}
\begin{aligned}
(\divergence f)(v) & \triangleq \sum_{\substack{e \in E:\\v \text{ has incoming edge } e}}{f(e)} \\
& \quad \quad - \sum_{\substack{e^{\prime} \in E:\\v \text{ has outgoing edge } e^{\prime}}}{f(e^{\prime})} \, .
\end{aligned}
\end{equation}
Furthermore, we write $\divergence f = 0$ to denote that $(\divergence f)(v) = 0$ for all vertices $v \in \V$.
\item For every vertex $v \in \V$, we define an associated $\Z$-edge function $\varrho_v$ as follows:
\begin{equation}
\label{Eq: Definition of varrho's}
\begin{aligned}
\forall e \in E, \enspace \varrho_v(e) & \triangleq \I\!\left\{v \text{ has incoming edge } e\right\} \\
& \quad \, - \I\!\left\{v \text{ has outgoing edge } e\right\} .
\end{aligned}
\end{equation}
These edge functions are actually gradients of characteristic (or indicator) functions of vertices in $\V$. They can also be perceived as rows of an oriented incidence matrix of $\G$. 
\item Lastly, for every directed circuit $H = (e_1,\dots,e_n) \in E^n$ in $\G$ (where $n \in \N$ is arbitrary), we define an associated $\N$-edge function $f_H$ as follows:
\begin{equation}
\forall e \in E, \enspace f_H(e) \triangleq \sum_{k = 1}^{n}{\I\!\left\{e_k = e\right\}} \, .
\end{equation}
$f_H$ encodes the circuit $H$ by counting the number of times each edge is traversed. (This parallels the encoding used in Lemma \ref{Lemma: Flow Space}.) Moreover, since the number of incoming edges equals the number of outgoing edges for every vertex in a circuit, it is straightforward to verify that $\divergence f_H = 0$.
\end{enumerate}

We will now state a result that includes Theorem~\ref{Thm: Equivalent Characterization of Cyclic Equivalence} as a
special case; the reduction is given at the end of this appendix.

\begin{theorem}[Closed $\R$-Edge Functions]
\label{Thm: Edge Functions}
Given any strongly connected finite directed graph $\G$, an edge function $f$ is closed if and only if there exist coefficients $\alpha_v \in \R$ for $v \in \V$ such that: 
$$ f = \sum_{v \in\V}{\alpha_v \varrho_v} \, . $$
The dimension of the linear subspace of closed edge functions equals $|\V|-1$. 
\end{theorem}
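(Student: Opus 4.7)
The plan is to prove the theorem by establishing both directions of the characterization and then extracting the dimension count from a short linear algebra argument. The forward direction is essentially a telescoping computation: for any edge $e = (u,v)$ one has $\bigl(\sum_w \alpha_w \varrho_w\bigr)(e) = \alpha_v - \alpha_u$, so along any directed circuit $H = (e_1, \ldots, e_n)$ the sum $\sum_k \bigl(\sum_w \alpha_w \varrho_w\bigr)(e_k)$ telescopes to zero, which shows that every element of $\linspan\{\varrho_v : v \in \V\}$ is closed.

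The reverse direction is the substantive part of the proof, and is where strong connectedness enters. Given a closed edge function $f$, I would fix a reference vertex $r \in \V$ and define a potential $\alpha_v \in \R$ for every $v \in \V$ by choosing any directed path $P_v$ from $r$ to $v$ (which exists by strong connectedness) and setting $\alpha_v \triangleq \sum_{e \in P_v} f(e)$. The key subtlety is to show that $\alpha_v$ does not depend on the choice of $P_v$: given two directed paths $P$ and $P'$ from $r$ to $v$, I would invoke strong connectedness a second time to pick a directed path $Q$ from $v$ back to $r$, and observe that both $P$ followed by $Q$ and $P'$ followed by $Q$ are directed circuits in $\G$, so closedness of $f$ forces $\sum_P f = -\sum_Q f = \sum_{P'} f$. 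Once well-definedness is in hand, concatenating any edge $e = (u,v) \in E$ onto $P_u$ produces a valid directed path from $r$ to $v$, so $\alpha_v = \alpha_u + f(e)$ and hence $f(e) = \alpha_v - \alpha_u$. Combined with the telescoping identity from the forward direction, this yields $f = \sum_w \alpha_w \varrho_w$.

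For the dimension count, I would analyze the linear map $\Phi : \R^\V \to \R^E$ defined by $\Phi(\alpha) = \sum_v \alpha_v \varrho_v$. Its image is exactly $\linspan\{\varrho_v : v \in \V\}$, which by the two preceding paragraphs coincides with the space of closed edge functions. The kernel of $\Phi$ consists of those $\alpha$ with $\alpha_v = \alpha_u$ for every edge $(u,v) \in E$; since the underlying undirected graph of $\G$ is connected (an immediate consequence of strong connectedness), $\alpha$ must be constant on $\V$, giving $\dim \kernel(\Phi) = 1$. Rank-nullity then yields $\dim \linspan\{\varrho_v\} = |\V| - 1$, as required.

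The only step where I anticipate any real difficulty is the well-definedness of $\alpha_v$, since the naive approach of reversing one of the two paths fails—the reverse of a directed walk need not be a directed walk in $\G$. Strong connectedness is precisely what lets us sidestep this obstacle by supplying a directed return path $Q$, converting the comparison between $P$ and $P'$ into a statement about two honest directed circuits where the closedness hypothesis applies directly.
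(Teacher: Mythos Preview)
Your proof is correct and takes a genuinely different, more elementary route than the paper's. You construct the potential $\alpha_v$ explicitly by path-summation from a fixed base vertex---the classical argument that a closed $1$-cochain on a connected graph is exact---and then read off the dimension by rank--nullity on the gradient map $\Phi$. The paper instead proceeds by duality: it sets $V_1 = \linspan\{\varrho_v\}$ and $V_2 = \{g : \divergence g = 0\}$, notes that $V_1^\perp = V_2$ (since $(\divergence g)(v) = \langle \varrho_v, g\rangle$), and then argues that $V_2$ is spanned by the circuit-indicator functions $f_H$. That last step requires a flatness reduction from $\R$ to $\Q$, a combinatorial cycle-peeling argument for $\N$-valued divergence-free functions, and an Eulerian-circuit trick to pass from $\Q$ back to $\N$; the dimension claim is then deferred to a citation. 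Your approach is shorter and entirely self-contained; the paper's buys an explicit identification of the orthogonal complement (the flow space) as the span of circuit vectors, which connects directly to the LP machinery developed later.
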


\begin{proof} 
We are trying to prove that the image of the linear map $\mathsf{G}:\R^{\V} \to \R^E$ given by:
$$ \mathsf{G}(\alpha) \triangleq \sum_{v \in \V} \alpha_v \varrho_v $$
coincides with the kernel of the map $\mathsf{H}:\R^E \to \R^m$ mapping an edge function to its evaluation on each of (finitely many, $m$) circuits in $\G$. Since $\R$ can be seen as an (infinite dimensional) $\Q$-vector space, it is a flat
$\Q$-module over $\Q$. Thus, by tensoring the exact sequence formed by $\mathsf{G},\mathsf{H}$ with $\R$, we see that equality of image of $\mathsf{G}$ with the kernel of $\mathsf{H}$ can be checked over $\Q$. From now on, we restrict our attention to $\Q$-edge functions.

Denote:
$$ V_1 = \linspan(\{\varrho_v : v\in V\})\,, \quad V_2 =\{f: \divergence f = 0\}\,.$$
We claim that: 
\begin{equation}
\label{eq:dpp_1}
	V_1^{\perp} = V_2\,, 
\end{equation}
or $f\in V_1$ if and only if $\left<f,g\right> = 0$ for all $g\in V_2$. Indeed, from the definition: 
$$ (\divergence g)(v) = 0 \quad \iff \quad \left<\varrho_v,g\right> =0\,.$$

In view of~\eqref{eq:dpp_1} we see that the statement of the theorem (over $\Q$) is equivalent to the following claim: \emph{A $\Q$-edge function $g$ satisfies $\divergence g = 0$ if and only if $g$ is a finite linear combination of $\{f_H : H \text{ is a circuit in }\G\}$ with rational coefficients.}
One direction is clear: $\divergence f_H = 0$ for any circuit $H$ in $\G$ since every vertex has the same number of incoming
and outgoing edges in a circuit. 

For the converse direction, first we consider a non-zero $\N$-edge function $g$ satisfying $\divergence g = 0$. Take any edge $e_1 \in E$ such that $g(e_1) > 0$, and let $e_1$ have destination vertex $v \in \V$. Then, since $(\divergence g)(v) = 0$, there exists an edge $e_2 \in E$ with source vertex $v$ such that $g(e_2) > 0$. We can repeatedly generate new edges $e \in E$ with $g(e) > 0$ using this argument until we revisit an already visited vertex. (This must happen because $\G$ is finite.) This yields a directed cycle $H = (e_1^{\prime},\dots,e_n^{\prime}) \in E^n$ with some length $n \in \N$ such
that $g(e_i^{\prime}) > 0$ for all $i \in \{1,\dots,n\}$. Notice that $g - f_H$ is also an $\N$-edge function. So, we
can iterate this procedure. This shows that any $\N$-edge function $g$ satisfies $\divergence g = 0$ if and only if $g$
can be written as:
$$ g = \sum_{\text{directed cycles } H}{\beta_{H} f_H} $$
for some choice of coefficients $\beta_H \in \N$, where the sum is over all directed cycles of $\G$.\footnote{This representation of $g$ can be construed as the converse of Lemma \ref{Lemma: Flow Space}, cf. \cite[Theorem 14.2.2, Corollary 14.2.3]{GodsilRoyle2001}.}

Now suppose that $g$ is a $\Q$-edge function satisfying $\divergence g = 0$ (as in the above claim). Let $J$ be a directed circuit passing through every edge of $\G$, which exists because $\G$ is strongly connected, and let $f_J$ be its corresponding $\N$-edge function. Then, since $\divergence f_J = 0$ and $f_J(e) > 0$ for all $e \in E$, for some sufficiently large integers $k_1,k_2 \in \N$, $g^{\prime} = k_1 g + k_2 f_{J}$ is an $\N$-edge function satisfying $\divergence g^{\prime} = 0$. Thus, $g^{\prime}$ can be represented as: 
$$ k_1 g + k_2 f_{J} = g^{\prime} = \sum_{\text{directed cycles } H}{\beta_{H} f_H} $$
for some choice of coefficients $\beta_H \in \N$, as argued above. This implies that $g$ is a finite linear combination
of $\{f_H : H \text{ is a circuit in } \G\}$ with rational coefficients. This establishes the claim.

The statement about dimension follows from \cite[Theorem 8.3.1]{GodsilRoyle2001}, which shows $\mathrm{dim}\, V_1 = |\V|-1$, after we use the facts that $\G$ is strongly connected and that $\{\varrho_v : v \in \V\}$ define the rows of the oriented incidence matrix of $\G$. This completes the proof.
\end{proof}

\renewcommand{\proofname}{Proof of Theorem \ref{Thm: Equivalent Characterization of Cyclic Equivalence}}

\begin{proof}
Akin to \eqref{Eq: Pure Rank Formal Sum}, consider any counting form $w$ with pure rank $s$ defined by the formal sum:
$$ w = \sum_{e \in \Y^s}{\alpha_e \{e\}} $$
with coefficients $\{\alpha_e \in \R : e \in \Y^s\}$. Let $\G$ be the strongly connected directed graph with vertex set $\Y^{s-1}$ and directed edge set $\Y^{s}$, where any edge $(v_1 \, \cdots \, v_s) \in \Y^s$ has source vertex $(v_1 \, \cdots \, v_{s-1})$ and destination vertex $(v_2 \, \cdots \, v_s)$. Note that $\G$ has the same underlying structure as the graph constructed in subsection \ref{Graph Theoretic Characterization}, but it identifies edges (rather than vertices) with basis clauses of rank $s$.\footnote{We used an alternative graph construction in our development in subsections \ref{Graph Theoretic Characterization} and \ref{Linear Programming Criteria} since it was arguably more natural.} Define the edge function $f : \Y^s \rightarrow \R$ corresponding to $w$ such that:
$$ \forall e \in \Y^s, \enspace f(e) = \alpha_e \, . $$
By treating $f$ as a weight function on the edges of $\G$, the proof of Proposition \ref{Prop: Graph Theoretic Characterization of Cyclic Partial Order} in subsection \ref{Graph Theoretic Characterization}, mutatis mutandis, shows that $w \cyceq \0$ if and only if $f$ is closed. (We omit the details of this argument for brevity.) Hence, applying Theorem \ref{Thm: Edge Functions} yields that $w \cyceq \0$ if and only if $f \in \linspan(\{\varrho_v : v \in \Y^{s-1}\})$. Now, using the structure of $\G$ and the definitions in \eqref{Eq: Def of rho_v} and \eqref{Eq: Definition of varrho's}, it is easy to see that for all $v \in \Y^{s-1}$, the $\Z$-edge function $-\varrho_v$ defines the coefficients of the counting form $\rho_v$:
$$ \rho_v = -\sum_{e \in \Y^{s}}{\varrho_v(e) \{e\}} \, . $$
Therefore, since $f$ encodes the coefficients of $w$, $w \cyceq \0$ if and only if $w \in \linspan(\{\rho_v : v \in
\Y^{s-1}\})$. Lastly, by Theorem~\ref{Thm: Edge Functions}, we get that $\linspan(\{\rho_v : v \in \Y^{s-1}\})$ has dimension $|\Y^{s-1}| - 1 = 3^{s-1} - 1$ (using \eqref{Eq: The coupled NAND alphabet}). This completes the proof.
\end{proof}

\renewcommand{\proofname}{Proof}

\section{Necessary Conditions to Satisfy \eqref{Eq: Cyc Form Cond 1} for Sufficiently Small Noise}
\label{Necessary Conditions to Satisfy Cyc Form Cond 1 for Small Delta}

In this appendix, we consider the setting of sufficiently small noise $\delta > 0$, which is intuitively the most challenging regime to establish the impossibility of broadcasting in. In particular, we will computationally construct a $u$-only counting form with rank $3$ which provides evidence for the existence of $u$-only counting forms that satisfy the supermartingale condition \eqref{Eq: Cyc Form Cond 1} for all sufficiently small $\delta > 0$. (Note that we do not consider the constraint \eqref{Eq: Cyc Form Cond 2} in this appendix.) 

To present our computer-assisted construction, define the function $T : \Y \rightarrow \{0,1,2\}$, $T(y) = 2\I\{y = u\} + y\I\{y \in \{0,1\}\}$, which changes $u$'s into $2$'s. Moreover, for the linear subspaces of counting forms with pure rank $s \in \N\backslash\!\{0\}$, define the ensuing maps that equivalently represent these counting forms as column vectors in $\R^{3^s}$:
\begin{align}
\label{Eq: The Coeff Map}
w & = \!\sum_{v \in \Y^s}{\!\alpha_v \{v\}} \enspace \mapsto \enspace \zeta_s(w) \triangleq \left[\tilde{\alpha}(0) \, \cdots \, \tilde{\alpha}(3^s - 1)\right]^{\T} \in \R^{3^s}, \\
\forall v & = (v_{s-1} \cdots v_{0}) \in \Y^s, \enspace \tilde{\alpha}\!\left(\sum_{i = 0}^{s-1}{T(v_i) \, 3^i}\right) = \alpha_v \, ,
\label{Eq: Index map}
\end{align}
where $\{\alpha_v \in \R : v \in \Y^s\}$ denote the coefficients of the counting form $w$ with pure rank $s$, and we write these coefficients as column vectors whose indices are the lexicographically ordered $s$-length strings interpreted as ternary representations of non-negative integers. 

Now consider the conditional expectation operator $\CE$ in Definition \ref{Def: Conditional Expectation Operator} acting on the linear subspace of all counting forms with pure rank $s$. Then, since the codomain of $\CE$ is the linear subspace of all counting forms with pure rank $s+1$, we can equivalently represent $\CE$ using a $3^{s+1} \times 3^s$ matrix $\mathbf{C}_s(\delta)$ such that for every counting form $w$ with pure rank $s$, we have:
\begin{equation}
\label{Eq: Matrix equation for forms}
\zeta_{s+1}(\CE(w)) = \mathbf{C}_s(\delta) \, \zeta_s(w) \, .
\end{equation}
Note that when $s = 1$, the matrix $\mathbf{C}_1(\delta)$ is given by:
\begin{equation}
\label{Eq: Basic CE}
\mathbf{C}_{1}(\delta) \triangleq \!
\bbordermatrix{
	       & 0 & 1 & u \cr
   (0,0) & \delta^2 & 1 - \delta^2 & 0 \cr 
   (0,1) & \delta(1-\delta) & \delta + (1-\delta)^2 & 0 \cr
   (0,u) & \delta^2 & \delta + (1-\delta)^2 & \delta(1 - 2\delta) \cr
   (1,0) & \delta(1-\delta) & \delta + (1-\delta)^2 & 0 \cr
   (1,1) & (1-\delta)^2 & 2\delta - \delta^2 & 0 \cr
   (1,u) & \delta(1-\delta) & 2\delta - \delta^2 & (1-\delta)(1 - 2\delta) \cr
   (u,0) & \delta^2 & \delta + (1-\delta)^2 & \delta(1 - 2\delta) \cr
   (u,1) & \delta(1-\delta) & 2\delta - \delta^2 & (1-\delta)(1 - 2\delta) \cr
   (u,u) & \delta^2 & 2\delta - \delta^2 & 1-2\delta \cr}
\end{equation}
for all $\delta \in \big[0,\frac{1}{2}\big]$, where we label the indices of the rows and columns using $\Y^2$ and $\Y$, respectively, for illustrative purposes. (Note, for example, that the rows are actually indexed by $[9]$ since each $(v_1 \, v_0) \in \Y^2$ corresponds to a non-negative integer $T(v_0) + 3 T(v_1)$ using \eqref{Eq: Index map}.) Furthermore, when $s \geq 2$, for every $v = (v_{s} \cdots v_0) \in \Y^{s+1}$ and every $y = (y_{s-1} \cdots y_0) \in \Y^{s}$, the $(a,b)$th element of the matrix $\mathbf{C}_s(\delta)$, with $a = \sum_{i = 0}^{s}{T(v_i) 3^i}$ and $b = \sum_{j = 0}^{s-1}{T(y_j) 3^j}$, is given by:
\begin{align}
\left[\mathbf{C}_s(\delta)\right]_{a,b} & \triangleq \prod_{k = 0}^{s-1}{\left[\mathbf{C}_1(\delta)\right]_{T(v_k) + 3 T(v_{k+1}),T(y_k)}} \nonumber \\
& = \P\!\left((Y_{s+1,1},\dots,Y_{s+1,s}) = v \, \middle| \, Y_s = y\right)
\label{Eq: Useful representation of CE matrix}
\end{align}
for all $\delta \in \big[0,\frac{1}{2}\big]$, according to Definition \ref{Def: Conditional Expectation Operator}. Since $\mathbf{C}_1(\delta)$ is a quadratic \emph{polynomial matrix} in $\delta$ (see \eqref{Eq: Basic CE}), it follows that $\mathbf{C}_s(\delta)$ is a polynomial matrix in $\delta$ with degree $2s$, and we may write it as:
\begin{equation}
\label{Eq: Polynomial Matrix}
\forall \delta \in \left[0,\frac{1}{2}\right], \enspace \mathbf{C}_s(\delta) = \sum_{k = 0}^{2s}{\mathbf{C}_s^{(k)} \, \delta^k}
\end{equation}
where each $3^{s+1} \times 3^s$ coefficient matrix $\mathbf{C}_s^{(k)}$ does not depend on $\delta$ (and is explicitly known).

Next, suppose we seek to find a $u$-only counting form $w_{\delta}$ with pure rank $s$ satisfying \eqref{Eq: Cyc Form Cond 1} that can be represented as a linear polynomial vector:
\begin{equation}
\label{Eq: First order form}
\forall \delta \in \left[0,\frac{1}{2}\right], \enspace \zeta_{s}(w_{\delta}) = \hat{w}_0 + \hat{w}_1 \delta
\end{equation} 
where $\hat{w}_0,\hat{w}_1 \in \R^{3^s}$ do not depend on $\delta$. Define the $3^{s+1} \times 3^s$ ``purification matrix'' $P_s$ via (see Lemma \ref{Lemma: Purification of Counting Forms} and the discussion preceding Theorem \ref{Thm: Equivalent Characterization of Cyclic Equivalence}):
\begin{equation}
\label{Eq: The Purify Matrix}
P_s \triangleq I_{s} \otimes [1 \, 1 \, 1]^{\T}
\end{equation}
where $I_s$ denotes the $3^s \times 3^s$ identity matrix, and $\otimes$ denotes the \emph{Kronecker product} of matrices. Note that for any $u$-only counting form $w$ with pure rank $s$, $P_s \zeta_s(w)$ produces the coefficients of a $u$-only counting form with pure rank $s+1$ that is cyclically equivalent to $w$. To satisfy the supermartingale condition \eqref{Eq: Cyc Form Cond 1}, $w_{\delta} - \CE(w_{\delta}) \cycgeq \0$, observe that it suffices to ensure that:
\begin{equation}
\label{Eq: Feasibility Problem Form 1}
\begin{aligned}
& \exists \, z_{\delta} \in \linspan\!\left(\left\{\rho_{v} : v \in \Y^{s} \backslash \{0,1\}^{s}\right\}\right) , \\
& \qquad \quad \underbrace{P_s \zeta_s(w_{\delta})}_{\cyceq w_{\delta}} - \underbrace{\mathbf{C}_s(\delta) \zeta_s(w_{\delta})}_{= \, \zeta_{s+1}(\CE(w_{\delta}))} + \underbrace{\zeta_{s+1}(z_{\delta})}_{\substack{\text{does not change}\\w_{\delta} - \CE(w_{\delta})}} \geq \0
\end{aligned}
\end{equation}
for all $\delta \in \big[0,\frac{1}{2}\big]$, where we utilize the special case of the converse direction of Theorem \ref{Thm: Equivalent Characterization of Cyclic Equivalence} for $u$-only counting forms (and \eqref{Eq: Matrix equation for forms}), and the $\geq$ relation holds entry-wise. Indeed, if all the coefficients of a counting form that is cyclically equivalent to $w_{\delta} - \CE(w_{\delta})$ are non-negative, then $w_{\delta} - \CE(w_{\delta}) \cycgeq \0$ using Definitions \ref{Def: Cyclic Counting Forms} and \ref{Def: Cyclic Partial Order}. To simplify \eqref{Eq: Feasibility Problem Form 1} further, observe using \eqref{Eq: Matrix equation for forms}, \eqref{Eq: Polynomial Matrix}, and \eqref{Eq: First order form} that:
\begin{align}
\zeta_{s+1}(\CE(w_{\delta})) & = \left(\sum_{k = 0}^{2s}{\mathbf{C}_s^{(k)} \, \delta^k}\right) \! (\hat{w}_0 + \hat{w}_1 \delta) \nonumber \\
& = \mathbf{C}_s^{(0)} \hat{w}_0 + \left(\mathbf{C}_s^{(0)} \hat{w}_1 + \mathbf{C}_s^{(1)} \hat{w}_0\right) \! \delta + O\!\left(\delta^2\right) , \\
P_s \zeta_s(w_{\delta}) & = P_s \hat{w}_0 + P_s \hat{w}_1 \delta \, ,
\end{align}
where $O(\delta^2)$ is the standard big-$O$ asymptotic notation (as $\delta \rightarrow 0^+$). This implies that:
\begin{equation}
\label{Eq: Simplify Feasibility Problem Form 1}
\begin{aligned}
& P_s \zeta_s(w_{\delta}) - \zeta_{s+1}(\CE(w_{\delta})) \\
& = \left(P_s - \mathbf{C}_s^{(0)}\right) \! \hat{w}_0 + \left(\left(P_s - \mathbf{C}_s^{(0)}\right) \! \hat{w}_1 - \mathbf{C}_s^{(1)} \hat{w}_0\right) \! \delta \\
& \quad \, + O\!\left(\delta^2\right) .
\end{aligned}
\end{equation}
Furthermore, every $u$-only counting form $z_{\delta} \in \linspan\!\big(\big\{\rho_{v} : v \in \Y^{s} \backslash \{0,1\}^{s}\big\}\big)$ can be represented as $\zeta_{s+1}(z_{\delta}) = \mathbf{D}_s \tilde{x}_{\delta}$ for some $\delta$-dependent vector $\tilde{x}_{\delta} \in \R^{3^s - 2^s}$, where $\mathbf{D}_s$ denotes the (explicitly known and $\delta$-independent) $3^{s+1} \times (3^s - 2^s)$ matrix whose columns are given by:
\begin{equation}
\label{Eq: Simplify Feasibility Problem Form 2}
\mathbf{D}_s = \left[\zeta_{s+1}(\rho_v) : v \in \Y^{s} \backslash \{0,1\}^{s}\right] .
\end{equation}
Suppose $\tilde{x}_{\delta} = x_0 + x_1 \delta$ for $x_0,x_1 \in \R^{3^s - 2^s}$ that do not depend on $\delta$. Then, in the limit when $\delta \rightarrow 0^+$, using \eqref{Eq: Simplify Feasibility Problem Form 1} and \eqref{Eq: Simplify Feasibility Problem Form 2}, \eqref{Eq: Feasibility Problem Form 1} can be written as:
\begin{equation}
\label{Eq: Feasibility Problem Form 2}
\begin{aligned}
& \exists \, x_0,x_1 \in \R^{3^s - 2^s} , \\
& \qquad \left(\left(P_s - \mathbf{C}_s^{(0)}\right) \! \hat{w}_0 + \mathbf{D}_s x_0\right) \\
& \qquad + \left(\left(P_s - \mathbf{C}_s^{(0)}\right) \! \hat{w}_1 - \mathbf{C}_s^{(1)} \hat{w}_0 + \mathbf{D}_s x_1\right) \! \delta \geq \0
\end{aligned}
\end{equation}
where we neglect the $O(\delta^2)$ terms. 

\begin{algorithm*}[t] 
\caption{Algorithm to determine whether a $u$-only counting form with pure rank is cyclically non-negative.}
\label{Algorithm: Bellman-Ford}
\begin{algorithmic}[1]
\renewcommand{\algorithmicrequire}{\textbf{Input 1:}}
\Require A rank $r \in \N\backslash\!\{0\}$
\renewcommand{\algorithmicrequire}{\textbf{Input 2:}}
\Require An array of coefficients $(\alpha[v] \in \R : v \in \Y^r)$ corresponding to a $u$-only counting form $w$ with pure rank $r$, where $\alpha[v] = 0$ if $v \in \{0,1\}^r$ \Comment{$\alpha[v]$ is the coefficient of the basis clause $\{v\}$ for $w$}
\renewcommand{\algorithmicensure}{\textbf{Boolean Output:}}
\Ensure \True (if $w \cycgeq \0$) or \False (otherwise)
\Statex \textbf{\emph{Stage 1}} \Comment{Construct edge set $E$ of the directed graph $\G_r(w)$} 
\State \textbf{Initialization:} $E = \{ \}$
\ForAll{source vertices $v = (v_1 \cdots v_r) \in \Y^r$} 
\ForAll{letters $y \in \Y$} 
\State $v^{(y)} = (v_2 \cdots v_r \, y)$ \Comment{Construct destination vertex}
\State $E.$\Call{insert}{}$\big\{\big(v,v^{(y)}\big)\big\}$ \Comment{Insert directed edge into $E$}
\EndFor
\EndFor
\Statex \textbf{\emph{Stage 2}} \Comment{\emph{Bellman-Ford algorithm} to detect negative cycles reachable from $(u \cdots u)$} 
\State \textbf{Initialization:} distances[$v$] $\gets +\infty$ for all $v \in \Y^r$ \Comment{Shortest path lengths from $(u \cdots u)$ to $v$}
\State \textbf{Initialization:} distances[$(u \cdots u)$] $\gets 0$ \Comment{$(u \cdots u)$ is the source vertex}
\For{$i = 1$ \textbf{to} $3^r - 1$} \Comment{Loop to \emph{relax} edges}
\ForAll{edges $(v_1,v_2) \in E$}
\If{distances[$v_1$] $+$ $\alpha[v_1]$ $<$ distances[$v_2$]} \Comment{Edge weights are given by the relation \eqref{Eq: Weight Construction for G}}
\State distances[$v_2$] $\gets$ distances[$v_1$] $+$ $\alpha[v_1]$
\EndIf
\EndFor
\EndFor
\ForAll{edges $(v_1,v_2) \in E$} \Comment{Loop to detect negative cycles reachable from $(u \cdots u)$}
\If{distances[$v_1$] $+$ $\alpha[v_1]$ $<$ distances[$v_2$]}
\State \Return \False \Comment{Negative cycle found!}
\EndIf
\EndFor
\State \Return \True \Comment{No negative cycles found}
\end{algorithmic}
\end{algorithm*}

To simplify \eqref{Eq: Feasibility Problem Form 2} further, let us set $s = 3$. So, we seek to find a $u$-only counting form $w_{\delta}$ of the form \eqref{Eq: First order form} with pure rank $3$, that satisfies \eqref{Eq: Cyc Form Cond 1} up to first order in the limit as $\delta \rightarrow 0^+$. Recall the $u$-only harmonic counting form $w_*$ with rank $3$ defined in \eqref{Eq: Holroyd potential}, and notice that the proof of Proposition \ref{Prop: Harmonic Counting Form} can be easily modified (using purification with respect to $\cyceq$\textemdash see the discussion preceding Theorem \ref{Thm: Equivalent Characterization of Cyclic Equivalence}) to yield $w_* \cyceq \CE(w_*)$ when $\delta = 0$. Hence, let us restrict ourselves to $u$-only counting forms $w_{\delta}$ with some fixed $w_0 = w_0^* \cyceq w_*$ so that $w_0^* \cyceq \CE(w_0^*)$, where $w_0^*$ is $u$-only and has pure rank $3$. Specifically, we use:
\begin{align}
\hat{w}_0 & = \hat{w}_0^* \triangleq \zeta_3(w_0^*) \nonumber \\
& = \big[0 \enspace 0 \enspace 0 \enspace 0 \enspace 0 \enspace 1 \, -\!2 \enspace 0 \enspace 0 \enspace 0 \enspace 0 \enspace 0 \enspace 0 \enspace 0 \enspace 0 \enspace 1 \enspace 1 \enspace 1 \enspace 2 \enspace 2 \enspace 2 \nonumber \\
& \quad \enspace \,\, 4 \enspace 3 \enspace 3 \enspace 2 \enspace 2 \enspace 2\big]^{\T} \in \R^{27} \, .
\end{align}
With this choice of $\hat{w}_0 = \hat{w}_0^*$, since $w_0^* \cyceq \CE(w_0^*)$, we know that the counting form corresponding to $\big(P_3 - \mathbf{C}_3^{(0)}\big) \hat{w}_0^* + \mathbf{D}_3 x_0$ is $\cyceq \0$ in \eqref{Eq: Feasibility Problem Form 2}. Therefore, in the limit when $\delta \rightarrow 0^+$, the problem of finding $\hat{w}_1$ such that \eqref{Eq: Feasibility Problem Form 2} is satisfied can be recast as:
\begin{equation}
\label{Eq: Feasibility Problem Form 3}
\begin{aligned}
& \text{Find } \hat{w}_1 \in \R^{27} \text{ and } x_1 \in \R^{19} \text{ such that:} \\
& \left(P_3 - \mathbf{C}_3^{(0)}\right) \! \hat{w}_1 - \mathbf{C}_3^{(1)} \hat{w}_0^* + \mathbf{D}_3 x_1 \geq \0 \, , \\
& \, \, \forall (v_2 \, v_1 \, v_0) \in \{0,1\}^3, \enspace \big(\hat{w}_1\big)_{v_0 + 3 v_1 + 9 v_2} = 0
\end{aligned}
\end{equation}
where $(\hat{w}_1)_{i}$ denotes the $i$th entry of $\hat{w}_1$ for $i \in [27]$, the matrices $P_3 - \mathbf{C}_3^{(0)} \in \R^{81 \times 27}$, $\mathbf{D}_3 \in \R^{81 \times 19}$ and the vector $\mathbf{C}_3^{(1)} \hat{w}_0^* \in \R^{81}$ are known, and the equality constraints in \eqref{Eq: Feasibility Problem Form 3} ensure that $\hat{w}_1$ corresponds to a $u$-only counting form. 

The problem in \eqref{Eq: Feasibility Problem Form 3} is a \emph{linear programming feasibility} problem. Using MATLAB, we can solve \eqref{Eq: Feasibility Problem Form 3} (with additional constraints) to obtain the following integer-valued solution:
\begin{equation}
\label{Eq: Solution 1}
\begin{aligned}
\hat{w}_1 = \hat{w}_1^* & \triangleq \big[0 \enspace 0 \enspace 2 \enspace 0 \enspace 0 \enspace 4 \enspace 4 \enspace 4 \enspace 3 \enspace 0 \enspace 0 \enspace 4 \enspace 0 \enspace 0 \enspace 4 \enspace 4 \enspace 4 \\
& \quad \enspace \,\, 4 \enspace 2 \enspace 4 \enspace 4 \enspace 4 \enspace 4 \enspace 4 \enspace 3 \enspace 4 \enspace 2 \big]^{\T} \in \N^{27} 
\end{aligned}
\end{equation}
which has a corresponding integer-valued solution $x_1 = x_1^* \in \Z^{19}$ that we do not reproduce here. Therefore, the $u$-only counting form $w_{\delta}^* \triangleq \zeta_3^{-1}(\hat{w}_0^* + \hat{w}_1^* \delta)$ with pure rank $3$ has the property that it satisfies \eqref{Eq: Cyc Form Cond 1} up to first order as $\delta \rightarrow 0^+$. This provides numerical evidence for the existence of $u$-only counting forms with rank $3$ satisfying \eqref{Eq: Cyc Form Cond 1} for all values of $\delta$ in a sufficiently small neighborhood of $0$. Indeed, if we could construct solutions $\hat{w}_1^*$ and $x_1^*$ that satisfied the entry-wise strict inequality:
\begin{equation}
\label{Eq: Local Dream}
\left(P_3 - \mathbf{C}_3^{(0)}\right) \! \hat{w}_1^* - \mathbf{C}_3^{(1)} \hat{w}_0^* + \mathbf{D}_3 x_1^* > \0 \, , 
\end{equation} 
then we would obtain that for all sufficiently small $\delta > 0$:
\begin{equation}
\left(\left(P_3 - \mathbf{C}_3^{(0)}\right) \! \hat{w}_1^* - \mathbf{C}_3^{(1)} \hat{w}_0^* + \mathbf{D}_3 x_1^*\right) \!\delta + O\!\left(\delta^2\right) \geq \0 \, , 
\end{equation}
which includes all the higher order terms in \eqref{Eq: Simplify Feasibility Problem Form 1}. This would in turn prove that $w_{\delta}^*$ satisfies \eqref{Eq: Cyc Form Cond 1} for all sufficiently small $\delta > 0$. Since our solution \eqref{Eq: Solution 1} does not satisfy \eqref{Eq: Local Dream}, it only \emph{suggests} the existence of $u$-only counting forms with rank $3$ that satisfy \eqref{Eq: Cyc Form Cond 1} for all sufficiently small $\delta > 0$.

\section{Algorithm to Detect Cyclically Non-negative Counting Forms}
\label{Algorithm to Detect Non-negative Cyclic Counting Forms}

In this appendix, we present some pseudocode in Algorithm \ref{Algorithm: Bellman-Ford} for an algorithm that computes whether a given $u$-only counting form with pure rank $r$ is cyclically non-negative in the sense of Definition \ref{Def: Cyclic Partial Order}. Specifically, given the coefficients of a $u$-only counting form $w$ with pure rank $r$ as input, Algorithm \ref{Algorithm: Bellman-Ford} first constructs the directed graph $\G_r(w)$ by adding three outgoing edges to each vertex in $\Y^r$, even if the coefficient corresponding to the vertex (or basis clause) is zero, i.e. even if the outgoing edges are assigned zero weight. (Indeed, edges with zero weight are not equivalent to non-existent edges, because the latter kind of edges can be construed as having infinite weight.) Then, Algorithm \ref{Algorithm: Bellman-Ford} executes the Bellman-Ford algorithm, cf. \cite[Section 24.1]{Cormenetal2009}, to find negative cycles in $\G_r(w)$ with edge weights given by the relation \eqref{Eq: Weight Construction for G} and source vertex chosen as $(u \cdots u)$ (all $u$'s), which is a reasonable choice since we only consider $u$-only counting forms $w$. The correctness of this algorithm follows from part 2 of Proposition \ref{Prop: Graph Theoretic Characterization of Cyclic Partial Order} and the correctness of the Bellman-Ford algorithm (in detecting the existence of negative cycles). We also remark that it is straightforward to modify Algorithm \ref{Algorithm: Bellman-Ford} to return an explicit negative cycle in $\G_r(w)$ when such cycles exist.

Since the correctness of the Bellman-Ford algorithm is typically proved for directed \emph{simple} graphs, it is worth explicitly mentioning that the self-loops on the vertices $(0 \cdots 0)$, $(1 \cdots 1)$, and $(u \cdots u)$ in $\G_r(w)$ do not affect the correctness of this algorithm. As mentioned earlier in subsection \ref{Graph Theoretic Characterization}, the self-loop edges on $(0 \cdots 0)$ and $(1 \cdots 1)$ have weight $0$, and never satisfy the \textbf{if} clause conditions in steps 12 and 18 of Algorithm \ref{Algorithm: Bellman-Ford}. Likewise, when the self-loop edge on $(u \cdots u)$ has non-negative weight, it also never satisfies the \textbf{if} clause conditions in steps 12 and 18. Hence, the algorithm runs correctly in this scenario. On the other hand, if the self-loop edge on $(u \cdots u)$ has negative weight, then it does satisfy the \textbf{if} clause condition in step 18, and the algorithm correctly discovers a negative cycle.

\balance 

\bibliographystyle{IEEEtran}
\bibliography{BroadcastRefsGrid}

\end{document}